\numberwithin{equation}{section}
\renewcommand\d{\partial}
\DeclareMathOperator{\beD}{\mathbf{e}}
\newcommand\ii{\iD}
\newcommand\dd{\dD}
\DeclareMathOperator{\iD}{i}
\DeclareMathOperator{\eD}{e}
\DeclareMathOperator{\dD}{d}
\def\eps{\varepsilon }
\newcommand{\I}{\text{I}}
\DeclareMathOperator{\Tr}{Tr}
\DeclareMathOperator{\sign}{sgn}
\DeclareMathOperator{\diag}{diag}
\DeclareMathOperator{\Hess}{Hess}
\DeclareMathOperator{\Jac}{Jac}
\DeclareMathOperator{\Div}{div}
\DeclareMathOperator{\Curl}{curl}
\def\transp#1{{#1}^{{\sf T}}}
\newcommand{\Ham}{\mathscr{H}}
\newcommand{\Hamp}{\mathscr{H}_{\rm u}}
\newcommand{\Hamkp}{\mathscr{H}_{\tkp}}
\newcommand{\Hp}{H_{\rm u}}
\newcommand{\HEK}{H_{\rm EK}}
\newcommand{\bImpulse}{\pmb{\mathscr{Q}}}
\newcommand{\Impulse}{\mathscr{Q}}
\newcommand{\impulse}{\mathscr{q}}
\newcommand{\bimpulse}{\pmb{\mathscr{q}}}
\newcommand{\uimpulse}{{\underline \impulse}}
\newcommand{\Mass}{\mathscr{M}}
\newcommand{\mass}{\mathscr{m}}
\newcommand{\umass}{{\underline \mass}}
\newcommand{\uxip}{\uxi_{\phi}}
\newcommand{\ubkp}{\ubk_{\phi}}
\newcommand{\ubkx}{\ubk_{x}}
\newcommand{\utkp}{\widetilde{\ubk}_{\phi}}
\newcommand{\uex}{\underline{\beD}_{x}}
\newcommand{\ukp}{\uk_{\phi}}
\newcommand{\ukx}{\uk_{x}}
\newcommand{\uXx}{\uX_{x}}
\newcommand{\ucx}{\uc_{x}}
\newcommand{\uomp}{\uom_{\phi}}
\newcommand{\uomx}{\uom_{x}}
\newcommand{\umup}{\umu_{\phi}}
\newcommand{\umux}{\umu_{x}}
\newcommand{\xip}{\xi_{\phi}}
\newcommand{\bkp}{\bfk_{\phi}}
\newcommand{\bkx}{\bfk_{x}}
\newcommand{\tkp}{\widetilde{\bfk}_{\phi}}
\newcommand{\ex}{\beD_{x}}
\newcommand{\kp}{k_{\phi}}
\newcommand{\kx}{k_{x}}
\newcommand{\Xx}{X_{x}}
\newcommand{\cx}{c_{x}}
\newcommand{\omp}{\omega_{\phi}}
\newcommand{\omx}{\omega_{x}}
\newcommand{\mup}{\mu_{\phi}}
\newcommand{\mux}{\mu_{x}}
\newcommand{\vphip}{\varphi_{\phi}}
\newcommand{\vphix}{\varphi_{x}}
\newcommand{\Sp}{\mathscr{S}_{\phi}}
\newcommand{\Sx}{\mathscr{S}_{x}}
\newcommand{\rhomin}{\rho_{\rm min}}
\newcommand{\rhomax}{\rho_{\rm max}}
\newcommand{\rhodual}{\rho_{\rm dual}}
\newcommand\br{\begin{remark}}
\newcommand\er{\end{remark}}
\newcommand\bp{\begin{pmatrix}}
\newcommand\ep{\end{pmatrix}}
\newcommand{\be}{\begin{equation}}
\newcommand{\ee}{\end{equation}}
\newcommand\ds{\displaystyle}
\newcommand\nn{\nonumber}
\newcommand{\beg}{\begin{example}}
\newcommand{\eeg}{\end{exaplem}}
\newcommand{\bpr}{\begin{proposition}}
\newcommand{\epr}{\end{proposition}}
\newcommand{\bt}{\begin{theorem}}
\newcommand{\et}{\end{theorem}}
\newcommand{\bc}{\begin{corollary}}
\newcommand{\ec}{\end{corollary}}
\newcommand{\bl}{\begin{lemma}}
\newcommand{\el}{\end{lemma}}
\newcommand{\bd}{\begin{definition}}
\newcommand{\ed}{\end{definition}}
\newcommand{\brs}{\begin{remarks}}
\newcommand{\ers}{\end{remarks}}
\newtheorem{theorem}{Theorem}[section]
\newtheorem{proposition}[theorem]{Proposition}
\newtheorem{corollary}[theorem]{Corollary}
\newtheorem{lemma}[theorem]{Lemma}
\theoremstyle{remark}
\newtheorem{remark}[theorem]{Remark}
\theoremstyle{definition}
\newtheorem{definition}[theorem]{Definition}
\newtheorem{example}[theorem]{Example}
\newcommand\R{\mathbf R}
\newcommand\C{\mathbf C}
\newcommand{\N}{\mathbf N}
\newcommand{\Z}{\mathbf Z}
\newcommand{\per}{\text{per}}
\newcommand{\di}{\displaystyle}
\newcommand{\SSp}{S_{{\rm p}}}
\newcommand{\spp}{s^{\rm p}}
\newcommand{\SigW}{\Sigma^{W}}
\newcommand{\OO}{{\mathbb O}}
\newcommand{\PP}{{\mathbb P}}
\newcommand{\RR}{{\mathbb R}}
\renewcommand{\SS}{{\mathbb S}}
\newcommand\bA{{\mathbf A}}
\newcommand\bB{{\mathbf B}}
\newcommand\bC{{\mathbf C}}
\newcommand\bD{{\mathbf D}}
\newcommand\bJ{{\mathbf J}}
\newcommand\bP{{\mathbf P}}
\newcommand\bT{{\mathbf T}}
\newcommand\bU{{\mathbf U}}
\newcommand\bV{{\mathbf V}}
\newcommand\bW{{\mathbf W}}
\newcommand\bX{{\mathbf X}}
\newcommand\bZ{{\mathbf Z}}
\newcommand\bfa{{\mathbf a}}
\newcommand\bfb{{\mathbf b}}
\newcommand\bfk{{\mathbf k}}
\newcommand\bfu{{\mathbf u}}
\newcommand\bfv{{\mathbf v}}
\newcommand\bfx{{\mathbf x}}
\newcommand\bfy{{\mathbf y}}
\newcommand\bfeta{{\boldsymbol \eta}}
\newcommand\bfpsi{{\boldsymbol \psi}}
\newcommand\bfxi{{\boldsymbol \xi}}
\newcommand\ubD{{\underline \bD}}
\newcommand\ubU{{\underline \bU}}
\newcommand\ubW{{\underline \bW}}
\newcommand\ubk{{\underline \bfk}}
\newcommand\uD{{\underline D}}
\newcommand\uU{{\underline U}}
\newcommand\uX{{\underline X}}
\newcommand\uc{{\underline c}}
\newcommand\uf{{\underline f}}
\newcommand\uk{{\underline k}}
\newcommand\uu{{\underline u}}
\newcommand\utheta{{\underline \theta}}
\newcommand\umu{{\underline \mu}}
\newcommand\uxi{{\underline \xi}}
\newcommand\urho{{\underline \rho}}
\newcommand\utau{{\underline \tau}}
\newcommand\uom{{\underline \omega}}
\newcommand{\falpha}{\mathfrak{a}}
\newcommand{\fbeta}{\mathfrak{b}}
\newcommand{\fgamma}{\mathfrak{c}}
\newcommand{\feta}{\mathfrak{h}}
\newcommand{\fdelta}{\mathfrak{d}}
\newcommand{\fepsilon}{\mathfrak{e}}
\newcommand{\fq}{\mathfrak{q}}
\newcommand\utv{{\underline \tv}}
\newcommand\cA{{\mathcal A}}
\newcommand\cB{{\mathcal B}}
\newcommand\cC{{\mathcal C}}
\newcommand\cF{{\mathcal F}}
\newcommand\cG{{\mathcal G}}
\newcommand\cH{{\mathcal H}}
\newcommand\cI{{\mathcal I}}
\newcommand\cJ{{\mathcal J}}
\newcommand\cL{{\mathcal L}}
\newcommand\cM{{\mathcal M}}
\newcommand\cO{{\mathcal O}}
\newcommand\cP{{\mathcal P}}
\newcommand\cU{{\mathcal U}}
\newcommand\cV{{\mathcal V}}
\newcommand\cW{{\mathcal W}}
\newcommand\cZ{{\mathcal Z}}
\newcommand\ucF{{\underline{\mathcal F}}}
\newcommand\ucU{{\underline{\mathcal U}}}
\newcommand\ucV{{\underline{\mathcal V}}}
\newcommand\tf{{\widetilde f}}
\newcommand\tq{{\widetilde q}}
\newcommand\tv{{\widetilde v}}
\newcommand\talpha{{\widetilde \alpha}}
\newcommand\tbeta{{\widetilde \beta}}
\newcommand\tkappa{{\widetilde \kappa}}
\newcommand\tchi{{\widetilde \chi}}
\newcommand\tpsi{{\widetilde \psi}}
\newcommand\tLambda{\widetilde{\Lambda}}
\newcommand\tbU{\widetilde{\bU}}
\newcommand\tbV{\widetilde{\bV}}
\newcommand{\n}{\nabla}
\title{
About plane periodic waves\\of the nonlinear Schr\"odinger equations
}
\author{Corentin Audiard}
\address{Sorbonne Université, CNRS, Université de Paris, Laboratoire Jacques-Louis Lions (LJLL), F-75005 Paris, France }
\email{{\tt corentin.audiard@upmc.fr}}
\thanks{Research of C.A. was partially supported by the French ANR Project NABUCO ANR-17-CE40-0025.}
\author{L.~Miguel Rodrigues}
\address{
Univ Rennes \& IUF, CNRS, IRMAR - UMR 6625, F-35000 Rennes, France}
\email{{\tt luis-miguel.rodrigues@univ-rennes1.fr}}
\thanks{}
\begin{document}

\begin{abstract}
The present contribution contains a quite extensive theory for the stability analysis of plane periodic waves of general Schr\"odinger equations. On one hand, we put the one-dimensional theory, or in other words the stability theory for longitudinal perturbations, on a par with the one available for systems of Korteweg type, including results on co-periodic spectral instability, nonlinear co-periodic orbital stability, side-band spectral instability and linearized large-time dynamics in relation with modulation theory, and resolutions of all the involved assumptions in both the small-amplitude and large-period regimes. On the other hand, we provide extensions of the spectral part of the latter to the multi-dimensional context. Notably, we provide suitable multi-dimensional modulation formal asymptotics, validate those at the spectral level and use them to prove that waves are always spectrally unstable in both the small-amplitude and the large-period regimes.

\vspace{1em}

\noindent{\it Keywords}: Schr\"odinger equations; periodic traveling waves ; spectral stability ; orbital stability ; abbreviated action integral ; harmonic limit ; soliton asymptotics ; modulation  systems ; Hamiltonian dynamics.

\vspace{1em}

\noindent{\it 2010 MSC}: 35B10, 35B35, 35P05, 35Q55, 37K45.
\end{abstract}

\maketitle



{\bf 
\contentsline {section}{\tocsection {}{1}{\large Introduction}}{2}{section.1}
}%
\vspace{0.5em}
\contentsline {subsection}{\tocsubsection {}{\hspace{2em}1.1}{Longitudinal perturbations}}{4}{subsection.1.1}%
\contentsline {subsection}{\tocsubsection {}{\hspace{2em}1.2}{General perturbations}}{7}{subsection.1.2}%
\vspace{1em}
{\bf 
\contentsline {section}{\tocsection {}{2}{\large Structure of periodic wave profiles}}{12}{section.2}
}%
\vspace{0.5em}
\contentsline {subsection}{\tocsubsection {}{\hspace{2em}2.1}{Radius equation}}{12}{subsection.2.1}%
\contentsline {subsection}{\tocsubsection {}{\hspace{2em}2.2}{Jump map}}{14}{subsection.2.2}%
\contentsline {subsection}{\tocsubsection {}{\hspace{2em}2.3}{Madelung's transformation}}{15}{subsection.2.3}%
\contentsline {subsection}{\tocsubsection {}{\hspace{2em}2.4}{Action integral}}{18}{subsection.2.4}%
\contentsline {subsection}{\tocsubsection {}{\hspace{2em}2.5}{Asymptotic regimes}}{19}{subsection.2.5}%
\contentsline {subsection}{\tocsubsection {}{\hspace{2em}2.6}{General plane waves}}{23}{subsection.2.6}%
\vspace{1em}
{\bf 
\contentsline {section}{\tocsection {}{3}{\large Structure of the spectrum}}{24}{section.3}
}%
\vspace{0.5em}
\contentsline {subsection}{\tocsubsection {}{\hspace{2em}3.1}{The Bloch transform}}{24}{subsection.3.1}%
\contentsline {subsection}{\tocsubsection {}{\hspace{2em}3.2}{Linearizing Madelung's transformation}}{25}{subsection.3.2}%
\contentsline {subsection}{\tocsubsection {}{\hspace{2em}3.3}{The Evans function}}{26}{subsection.3.3}%
\contentsline {subsection}{\tocsubsection {}{\hspace{2em}3.4}{High-frequency analysis}}{27}{subsection.3.4}%
\contentsline {subsection}{\tocsubsection {}{\hspace{2em}3.5}{Low-frequency analysis}}{27}{subsection.3.5}%
\vspace{1em}
{\bf 
\contentsline {section}{\tocsection {}{4}{\large Longitudinal perturbations}}{30}{section.4}
}%
\vspace{0.5em}
\contentsline {subsection}{\tocsubsection {}{\hspace{2em}4.1}{Co-periodic perturbations}}{30}{subsection.4.1}%
\contentsline {subsection}{\tocsubsection {}{\hspace{2em}4.2}{Side-band perturbations}}{33}{subsection.4.2}%
\contentsline {subsection}{\tocsubsection {}{\hspace{2em}4.3}{Large-time dynamics}}{37}{subsection.4.3}%
\vspace{1em}
{\bf 
\contentsline {section}{\tocsection {}{5}{\large General perturbations}}{46}{section.5}
}%
\vspace{0.5em}
\contentsline {subsection}{\tocsubsection {}{\hspace{2em}5.1}{Geometrical optics}}{48}{subsection.5.1}%
\contentsline {subsection}{\tocsubsection {}{\hspace{2em}5.2}{Instability criteria}}{50}{subsection.5.2}%
\contentsline {subsection}{\tocsubsection {}{\hspace{2em}5.3}{Large-period regime}}{53}{subsection.5.3}%
\contentsline {subsection}{\tocsubsection {}{\hspace{2em}5.4}{Small-amplitude regime}}{58}{subsection.5.4}%
\vspace{1em}
\contentsline {section}{\tocsection {{\bf Appendix}}{{\bf A}}{\large Symmetries and conservation laws}}{61}{appendix.A}%
\vspace{0.5em}
\contentsline {section}{\tocsection {{\bf Appendix}}{{\bf B}}{\large Spectral stability of constant states}}{62}{appendix.B}%
\vspace{0.5em}
\contentsline {section}{\tocsection {{\bf Appendix}}{{\bf C}}{\large Anisotropic equations}}{64}{appendix.C}%
\vspace{0.5em}
\contentsline {section}{\tocsection {{\bf Appendix}}{{\bf D}}{\large General plane waves}}{66}{appendix.D}%
\vspace{0.5em}
\contentsline {section}{\tocsection {{\bf Appendix}}{{\bf E}}{\large Table of symbols}}{66}{appendix.E}%
\vspace{0.5em}
\contentsline {section}{\tocsection {}{}{{\bf \large References}}}{67}{section*.3}%

\vspace{2em}


\section{Introduction}\label{s:introduction}

We consider Schr\"odinger equations in the form
\be\label{e:nls}
\iD\d_tf\,=\,
-\Div_\bfx\left(\kappa(|f|^2)\,\nabla_\bfx f\right)
\,+\,\kappa'(|f|^2)\,\|\nabla_\bfx f\|^2\,f
\,+\,2\,W'(|f|^2)\,f ,
\ee
(or some anisotropic generalizations) with $W$ real-valued and $\kappa$ positive-valued, bounded away from zero, where the unknown $f$ is complex-valued, $f(t,\bfx)\in\C$, $(t,\bfx)\in\R^d$. Note that the sign assumption on $\kappa$ may be replaced with the assumption that $\kappa$ is real-valued and far from zero since one may change the sign of $\kappa$ by replacing $(f,\kappa,W)$ with $(\overline{f},-\kappa,-W)$.

Since the nonlinearity is not holomorphic in $f$, it is convenient to adopt a real point of view and introduce real 
and imaginary parts $f=a\,+\,\iD\,b$, $\bU=\bp a\\b\ep$. Multiplication by $-\iD$ is thus encoded in
\be\label{def:J}
\bJ\,=\,\bp0&1\\-1&0\ep ,
\ee
and Equation~\eqref{e:nls} takes the form
\be\label{e:ab}
\d_t\bU\,=\,
\bJ\left(
-\Div_\bfx\left(\kappa(\|\bU\|^2)\,\nabla_\bfx \bU\right)
\,+\,\kappa'(\|\bU\|^2)\,\|\nabla_\bfx \bU\|^2\,\bU
\,+\,2\,W'(\|\bU\|^2)\,\bU\right)\,.
\ee
The problem  has a Hamiltonian structure
\[
\d_t\bU=\bJ\,\delta \Ham_0[\bU]
\qquad\text{with}\qquad
\Ham_0\left[\bU\right]=\tfrac12\kappa(\|\bU\|^2)\|\nabla_\bfx \bU\|^2
+W(\|\bU\|^2) ,
\]
with $\delta$ denoting variational gradient\footnote{See the notational section at the end of the present introduction for a definition.}. Indeed our interest in \eqref{e:nls} originates in the fact that we regard the class of equations \eqref{e:nls} as the most natural class of isotropic quasilinear dispersive Hamiltonian equations including most classical semilinear Schr\"odinger equations. See \cite{Sulem-Sulem} for a comprehensive introduction to the latter. In Appendix~\ref{s:more-equations}, we also show how to treat some anisotropic versions of the equations.

Note that in the above form are embedded invariances with respect to rotations, time translations and space translations: if $f$ is a solution so is $\tf$ when
\begin{align*}
\tf(t,\bfx)&=\eD^{-\iD\phi_0}f(t,\bfx)\,,&\phi_0\in\R\,,\ \text{rotational invariance}\,,\\
\tf(t,\bfx)&=f(t-t_0,\bfx)\,,&t_0\in\R\,,\ \text{time translation invariance}\,,\\
\tf(t,\bfx)&=f(t,\bfx-\bfx_0)\,,&\bfx_0\in\R^d\,,\ \text{space translation invariance}\,.\\
\end{align*}
Actually rotations and time and space translations leave the Hamiltonian $\Ham_0$ essentially unchanged, in a sense made explicit in Appendix~\ref{s:Noether}. Thus, through a suitable version of Noether's principle, they are associated with conservation laws, respectively on mass $\Mass[\bU]=\tfrac12\|\bU\|^2$, Hamiltonian $\Ham_0[\bU]$ and momentum $\bImpulse[\bU]=(\Impulse_j[\bU])_j$, with $\Impulse_j[\bU]=\tfrac12\bJ\bU\cdot\d_j \bU$, $j=1,\cdots,d$. Namely invariance by rotation implies that any solution $\bU$ to \eqref{e:ab} satisfies mass conservation law
\be
\label{e:mass}
\d_t\Mass(\bU)=
\sum_j\d_j\left(\bJ\delta\Mass[\bU]\cdot\nabla_{\bU_{x_j}}\Ham_0[\bU]\right)\,.
\ee
Likewise invariance by time translation implies that \eqref{e:ab} contains the conservation law
\be\label{e:Ham}
\d_t\Ham_0[\bU]=\sum_j\d_j\left(\nabla_{\bU_{x_j}}\Ham_0[\bU]\cdot\bJ\delta\Ham_0[\bU]\right)\,.
\ee
At last, invariance by spatial translation implies that from \eqref{e:ab} stems 
\be
\label{e:momentum}
\d_t\left(\bImpulse[\bU]\right)\,=\,
\nabla_\bfx\left(\frac12\bJ\bU\cdot\bJ\delta\Ham_0[\bU]-\Ham_0[\bU]\right)
+\sum_{\ell}\d_\ell(\bJ\delta\,\bImpulse[\bU]\cdot\nabla_{\bU_{x_\ell}}\Ham_0[\bU])\,.
\ee
The reader is referred to Appendix~\ref{s:Noether} for a derivation of the latter.

We are interested in the analysis of the dynamics near plane periodic uniformly traveling waves of \eqref{e:nls}. Let us first recall that a (uniformly traveling) wave is a solution whose time evolution occurs through the action of symmetries. We say that the wave is a plane wave when in a suitable frame it is constant in all but one direction and that it is periodic if it is periodic up to symmetries. Given the foregoing set of symmetries, after choosing for sakes of concreteness the direction of propagation as $\beD_1$ and normalizing period to be $1$ through the introduction of wavenumbers, we are interested in solutions to \eqref{e:nls} of the form
\[
f(t,\bfx)
\,=\,\eD^{-\iD\left(\ukp\,(x-\ucx\,t)+\uomp\,t\right)}\uf(\ukx\,(x-\ucx\,t))
\,=\,\eD^{-\iD(\ukp\,x+(\uomp-\ukp\,\ucx)\,t)}\uf(\ukx\,x+\uomx\,t)\,,
\]
with profile $\uf$ $1$-periodic, wavenumbers $(\ukp,\ukx)\in\R^2$, $\ukx>0$, time-frequencies $(\uomp,\uomx)\in\R^2$, spatial speed $\ucx\in\R$, where
\[
\bfx=(x,\bfy)\,\qquad
\uomx=-\ukx\,\ucx\,.
\]
In other terms we consider solutions to \eqref{e:ab} in the form
\be \label{def:wave}
\bU(t,\bfx)\,=\,
\eD^{\left(\ukp\,(x-\ucx\,t)+\uomp\,t\right)\bJ}\,
\ucU(\ukx\,(x-\ucx\,t))\,,
\ee
with $\ucU$ $1$-periodic (and non-constant). More general periodic plane waves are also considered in Appendix~\ref{s:more-waves}. Beyond references to results involved in our analysis given along the text and comparison to the literature provided near each main statement, in order to place our contribution in a bigger picture, we refer the reader to \cite{KapitulaPromislow_book} for general background on nonlinear wave dynamics and to \cite{Angulo-Pava,Haragus-Kapitula-Hamiltonian,DeBievre-RotaNodari} for material more specific to Hamiltonian systems.

To set the frame for linearization, we observe that going to a frame adapted to the background wave in \eqref{def:wave} by
\[
\bU(t,\bfx)
\,=\,
\eD^{\left(\ukp\,(x-\ucx\,t)+\uomp\,t\right)\bJ}\,\bV(t,\ukx\,(x-\ucx\,t),\bfy)\,,
\]
changes \eqref{e:ab} into
\begin{align}\label{e:moving-nls}
\d_t\bV&\,=\,\bJ\delta\Ham[\bV]\,,\\
\Ham[\bV]&\,:=\,\Ham_0(\bV,(\ukx\d_x+\ukp\bJ)\bV,\nabla_\bfy\bV)-\uomp\Mass[\bV]
+\ucx\Impulse_1(\bV,(\ukx\d_x+\ukp\bJ)\bV)\nonumber\\
&\,\,=\,
\Ham_0(\bV,(\ukx\d_x+\ukp\bJ)\bV,\nabla_\bfy\bV)-\left(\uomp-\ukp\,\ucx\right)\Mass[\bV]-\uomx\Impulse_1[\bV]\,,\nonumber
\end{align}
and that $(t,x,\bfy)\mapsto\ucU(x)$ is a stationary solution to \eqref{e:moving-nls}. Direct linearization of \eqref{e:moving-nls} near this solution provides the linear equation $\partial_t\bV=\cL\,\bV$ with $\cL$ defined by
\be\label{def:L}
\cL\bV\,=\,\bJ\Hess\Ham[\ucU](\bV)
\ee
where $\Hess$ denotes the variational Hessian, that is, $\Hess=L\delta$ with $L$ denoting linearization. Incidentally we point out that the natural splitting
\[
\Ham_0=\Ham_0^x+\Ham^\bfy\,,\qquad
\Ham^\bfy\left[\bU\right]=\tfrac12\kappa(\|\bU\|^2)\|\nabla_\bfy\bU\|^2\,,
\]
may be followed all the way through frame change and linearization
\begin{align*}
\Ham&=\Ham^x+\Ham^\bfy\,,\\
\cL&\,=\,\bJ\Hess\Ham^x[\ucU]+\bJ\Hess\Ham^\bfy[\ucU]=:\cL^x+\cL^\bfy\,,
\end{align*}
with $\cL^\bfy\,=\,-\kappa(\|\ucU\|^2)\bJ\,\Delta_\bfy$.

As made explicit in Section~\ref{s:Bloch} at the spectral and linear level, to make the most of the spatial structure of periodic plane waves, it is convenient to introduce a suitable Bloch-Fourier integral transform. As a result one may analyze the action of $\cL$ defined on $L^2(\R)$ through\footnote{As, by using Fourier transforms on constant-coefficient operators one reduces their action on functions over the whole space to finite-dimensonial operators parametrized by Fourier frequencies.} the actions of $\cL_{\xi,\bfeta}$ defined on $L^2((0,1))$ with periodic boundary conditions, where $(\xi,\bfeta)\in [-\pi,\pi]\times\R^{d-1}$, $\xi$ being a longitudinal Floquet exponent, $\bfeta$ a transverse Fourier frequency. The operator $\cL_{\xi,\bfeta}$ encodes the action of $\bJ\Hess\Ham[\ucU]$ on functions of the form
\[
\bfx=(x,\bfy)\mapsto \eD^{\iD\xi x+\iD\bfeta\cdot\bfy}\,\bW(x)\,,\qquad
\bW(\cdot+1)\,=\,\bW\,,
\]
through 
\[
\bJ\Hess\Ham[\ucU]\left((x,\bfy)\mapsto \eD^{\iD\xi x+\iD\bfeta\cdot\bfy}\,\bW(x)\right)(\bfx)\,=\,\eD^{\iD\xi x+\iD\bfeta\cdot\bfy}\,(\cL_{\xi,\bfeta}\bW)(x)\,.
\]
In particular, the spectrum of $\cL$ coincides with the union over $(\xi,\bfeta)$ of the spectra of $\cL_{\xi,\bfeta}$. In turn, as recalled in Section~\ref{s:Evans-def}, generalizing the analysis of Gardner \cite{Gardner-structure-periodic}, the spectrum of each $\cL_{\xi,\bfeta}$ may be conveniently analyzed with the help of an Evans' function $D_\xi(\cdot,\bfeta)$, an analytic function whose zeroes agree in location and algebraic multiplicity with the spectrum of $\cL_{\xi,\bfeta}$. A large part of our spectral analysis hinges on the derivation of an expansion of $D_\xi(\lambda,\bfeta)$ when $(\lambda,\xi,\bfeta)$ is small (Theorem~\ref{th:low-freq}).

As derived in Section~\ref{s:profile}, families of plane periodic profiles in a fixed direction --- here taken to be $\beD_1$ --- form four-dimensional manifolds when identified up to rotational and spatial translations, parametrized by $(\mux,\cx,\omp,\mup)$ where $(\mux,\mup)$ are constants of integration of profile equations associated with conservation laws \eqref{e:mass} and \eqref{e:momentum} (or more precisely its first component since we consider waves propagating along $\beD_1$). The averages along wave profiles of quantities of interest are expressed in terms of an action integral $\Theta(\mux,\cx,\omp,\mup)$ and its derivatives. This action integral plays a prominent role in our analysis. A significant part of our analysis indeed aims at reducing properties of operators acting on infinite-dimensional spaces to properties of this finite-dimensional function.

After these preliminary observations, we give here a brief account of each of our main results and provide only later in the text more specialized comments around precise statements. Our main achievements are essentially two-fold. On one hand, we provide counterparts to the main upshots of \cite{Benzoni-Noble-Rodrigues-note,Benzoni-Noble-Rodrigues,Benzoni-Mietka-Rodrigues,BMR2-I,BMR2-II,R_linKdV} --- derived for one-dimensional Hamiltonian equations of Korteweg type --- for one-dimensional Hamiltonian equations of Schr\"odinger type. On the other hand we extend parts of this analysis to the present multi-dimensional framework.

\subsection{Longitudinal perturbations}

To describe the former, we temporarily restrict to longitudinal perturbations or somewhat equivalently restrict to the case $d=1$. At the linear level, this amounts to setting $\bfeta=0$.

The first set of results we prove concerns perturbations that in the above adapted moving frame are spatially periodic with the same period as the background waves, so-called co-periodic perturbations. At the linear level, this amounts to restricting to $(\xi,\bfeta)=(0,0)$. In Theorem~\ref{th:co-periodic}, as in \cite{Benzoni-Mietka-Rodrigues}, we prove that a wave of parameters $(\umux,\ucx,\uomp,\umup)$ such that $\Hess(\Theta)(\umux,\ucx,\uomp,\umup)$ is invertible is
\begin{enumerate}
\item $H^1$ (conditionnally) nonlinearly (orbitally) stable under co-periodic longitudinal perturbations if $\Hess(\Theta)(\umux,\ucx,\uomp,\umup)$ has negative signature $2$ and $\d_{\mux}^2\Theta(\umux,\ucx,\uomp,\umup)\neq0$;
\item spectrally (exponentially) unstable under co-periodic longitudinal perturbations if this negative signature is either $1$ or $3$, or equivalently if $\Hess(\Theta)(\umux,\ucx,\uomp,\umup)$ has negative determinant.
\end{enumerate}
The main upshot here is that instead of the rather long list of assumptions that would be required by directly applying the abstract general theory \cite{GSS-II,DeBievre-RotaNodari}, assumptions are both simple and expressed in terms of the finite-dimensional $\Theta$.

Then, as in \cite{BMR2-I}, we elucidate these criteria in two limits of interest, the solitary-wave limit when the spatial period tends to infinity and the harmonic limit when the amplitude of the wave tends to zero. To describe the solitary-wave regime, let us point out that solitary wave profiles under consideration are naturally parametrized by $(\cx,\rho,\kp)$ where $\rho>0$ is the limiting value at spatial infinities of its mass and that families of solitary waves also come with an action integral $\Theta_{(s)}(\cx,\rho,\kp)$, known as the Boussinesq momentum of stability \cite{Boussinesq,Benjamin72,Benjamin} and associated for Schr\"odinger-like equations with the famous Vakhitov-Kolokolov slope condition \cite{VK}. The reader may consult  \cite{Zhidkov,Lin,DeBievre-RotaNodari} as entering gates in the quite extensive mathematical literature on the latter. In Theorem~\ref{th:coperiodic_asymptotics}, we prove that
\begin{enumerate}
\item in non-degenerate small-amplitude regimes, waves are nonlinearly stable to co-periodic perturbations;
\item in the large-period regime near a solitary wave of parameters $(\ucx^{(0)},\urho^{(0)},\ukp^{(0)})$, co-periodic spectral instability occurs when $\d_{\cx}^2\Theta_{(s)}(\ucx^{(0)},\urho^{(0)},\ukp^{(0)})<0$ whereas co-periodic nonlinear orbital stability holds when $\d_{\cx}^2\Theta_{(s)}(\ucx^{(0)},\urho^{(0)},\ukp^{(0)})>0$.
\end{enumerate}
Both results appear to be new in this context. Note in particular that our small-amplitude regime is disjoint from the cubic semilinear one considered in \cite{Gallay-Haragus-spectral-small} since there the constant asymptotic mass is taken to be zero. Yet, in the large-period regime, the spectral instability result could also be partly recovered by combining a spectral instability result for solitary waves available in the above-mentioned literature for some semilinear equations, with a non-trivial spectral perturbation argument from \cite{Gardner-large-period,Sandstede-Scheel-large-period,Yang-Zumbrun}.  

The rest of our results on longitudinal perturbations concerns side-band longitudinal perturbations, that is, perturbations corresponding to $(\xi,\bfeta)=(\xi,0)$ with $\xi$ small (but non-zero), and geometrical optics \emph{\`a la} Whitham \cite{Whitham}. 

The latter is derived by inserting in \eqref{e:ab} the two-phases slow/fastly-oscillatory \emph{ansatz}  
\be\label{e:ansatz-intro}
\bU^{(\eps)}(t,x)
\,=\,\eD^{\frac{1}{\eps}\vphip^{(\eps)}(\eps\,t,\eps\,x)\,\bJ}\cU^{(\eps)}\left(\eps\,t,\eps\,x;
\frac{\vphix^{(\eps)}(\eps\,t,\eps\,x)}{\eps}\right)
\ee 
with, for any $(T,X)$, $\zeta\mapsto\cU^{(\eps)}(T,X;\zeta)$ periodic of period $1$ and, as $\eps\to0$,
\begin{align*}
\cU^{(\eps)}(T,X;\zeta)&=
\cU_{0}(T,X;\zeta)+\eps\,\cU_{1}(T,X;\zeta)+o(\eps)\,,\\
\vphip^{(\eps)}(T,X)&=
(\vphip)_{0}(T,X)+\eps\,(\vphip)_{1}(T,X)+o(\eps)\,,\\
\vphix^{(\eps)}(T,X)&=
(\vphix)_{0}(T,X)+\eps\,(\vphix)_{1}(T,X)+o(\eps)\,.
\end{align*}
Arguing heuristically and identifying orders of $\eps$ as detailed in Section~\ref{s:side-band}, one obtains that the foregoing \emph{ansatz} may describe behavior of solutions to \eqref{e:ab} provided that the leading profile $\cU_{0}$ stems from a slow modulation of wave parameters
\be\label{e:slow-modulation}
\cU_{0}(T,X;\zeta)\,=\, \cU^{(\mux,\cx,\omp,\mup)(T,X)}(\zeta)
\ee
where here $\cU^{(\mux,\cx,\omp,\mup)}$ denotes a wave profile of parameters $(\mux,\cx,\omp,\mup)$, with local wavenumbers $(\kp,\kx)=(\d_X(\vphip)_{0},\d_X(\vphix)_{0})$ and the slow evolution of local parameters obeys 
\be\label{e:W-intro}
\kx\bA_0\Hess\Theta\,\left(\d_T+\cx\d_X\right)
\begin{pmatrix}
\mux\\\cx\\\omp\\\mup
\end{pmatrix}
\,=\,\bB_0\,\d_X
\begin{pmatrix}
\mux\\\cx\\\omp\\\mup
\end{pmatrix}
\ee
where
\begin{align*}
\bA_0&:=
\begin{pmatrix}
1&0&0&0\\
0&1&0&0\\
0&0&-1&0\\
0&0&0&-1
\end{pmatrix}\,,&
\bB_0&:=\,
\begin{pmatrix}
0&1&0&0\\
1&0&0&0\\
0&0&0&1\\
0&0&1&0
\end{pmatrix}\,.
\end{align*}
Let us point out that actually, in the derivation sketched above, System~\eqref{e:W-intro} is firstly obtained in the equivalent form
\be\label{e:W-intro-bis} 
\left\{
\begin{array}{rcl}
\d_T\kx&=&\d_X\omx\\
\d_T\impulse
&=&\d_X\left(\mux-\cx\impulse\right)\\
\d_T\mass
&=&\d_X\left(\mup-\cx\mass\right)\\
\d_T\kp&=&\d_X\left(\omp-\cx\,\kp\right)
\end{array}
\right.
\ee
with $\mass$ and $\impulse$ denoting averages over one period of respectively $\Mass(\cU)$ and \mbox{$\Impulse_1(\cU,(\kp\,\bJ+\kx\d_\zeta)\cU)$} with $\cU=\cU^{(\mux,\cx,\omp,\mup)}$. Note that two of the equations of \eqref{e:W-intro-bis} are so-called conservations of waves, whereas the two others arise as averaged equations. For a thorougher introduction to modulation systems such as \eqref{e:W-intro-bis} in the context of Hamiltonian systems, we refer the reader to the introduction of \cite{BMR2-I} and references therein.

Our second set of results concerns spectral validation of the foregoing formal arguments in the slow/side-band regime. More explicitly, as in \cite{Benzoni-Noble-Rodrigues}, we obtain in the specialization of Theorem~\ref{th:low-freq} to $\bfeta=0$ that 
\begin{align}\label{e:spec-W-intro}
D_\xi(\lambda,0)
&\stackrel{(\lambda,\xi)\to(0,0)}{=}
\det\left(\lambda\,\bA_0\,\Hess\,\Theta(\umux,\ucx,\uomp,\umup)-\iD\xi\,\bB_0\right)
+\cO\left((|\lambda|^4+|\xi|^4)\,|\lambda|\right)\,,
\end{align}
for a wave of parameter $(\umux,\ucx,\uomp,\umup)$. This connects slow/side-band Bloch spectral dispersion relation for the wave profile $\ucU$ as a stationary solution to \eqref{e:moving-nls} with slow/slow Fourier dispersion relation for $(\umux,\ucx,\uomp,\umup)$ as a solution to \eqref{e:W-intro}. Among direct consequences of \eqref{e:spec-W-intro} derived in Corollary~\ref{c:W-cor}, we point out that this implies that if $\Hess(\Theta)(\umux,\ucx,\uomp,\umup)$ is invertible and \eqref{e:W-intro} fails to be weakly hyperbolic at $(\umux,\ucx,\uomp,\umup)$, then the wave is spectrally exponentially unstable to side-band perturbations. Afterwards, as in \cite{BMR2-II}, in Theorem~\ref{th:side-band_asymptotics} we combine asymptotics for \eqref{e:W-intro} with the foregoing instability criterion to derive that waves are spectrally exponentially unstable to longitudinal side-band perturbations
\begin{enumerate}
\item in non-degenerate small-amplitude regimes near an harmonic wavetrain such that $\delta_{hyp}<0$ or $\delta_{BF}<0$, with indices $(\delta_{hyp},\delta_{BF})$ defined explicitly in \eqref{def:sideindex} and \eqref{def:sideindex_II} ;
\item in the large-period regime near a solitary wave of parameters $(\ucx^{(0)},\urho^{(0)},\ukp^{(0)})$ such that $\d_{\cx}^2\Theta_{(s)}(\ucx^{(0)},\urho^{(0)},\ukp^{(0)})<0$.
\end{enumerate}
Again, these results are new in this context, except for the corollary about weak hyperbolicity that overlaps with the recent preprint \cite{Clarke-Marangell} --- based on the recent \cite{LBJM} ---, appeared during the preparation of the present contribution. Note however that our proof of the corollary is different and our assumptions are considerably weaker.

Our third set of results concerning longitudinal perturbations shows that for spectrally stable waves in a suitable dispersive sense, by including higher-order corrections in \eqref{e:W-intro} one obtains a version of \eqref{e:slow-modulation}-\eqref{e:W-intro} that captures at any arbitrary order the large-time asymptotics for the slow/side-band part of the linearized dynamics. Besides the oscillatory-integral analysis directly borrowed from \cite{R_linKdV}, this hinges on a spectral validation of the formal asymptotics --- obtained in Theorem~\ref{th:W-Bloch} --- as predictors for expansions of spectral projectors (and not only of spectral curves) in the slow/side-band regime. The identified decay is inherently of dispersive type and we refer the curious reader to \cite{Linares-Ponce,Erdogan-Tzirakis} 
for comparisons with the well-known theory for constant-coefficient operators. Let us stress that deriving global-in-time dispersive estimates for non-constant non-normal operators is a considerably harder task and that the analysis in \cite{R_linKdV} has provided the first-ever dispersive estimates for the linearized dynamics about a periodic wave. We also point out that a large-time dynamical validation of modulation systems for general data --- as opposed to a spectral validation or a validation for well-prepared data --- requires the identification of effective initial data for modulation systems, a highly non trivial task that cannot be guessed from the formal arguments sketched above.

At this stage the reader could wonder how in a not-so-large number of pages could be obtained Schr\"odinger-like counterparts to Korteweg-like results originally requiring a quite massive body of literature \cite{Benzoni-Noble-Rodrigues-note,Benzoni-Noble-Rodrigues,Benzoni-Mietka-Rodrigues,BMR2-I,BMR2-II,R_linKdV}. There are at least two phenomena at work. On one hand, we have actually left without counterparts a significant part of \cite{BMR2-II,R_linKdV}. Results in \cite{BMR2-II} are mainly motivated by the study of dispersive shocks and the few stability results adapted here from \cite{BMR2-II} were obtained there almost in passing. The analysis in \cite{R_linKdV} studies the full linearized dynamics for the Korteweg-de Vries equation. Yet, the underlying arguments being technically demanding, we have chosen to adapt here only the part of the analysis directly related to modulation behavior, for the sake of both consistency and brevity. On the other hand, some of the results proved here are actually deduced from the results derived for some Korteweg-like systems rather than proved from scratch. 

The key to these deductions is a suitable study of Madelung's transformation \cite{Madelung}. As we develop in Section~\ref{s:EK}, even at the level of generality considered here, Madelung's transformation provides a convenient hydrodynamic formulation of \eqref{e:nls} of Korteweg type. A solution $\bU$ to \eqref{e:ab} is related to a solution $(\rho,\bfv)$, with curl-free velocity $\bfv$, of a Euler--Korteweg system through
\[
\bU=\sqrt{2\,\rho}\,\eD^{\theta\,\bJ}\bp 1\\0\ep\,,\qquad \bfv=\nabla_\bfx \theta\,.
\]
We refer the reader to \cite{Madelung-survey} for some background on the transformation and its mathematical use. Let us stress that the transformation dramatically changes the geometric structure of the equations, in both its group of symmetries and its conservation laws. A basic observation that makes the Madelung's transformation particularly efficient here is that non-constant periodic wave profiles stay away from zero. Consistently, the asymptotic regimes we consider also lie in the far-from-zero zone. Our co-periodic nonlinear orbital stability result is in particular proved here by studying in Lemma~\ref{diffeosobolev} correspondences through the Madelung's transformation. Even more efficiently, identification of respective action integrals also reduces the asymptotic expansions of $\Hess \Theta$ required here to those already obtained in \cite{BMR2-I,BMR2-II}. For the sake of completeness, in Section~\ref{s:linearized-EK} we also carry out a detailed study of spectral correspondences. Yet those fail to fully elucidate spectral behavior near $(\lambda,\xi,\bfeta)=(0,0,0)$ and, thus, they play no role in our spectral and linear analyses.

\subsection{General perturbations}

In the second part of our analysis we extend to genuinely multi-dimensional perturbations the spectral results of the longitudinal part. 

To begin with, we provide an instability criterion for perturbations that are longitudinally co-periodic, that is, that corresponds to $\xi=0$. The corresponding result, Corollary~\ref{c:transverse}-(1), is made somewhat more explicit in Lemma~\ref{l:xi=0}. Yet we do not investigate the corresponding asymptotics because in the multi-dimensional context we are more interested in determining whether waves may be stable against any perturbation and the present co-periodic instability criterion turns out to be weaker than the slow/side-band one contained in Corollary~\ref{c:transverse}-(2) and that we describe now. 

The second, and main, set of results of this second part focuses on slow/side-band perturbations, corresponding to the regime $(\lambda,\xi,\bfeta)$ small. In the latter regime, generalizing the longitudinal analysis, we derive an instability criterion, interpret it in terms of formal geometrical optics and elucidate it in both the small-amplitude and large-period asymptotics.

Concerning geometrical optics, a key observation is that even if one is merely interested on the stability of waves in the specific form \eqref{def:wave}, the relevant modulation theory involves more general waves in the form
\be \label{def:wave-general}
\bU(t,\bfx)\,=\,
\eD^{\left(\ubkp\cdot(\bfx-\ucx\,\uex\,t)+\uomp\,t\right)\bJ}\,
\ucU(\ubkx\cdot(\bfx-\ucx\,\uex\,t))\,,
\ee
with $\ucU$ $1$-periodic, $\ubkx$ non-zero of unitary direction $\uex$. The main departure in \eqref{def:wave-general} from \eqref{def:wave} is that $\ubkx$ and $\ubkp$ are non longer assumed to be colinear. To stress comparisons with \eqref{def:wave}, let us decompose $(\ubkx,\ubkp)$ as 
\begin{align*}
\ubkx&=\ukx\,\uex\,,&
\ubkp&=\ukp\,\uex+\utkp\,, 
\end{align*}
with $\utkp$ orthogonal to $\uex$. In Section~\ref{s:profile-general}, we show that this more general set of plane waves may be conveniently parametrized by $(\mux,\cx,\omp,\mup,\ex,\tkp)$, with $(\ex,\tkp)$ varying in the $2(d-1)$-dimensional manifold of vectors such that $\ex$ is unitary and $\tkp$ is orthogonal to $\ex$.

With this in hands, adding possible slow dependence on $\bfy$ in \eqref{e:ansatz-intro} through
\be\label{e:ansatz-more-intro}
\bU^{(\eps)}(t,\bfx)
\,=\,\eD^{\frac{1}{\eps}\vphip^{(\eps)}(\eps\,t,\eps\,\bfx)\,\bJ}\cU^{(\eps)}\left(\eps\,t,\eps\,\bfx;
\frac{\vphix^{(\eps)}(\eps\,t,\eps\,\bfx)}{\eps}\right)
\ee 
and arguing as before leads to the modulation behavior 
\be\label{e:slow-modulation-general}
\cU_{0}(T,\bX;\zeta)\,=\, \cU^{(\mux,\cx,\omp,\mup,\ex,\tkp)(T,\bX)}(\zeta)
\ee
with local wavevectors $(\bkp,\bkx)=(\nabla_\bX(\vphip)_{0},\nabla_\bX(\vphix)_{0})$ and the slow evolution of local parameters obeys 
\be\label{e:W-intro-more} 
\left\{
\begin{array}{rcl}
\d_T\bkx&=&\nabla_\bX\omx\\
\d_T\bimpulse
&=&\nabla_\bX\left(\mux-\cx\impulse+\frac12\,\tau_0\,\|\tkp\|^2\right)\\
&&+\Div_\bX\left(\tau_1\,\tkp\otimes\tkp
+\tau_2\,(\tkp\otimes\ex+\ex\otimes\tkp)
+\tau_3\,\left(\ex\otimes\ex-\I_d\right)\right)\\
\d_T\mass
&=&\Div_\bX\left((\mup-\cx\mass)\,\ex+\,\tau_1\,\tkp\right)\\
\d_T\bkp&=&\nabla_\bX\left(\omp-\cx\,\kp\right)
\end{array}
\right.
\ee
with extra constraints (propagated by the time-evolution) that $\bkx$ and $\bkp$ are curl-free. In  System~\eqref{e:W-intro-more}, $\bfa\otimes \bfb$ denotes the matrix of $(j,\ell)$-coordinate $\bfa_\ell\,\bfb_j$, $\Div_\bX$ acts on matrix-valued maps row-wise and $\bimpulse$, $\tau_0$, $\tau_1$, $\tau_2$ and $\tau_3$ denote the averages over one period of respectively 
\begin{align*}
\bImpulse(\cU,(\kp\,\bJ+\kx\d_\zeta)\cU),&& 
\kappa'(\|\cU\|^2)\,\|\cU\|^2,&& 
\kappa(\|\cU\|^2)\,\|\cU\|^2,\\
\kappa(\|\cU\|^2)\,\bJ\cU\cdot(\kp\,\bJ+\kx\d_\zeta)\cU&&
\textrm{and}&&
\kappa(\|\cU\|^2)\,\|(\kp\,\bJ+\kx\d_\zeta)\cU\|^2\,,
\end{align*}
with $\cU=\cU^{(\mux,\cx,\omp,\mup,\ex,\tkp)}$. Linearizing System~\eqref{e:W-intro-more} about the constant $(\umux,\ucx,\uomp,\umup,\beD_1,0)$ yields after a few manipulations 
\be\label{e:W-lin-intro}
\left\{
\begin{array}{rcl}
\ukx\bA_0\Hess\Theta\,\left(\d_T+\ucx\d_X\right)
\begin{pmatrix}
\mux\\\cx\\\omp\\\mup
\end{pmatrix}
&\,=\,&\bB_0\,\d_X
\begin{pmatrix}
\mux\\\cx\\\omp\\\mup
\end{pmatrix}
+\bp
0&0\\
\utau_3&\utau_2\\
\utau_2&\utau_1\\
0&0
\ep\bp \Div_\bX(\ex)\\\Div_\bX(\tkp) \ep\\
\left(\d_T+\ucx\d_X\right)\ex
&\,=\,&-\left(\nabla_\bX-\beD_1\,\d_X\right)\cx\\
\left(\d_T+\ucx\d_X\right)\tkp
&\,=\,&\left(\nabla_\bX-\beD_1\,\d_X\right)\omp
\end{array}\right.
\ee
with extra constraints that $\tkp$ and $\ex$ are orthogonal to $\uex=\beD_1$ and that $\kx\uex+\ukx\ex$ and $\kp\uex+\ukp\ex+\tkp$ are curl-free, where $(\kx,\kp)$ are deviations given explicitly as 
\begin{align*}
\kx&=-\ukx^2\dD\,(\d_{\mux}\Theta)(\mux,\cx,\omp,\mup)\,,&
\kp&=\frac{\kx}{\ukx}\,\ukp
-\ukx\dD\,(\d_{\mup}\Theta)(\mux,\cx,\omp,\mup)\,,&
\end{align*}
where total derivatives are taken with respect to $(\mux,\cx,\omp,\mup)$ and evaluation is at $(\umux,\ucx,\uomp,\umup,\beD_1,0)$. In System~\ref{e:W-lin-intro}, likewise $\Hess\Theta=\Hess_{(\mux,\cx,\omp,\mup)}\Theta$ is evaluated at $(\umux,\ucx,\uomp,\umup,\beD_1,0)$, and $\bA_0$ and $\bB_0$ are as in System~\eqref{e:W-intro}.

As made explicit in Section~\ref{s:WKB}, our Theorem~\ref{th:low-freq} provides a spectral validation of \eqref{e:W-intro-more} in the form 
\begin{align*}
&\lambda^{2(d-1)}\times D_\xi(\lambda,\bfeta)\stackrel{(\lambda,\xi,\bfeta)\to(0,0,0)}{=}\\
&\det\left(\lambda\,\bp \I_{2(d-1)}&0\\
0&\bA_0\,\Hess\,\Theta\ep
-\iD\xi\,
\bp 0&0\\
0&\bB_0\ep
+\left(\begin{array}{cc|cccc}
0&0&0&-\iD\bfeta&0&0\\
0&0&0&0&\iD\bfeta&0\\
\hline\\[-1.25em]
0&0&0&0&0&0\\
\frac{\utau_3}{\ukx}\,\iD\transp{\bfeta}&\frac{\utau_2}{\ukx}\,\iD\transp{\bfeta}
&0&0&0&0\\[0.5em]
\frac{\utau_2}{\ukx}\,\iD\transp{\bfeta}&\frac{\utau_1}{\ukx}\,\iD\transp{\bfeta}
&0&0&0&0\\
0&0&0&0&0&0
\end{array}\right)\right)\\
&+\cO\left(|\lambda|^{2(d-1)}\,(|\lambda|+|\xi|+\|\bfeta\|)^5\right)\,,
\end{align*}
or equivalently in the form
\begin{align}\label{e:spec-W-intro-more}
D_\xi(\lambda,\bfeta)\stackrel{(\lambda,\xi,\bfeta)\to(0,0,0)}{=}
&\det\left(\lambda\,\bA_0\,\Hess\,\Theta-\iD\xi\,\bB_0
+\frac{\|\bfeta\|^2}{\lambda}\,\bC_0\right)
+\cO\left((|\lambda|+|\xi|+\|\bfeta\|)^5\right)\,,
\end{align}
with
\begin{align*}
\bC_0
&:=\begin{pmatrix}0&0&0&0\\
0&-\sigma_3
&\sigma_2&0\\
0&-\sigma_2
&\sigma_1&0\\
0&0&0&0
\end{pmatrix}\,,& 
\sigma_j=\frac{\utau_j}{\ukx}\,,&\quad j\in\{1,2,3\}\,.
\end{align*}
In the foregoing, again $\Hess\Theta=\Hess_{(\mux,\cx,\omp,\mup)}\Theta(\umux,\ucx,\uomp,\umup,\beD_1,0)$. Note that, consistently with the equality, the structure of $\bB_0$ and $\bC_0$ implies that the apparent singularity in $\lambda$ of the left-hand side of \eqref{e:spec-W-intro-more} is indeed spurious, each factor $\|\bfeta\|^2/\lambda$ being necessarily paired with a factor $\lambda$ in the expansion of the determinant. We stress that we are not aware of any other rigorous spectral validation of a multi-dimensional modulation system, even for other classes of equations.

It follows directly from \eqref{e:spec-W-intro-more} that if \eqref{e:W-lin-intro} fails to be weakly hyperbolic at $(\umux,\ucx,\uomp,\umup,\beD_1,0)$ then the corresponding wave is spectrally exponentially unstable. In Section~\ref{s:criteria}, besides this most general instability criterion, we provide two instability criteria, more specific but easier to check, corresponding to the breaking of multiple roots near $\bfeta=0$ (Proposition~\ref{pr:splitting-eta0}) and near $\xi=0$ (Proposition~\ref{pr:splitting-xi0}) respectively. 

Afterwards we turn to the elucidation of the full instability criterion in the asymptotic regimes already studied in the longitudinal part. Our striking conclusion is that, when $d\geq2$, in non degenerate cases plane waves of the form \eqref{def:wave} are spectrally exponentially unstable in both the small-amplitude (Theorem~\ref{th:harmonic_asymptotics}) and the large-period (Theorem~\ref{th:homoclinic_asymptotics}) regimes. More explicitly we prove that such waves are spectrally exponentially unstable to slow/side-band perturbations
\begin{enumerate}
\item in non-degenerate small-amplitude regimes near an harmonic wavetrain such that $\delta_{hyp}\neq0$ and $\delta_{BF}\neq0$, with indices defined explicitly in \eqref{def:sideindex} and \eqref{def:sideindex_II} ;
\item in the large-period regime near a solitary wave of parameters $(\ucx^{(0)},\urho^{(0)},\ukp^{(0)})$ such that $\d_{\cx}^2\Theta_{(s)}(\ucx^{(0)},\urho^{(0)},\ukp^{(0)})\neq0$.
\end{enumerate}

Let us stress that to obtain the latter we derive various instability scenarios --- all hinging on expansion \eqref{e:spec-W-intro-more} thus occurring in the region $(\lambda,\xi,\bfeta)$ small --- corresponding to different instability criteria. The point is that the union of these criteria covers all possibilities. In particular, in the harmonic limit, the argument requires the full strength of the joint expansion in $(\lambda,\xi,\bfeta)$ and it is relatively elementary --- see Appendix~\ref{s:constant} --- to check that the instability is non trivial in the sense that it occurs even in cases when the limiting constant states is spectrally stable. We also stress that both asymptotic results are derived by extending to the multidimensional context some of the finest properties of longitudinal modulated systems proved in \cite{BMR2-II} from asymptotic expansions of $\Hess\Theta$ obtained in \cite{BMR2-I}.

All the results about general perturbations are new, including this form of the formal derivation of a modulation system. The only small overlap we are aware of is with \cite{LBJM} appeared during the preparation of the present contribution and studying to leading order the spectrum of $\cL_{(0,\bfeta)}$ near $\lambda=0$, when $\bfeta$ is small. Even for this partial result, our proof is different and our assumptions are considerably weaker. Let us also stress that \cite{LBJM} discusses neither modulation systems nor asymptotic regimes. At last, we point out that the operator $\cL_{(0,\bfeta)}$ depends on $\bfeta$ only through the scalar parameter $\|\bfeta\|^2$ so that the problem studied in \cite{LBJM} fits the frame of spectral analysis of analytic one-parameter perturbations, a subpart of general spectral perturbation theory that is considerably more regular and simpler, even compared to two-parameters perturbations as we consider here. Concerning the latter, we refer the reader to \cite{Kato,Davies} for general background on spectral theory. Besides \cite{LBJM}, in the large-period regime, we expect again that the spectral instability result could be partly recovered by combining a spectral instability result for solitary waves available in the literature for some specific semilinear equations \cite{Rousset-Tzvetkov-linear}, with a non-trivial spectral perturbation argument as mentioned above \cite{Gardner-large-period,Sandstede-Scheel-large-period,Yang-Zumbrun}.

\medskip

\noindent {\bf Extensions and open problems.} Since such plane waves play a role in the nearby modulation theory, the reader may wonder whether our main results extend to more general plane waves in the form \eqref{def:wave-general}. As pointed out in Section~\ref{s:profile-general}, it is straightforward to check that it is so for all results concerning longitudinal perturbations. Concerning instability under general perturbations, a first obvious answer is that instabilities persist under perturbations and thus extends to waves associated with small $\tkp$. In Appendix~\ref{s:more-waves}, we show how to extend the results to all waves in the semilinear case, that is, when $\kappa$ is constant, and in the high dimensional case, that is, when $d\geq3$.

At last, in Appendix~\ref{s:more-equations}, we show how to extend our results to anisotropic equations, even with dispersion of mixed signature, for waves propagating in a principal direction.

Though our results strongly hints at the multi-dimensional spectral instability of any periodic plane wave, it does leave this question unanswered, even for semilinear versions of \eqref{e:nls}. In the reverse direction of leaving some hope for stability, we stress that there are known natural examples of classes of one-dimensional equations for which both small-amplitude and large-period waves are unstable but there are bands of stable periodic waves. The reader is referred to \cite{BJNRZ-KS,JNRZ-KdV-KS,Barker} for examples on the Korteweg-de Vries/Kuramoto-Sivashinsky equation and to \cite{BJNRZ-KdV-SV,BJNRZ-KdV-SV-note} for examples on shallow-water Saint-Venant equations. We regard the elucidation of this possibility, even numerically, as an important open question. We point out as an intermediate issue whose resolution would already be interesting, and probably more tractable, the determination of whether there exist periodic waves of \eqref{e:nls} associated with wave parameters at which the modulation system~\eqref{e:W-intro-more} is weakly hyperbolic.

Let us conclude the global presentation of our main results by recalling that more specialized discussions, including more technical comparison to the literature, are provided along the text.

\medskip

\noindent {\bf Outline.} Next two sections contain general preliminary material, the first one on the structure of wave profile manifolds, the following one on adapted spectral theory. The latter contains however two highly non trivial results: spectral conjugations through linearized Madelung's transform (Section~\ref{s:linearized-EK}), and the slow/side-band expansion of the Evans' function (Theorem~\ref{th:low-freq}) --- a key block of our spectral analysis. After these two sections follow two sections devoted respectively to longitudinal perturbations and to general perturbations. Appendices contain key algebraic relations stemming from invariances and symmetries used throughout the text (Appendix~\ref{s:Noether}), the examination of constant-state spectral stability (Appendix~\ref{s:constant}), extensions to more general equations (Appendix~\ref{s:more-equations}) and more general profiles (Appendix~\ref{s:more-waves}) and a table of symbols (Appendix~\ref{s:index}).

Subsections of the two main sections are in clear correspondence with various sets of results described in the introduction so that the reader interested in some specific class of results may use the table of contents to jump at the relevant part of the analysis and meanwhile refer to the table of symbols to seek for involved definitions. 

\medskip

\phantomsection\label{notation}

\noindent {\bf Notation.} Before engaging ourselves in more concrete analysis, we make explicit here our conventions for vectorial, differential and variational notation.

Throughout we identify vectors with columns. The partial derivative with respect to a variable $a$ is denoted $\d_a$, or $\d_j$ when variables are numbered and $a$ is the $j$th one. The piece of notation $\dD$ stands for differentiation so that $\dD g(x)(h)$ denotes the derivative of $g$ at $x$ in the direction $h$. The Jacobian matrix $\Jac g(x)$ is the matrix associated with the linear map $\dD g(x)$ in the canonical basis. The gradient $\nabla g(x)$ is the adjoint matrix of $\Jac g(x)$ and we sometimes use suffix ${}_a$ to denote the gradient with respect to $a$. The Hessian operator $\Hess$ is given as the Jacobian of the gradient, $\Hess g=\Jac(\nabla g)$. The divergence operator $\Div$ is the opposite of the dual of the $\nabla$ operator. We say that a vector-field is curl-free if its Jacobian is valued in symmetric matrices. 

For any two vectors $\bV$ and $\bW$ in $\R^{d_0}$, thought of as column vectors, $\bV\otimes \bW$ stands for the rank-one, square matrix of  size $d_0$
\[
\bV\otimes \bW\,=\,\bV \,\transp{\bW}
\]
whatever $d_0$, where $\transp{}$ stands for matrix transposition. Acting on square-valued maps, $\Div$ acts row-wise. Dot $\cdot$ denotes the standard scalar product. Since, as a consequence of invariance by rotational changes, our differential operators act mostly component-wise, we believe that no confusion is possible and do not mark differences of meaning of $\cdot$ even when two vectorial structures coexist. The convention is that summation in scalar products is taken over compatible dimensions.  For instance,
\begin{align*}
\bV\cdot\nabla_{\bU}\Ham_0(\bU,\nabla_\bfx\bU)&=\sum_{j=1}^2 \bV_j\,\d_{U_j}\Ham_0(\bU,\nabla_\bfx\bU)\,,\\
\ex\cdot\nabla_{\nabla_{\bfx}\bU}\Ham_0(\bU,\nabla_\bfx\bU)
&=\sum_{j=1}^d (\ex)_j\,\nabla_{\d_j\bU}\Ham_0(\bU,\nabla_\bfx\bU)\,,\\
\nabla_{\bfx}\bU\cdot\nabla_{\nabla_{\bfx}\bU}\Ham_0(\bU,\nabla_\bfx\bU)
&=\sum_{j=1}^d\sum_{\ell=1}^2 \d_jU_\ell\,\nabla_{\d_jU_\ell}\Ham_0(\bU,\nabla_\bfx\bU)\,.
\end{align*}

We also use notation for differential calculus on functional spaces (thus in infinite dimensions), mostly in variational form. We use $L$ to denote linearization, analogously to $\dD$, so that $L(\cF)[\bU]\bV$ denotes the linearization of $\cF$ at $\bU$ in the direction $\bV$. Notation $\delta$ stands for variational derivative and plays a role analogous to gradient except that we use it on functional densities instead of functionals. With suitable boundary conditions, this would be the gradient for the $L^2$ structure of the functional associated with the given functional density at hand. We only consider functional densities depending of derivatives up to order $1$, so that this is explicitly given as
\[
\delta \cA[\bU]\,=\,
\nabla_\bU \cA(\bU,\nabla_\bfx\bU)\,-\Div_\bfx\,\left(\nabla_{\nabla_\bfx\bU}\cA(\bU,\nabla\bU)\right)\,.
\]
In this context, $\Hess$ denotes the linearization of the variational derivative, $\Hess=L\delta$, explicitly here 
\begin{align*}
\Hess \cA[\bU]\bV&\,=\,
\dD_{(\bU,\nabla_\bfx\bU)}(\nabla_\bU\cA)(\bU,\nabla_\bfx\bU)(\bV,\nabla_\bfx\bV)\\
&\quad\,-\Div_\bfx\,\left(\dD_{(\bU,\nabla_\bfx\bU)}(\nabla_{\nabla_\bfx\bU}\cA)(\bU,\nabla_\bfx\bU)(\bV,\nabla_\bfx\bV)\right)\,.
\end{align*}

Even when one is interested in a single wave, nearby waves enter in stability considerations. We use almost systematically underlining to denote quantities associated with the particular given background wave under study. In particular, when a wave parametrization is available, underlining denotes evaluation at the parameters of the wave under particular study.


\section{Structure of periodic wave profiles}\label{s:profile}

To begin with, we gather some facts about plane traveling wave manifolds. Until Section~\ref{s:profile-general}, we restrict to waves in the form \eqref{def:wave}. Consistently, here, for concision, we may set $\Impulse=\Impulse_1$.

\subsection{Radius equation}\label{s:radius}

To analyze the structure of the wave profiles, we step back 
from \eqref{def:wave} and look for profiles in the form
\be\label{def:wave-unscaled}
\bU(t,\bfx)\,=\,
\eD^{\omp\,t\bJ}\,\cV(x-\cx\,t)\,,
\ee
without normalizing to enforce $1$-periodicity. Profile equation 
becomes
\be\label{e:unscaled-profile}
0\,=\,\delta\Hamp[\cV]\,,\qquad\qquad\textrm{with}\qquad
\Hamp[\cV]\,=\,\Ham_0[\cV]-\omp\Mass[\cV]+\cx\Impulse[\cV]\,.
\ee
Moreover note that \eqref{e:unscaled-profile} also contains as a consequence of the rotational and spatial translation invariances of $\Hamp$ the following form of mass and momentum conservations
\begin{align}
\label{e:mass-sta}
0&=
-\frac{\dd}{\dd x}\left(\cV\cdot\bJ\nabla_{\bU_{x}}\Hamp[\cV]
\right)\,,\\
\label{e:mom-sta}
0&=
\frac{\dd}{\dd x}\left(-\Hamp[\cV]+\d_x\cV\cdot\nabla_{\bU_{x}}
\Hamp[\cV]\right)\,,
\end{align}
and introduce $\mup$ and $\mux$ corresponding constants of integration so that
\begin{align}\label{e:aug-mass}
\mup&=\bJ\cV\cdot\nabla_{\bU_{x}}\Hamp[\cV]\,,\\\label{e:aug-mom}
\mux&=\frac{\dd\cV}{\dd x}\cdot\nabla_{\bU_{x}}\Hamp[\cV]-\Hamp[\cV]\,.
\end{align}
Observe that reciprocally by differentiating 
\eqref{e:aug-mass}-\eqref{e:aug-mom} one obtains 
\[
\bJ\cV\cdot\delta\Hamp[\cV]=0
\qquad\textrm{and}\qquad
\bigg(\cV\cdot\frac{\dd\cV}{\dd x}\bigg)\ \cV\cdot\delta\Hamp[\cV]=0\,,
\]
which yields \eqref{e:unscaled-profile} provided the set where 
$\cV\cdot\frac{\dd\cV}{\dd x}$ vanishes has empty interior.

We check now that the above-mentioned condition on $\cV\cdot\frac{\dd\cV}{\dd x}$ excludes only solutions under the form \eqref{def:wave-unscaled} that have constant modulus and travel uniformly in phase. Since \eqref{e:unscaled-profile} is a 
differential equation, it is already clear that if $\cV$ vanishes 
on some nontrivial interval then $\cV\equiv0$ and from now on we 
exclude this case from our analysis. Then if 
$\cV\cdot\frac{\dd\cV}{\dd x}$ vanishes on some nontrivial interval 
it follows that on this interval $\|\cV\|$ is constant equal to some $r_0>0$ and from \eqref{e:aug-mass} that 
$$
\cV(x)\,=\,\eD^{\frac{2\mup-\cx\,r_0^2}{2\,\kappa(r_0^2)\,r_0^2}
\,x\,\bJ}\left(r_0\,\eD^{\vphip\bJ}\beD_1\right),
$$
for some $\vphip\in\R$. Since the formula provides a solution 
to \eqref{e:unscaled-profile} everywhere this holds everywhere and 
henceforth we also exclude this case. However these constant solutions are discussed further in Appendix~\ref{s:constant}.

Now to analyze \eqref{e:unscaled-profile} further we first recast 
\eqref{e:aug-mass}-\eqref{e:aug-mom} in a more explicit form, 
\begin{align*}
\mup&=\kappa(\|\cV\|^2)\,\bJ\cV\cdot\frac{\dd\cV}{\dd x}+\frac{\cx}{2}\|\cV\|^2\,,\\
\mux&=\frac12\kappa(\|\cV\|^2)\left\|\frac{\dd\cV}{\dd x}\right\|^2-W(\|\cV\|^2)+\frac{\omp}{2}\|\cV\|^2\,.
\end{align*}
Then we set $\alpha=\|\cV\|^2$ and observe that
\begin{align*}
\alpha\,\frac{\dd\cV}{\dd x}&= \frac12\frac{\dd \alpha}{\dd x}\,\cV
+\bJ\cV\cdot\frac{\dd\cV}{\dd x}\,\bJ\cV\,,\\
\alpha\left\|\frac{\dd\cV}{\dd x}\right\|^2&=
\frac{1}{4}\left(\frac{\dd \alpha}{\dd x}\right)^2+\,\left(\bJ\cV\cdot\frac{\dd\cV}{\dd x}\right)^2\,.
\end{align*}
In particular from \eqref{e:aug-mass}-\eqref{e:aug-mom} stems
\be\label{e:radius-square}
\frac{1}{8}\kappa(\alpha)\left(\frac{\dd \alpha}{\dd x}\right)^2
+\cW_\alpha(\alpha;\cx,\omp,\mup)\,=\,\mux\,\alpha\,,
\ee
with
\be\label{e:a-potential}
\cW_\alpha(\alpha;\cx,\omp,\mup)
:=-W(\alpha)\,\alpha+\frac{\omp}{2}\,\alpha^2
+\frac{1}{8}\frac{(2\mup-\cx\,\alpha)^2}{\kappa(\alpha)}\,.
\ee
Consistently going back to \eqref{e:unscaled-profile}, one derives
\be\label{e:eq-radius-square}
\frac{1}{4}\kappa(\alpha)\frac{\dd^2 \alpha}{\dd x^2}
+\frac{\kappa'(\alpha)}{\kappa(\alpha)}(\mux\,\alpha-\cW_\alpha(\alpha))+\d_\alpha\cW_\alpha(\alpha)\,=\,\mux\,.
\ee

As a consequence, since $\alpha\geq0$, if $\alpha$ vanishes at some 
point then its derivative also vanishes there and $\mup=0$. From 
this we deduce near the same point 
$$
\frac{\dd \alpha}{\dd x}=\cO(\alpha)\qquad\textrm{and}\qquad
\bJ\cV\cdot\frac{\dd\cV}{\dd x}=\cO(\alpha)
\qquad\qquad\textrm{hence}\qquad
\frac{\dd\cV}{\dd x}=\cO(\sqrt{\alpha})\,,
$$
This implies $\mux=-W(0)$ and corresponds to the trivial solution 
to \eqref{e:unscaled-profile} given by $\cV\equiv0$ that we have 
already ruled out. Note that this exclusion may be enforced by 
requiring $(\mux,\mup)\neq(-W(0),0)$. 

The foregoing discussion ensures that actually $\cV$ does not 
vanish so that in particular $r=\sqrt{\alpha}=\|\cV\|$ is a smooth 
function solving 
\be\label{e:radius}
\frac12\kappa(r^2)\left(\frac{\dd r}{\dd x}\right)^2
+\cW_r(r;\cx,\omp,\mup)\,=\,\mux,
\ee
where $\cW_r$ is defined by
\begin{align}\label{e:reduced-potential}
\cW_r(r;\cx,\omp,\mup)
&\,:=\,
\frac{1}{r^2}\cW_\alpha(r^2;\cx,\omp,\mup)\\\nonumber
&\,=\,
-W(r^2)+\frac{\omp}{2}\,r^2
+\frac18\frac{(2\mup-\cx\,r^2)^2}{\kappa(r^2)\,r^2}\,,
\end{align}
and
\be\label{e:eq-radius}
\kappa(r^2)\frac{\dd^2 r}{\dd x^2}
+2r\,\frac{\kappa'(r^2)}{\kappa(r^2)}(\mux\,-\cW_r(r))+\d_r\cW_r(r)\,=\,0\,.
\ee
Note that the excluded case where $r$ is constant equal to some $r_0$ happens only when
$$
\mux\,=\,-\cW_r(r_0;\cx,\omp,\mup)\qquad\textrm{and}\qquad 
0\,=\,\d_r\cW_r(r_0;\cx,\omp,\mup)\,.
$$

When coming back from \eqref{e:radius} to \eqref{e:unscaled-profile}, 
some care is needed when $\mup$ is zero since then $\cW_r$ may be extended to $\R$ but solutions to \eqref{e:radius} taking negative 
values must still be discarded. Except for that point, one readily 
obtains from \eqref{e:aug-mass} that with any solution $r$ to 
\eqref{e:radius} is associated the family of solutions to 
\eqref{e:unscaled-profile}-\eqref{e:aug-mass}-\eqref{e:aug-mom}
$$
\cV(x)\,=\,\eD^{\ds\left(\int_0^{x+\vphix}\frac{2\mup-\cx\,r(y)}
{2\,\kappa(r(y)^2)\,r(y)^2}\,\dD y\right)\,\bJ}\left(r(x+\vphix)\,
\eD^{\vphip\bJ}\beD_1\right),
$$
parametrized by rotational and spatial shifts $(\vphip,\vphix)\in\R^2$. 

Classical arguments show that if parameters are such that \eqref{e:radius} defines\footnote{The relation could define many connected components but implicitly we discuss them one by one. See Figures~\ref{phasesmall} and~\ref{phasesol}.} a non-trivial closed curve in phase-space that is included in the half-plane $r>0$, then the above construction yields a wave of the sought form, unique up to translations in rotational and spatial positions.

\subsection{Jump map}\label{s:jumps}
Rather than on the existence of periodic waves, we now turn our focus on their parametrization, assuming the existence of a given reference wave $\ucV$. As announced, parameters associated with $\ucV$ are underlined, and, more generally, any functional $\cF$ evaluated at the reference wave is denoted $\ucF$. 

In the unscaled framework, instead of wavenumbers $(\kx,\kp)$, we rather manipulate the spatial period $\Xx:=1/\kx$, and the rotational shift\footnote{We refrain from using the word Floquet exponent for $\xip$ to avoid confusion with Floquet exponents involved in integral transforms.} $\xip:=\kp/\kx$ that satisfy
$$
\cV(\cdot +\Xx)=\eD^{\xip\,\bJ}\cV(\cdot).
$$
Our goal is to show the existence of nearby waves and to determine which parameters are suitable for wave parametrization among
\begin{align*}
\omp\,,&&\text{rotational pulsation}\\
\cx\,,&&\text{spatial speed}\\
\cV(0),\,\frac{\dD \cV}{\dD x}(0),&&\text{initial data for the wave profile ODE}\\ 
\mup,\,\mux,&&\text{constants of integration associated with conservation laws}\\
\Xx,&&\text{spatial period}\\
\xip,&&\text{rotational shift after a period}\\
\vphip,\,\vphix,&&\text{rotational and spatial translations.}
\end{align*}

It follows from the Cauchy-Lipschitz theory that functions $\cV$ satisfying equation \eqref{e:unscaled-profile} are uniquely and smoothly determined by initial data $(\cV(0),\,\frac{\dD \cV}{\dD x}(0))=(\cV_0,\cV_1)$, and parameters of the equation $(\omp,\,\cx)$, on some common neighborhood of $[0,\uXx]$ provided that $(\cV_0,\cV_1,\omp,\cx)$ is sufficiently close to $(\ucV(0),\,\tfrac{\dD \ucV}{\dD x}(0),\uomp,\ucx)$. Note that the point $0$ plays no particular role and we may use a spatial translation to replace it with another nearby point so as to ensure suitable conditions on $(\ucV(0),\,\tfrac{\dD \ucV}{\dD x}(0))$. In particular, there is no loss in generality in assuming that $\ucV(0)\cdot \frac{\dD \ucV}{\dD x}(0)\neq0$.

At this stage, to carry out algebraic manipulations it is convenient to introduce notation
\begin{align*}
\Sp[\bU]&\,:=\,\bJ\bU\cdot\nabla_{\bU_{x}}\Ham_0[\bU]\,,\\
\Sx[\bU]&\,:=\,-\Ham_0[\bU]+\bU_x\cdot\nabla_{\bU_{x}}\Ham_0[\bU]\,.
\end{align*}
so that \eqref{e:aug-mass}-\eqref{e:aug-mom} is written as
\begin{align}\label{e:massu}
\mup&\,=\,\Sp[\cV]+\cx\Mass[\cV]\,,\\
\label{e:momu}
\mux&\,=\,\Sx[\cV]+\omp\Mass[\cV]\,.
\end{align}
Now we observe that $\dD_{\bU_x}(\Sp,\Sx)(\ucV(0),\tfrac{\dd\ucV}{\dd x}(0))$ has determinant $(\kappa(\|\ucV(0)\|^2))^2\,\ucV(0)\cdot\frac{\dD\ucV}{\dD x}(0)\neq0$. In particular, as a consequence of the Implicit Function Theorem, for $(\cV_0,\cV_1,\cx,\omp,\mup,\mux)$ near $(\ucV(0),\tfrac{\dd\ucV}{\dd x}(0),\ucx,\uomp,\umup,\umux)$, 
\begin{align*}
\mup&\,=\,\Sp(\cV_0,\cV_1)+\cx\Mass(\cV_0)\,,\\
\mux&\,=\,\Sx(\cV_0,\cV_1)+\omp\Mass(\cV_0)\,,
\end{align*}
is smoothly (and equivalently) solved as 
\[
\cV_1=\cV^{1}(\cV_0;\cx,\omp,\mup,\mux)\,.
\]
The same is true near $(\ucV(\Xx),\tfrac{\dd\ucV}{\dD x}(\Xx),\ucx,\uomp,\umup,\umux)$. This implies that, on one hand, one may replace $(\cV_0,\cV_1,\omp,\cx)$ with $(\cV_0,\omp,\cx,\mup,\mux)$ in the parametrization of solutions to \eqref{e:unscaled-profile} and, on the other hand, since values of $(\Sp[\cV]+\cx\Mass[\cV],\Sx[\cV]+\omp\Mass[\cV])$ are invariant under the flow of \eqref{e:unscaled-profile}, that, as a consequence of the Cauchy-Lipschitz theory, solutions to \eqref{e:unscaled-profile} defined on a neighborhood of $[0,\uXx]$ extend as solutions on $\R$ such that $\cV(\cdot +\Xx)=e^{\xip\,\bJ}\cV(\cdot)$ if and only if $\cV(\Xx)=e^{\xip\,\bJ}\cV(0)$.

We now show that we may replace $(\cV_0,\omp,\cx,\mup,\mux)$ with $(\vphip,\vphix,\omp,\cx,\mup,\mux)$ by taking the solution corresponding to $\cV_0=\ucV(0)$ and acting with rotational and spatial translations. The action of rotational and spatial translations is $\cV(\cdot) \mapsto \cV_{\vphip,\vphix}:=\eD^{\vphip\bJ}\cV(\cdot+\vphix)$. Obviously it leaves the set of periodic-wave profiles invariant and, among parameters, interacts only with initial data, thus, after the elimination of $\cV_1$, only with $\cV_0$. Let us denote by $\cV^{(\mux,\cx,\omp,\mup)}$ the solution to \eqref{e:unscaled-profile} such that 
\begin{align*}
\cV^{(\mux,\cx,\omp,\mup)}(0)&=\ucV(0)\,,&
\frac{\dD\ }{\dD x}\cV^{(\mux,\cx,\omp,\mup)}(0)
&=\cV^{1}(\ucV(0);\cx,\omp,\mup,\mux)\,.
\end{align*}
At background parameters the map $(\vphip,\vphix,\mux,\cx,\omp,\mup)\mapsto (\cV^{(\mux,\cx,\omp,\mup)})_{\vphip,\vphix}(0)$ has Jacobian determinant with respect to variations in $(\vphip,\vphix)$ equal to $\ucV(0)\cdot\frac{\dD\ucV}{\dD x}(0)\neq0$. Thus, as claimed, as a consequence of the Implicit Function Theorem, one may smoothly and invertibly replace $(\cV_0,\omp,\cx,\mup,\mux)$ with $(\vphip,\vphix,\omp,\cx,\mup,\mux)$ to parametrize solutions to \eqref{e:unscaled-profile} near the background profile.

As a conclusion, when identified up to rotational and spatial translations, periodic-wave profiles are smoothly identified as the zero level set of the map
\[
(\mux,\cx,\omp,\mup,\Xx,\xip)\mapsto \cV^{(\mux,\cx,\omp,\mup)}(\Xx)-\eD^{\xip\bJ}\cV^{(\mux,\cx,\omp,\mup)}(0)\,.
\]
Now, at background parameters, the foregoing map has Jacobian determinant with respect to variations in $(\Xx,\xip)$ equal to $\ucV(0)\cdot\frac{\dD\ucV}{\dD x}(0)\neq0$. Therefore a third application of the Implicit Function Theorem achieves the proof of the following proposition.

\begin{proposition}
Near a periodic-wave profile with non constant mass, periodic wave profiles form a six-dimensional manifold smoothly parametrized as
\[
(\vphip,\vphix,\omp,\cx,\mup,\mux)
\mapsto (\cV^{(\omp,\cx,\mup,\mux)}_{\vphip,\vphix},\Xx(\omp,\cx,\mup,\mux),\xip(\omp,\cx,\mup,\mux))
\]
with for any $(\vphip,\vphix)$,
\[
\cV^{(\omp,\cx,\mup,\mux)}_{\vphip,\vphix}(\cdot)
\,=\,\eD^{\vphip\bJ}\cV^{(\omp,\cx,\mup,\mux)}_{0,0}(\cdot+\vphix)\,.
\]
\end{proposition}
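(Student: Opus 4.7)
The proof consists in organizing the three successive applications of the Implicit Function Theorem that were already spelled out in Section~\ref{s:jumps}. The starting observation is that the hypothesis of non-constant mass prevents $\|\ucV\|^2$ from being locally constant (by the reduction at the end of Section~\ref{s:radius}, constancy of the radius forces the wave to be among the excluded plane-wave solutions), so the scalar $x\mapsto \ucV(x)\cdot\tfrac{\dd\ucV}{\dd x}(x)$ does not vanish identically. After a harmless spatial translation (to be absorbed later into the parameter $\vphix$) we may therefore assume that $\ucV(0)\cdot\tfrac{\dd\ucV}{\dd x}(0)\neq0$. This single scalar will drive the three non-degeneracy computations below.

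First, Cauchy-Lipschitz applied to the second-order ODE \eqref{e:unscaled-profile} yields a smooth solution map $(\cV_0,\cV_1,\omp,\cx)\mapsto \cV$ on a uniform neighborhood of $[0,\uXx]$. The computation $\det \dD_{\bU_x}(\Sp,\Sx)(\ucV(0),\tfrac{\dd\ucV}{\dd x}(0))=\kappa(\|\ucV(0)\|^2)^2\,\ucV(0)\cdot\tfrac{\dd\ucV}{\dd x}(0)\neq 0$ allows IFT to smoothly invert $\cV_1\mapsto(\Sp,\Sx)(\cV_0,\cV_1)$ locally, producing $\cV_1=\cV^1(\cV_0;\omp,\cx,\mup,\mux)$; the invariance of $(\Sp+\cx\Mass,\Sx+\omp\Mass)$ along the profile flow guarantees that this reparametrization remains consistent along the whole trajectory. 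Second, the action $(\vphip,\vphix)\mapsto\eD^{\vphip\bJ}\ucV(\vphix)$ has Jacobian determinant $\ucV(0)\cdot\tfrac{\dd\ucV}{\dd x}(0)\neq0$ at $(0,0)$, so a second IFT application replaces $\cV_0$ by the translation parameters $(\vphip,\vphix)$. Combining the two steps produces the smooth family $\cV^{(\mux,\cx,\omp,\mup)}_{\vphip,\vphix}$ of profile candidates.

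Third, periodicity up to rotational shift is encoded by the closure condition
\[
F(\mux,\cx,\omp,\mup,\Xx,\xip)\,:=\,\cV^{(\mux,\cx,\omp,\mup)}(\Xx)-\eD^{\xip\bJ}\,\cV^{(\mux,\cx,\omp,\mup)}(0),
\]
which vanishes at background parameters. The partial Jacobian of $F$ with respect to $(\Xx,\xip)$ is the $2\times 2$ matrix with columns $\tfrac{\dd\ucV}{\dd x}(\uXx)$ and $-\bJ\,\eD^{\uxip\bJ}\,\ucV(0)$; using the background identity $\tfrac{\dd\ucV}{\dd x}(\uXx)=\eD^{\uxip\bJ}\,\tfrac{\dd\ucV}{\dd x}(0)$, an easy computation gives its determinant as $\ucV(0)\cdot\tfrac{\dd\ucV}{\dd x}(0)\neq 0$. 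A third IFT application therefore yields smooth functions $\Xx(\omp,\cx,\mup,\mux)$ and $\xip(\omp,\cx,\mup,\mux)$ solving $F=0$, completing the parametrization of the six-dimensional manifold of profiles modulo rotational and spatial translations.

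The argument is essentially routine once the three Jacobians have been identified; the only genuine issue is to confirm that the same scalar $\ucV(0)\cdot\tfrac{\dd\ucV}{\dd x}(0)$ can be arranged to be non-zero, which is where the hypothesis of non-constant mass enters (via the exclusion of constant-modulus plane-wave profiles discussed in Section~\ref{s:radius}). The remaining formula $\cV^{(\omp,\cx,\mup,\mux)}_{\vphip,\vphix}(\cdot)=\eD^{\vphip\bJ}\cV^{(\omp,\cx,\mup,\mux)}_{0,0}(\cdot+\vphix)$ is a direct consequence of the rotational and translational invariance of the profile equation \eqref{e:unscaled-profile}, together with the way $(\vphip,\vphix)$ were introduced at step two.
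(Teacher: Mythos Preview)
Your proof is correct and follows essentially the same approach as the paper's own argument in Section~\ref{s:jumps}: the same initial reduction to $\ucV(0)\cdot\tfrac{\dd\ucV}{\dd x}(0)\neq0$ via the non-constant-mass hypothesis, followed by the same three Implicit Function Theorem applications with the same Jacobian computations. You have simply reorganized and summarized that discussion, which is exactly what the proposition is meant to conclude.
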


\subsection{Madelung's transformation}\label{s:EK}

To ease comparisons with the analyses in \cite{Benzoni-Noble-Rodrigues-note,Benzoni-Noble-Rodrigues,Benzoni-Mietka-Rodrigues,BMR2-I,BMR2-II} for dispersive systems of Korteweg type, including Euler--Korteweg systems and quasilinear Korteweg--de Vries equations, we now provide hydrodynamic formulations of \eqref{e:nls}/\eqref{e:ab} and correspondences between respective periodic waves. The reader is referred to \cite{Benzoni-survey} for similar discussions concerning other kinds of traveling waves. 

In the present section, we temporarily go back to the general multi-dimensional framework.

On one hand, we consider for $f=a\,+\,\iD\,b$, $\bU=\bp a\\b\ep$, a system in the form
\be\label{e:absHam}
\d_t\bU=\bJ\,\delta \Ham_{\#}[\bU]
\qquad\text{with}\qquad
\Ham_{\#}\left[\bU\right]
=\Ham_{\rm eff}\left(\Mass[\bU],\bImpulse[\bU],\frac12\,\|\nabla_\bfx\bU\|^2\right)\,.
\ee
Then we introduce
\[
\cU(\rho,\theta)\,:=\,\sqrt{2\,\rho}\,\eD^{\theta\,\bJ}(\beD_1)\,\,,\qquad (\rho,\theta)\in\R_+\times\R\,,
\]
and 
\[
H_{\#}[(\rho,\bfv)]
:=\Ham_{\rm eff}\left(\rho,\rho\,\bfv,
\frac{1}{4\rho}\|\nabla_\bfx\rho\|^2+\rho\,\|\bfv\|^2\right)
\]
and observe that
\begin{align*}
\rho
&=\Mass[\cU(\rho(\cdot),\theta(\cdot))]\,,&
\nabla_\bfx \theta
&=\frac{\bImpulse[\cU(\rho(\cdot),\theta(\cdot))]}{\Mass[\cU(\rho(\cdot),\theta(\cdot))]}\,,&
H_{\#}[(\rho,\nabla_\bfx \theta)]
&=\Ham_{\#}\left[\cU(\rho(\cdot),\theta(\cdot))\right]\,.
\end{align*}

We also point out that
\begin{align}\label{e:MTrho}
\cU(\rho,\theta)\cdot
\bJ\,\delta\Ham_{\#}\left[\cU(\rho(\cdot),\theta(\cdot))\right]
&=\Div_\bfx\left(\delta_\bfv H_{\#}[(\rho,\nabla_\bfx \theta)]\right)\\\label{e:MTv}
\frac{1}{2\rho}\bJ\,\cU(\rho,\theta)\cdot
\bJ\,\delta\Ham_{\#}\left[\cU(\rho(\cdot),\theta(\cdot))\right]
&=\delta_\rho H_{\#}[(\rho,\nabla_\bfx \theta)]
\end{align}
so that if $\bU$ solves \eqref{e:absHam} and is bounded away from 
zero then
\be\label{def:Madelung}
(\rho,\bfv):=\left(\Mass[\bU],\frac{\bImpulse[\bU]}{\Mass[\bU]}\right)
\ee
solves 
\be\label{e:absEK}
\d_t\begin{pmatrix}\rho\\\bfv\end{pmatrix}=\cJ\,\delta H_{\#}[(\rho,\bfv)]
\ee
with the constraint that $\bfv$ is curl-free, where $\cJ$ denotes the skew-symmetric operator
\[
\cJ:=\begin{pmatrix}0&\Div_\bfx\\\nabla_\bfx&0\end{pmatrix}\,.
\]
Note that the curl-free constraint is preserved by the time-evolution 
so that it is sufficient to prescribe it on the initial data.

Reciprocally if $(\rho,\bfv)$ solves \eqref{e:absEK} and $\rho$ is 
bounded below away from zero, then for any $\theta$ such that
\be\label{e:abstheta}
\d_t\theta=\delta_\rho H_{\#}[(\rho,\bfv)]
\ee
we have $\nabla_\bfv\theta=\bfv$ and $\bU:=\cU(\rho(\cdot),\theta(\cdot))$ 
solves \eqref{e:absHam}. Note moreover that under such conditions, for 
any $(t_0,\bfx_0,\theta_0)$, \eqref{e:abstheta} possesses a unique 
solution such that $\theta(t_0,\bfx_0)=\theta_0$ and that, for any 
$\bfx_0$, \eqref{e:abstheta} could alternatively be replaced by : 
for any $t$, $\d_t\theta(t,\bfx_0)=\delta_\rho H_{\#}[(\rho,\bfv)]
(t,\bfx_0)$ and $\nabla_\bfv\theta(t,\cdot)=\bfv(t,\cdot)$.

We point out that whereas the Madelung transformation 
$\bU\mapsto (\rho,\bfv)$ quotients the rotational invariance, it 
preserves the time and space translation invariances. With respect 
to the latter, we consider
\[
Q_j[\rho,\bfv]\,:=\,\rho\,\bfv\cdot\beD_j\,,\qquad j=1,\cdots,d,
\]
and observe that on one hand $Q_j$ generates spatial translations 
along the direction $\beD_j$ in the sense that if $\bfv$ is 
curl-free then
\[
\beD_j\cdot\nabla\begin{pmatrix}\rho\\\bfv\end{pmatrix}
\,=\,\cJ\,\delta Q_j[(\rho,\bfv)]
\]
and that on the other hand 
\begin{align*}
Q_j[(\rho,\nabla \theta)]
&=\Impulse_j\left[\cU(\rho(\cdot),\theta(\cdot))\right]\,.
\end{align*}
We also note that \eqref{e:absEK} implies
\[
\d_t\left(Q_j(\rho,\bfv)\right)\,=\,
\d_j\left(\rho\,\d_\rho H_{\#}[(\rho,\bfv)]-H_{\#}[(\rho,\bfv)]\right)
+\Div_\bfx\left(v_j\,\nabla_\bfv H_{\#}[(\rho,\bfv)]
+\rho_{x_j}\,\nabla_{\nabla_\bfx\rho} H_{\#}[(\rho,\bfv)]\right)
\]
and, for comparison with \eqref{e:momentum}, that when 
$\bU=\cU(\rho(\cdot),\theta(\cdot))$, $\bfv=\nabla_\bfx \theta$,
\begin{align*}
\nabla_{\bU_{x_j}}\Impulse_j[\bU]\cdot\bJ\delta\Ham_{\#}[\bU]
&=\rho\,\d_\rho H_{\#}[(\rho,\bfv)]\,,\\
\bJ\delta\Impulse_j[\bU]\cdot\nabla_{\bU_{x_\ell}}\Ham_{\#}[\bU]
&=v_j\,\d_{v_\ell} H_{\#}[(\rho,\bfv)]
+\rho_{x_j}\,\d_{\rho_{x_\ell}} H_{\#}[(\rho,\bfv)]\,.
\end{align*}

Concerning the time translation invariance, we note that 
\eqref{e:absEK} implies
\[
\d_t\left(H_{\#}(\rho,\bfv)\right)\,=\,
\Div_\bfx\Big(\delta_\rho H_{\#}[(\rho,\bfv)]\,\nabla_\bfv H_{\#}[(\rho,\bfv)]
+\Div_\bfx(\nabla_\bfv H_{\#}[(\rho,\bfv)])\,\nabla_{\nabla_\bfx\rho} H_{\#}
[(\rho,\bfv)]\Big)
\]
and, for comparison with \eqref{e:Ham}, that when 
$\bU=\cU(\rho(\cdot),\theta(\cdot))$, $\bfv=\nabla_\bfx \theta$,
\[
\nabla_{\bU_{x_j}}\Ham_{\#}[\bU]\cdot\bJ\delta\Ham_{\#}[\bU]
\,=\,
\delta_\rho H_{\#}[(\rho,\bfv)]\,\d_{v_j} H_{\#}[(\rho,\bfv)]
+\Div_\bfx(\nabla_\bfv H_{\#}[(\rho,\bfv)])\,\d_{\rho_{x_j}} H_{\#}
[(\rho,\bfv)]
\]

In the hydrodynamic formulation, what replace to some extent the rotational invariance and its accompanying conservation law for $\Mass[\bU]$ are the fact that the time evolution in \eqref{e:absEK} obeys a system of $d+1$ conservation laws and that one may add to $H_{\#}$ any affine function of $(\rho,\bfv)$ without changing \eqref{e:absEK}. With this respect, to compare \eqref{e:mass} with the equation on $\d_t\rho$, note that when $\bU=\cU(\rho(\cdot),\theta(\cdot))$, $\bfv=\nabla_\bfx \theta$,
\[
\bJ\delta\Mass[\bU]\cdot\nabla_{\bU_{x_j}}\Ham_{\#}[\bU]
\,=\,\d_{v_j} H_{\#}[(\rho,\bfv)]\,.
\]

To make the discussion slightly more concrete, we compute that when 
$\Ham_{\#}=\Ham_0$ one receives
\begin{equation}
\label{e:concrete-H}
H_0[(\rho,\bfv)]:=
H_{\#}[(\rho,\bfv)]=
\kappa(2\,\rho)\,\rho\,\|\bfv\|^2
+\frac{\kappa(2\,\rho)}{4\,\rho}\|\nabla \rho\|^2+W(2\,\rho)
\end{equation}
and that when $\Ham_{\#}=\Hamp$ one receives
\[
\Hp[(\rho,\bfv)]:=
H_{\#}[(\rho,\bfv)]=
H_0[(\rho,\bfv)]
-\omp\rho+\cx Q_1(\rho,\bfv)\,.
\]

Turning to the identification of periodic traveling waves moving in 
the direction $\beD_1$, we now restrict spatial variable to 
dimension $1$ and consider functions independent of time. We point 
out that $\cV$ is a solution to
\begin{align*}
0&\,=\,\delta\Hamp[\cV]\,,&
\mup&=\bJ\cV\cdot\nabla_{\bU_{x}}\Hamp[\cV]\,,
\end{align*}
bounded away from zero if and only if $\cV=\cU(\rho(\cdot),
\theta(\cdot))$, with $\rho$ bounded below away from zero, 
$v=\tfrac{\dd\theta}{\dd x}$, and $0\,=\,\delta\HEK[(\rho,v)]$ where
\begin{align}\label{eq:EK}
\HEK[(\rho,v)]&:=\Hp[(\rho,v)]
-\mup\,v
\,=\,
H_0[(\rho,v)]
-\omp\rho-\mup\,v+\cx Q(\rho,v)\,.
\end{align}
Moreover then with $\mux$ as in \eqref{e:aug-mom},
\begin{align*}
\mux&=-\Hp[(\rho,v)]+v\,\mup+\rho_{x}\,\d_{\rho_{x}} \Hp[(\rho,v)]\,\\
&=\rho_{x}\,\d_{\rho_{x}} \HEK[(\rho,v)]-\HEK[(\rho,v)]\,,
\end{align*}
and 
\begin{equation}\label{defnu}
v\,=\,\nu(\rho;\cx,\mup):=
\frac{\mup-\cx\,\rho}{2\,\rho\,\kappa(2\,\rho)}\,.
\end{equation}
Furthermore we stress that under these circumstances there exists 
$\kp$ such that $x\mapsto\eD^{-\kp\,x\,\bJ}\cV(x)$ is periodic of 
period $\Xx$ if and only $(\rho,v)$ is periodic of period $\Xx$, 
and when this happens $\kp$ is the average of $v$ over one period.

With notation\footnote{Except that $(\rho,v)$ plays the role of 
$(v,u)$ in \cite{BMR2-II}.} from \cite{BMR2-II}, we have untangled 
the correspondences in parameters 
\begin{align*}
\cx&=c\,,&
\mux&=\mu\,,&
\Xx&=\Xi\,,&\\
\omp&=-\lambda_\rho\,,&
\mup&=-\lambda_v\,,&
\kp&=\langle v\rangle\,,&
\end{align*}
besides the pointwise correspondences of mass, momentum and 
Hamiltonian.

We also point out that we have recovered the reduction of profile 
equations to a two-dimensional Hamiltonian system associated with 
\be\label{e:profile-rho}
\frac{\kappa(2\,\rho)}{4\,\rho}\left(\frac{\dd \rho}{\dd x}\right)^2
+\cW_\rho(\rho;\cx,\omp,\mup)\,=\,\mux\,,
\ee
where
\begin{equation}\label{def:wrho}
\cW_\rho(\rho;\cx,\omp,\mup)
:=-W(2\,\rho)
-\kappa(2\,\rho)\,\rho\,(\nu(\rho))^2
+\omp\,\rho+\mup\,\nu(\rho)
-\cx Q(\rho,\nu(\rho))\,,
\end{equation}
with $\nu(\rho)=\nu(\rho;\cx,\mup)$.

\subsection{Action integral}\label{s:action}

Motivated by the foregoing subsections we introduce 
\be\label{def:action}
\Theta(\mux,\cx,\omp,\mup):= \int_{0}^{\Xx} 
\left(\Ham_0[\cV]+\cx\Impulse[\cV]
-\omp\,\Mass[\cV]
-\mup\,\frac{\Impulse[\cV]}{\Mass[\cV]}
+\mux\right)\,\dd x\,,
\ee
with $(\Xx,\cV)$ associated with $(\mux,\cx,\omp,\mup)$ as in 
Section~\ref{s:jumps}. Note that $\Theta$ is indeed independent of 
$(\vphip,\vphix)$, and since
\[
\kp\,\Xx\,=\,\xip\,=\,
\int_{0}^{\Xx} \frac{\Impulse[\cV]}{\Mass[\cV]}\,\dd x
\]
we also have
\[
\Theta(\mux,\cx,\omp,\mup)= \int_{0}^{\Xx} 
\left(\Ham_0[\cV]+\cx\Impulse[\cV]
-\omp\,\Mass[\cV]
-\mup\,\kp
+\mux\right)\,\dd x\,,
\]
with $(\Xx,\kp,\cV)$ associated with $(\mux,\cx,\omp,\mup)$ as 
in Section~\ref{s:jumps}.

Based on \eqref{e:profile-rho} we stress the following basic alternative formula
\[
\Theta(\mux,\cx,\omp,\mup)
=
2\,\int_{\rhomin}^{\rhomax} 
\sqrt{\mux-\cW_\rho(\rho;\cx,\omp,\mup))}\sqrt{\frac{
\kappa(2\,\rho)}{\rho}}\,\dd \rho
\]
where $\rhomin=\rhomin(\mux,\cx,\mup,\omp)$ and 
$\rhomax=\rhomax(\mux,\cx,\mup,\omp)$ are respectively the minimum 
and the maximum values of $\Mass[\cV]$. Note that 
$\rhomin$ and $\rhomax$ are (locally) characterized by
\begin{align*}
\mux&=\cW_\rho(\rhomin;\cx,\omp,\mup)\,,&
\mux&=\cW_\rho(\rhomax;\cx,
\omp,\mup)\,.
\end{align*}

A fundamental observation, intensively used in 
\cite{Benzoni-Noble-Rodrigues-note,Benzoni-Noble-Rodrigues,
Benzoni-Mietka-Rodrigues,BMR2-I,BMR2-II}, is that
\begin{equation}\label{dtheta}
\left\{
\begin{array}{ll}
\di \d_{\mux}\Theta(\mux,\cx,\omp,\mup)&\,=\,\Xx\,,\\
\di \d_{\cx}\Theta(\mux,\cx,\omp,\mup)&\,=\,
\di \int_{0}^{\Xx} \Impulse[\cV]\,\dd x\,,\\
\di \d_{\omp}\Theta(\mux,\cx,\omp,\mup)&\,=\,
\di -\int_{0}^{\Xx} \Mass[\cV]\,\dd x\,,\\
\di \d_{\mup}\Theta(\mux,\cx,\omp,\mup)&\,=\,
\di -\int_{0}^{\Xx} \frac{\Impulse[\cV]}{\Mass[\cV]}\,\dd x\,.
\di 
\end{array}
\right.
\end{equation}
See for instance \cite[Proposition~1]{Benzoni-Noble-Rodrigues-note} for a proof\footnote{Let us recall 
that in this reference, the role of $(\rho,v)$ is played by $(v,u)$. Note that the proof given there uses $\ucV\cdot\ucV_x(0)=0$, but this may be assumed up to an harmless spatial translation.} of this elementary fact. For each of those we also have 
\[
\d_{\#}\Theta(\mux,\cx,\omp,\mup)
=
\int_{\rhomin}^{\rhomax} 
\frac{\d_{\#}(\mux-\cW_\rho)(\rho;\cx,\omp,\mup)}{\sqrt{\mux-\cW_\rho(\rho;\cx,\omp,\mup))}}\sqrt{\frac{2\,\kappa(2\,\rho)}{2\,\rho}}\,\dd \rho\,.
\]

\subsection{Asymptotic regimes}
\label{s:asymp}
As in \cite{BMR2-I,BMR2-II}, we shall specialize most of the general results to two asymptotic regimes, small amplitude and large period asymptotics.

We make explicit here the descriptions of both regimes in terms of 
parameters. Let $(\urho^{(0)},\ukp^{(0)})\in(0,\infty)\times\R$. 
Then for any $\phi^{(0)}\in\R$, 
\[
\cV^{(0)}(x)=\sqrt{2\urho^{(0)}}\eD^{(\phi^{(0)}+\ukp^{(0)}\,x)\bJ}(\beD_1)
\]
defines an unscaled profile with parameters 
$(\umux^{(0)},\ucx^{(0)},\uomp^{(0)},\umup^{(0)})$ 
determined by (see \eqref{e:aug-mass},\eqref{e:aug-mom})
\begin{align*}
\umup^{(0)}&=\,\ucx^{(0)}\urho^{(0)}+\kappa(2\urho^{(0)})\,2\,\urho^{(0)}\,\ukp^{(0)}\,,\\
\uomp^{(0)}&=\,\ucx^{(0)}\ukp^{(0)}
+\left(\kappa'(2\urho^{(0)})\,2\,\urho^{(0)}+\kappa(2\urho^{(0)})\right)\,(\ukp^{(0)})^2
+2W'(2\urho^{(0)})\\
\umux^{(0)}&=\,
-\frac12\,\kappa(2\urho^{(0)})\,2\urho^{(0)}\,(\ukp^{(0)})^2-W(2\urho^{(0)})
-\ucx^{(0)}\urho^{(0)}\ukp^{(0)}
+\uomp^{(0)}\urho^{(0)}
+\umup^{(0)}\ukp^{(0)}
\end{align*}
except for $\ucx^{(0)}\in\R$, which may be chosen arbitrarily. Using $\nu$, and $\cW_\rho$ introduced in \eqref{defnu}-\eqref{def:wrho}, the determination of parameters is equivalently 
written as 
\begin{align*}
\ukp^{(0)}
&=\nu(\urho^{(0)};\ucx^{(0)},\umup^{(0)})\,,\\
0&=\d_\rho\cW_\rho(\urho^{(0)};\ucx^{(0)},\uomp^{(0)},\umup^{(0)})\,,\\
\umux^{(0)}&=\cW_\rho(\urho^{(0)};\ucx^{(0)},\uomp^{(0)},\umup^{(0)})\,.
\end{align*}
On this alternate formulation, it is clear that we could instead 
fix $(\urho^{(0)},\umup^{(0)},\ucx^{(0)})\in(0,\infty)\times\R^2$ 
and determine $(\ukp^{(0)},\uomp^{(0)},\umux^{(0)})$ 
correspondingly. 

We are only interested in non-degenerate constant solutions, and 
thus assume
\[
\d_\rho^2\cW_\rho(\urho^{(0)};\ucx^{(0)},\uomp^{(0)},\umup^{(0)})
\neq0.
\]
Under this condition, for any $(\cx^{(0)},\,\omp^{(0)},\,\mup^{(0)})$ 
in some neighborhood of $(\ucx^{(0)},\,\uomp^{(0)},\,\umup^{(0)})$ 
there is a unique corresponding 
\[
(\rho^{(0)},\kp^{(0)},\mux^{(0)}):=
(\rho^{(0)},\kp^{(0)},\mux^{(0)})(\cx^{(0)},\omp^{(0)},\mup^{(0)})
\] 
in some neighborhood of $(\urho^{(0)},\ukp^{(0)},\umux^{(0)})$.

When 
\[
\d_\rho^2\cW_\rho(\urho^{(0)};\ucx^{(0)},\uomp^{(0)},\umup^{(0)})>0\,,
\]
to any  $(\cx,\omp,\mup,\mux)$ sufficiently close to 
$(\ucx^{(0)},\uomp^{(0)},\umup^{(0)},\umux^{(0)})$ and satisfying
\[
\mux>\mux^{(0)}(\cx,\omp,\mup)
\]
corresponds a unique --- up to rotational and spatial translations 
invariances --- periodic traveling wave with mass 
close\footnote{Recall that there could be various branches 
corresponding to the same parameters. We give this precision to 
exclude other branches; see Figure~\ref{phasesmall}.} 
to $\rho^{(0)}(\cx,\omp,\mup)$. The small 
amplitude limit denotes the asymptotics $\mux-\mux^{(0)}(\cx,\omp,
\mup)\to 0$ and the \emph{small amplitude regime} is the zone 
where $\mux-\mux^{(0)}(\cx,\omp,\mup)$ is small but positive. 
Incidentally we point that the limiting small amplitude period is 
given by
\be\label{e:harm_period}
\Xx^{(0)}(\cx,\omp,\mup)
:=2\pi\,\sqrt{\frac{\kappa(2\,\rho^{(0)})}{2\rho^{(0)}
\d_\rho^2\cW_\rho(\rho^{(0)};\cx,\omp,\mup)}}\,.
\ee
with $\rho^{(0)}=\rho^{(0)}(\cx,\omp,\mup)$.
\begin{figure}[ht]
\begin{center}
 \includegraphics[scale=0.33]{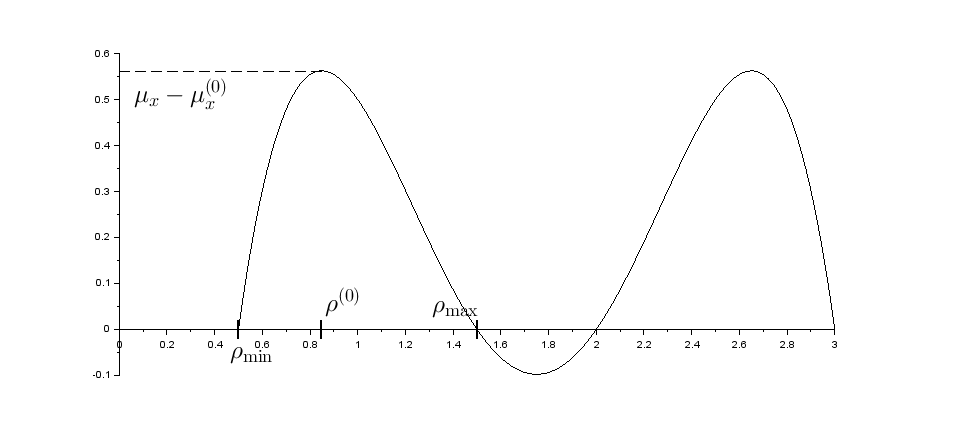}
 \includegraphics[scale=0.33]{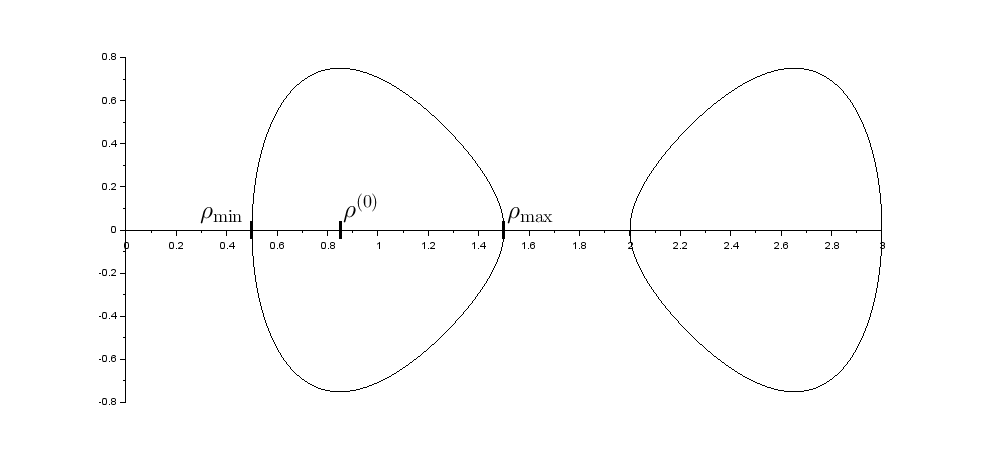}
 \end{center}
 \caption{{\bf Small-amplitude limit} with two branches with same parameters. The upper graph is the graph of $\mu_x-\cW_\rho$ as a function of $\rho$. The lower graph is, in the $(\rho,\tfrac{\dD \rho}{\dD x})$ phase plane, the level set defined by \eqref{e:profile-rho}. The two closed curves 
 correspond to two periodic waves, the curve of interest being the 
 one circling $\rho^{(0)}$.} 
\label{phasesmall}
 \end{figure}

When
\[
\d_\rho^2\cW_\rho(\urho^{(0)};\ucx^{(0)},\uomp^{(0)},\umup^{(0)})<0\,,
\]
there are at most two solitary wave profiles with parameters 
$(\ucx^{(0)},\uomp^{(0)},\umup^{(0)})$, namely at most one with 
$\urho^{(0)}$ as both an infimum and an endstate for its mass and 
at most one with $\urho^{(0)}$ as both a supremum and an endstate 
for its mass; see Figure~\ref{phasesol}. Concerning the large-period regime we restrict to the case when the periodic-wave profile asymptotes a single solitary-wave profile 
and leave aside the case\footnote{There are yet more possibilities 
(involving fronts/kinks besides solitary-waves) but they may be 
thought as degenerate in the sense that they form a manifold of a 
smaller dimension. The two-bumps case is non-degenerate but was 
left aside in \cite{BMR2-I} as \emph{a priori} significantly different 
from the single-bump case dealt with here and there.} when the 
periodic wave profile is asymptotically obtained by gluing two 
pieces of distinct solitary wave profiles sharing the same endstate. From now on we focus on the case where $\urho^{(0)}$ is an infimum. Note that when there are two solitary waves with the same endstate/parameters they generate distinct branches of (single-bump) periodic waves thus they may be analyzed independently. Moreover we point out that the related analysis of the supremum case is completely analogous. The existence of a solitary wave of such a type is equivalent to the existence of $\urho^{(s)}>\urho^{(0)}$ such that 
\begin{align*}
\cW_\rho(\urho^{(s)};\ucx^{(0)},\uomp^{(0)},\umup^{(0)})&\,
=\,\umux^{(0)}\,,&
\d_\rho\cW_\rho(\urho^{(s)};\ucx^{(0)},\uomp^{(0)},\umup^{(0)})&
\,>\,0\,,
\end{align*} 
and
\[
\forall \rho\in (\urho^{(0)},\urho^{(s)}),\qquad
\cW_\rho(\rho;\ucx^{(0)},\uomp^{(0)},\umup^{(0)})\,\neq\,\umux^{(0)}\,,
\]
where $(\urho^{(0)},\umux^{(0)})=(\rho^{(0)},\mux^{(0)})(\ucx^{(0)},
\uomp^{(0)},\umup^{(0)})$. The situation is stable by perturbation of 
parameters $(\ucx^{(0)},\uomp^{(0)},\umup^{(0)})$. 
Assuming the latter, one deduces that to any $(\mux,\cx,\omp,\mup)$ 
sufficiently close to $(\umux^{(0)},\ucx^{(0)},\uomp^{(0)},
\umup^{(0)})$ and satisfying
\[
\mux<\mux^{(0)}(\cx,\omp,\mup),
\]
corresponds a unique --- up to rotational and spatial translations 
invariances --- periodic traveling wave with mass average and mass 
minimum close to $\rho^{(0)}(\cx,\omp,\mup)$. The large period limit 
denotes the asymptotics $\mux-\mux^{(0)}(\cx,\omp,\mup)\to 0$ and the 
\emph{large period regime} is the zone where 
$\mux-\mux^{(0)}(\cx,\omp,\mup)$ is sufficiently small but negative.\\
\begin{figure}[ht]
\begin{center}
 \includegraphics[scale=0.4]{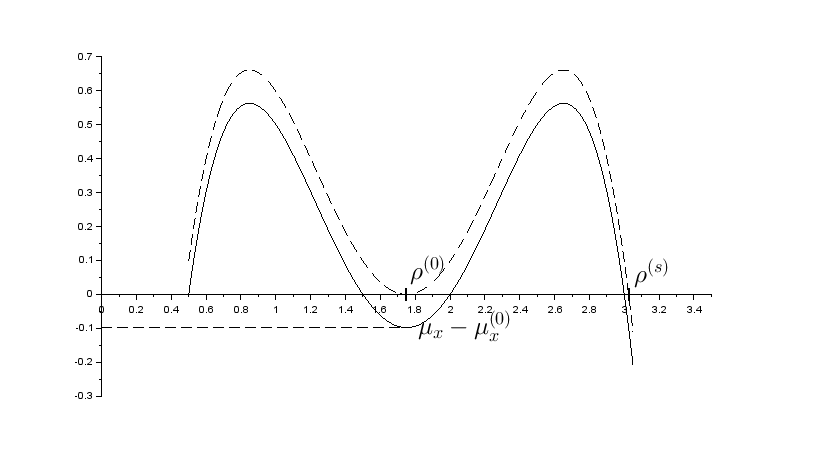}
 \includegraphics[scale=0.35]{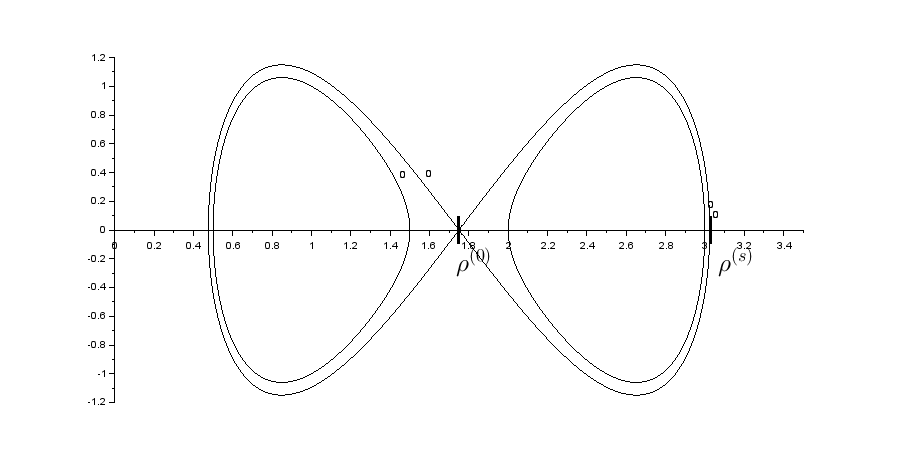}
 \end{center}
 \caption{{\bf Solitary-wave limit} with two branches with same parameters. The upper graph is the graph of $\mu_x-\cW_\rho$ as a function of $\rho$. The lower graph is, in the $(\rho,\tfrac{\dD \rho}{\dD x})$ phase plane, the level set defined by \eqref{e:profile-rho}. In both graphs we superimpose images corresponding to parameters of the solitary-wave limit and nearby parameters corresponding to periodic waves of a large period. The curves of interest are the right-hand ones.} 
\label{phasesol}
 \end{figure}
From the point of view of solitary waves themselves, it is actually 
both more natural and more convenient to keep a parametrization by 
$(\cx,\rho_{(0)},\kp)$ rather than by $(\cx,\omp,\mup)$, with 
$\rho_{(0)}$ the endstate. This is 
consistent with the fact that variations in the endstate (thus in 
$(\rho,\kp)$) play no role in the classical stability analysis of 
solitary waves (under localized perturbations). Assuming as above 
that there is a $\urho^{(s)}$ associated with 
$(\ucx^{(0)},\urho_{(0)},\ukp^{(0)})$, one deduces that for 
any $(\cx,\rho_{(0)},\kp)$ sufficently close to 
$(\ucx^{(0)},\urho^{(0)},\ukp^{(0)})$ there exists 
$\rho^{(s)}=\rho^{(s)}(\cx,\rho_{(0)},\kp)$ close to $\urho^{(s)}$ 
such that
\begin{align*}
\cW_\rho(\rho^{(s)};\cx,\omp^{(0)},\mup^{(0)})&\,=\,(\mux)_{(0)}\,,&
\d_\rho\cW_\rho(\rho^{(s)};\cx,\omp^{(0)},\mup^{(0)})&\,>\,0\,,
\end{align*} 
and
\[
\forall \rho\in (\rho_{(0)},\rho^{(s)}),\qquad
\cW_\rho(\rho;\cx,\omp^{(0)},\mup^{(0)})\,\neq\,(\mux)_{(0)}\,,
\]
where $(\omp^{(0)},\mup^{(0)})=(\omp^{(0)},\mup^{(0)})(\cx,\rho^{(0)},
\kp)$ is defined implicitly by
\[
(\rho_{(0)},\kp)
=(\rho^{(0)},\kp^{(0)})(\cx,\omp^{(0)},\mup^{(0)})
\]
and $(\mux)_{(0)}=(\mux)_{(0)}(\cx,\rho_{(0)},\kp):=\mux^{(0)}
(\cx,\omp^{(0)}(\cx,\rho_{(0)},\kp),\mup^{(0)}(\cx,\rho_{(0)},\kp))$. 
The mass of the corresponding solitary-wave profile 
$\rho_{(s)}=\rho_{(s)}(\,\cdot\,;\cx,\rho_{(0)},\kp)$ is then obtained by 
solving
\[
\frac{\kappa(2\,\rho_{(s)})}{2\,\rho_{(s)}}
\frac{\dd^2 \rho_{(s)}}{\dd x\,{}^2}
\,=\,
-\left(\frac{\kappa'(2\,\rho_{(s)})}{2\,\rho_{(s)}}
-\frac{\kappa(2\,\rho_{(s)})}{4\,\rho_{(s)}^2}\right)
\left(\frac{\dd \rho_{(s)}}{\dd x}\right)^2
-\d_\rho\cW_\rho(\rho_{(s)};\cx,\omp^{(0)},\mup^{(0)})\,,
\]
with $\rho_{(s)}(\,0\,;\cx,\rho_{(0)},\kp)=\rho^{(s)}
(\cx,\rho_{(0)},\kp)$. Then the unscaled profile $\cV_{(s)}=\cV_{(s)}
(\,\cdot\,;\cx,\rho_{(0)},\kp)$ is obtained through\footnote{The choice 
of the point where the value $\rho^{(s)}(\cx,\rho,\kp)$ is achieved (resp. of $\theta_{(s)}(0;\cx,\rho,\kp)$), quotients the invariance by spatial (resp. rotational) translation.}
\[
\cV_{(s)}\,=\,\sqrt{2\,\rho_{(s)}}\,\eD^{\theta_{(s)}\,\bJ}(\beD_1)\,,
\qquad\qquad
\theta_{(s)}(x)\,=\,\int_0^x \nu(\rho_{(s)}(\,\cdot\,;
\cx,\rho_{(0)},\kp);\cx,\mup^{(0)}(\cx,\rho,\kp))\,.
\]
Stability conditions are expressed in terms of
\be\label{def:action-s}
\Theta_{(s)}(\cx,\rho,\kp):= \int_{-\infty}^{\infty} 
\left(\Ham_0[\cV_{(s)}]+\cx\Impulse[\cV_{(s)}]
-\omp^{(0)}\,\Mass[\cV_{(s)}]
-\mup^{(0)}\,\frac{\Impulse[\cV_{(s)}]}{\Mass[\cV_{(s)}]}
+(\mux)_{(0)}\right)\,.
\ee

Concerning the small amplitude limit, though this is less crucial, 
at some point it will also be convenient to adopt a parametrization 
of limiting harmonic wavetrains by $(\kx,\rho_{(0)},\kp)$ (rather than 
by $(\cx,\omp,\mup)$). Our starting point was a parametrization by 
$(\cx,\rho_{(0)},\kp)$ so that we only need to examine the invertibility of 
the relation $\cx\mapsto 1/\Xx^{(0)}$ at fixed $(\rho,\kp)$. The 
equation to invert is
\[
\d_\rho^2\cW_\rho(\rho;\cx,\omp,\mup)
\,=\,\frac12\frac{\kappa(2\,\rho)}{2\rho}\,(2\pi\,\kx)^2\,.
\]
with $(\omp,\mup)$ associated with $(\cx,\rho,\kp)$ through
\begin{align}\label{eq:harmonic-omp-mup}
\kp
&=\nu(\rho;\cx,\umup)\,,&
0&=\d_\rho\cW_\rho(\rho;\cx,\omp,\mup)\,.
\end{align}
Straightforward computations detailed in \cite[Appendix~A]{BMR2-II} show that
\begin{align*}
\d_\rho^2\cW_\rho(\rho;\cx,\omp,\mup)
&\,=\,\frac{1}{\kappa(2\,\rho)\,2\rho}
\,\det(\bB\Hess H^{(0)}(\rho,\kp)+\cx\,\I_2)\\
-2\,\kappa(2\,\rho)\,2\rho\,\d_\rho\nu(\rho;\cx,\mup)&\,=\,
2\cx+\Tr(\bB\Hess H^{(0)}(\rho,\kp))
\end{align*}
where 
\begin{align}\label{e:zerodisp}
\bB&:=\bp0&1\\1&0\ep\,,&
H^{(0)}(\rho,v):=\kappa(2\,\rho)\rho\,v^2+W(2\,\rho)\,.
\end{align}
Let us stress incidentally that $H^{(0)}$ is the zero dispersion limit of the Hamiltonian $H_0$ of the hydrodynamic formulation of the Schr\"odinger equation and $\bB$ is the self-adjoint matrix involved in this formulation. As a result, if $\d_\rho\nu(\urho^{(0)};\ucx^{(0)},\umup^{(0)})\neq0$ then locally one may indeed parametrize waves by $(\kx,\rho,\kp)$ and we shall denote
\begin{align*}
\cx&\,=\,\cx^{(0)}(\kx,\rho,\kp)\,,&
\omx^{(0)}(\kx,\rho,\kp)&:=\,-\kx\,\cx^{(0)}(\kx,\rho,\kp)\,,
\end{align*}
the corresponding harmonic phase speed and associated spatial time frequency.

\subsection{General plane waves}\label{s:profile-general}

We now explain how to extend the foregoing analysis to more general plane waves in the form \eqref{def:wave-general}. So far, we have discussed explicitly the case when $\bkp$ and $\bkx$ point in the direction of $\beD_1$. The main task is to show how to reduce to the case when $\bkp$ and $\bkx$ are colinear, that is, when $\tkp=0$.

Let us first observe that for any vector $\tkp$, the frame change
\[
\bU(t,\bfx)
\,=\,
\eD^{\tkp\cdot \bfx\,\bJ}\,\tbU(t,\bfx)\,,
\]
changes \eqref{e:ab} into
\begin{align}
\d_t\tbU&\,=\,\bJ\delta\Hamkp[\tbU]\,,\nonumber\\
\Hamkp[\tbU]&\,:=\,\Ham_0(\tbU,(\nabla_\bfx+\tkp\bJ)\tbU)\label{e:moving-kp}\\
&\,\,=\,
\Ham_0(\tbU,\nabla_\bfx\tbU)+\frac12\,\|\tbU\|^2\,\kappa(\|\tbU\|^2)\,\|\tkp\|^2
\,+\,\kappa(\|\tbU\|^2)\,\tkp\cdot\,\bImpulse[\tbU]\,.\nonumber
\end{align}

As a consequence, if one is simply interested in analyzing the structure of waves or the behavior of solutions arising from longitudinal perturbations or more generally from perturbations depending only on directions orthogonal to $\tkp$, it is sufficient to fix $\ex$ and $\tkp$ (orthogonal to each other) and replace $\Ham_0$ with $\Ham_{0,\tkp}$ defined by
\[
\Ham_{0,\tkp}[\tbU]:=\Ham_0(\tbU,\nabla_\bfx\tbU)+\frac12\,\|\tbU\|^2\,\kappa(\|\tbU\|^2)\,\|\tkp\|^2
\]
or equivalently to replace $W$ with $W_{\tkp}$ defined through
\[
W_{\tkp}(\alpha):=W(\alpha)\,+\,\frac12\,\alpha\,\kappa(\alpha)\,\|\tkp\|^2\,.
\]

With this point of view, all quantities manipulated in previous subsections of the present section should be thought as implicitly depending on $\ex$ and $\tkp$. Note however that actually they do not depend on $\ex$ and depend on $\tkp$ only through $\|\tkp\|^2$. In particular their first-order derivatives with respect to $\tkp$ vanish at $\tkp=0$.

To prepare the analysis of stability under general perturbations, let us make explicit the relations defining constants of integration and averaged quantities for general plane waves taken in the form
\[
\bU(t,\bfx)\,=\,
\eD^{\left(\tkp\cdot(\bfx-\ucx\,\uex\,t)+\uomp\,t\right)\bJ}\,
\cV(\ex\,\cdot(\bfx-\ucx\,\ex\,t))\,,
\]
generalizing \eqref{def:wave-unscaled}. 
We still have
\[
\mup=\Sp[\cV]+\cx\Mass[\cV]
\]
but $\Sp$ should be taken as
\[
\Sp[\cV]\,=\,\bJ\cV\cdot \left(\ex\cdot\nabla_{\bU_{\bfx}}\right)\Ham_{0,\tkp}(\cV,\ex\,\d_\zeta\cV)\,.
\]
Likewise 
\[
\mux\,=\,\Sx[\cV]+\omp\Mass[\cV]
\]
with
\[
\Sx[\bV]\,=\,-\Ham_{0,\tkp}(\cV,\ex\,\d_\zeta\cV)
+\d_\zeta\cV\,\cdot\left(\ex\cdot\nabla_{\bU_{\bfx}}\right)\Ham_{0,\tkp}(\cV,\ex\,\d_\zeta\cV)\,.
\] 
This implies that
\begin{align*}
\bImpulse[\bU]&=\ex\cdot\,\bImpulse(\cV,\ex\,\d_\zeta \cV)\,\ex
+\Mass[\cV]\,\tkp\,,\\
\bJ\bU\cdot\nabla_{\bU_{\bfx}}\Ham_0[\bU]
&=\ex \,\left(\mup-\cx\Mass[\cV]\right)
+\tkp\,\kappa(\|\cV\|^2)\,2\,\Mass[\cV]\,,\\
\frac12\bJ\bU\cdot\bJ\delta\Ham_0[\bU]-\Ham_0[\bU]
&=\mux\,-\,\cx \ex\cdot\,\bImpulse(\cV,\ex\,\d_\zeta \cV)
-\kappa(\|\cV\|^2)\,\|\d_\zeta\cV\|^2
+\kappa'(\|\cV\|^2)\,\Mass[\cV]\,\|\tkp\|^2\,,\\
\bJ\delta\,\Impulse_j[\bU]\cdot\nabla_{\bU_{x_\ell}}\Ham_0[\bU]
&=\kappa(\|\cV\|^2)\,\left((\ex)_j\,\d_\zeta\cV+(\tkp)_j\,\bJ\cV\right)
\cdot\left((\ex)_\ell\,\d_\zeta\cV+(\tkp)_\ell\,\bJ\cV\right)\,,
\end{align*}
with right-hand terms evaluated at $(\ex\,\cdot(\bfx-\ucx\,\ex\,t))$.


\section{Structure of the spectrum}\label{s:structure}

Now we turn to gathering key facts about the spectrum of operators arising from linearization, in suitable frames, about periodic plane waves.

\subsection{The Bloch transform}\label{s:Bloch}

Our first observation is that, thanks to a suitable 
integral transform, the spectrum of the linearized operator $\cL$ defined in \eqref{def:L}  may be studied through normal-mode analysis.

To begin with, we introduce a suitable Fourier-Bloch 
transform, as a mix of a Floquet/Bloch transform in the $x$ variable and the
Fourier transform in the $\bfy$ variable:
\be\label{Bloch}
\cB(g)(\xi,x,\bfeta)\ =\ \check{g}(\xi,x,\bfeta)\ :=\ 
\sum_{j\in\Z} \eD^{\ii\,2j\pi x}\ \widehat{g}(\xi+2j\pi,\bfeta),
\ee
where $\widehat{g}$ is the usual Fourier transform normalized so that
for $\bfx=(x,\bfy)$
\begin{align*}
\cF(g)(\xi,\bfeta)=\hat g(\xi,\bfeta)&\ :=\ \frac{1}{(2\pi)^d} 
\int_\R \eD^{-\iD\xi x-\iD\bfeta\cdot\bfy} g(\bfx)\ \dD \bfx,\\
g(\bfx)&\ =\ \int_\R \eD^{\iD\xi x+\iD\bfeta\cdot\bfy}\ \hat{g}(\xi,\bfeta)\ 
\dD\xi\,\dD\bfeta.
\end{align*}
Obviously, $\check{g}(\xi,\cdot,\bfeta)$ is periodic of period one
for any $(\xi,\,\bfeta)$, 
that is,
$$
\forall x\in\R,\qquad\check{g}(\xi,x+1,\bfeta)\ =\ \check{g}(\xi,x,\bfeta).
$$
As follows readily from \eqref{Bloch} and basic Fourier theory, 
$(2\pi)^{d/2}\,\cB$ is a total 
isometry from $L^2(\R^d)$ to $L^2((-\pi,\pi)\times(0,1)\times\R^{d-1})$,
and it satisfies the inversion formula
\be\label{inverse-Bloch}
g(\bfx)\ =\ \int_{-\pi}^\pi\int_{\R^{d-1}} \eD^{\iD\xi x+\iD\bfeta\cdot\bfy}\ \check{g}(\xi,x,\bfeta)\ \dD\xi\,\dD\bfeta\,.
\ee
The Poisson summation formula provides an alternative equivalent 
formula for \eqref{Bloch}
$$
\check{g}(\xi,x,\bfeta)\ =\ \frac{1}{2\pi}\sum_{\ell\in\Z}
\eD^{-\ii\xi (x+\ell)}\cF_\bfy(g)(x+\ell,\bfeta)
$$
where $\cF_\bfy$ denotes the Fourier transform in the $\bfy$-variable 
only. 

The key feature of the transform $\cB$ is that in some sense it diagonalizes differential operators whose coefficients do not depend on $\bfy$ and are $1$-periodic in $x$. For large classes of such operators $\cP=P(x,\partial_x,\nabla_\bfy)$, stands
\[
\cB(\cP u)(\xi,x,\bfeta)=P(x,\partial_x+\ii\xi,\ii\bfeta)\,\cB(u)(\xi,x,\bfeta)
\]
so that the action of such operators on functions defined on $\R^d$ is reduced to the action of $\cP_{\xi,\bfeta}=P(x,\partial_x+\ii\xi,\ii\bfeta)$ on $1$-periodic functions, parametrized by $(\xi,\bfeta)$.

In particular, for $\cL$ as in \eqref{def:L} we do have
\[
(\cL g)(\bfx)\ =\ \int_{-\pi}^\pi\int_{\R^{d-1}}\eD^{\iD\xi \bfx
+\iD\bfeta\cdot\bfy}\ (\cL_{\xi,\bfeta}\check{g}(\xi,\cdot,\bfeta))(x)
\ \dD\xi\,\dD\bfeta,
\]
where $\cL_{\xi,\bfeta}$ acts on $1$-periodic functions and inherits from $\Ham=\Ham^x+\Ham^\bfy$ the splitting 
\begin{align*}
\cL_{\xi,\bfeta}&:=\,\cL^x_\xi+\cL^\bfy_\bfeta\,,&
\text{ with }&&
\cL^\bfy_\bfeta&:=\|\bfeta\|^2\,\kappa(\|\bU\|^2)\,\bJ\,,
\end{align*}
and $\cL^x_\xi$ given by
\begin{align*}
\cL^x_\xi\bV
\,=\,\bJ\,\Big(&\dD_{(\bU,\bU_x)}(\nabla_\bU\Ham)(\ubU,\nabla_{\bfx}\ubU)(\bV,(\d_x+\ii\xi)\bV)\\
&\,-(\d_x+\ii\xi)\,\left(\dD_{(\bU,\bU_x)}(\nabla_{\bU_x}\Ham)(\ubU,\nabla_\bfx\ubU)(\bV,(\d_x+\ii\xi)\bV)\right)\Big)\,.
\end{align*}
On\footnote{We insist on the substrict ${}_{\per}$ to emphasize that 
corresponding domains involve $H^s_{\per}((0,1))$ spaces, thus 
effectively encoding periodic boundary conditions when $s>1/2$. 
Notation "$\cL_{\xi,\bfeta}^{\per}$ acts on $L^2((0,1))$" would be 
mathematically more accurate but more cumbersome.} 
$L_{\per}^2((0,1))$ each $\cL_{\xi,\bfeta}$ has compact resolvent 
and  depends analytically on $(\xi,\bfeta)$ in the strong resolvent 
sense.

It is both classical and relatively straightforward to derive from the 
latter and the isometry of $(2\pi)^{d/2}\,\cB$ that the spectrum of 
$\cL$ on $L^2(\RR)$ coincides with the union over 
$(\xi,\bfeta)\in [-\pi,\pi]\times\R^{d-1}$ of the spectrum of each 
$\cL_{\xi,\bfeta}$ on  $L_{\per}^2((0,1))$. For some more details see 
for instance \cite[p.30-31]{R}.

\subsection{Linearizing Madelung's transformation}\label{s:linearized-EK}

We would like to point out here how the analysis of Section~\ref{s:EK}
may be extended to the spectral level. We stress that working with 
Bloch-Fourier symbols $\cL_{\xi,\bfeta}$ provides crucial 
simplifications in the arguments.

Firstly we observe that linearizing \eqref{e:MTrho}-\eqref{e:MTv} 
provides all the necessary algebraic identities. Secondly we note 
that applying a Bloch-Fourier transform to both sides of the foregoing 
identities yields the required algebraic conjugations between 
respective Bloch-Fourier symbols. 

To go beyond algebraic relations, we start with a few notational or 
elementary considerations. 
\begin{enumerate} 
\item From elementary elliptic regularity arguments it follows that 
the $L^2_{\per}$-spectrum of each $\cL_{\xi,\bfeta}$ coincides with 
its $H^1_{\per}$-spectrum. 
\item With $L^2_{\rm \Curl_{\xi,\bfeta}}((0,1))$ denoting the space 
of $L^2((0,1);\C)^d$-functions $\bfv$ such that\footnote{The condition means: $(\d_x+\iD\xi)v_j\,=\,\iD\,\eta_{j-1}v_1$ for $2\leq j\leq d$ and $\eta_{j}v_\ell=\eta_{\ell}v_j$ for $1\leq j,\,\ell\leq d-1$. When $d=1$, $L^2_{\rm \Curl_{\xi,\bfeta}}((0,1))=L^2((0,1);\C)$.} 
\[
\begin{pmatrix}(\d_x+\iD\xi)\\\iD\bfeta\end{pmatrix}\wedge\bfv\,=\,0\,,
\]
we observe that when $(\xi,\bfeta)\in[-\pi,\pi]\times\R^{d-1}
\setminus\{(0,0)\}$, 
\[
\cI_{\xi,\bfeta}\,:\quad H^1_{\per}((0,1))\longrightarrow 
L^2_{\rm \Curl_{\xi,\bfeta}}((0,1))\,,\qquad
\theta\mapsto \begin{pmatrix}(\d_x+\iD\xi)\\\iD\bfeta\end{pmatrix}
\theta,
\]
is a bounded invertible operator. 
\item The linearization of the relation
\[
\bU=\eD^{-\frac{\kp}{\kx}(\cdot)\bJ}\cU(\rho(\cdot),\theta(\cdot)),
\] 
at $\ucU$, $(\urho,\utheta)$, is given by
\[
\mathfrak{m}\,:\quad H^1_{\per}((0,1);\C^2)\longrightarrow 
H^1_{\per}((0,1);\C)^2\,,\quad
\bV\mapsto \left(\ucU\cdot\bV,\ \frac{\bJ\ucU}{2\urho}\,\cdot\bV\right),
\]
and is bounded and invertible with inverse
\[
\mathfrak{m}^{-1}\,:\quad
H^1_{\per}((0,1);\C)^2\longrightarrow H^1_{\per}((0,1);\C^2)\,,\quad
(\rho,\theta)\mapsto \rho\,\frac{\ucU}{2\,\urho}\,+\theta\,\bJ\ucU\,.
\]
\end{enumerate}
Considering $\cL_{\xi,\bfeta}$ as an operator on $H^1_{\per}$, and 
denoting $L_{\xi,\bfeta}$ the corresponding Bloch-Fourier symbol for
the associated Euler--Korteweg system \eqref{e:absEK}, we deduce 
when $(\xi,\bfeta)\in[-\pi,\pi]\times\R^{d-1}\setminus\{(0,0)\}$, 
the conjugation
\[
\cL_{\xi,\bfeta}
\,=\,
\mathfrak{m}^{-1}
\begin{pmatrix}\I&0\\0&\cI_{\xi,\bfeta}^{-1}\end{pmatrix}
L_{\xi,\bfeta}
\begin{pmatrix}\I&0\\0&\cI_{\xi,\bfeta}\end{pmatrix}\mathfrak{m}
\,.
\]

Of course, the conjugation yields identity of spectra including 
algebraic multiplicities but also identity of detailed algebraic 
structure of each eigenvalue. When $d=1$, by continuity of the 
eigenvalues with respect to $\xi$, one also concludes that $\cL_{0}$ 
and $L_{0}$ share the same spectrum including algebraic multiplicities,
but algebraic structures may differ (and as stressed below in general 
they do). Note that to go from spectral to linear stability it is 
actually crucial to examine semi-simplicity of eigenvalues. 

For this reason, we focus now a bit more on the case 
$(\xi,\bfeta)=(0,0)$. To begin with, denoting $L^2_{0}((0,1))$  
the space of $L^2((0,1);\C)^d$-functions $\bfv$ of the form 
\[
\begin{pmatrix}v\\0\end{pmatrix}\,,\quad\textrm{with }\int_0^1v\,=\,0\,,
\]
we observe that 
\[
\cI^{(0)}\,:\quad H^1_{\per}((0,1))\longrightarrow L^2_{0}((0,1))\,,\qquad
\theta\mapsto \begin{pmatrix}\d_x\\0\end{pmatrix}\theta,
\]
is a bounded invertible operator. Moreover we point that $L_{0,0}$ 
leaves $H^1_{\per}((0,1))\times L^2_{0}((0,1))$ invariant\footnote{For 
an unbounded operator $A$ defined on $X$ with domain $D$, we say that 
$Y$, a subspace of $X$, is left invariant by $A$ if 
$A(D\cap Y)\subset Y$ and in this case $A_{|Y}$ is defined on $Y$ with 
domain $D\cap Y$.} and its restriction is conjugated to $\cL_{0,0}$ 
through
\[
\cL_{0,0}
\,=\,
\mathfrak{m}^{-1}
\begin{pmatrix}\I&0\\0&(\cI^{(0)})^{-1}\end{pmatrix}
(L_{0,0})_{|\,H^1_{\per}((0,1))\times L^2_{0}((0,1))}
\begin{pmatrix}\I&0\\0&\cI^{(0)}\end{pmatrix}\mathfrak{m}\,.
\]
Denoting $\pi_0$ the orthogonal projector of 
$H^1_{\per}((0,1))\times L^2_{\rm \Curl_{0,0}}((0,1))$ on 
$H^1_{\per}((0,1))\times L^2_{0}((0,1))$, we also note that 
$(\I-\pi_0)\,L_{0,0}\,(\I-\pi_0)$ is identically zero and 
$\pi_0\,L_{0,0}\,(\I-\pi_0)$ is bounded. As a conclusion, one 
derives when $\lambda$ is non zero and does not belong to the 
spectrum of $\cL_{(0,0)}$ or equivalently, when $\lambda$ is 
non zero and does not belong to the spectrum of $L_{(0,0)}$
\begin{align*}
(\lambda \I-L_{(0,0)})^{-1}
&\,=\,
\frac{1}{\lambda}\,(\I-\pi_0)\\
&\quad+\begin{pmatrix}\I&0\\0&\cI^{(0)}\end{pmatrix}\mathfrak{m}^{-1}
(\lambda \I-\cL_{(0,0)})^{-1}\,
\mathfrak{m}
\begin{pmatrix}\I&0\\0&(\cI^{(0)})^{-1}\end{pmatrix}
\left(
\pi_0+\frac{1}{\lambda}\pi_0\,L_{0,0}\,(\I-\pi_0)
\right),
\end{align*}
so that for nonzero eigenvalues the algebraic 
structures\footnote{Recall that the algebraic structure of an 
eigenvalue $\lambda_0$ of an operator $A$ is read on the singular 
part of $\lambda \mapsto (\lambda\I-A)^{-1}$ at $\lambda=\lambda_0$.} 
of $\cL_{(0,0)}$ and $L_{0,0}$ are the same.

As we comment further below, in general $0$ is an eigenvalue of 
$\cL_{(0,0)}$ of algebraic multiplicity $4$ with two Jordan blocks 
of height $2$, whereas, when $d=1$, $0$ is an eigenvalue of $L_{0}$ 
of algebraic multiplicity $4$ with geometric multiplicity $3$ and 
one Jordan block of height $2$.

\subsection{The Evans function}\label{s:Evans-def}

Since each $\cL_{\xi,\bfeta}$ acts on functions of a scalar variable, 
it is convenient to analyze their spectra by focusing on spatial 
dynamics, rewriting spectral problems in terms of ODEs of the spatial 
variable. Adapting the construction of Gardner 
\cite{Gardner-structure-periodic} to the situation at hand, 
this leads to the introduction of a suitable Evans function.

To keep spectral ODEs as simple as possible, it is expedient to 
work with unscaled equations as in Section~\ref{s:profile}. 
Explicitly, with notation from Section~\ref{s:profile}, for 
$\lambda\in\C$ and $\bfeta\in\R^{d-1}$, we consider 
$R(\cdot,x_0;\lambda,\bfeta)$ the solution operator of the first-order 
$4$-dimensional differential operator canonically associated with 
the second-order $2$-dimensional operator
$\bJ \Hess \Hamp[\ucV]+\|\bfeta\|^2\,\kappa(\|\ucV\|^2)\bJ
-\lambda $. 
Note that $R(x_0,x_0;\lambda,\bfeta)=\I_4$. Accordingly we 
introduce the Evans function
\begin{equation}\label{e:evans}
D^{x_0}_\xi(\lambda,\bfeta)=\det\left(R(x_0+\uXx,x_0;\lambda,\bfeta)
-\eD^{\iD\xi}\diag(\eD^{\uxip\bJ},\eD^{\uxip\bJ})\right)\,.
\end{equation}
The choice of $x_0$ is immaterial, we shall set $x_0=0$ and drop 
the corresponding superscript in the following.

The backbone of the Evans function theory is that $\lambda_0$ belongs 
to the spectrum of $\cL_{\xi,\bfeta}$ if and only if $\lambda_0$ is a
root of $D_\xi(\cdot,\bfeta)$ and that its (algebraic) multiplicity 
as an eigenvalue of $\cL_{\xi,\bfeta}$ agrees with its multiplicity 
as a root of $D_\xi(\cdot,\bfeta)$. The first part of the claim is a 
simple reformulation of the fact that the spectrum of 
$\cL_{\xi,\bfeta}$ contains only eigenvalues, whereas the second 
part may be derived from the expression of resolvents of 
$\cL_{\xi,\bfeta}$ at $\lambda$ in terms of solution operators 
$R(\cdot,\cdot;\lambda,\bfeta)$ and the characterization/definition 
of algebraic multiplicity at $\lambda_0$ as the rank of the residue 
at $\lambda_0$ of the resolvent map.

To a large extent, the benefits from using an Evans function instead 
of directly studying spectra are the same as those arising from the consideration of characteristic polynomials to study finite-dimensional spectra.

\subsection{High-frequency analysis}\label{s:high}

It is quite straightforward to check that when $|\Re(\lambda)|$ is
sufficiently large, $\lambda$ does not belong to the spectrum of 
any $\cL_{\xi,\bfeta}$. When $\lambda$ is real and $\xi\in\{0,\pi\}$, $D_\xi(\lambda,\bfeta)$ is real-valued and we would like to go further and determine its sign when $(\lambda,\bfeta)$ is sufficiently large with $\lambda$ real. This is useful in order to derive instability 
criteria based on the Intermediate Value Theorem.

Since the principle part of $\cL_{\xi,\bfeta}$ has non-constant coefficients, this is not completely trivial but one may reduce the computation to the constant-coefficient case by a homotopy argument similar to the one in \cite{Benzoni-Mietka-Rodrigues}.

\begin{proposition}\label{p:hig-freq}
Let $\ucV$ be an unscaled wave profile (in the sense of 
\eqref{e:unscaled-profile}). There exists $R_0>0$ such that 
for any $(\lambda,\bfeta)\in\R\times\R^{d-1}$ satisfying 
$|\lambda|+\|\bfeta\|^2\geq R_0$ we have $D_0(\lambda,\bfeta)>0$ 
and $D_\pi(\lambda,\bfeta)>0$.
\end{proposition}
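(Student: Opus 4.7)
My plan follows the homotopy argument sketched for a related class of equations in \cite{Benzoni-Mietka-Rodrigues}. The starting observation is that for $\xi\in\{0,\pi\}$ and $(\lambda,\bfeta)\in\R\times\R^{d-1}$, the Evans function $D_\xi(\lambda,\bfeta)$ is real-valued: the ODE solved by $R(\cdot,0;\lambda,\bfeta)$ has real coefficients, $\eD^{\iD\xi}=\pm1$, and the twist matrix $\diag(\eD^{\uxip\bJ},\eD^{\uxip\bJ})$ is real because $\bJ$ is real. The task therefore reduces to tracking the sign of a real continuous function under a suitable deformation of the background wave.

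\textbf{Nonvanishing.} Write $\cL_{\xi,\bfeta}=\bJ\,A_{\xi,\bfeta}$, with $A_{\xi,\bfeta}$ a formally self-adjoint second-order operator whose principal part contains $-\kappa(\|\ucV\|^2)(\d_x+\iD\xi)^2+\|\bfeta\|^2\,\kappa(\|\ucV\|^2)$, with $\kappa(\|\ucV\|^2)$ bounded below away from zero. For real $\lambda$ and $\bV\in H^2_{\per}((0,1))$, a short computation gives
\[
\|(\cL_{\xi,\bfeta}-\lambda)\bV\|_{L^2}^2
=\|A_{\xi,\bfeta}\bV\|_{L^2}^2+\lambda^2\|\bV\|_{L^2}^2-\lambda\,\langle[\bJ,A_{\xi,\bfeta}]\bV,\bV\rangle\,,
\]
where the commutator $[\bJ,A_{\xi,\bfeta}]$ is a first-order differential operator with coefficients bounded uniformly in $(\xi,\bfeta)$. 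Combined with the ellipticity of $A_{\xi,\bfeta}$ (which controls $\|\bV\|_{H^2}^2+\|\bfeta\|^4\,\|\bV\|_{L^2}^2$ by $\|A_{\xi,\bfeta}\bV\|_{L^2}^2$ up to lower order), a standard interpolation argument shows that there exists $R_0>0$ for which $\cL_{\xi,\bfeta}-\lambda$ is invertible on $L^2_{\per}((0,1))$ whenever $|\lambda|+\|\bfeta\|^2\geq R_0$, so that $D_\xi(\lambda,\bfeta)\neq 0$ in this regime.

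\textbf{Homotopy.} Introduce a continuous family of linearized operators $\cL^{(s)}_{\xi,\bfeta}=\bJ\,A^{(s)}_{\xi,\bfeta}$, $s\in[0,1]$, with $\cL^{(1)}_{\xi,\bfeta}=\cL_{\xi,\bfeta}$ and $A^{(0)}_{\xi,\bfeta}$ a real-symmetric constant-coefficient second-order operator with strictly positive principal coefficient; for instance one interpolates linearly toward the operator obtained by replacing $\ucV$ with a constant unitary state. Throughout the deformation the preceding nonvanishing estimate remains valid with a common $R_0$, the essential point being the uniform positivity of the principal coefficient along the path. Since $D^{(s)}_\xi(\lambda,\bfeta)$ is real, continuous in $s$, and never vanishes in the regime $|\lambda|+\|\bfeta\|^2\geq R_0$, its sign is independent of $s\in[0,1]$.

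\textbf{Constant endpoint and main obstacle.} At $s=0$ the spatial dynamics system reduces to $\d_x\bW=M(\lambda,\bfeta)\bW$ with constant $4\times 4$ matrix $M$. For $|\lambda|+\|\bfeta\|^2$ large, the four roots of $\det(\mu\,\I_4-M(\lambda,\bfeta))$ split as $\pm\mu_1,\pm\mu_2$ with $\Real\mu_j$ positive of order $(|\lambda|+\|\bfeta\|^2)^{1/2}$, so that
\[
D^{(0)}_\xi(\lambda,\bfeta)=\det\bigl(\eD^{M\uXx}-\eD^{\iD\xi}\,\diag(\eD^{\uxip\bJ},\eD^{\uxip\bJ})\bigr)\,\sim\,\eD^{(\mu_1+\mu_2)\uXx}\,>\,0\,.
\]
By the sign preservation along the homotopy, $D_\xi(\lambda,\bfeta)>0$ in the same regime, which concludes the proof. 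I expect the main obstacle to lie in the design of the homotopy: one must preserve simultaneously realness of $D^{(s)}_\xi$ for $\xi\in\{0,\pi\}$ and real $(\lambda,\bfeta)$, uniform ellipticity of $A^{(s)}_{\xi,\bfeta}$ (so that $R_0$ does not depend on $s$), and arrival at an endpoint where the sign can be read off explicitly. Interpolation within the class of real-symmetric elliptic Hessians with strictly positive leading coefficient makes all three possible thanks to the uniform positivity of $\kappa$.
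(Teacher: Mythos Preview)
Your approach coincides with the paper's: a homotopy to a constant-coefficient endpoint, with a uniform-in-$s$ resolvent bound showing that $D^{(s)}_\xi$ never vanishes in the region $|\lambda|+\|\bfeta\|^2\geq R_0$, followed by reading off the sign at the endpoint. Your nonvanishing estimate, based on $\|(\cL_{\xi,\bfeta}-\lambda)\bV\|_{L^2}^2$ and the observation that $[\bJ,A_{\xi,\bfeta}]$ is of order at most one, is a valid variant of the paper's Lax--Milgram testing against $\bJ\bV-\sign(\lambda)\bV$.

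The sign evaluation at the endpoint, however, has a gap. Even after replacing $\ucV$ by a constant, the operator retains the first-order drift $\ucx\,\d_x$ coming from $\ucx\,\Hess\Impulse_1=-\ucx\,\bJ\d_x$, so the characteristic polynomial in $\mu$ is not even and the spatial eigenvalues do not come in exact $\pm$ pairs. More to the point, the asserted asymptotic $D^{(0)}_\xi\sim\eD^{(\mu_1+\mu_2)\uXx}$ suppresses a prefactor---a $2\times 2$ minor of a matrix conjugate to the twist $\diag(\eD^{\uxip\bJ},\eD^{\uxip\bJ})$ in the basis of spatial eigenvectors of $M$---whose sign you have not determined and which depends on $(\lambda,\bfeta)$ through the eigenvector matrix. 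The paper avoids both issues by choosing as endpoint the simpler operator $\bJ\bigl(-(\ukx\d_x+\ukp\bJ)^2+\|\bfeta\|^2\bigr)$ and, after a further reduction taking $(\ukp,\uxip)$ to $(0,0)$, evaluating explicitly at $(\lambda,\bfeta)=(0,\bfeta)$ with $\|\bfeta\|$ large, where $D_0(0,\bfeta)=(\eD^{\|\bfeta\|\uXx}-1)^2(\eD^{-\|\bfeta\|\uXx}-1)^2>0$. Since you already have nonvanishing on the whole region, evaluating at a single well-chosen point suffices and the full asymptotic is not needed.
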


\begin{proof}
An elementary Lax-Milgram type argument shows that when 
$(\lambda,\bfeta)$ is sufficiently large (with $\lambda$ real) 
independently of $\theta\in[0,1]$, $\lambda$ does not belong to 
the spectrum of 
\[
\cL^{(\theta)}_{0,\bfeta}:=(1-\theta)\cL_{0,\bfeta}
+\theta\,\bJ(-(\ukx\,\d_x+\ukp\bJ)^2+\|\bfeta\|^2)
\]
on $L_{\per}^2((0,1))$, for any $\theta\in[0,1]$. The needed 
estimates stem from the form 
\[
\cL^{(\theta)}_{0,\bfeta}
\,=\,
\bJ((\theta\,+(1-\theta)\kappa(\|\ucU\|^2))\,(-\ukx^2\d_x^2
+\|\bfeta\|^2))+\textrm{lower order terms independent of }
\lambda\textrm{ and }\bfeta
\]
and the fact that $\min(\kappa(\|\ucU\|^2))>0$. Indeed, for 
some positive constants $c$, $C$ independent of 
$(\lambda,\bfeta,\theta)\in \R\times\R^{d-1}\times[0,1]$
\begin{align*}
\langle (\bJ \bV-\sign(\lambda)\bV,(\cL^{(\theta)}_{0,\bfeta}
-\lambda)\bV)\rangle_{L^2}
&\geq c\,(\|\bV\|_{H^1}^2+(\|\bfeta\|^2+|\lambda|)\|\bV\|^2)
-C\,\|\bV\|_{H^1}\,\|\bV\|_{L^2}\\
&\geq \frac{c}{2}\,(\|\bV\|_{H^1}^2+(\|\bfeta\|^2+|\lambda|)\|\bV\|^2)
\end{align*}
provided that $(\lambda,\bfeta)$ is sufficiently large and 
$\bV\in H^2_{\per}((0,1))$. A similar bound holds for the adjoint 
problem.

For corresponding Evans functions, this implies that 
$(\lambda,\bfeta,\theta)\mapsto D^{(\theta)}_0(\lambda,\bfeta)$ 
has a constant sign on
\[
\left\{\,(\lambda,\bfeta,\theta)\in\R\times\R^{d-1}\times[0,1]\,;\,
|\lambda|+\|\bfeta\|\geq R_0
\,\right\}
\]
for some $R_0>0$. This sign is easily evaluated by considering 
either $D^{(1)}_0(\lambda,0)$ when $\lambda$ is large 
or\footnote{When $d=1$, this requires first to embed artificially 
the spectral problem at hand in a corresponding higher-dimensional 
problem.} $D^{(1)}_0(0,\bfeta)$ when $\|\bfeta\|$ is large. The 
foregoing computations can be made even more explicit running first 
another homotopy argument moving $\bJ(-(\ukx\,\d_x+\ukp\bJ)^2
+\|\bfeta\|^2)$ to $\bJ(-\ukx^2\,\d_x^2+\|\bfeta\|^2)$ thus 
reducing to $\uxip=0$, in this case we have
$D^{(1)}_0(0,\bfeta)=(\eD^{\|\bfeta\|\uXx}-1)^2\,(\eD^{-\|\bfeta\|\uXx}
-1)^2$.

The study of $D_\pi(\lambda,\bfeta)$ is nearly identical and thus 
omitted.
\end{proof}

\subsection{Low-frequency analysis}\label{s:low}

Now we turn to the derivation of an expansion of 
$D_\xi(\lambda,\bfeta)$ when $(\lambda,\xi,\bfeta)$ is small. We 
begin with a few preliminary remarks to prepare such an expansion. 

For the sake of brevity in algebraic manipulations, 
we introduce notation 
\[
[A]_0:=A(\uXx)-\eD^{\uxip\bJ}A(0)\,.
\]
Let us observe that if $\cA[\bU]=\cA(\bU,\,\bU_x)$ is rotationally invariant, 
\[
L\cA[\bU]\bV=L\cA[\eD^{\vphip\bJ}\bU]\eD^{\vphip\bJ}\bV
\]
for any $\vphip\in \R$, hence if $\cV(\cdot+\uXx)=\eD^{\uxip\bJ}\cV(\cdot)$ 
\begin{align*}
(L\cA[\cV]\bfpsi)(0)&\,=\,
\dD_{(\bU,\bU_x)}\cA(\cV(\uXx),\cV_x(\uXx))(\eD^{\uxip\bJ}\,
\bfpsi(0),\eD^{\uxip\bJ}\,\bfpsi_x(0))\,,\\
(L\cA[\cV]\bfpsi)(\uXx)-(L\cA[\cV]\bfpsi)(0)&\,
=\,\dD_{(\bU,\bU_x)}\cA(\cV(\uXx),\cV_x(\uXx))([\bfpsi]_0,
[\bfpsi_x]_0)\,.
\end{align*}

All relevant quantities depend on $\bfeta$ 
only through $\|\bfeta\|^2$ and a wealth of information on the 
regime $(\lambda,\xi,\bfeta)$ small --- used repeatedly below 
without mention --- is obtained by differentiating 
\eqref{e:unscaled-profile}, \eqref{e:massu} and \eqref{e:momu}; 
for $a=\omp,\cx,\mup,\mux$,
\begin{align}
\label{e:dPer}
[\d_a \ucV]_0&=\d_a\uxip\,\eD^{\uxip\bJ}\bJ\ucV(0)
-\d_a\uXx\,\eD^{\uxip\bJ}\ucV_x(0)\,,\\
(L(\Sp+\ucx \Mass)[\ucV]\d_a \ucV)(0)&
=\d_a\umup-\d_a\ucx\,\Mass[\ucV](0)\,,\\
(L(\Sx+\uomp \Mass)[\ucV]\d_a \ucV)(0)
&=\d_a\umux-\d_a\uomp\,\Mass[\ucV](0)\,.
\end{align}

At last, we derive from Appendix~\ref{s:Noether} that if 
\[
\lambda \bfpsi\,=\,\bJ \Hess \Hamp[\ucV]\bfpsi+\|\bfeta\|^2\,\kappa(\|\ucV\|^2)\bJ\bfpsi
\]
then 
\begin{align}
\label{e:Lmass}
\lambda L\Mass[\ucV]\bfpsi
&=\d_x\left(L(\Sp+\ucx \Mass)[\ucV]\bfpsi\right)
-\|\bfeta\|^2\,\kappa(\|\ucV\|^2)\,\bJ\ucV\cdot\bfpsi\,,\\
\label{e:Lmom}
\lambda L\Impulse_1[\ucV]\bfpsi
&=\d_x\left(L(\Sx+\uomp \Mass)[\ucV]\bfpsi\right)
-\|\bfeta\|^2\,\kappa(\|\ucV\|^2)\,\ucV_x\cdot\bfpsi\\
\nonumber
&\quad 
+\d_x\left(\frac12\bJ\ucV\cdot\left(\lambda\bfpsi-\|\bfeta\|^2\,\kappa(\|\ucV\|^2)\bJ\bfpsi\right)\right)\,,
\end{align}
Moreover, as already mentioned in section \ref{s:jumps}
\[
\det(\dD_{\bU_x}(\Sp,\Sx)(\ucV(\uXx),\ucV_x(\uXx))
\,=\,(\kappa(\|\ucV(0)\|^2))^2\ \ucV(0)\cdot\ucV_x(0)\,.
\]

\bt\label{th:low-freq}
With notation from Section~\ref{s:profile}, consider an unscaled wave profile $\ucV$ such that $\ucV\cdot \ucV_x\not\equiv0$. Then the corresponding Evans function expands uniformly in $\xi\in[-\pi,\pi]$ as
\begin{align}
D_\xi(\lambda,\bfeta)
&=\det\begin{pmatrix}\lambda\,\Sigma_t-(\eD^{\iD\xi}-1)\I_4+\frac{\|\bfeta\|^2}{\lambda}\Sigma_{\bfy}\end{pmatrix}\\
&+\cO\left((|\lambda|+|\xi|+\|\bfeta\|^2)\,
(|\lambda|^2+|\xi|^2+\|\bfeta\|^2)\,(|\lambda|(|\lambda|+|\xi|)+\|\bfeta\|^2)\right)
\nonumber
\end{align} 
when $(\lambda,\bfeta)\to(0,0)$, with 
\begin{align*}
\Sigma_t
&=\begin{pmatrix}\d_a\uxip\\[0.25em]
\d_a\uXx\\[0.25em]
\int_0^{\uXx}L\Mass[\ucV]\d_a\ucV+\Mass[\ucV](0)\,\d_a\uXx\\[0.25em]
\int_0^{\uXx}L\Impulse_1[\ucV]\d_a\ucV
+\Impulse_1[\ucV](0)\,\d_a\uXx
\end{pmatrix}_{a=\omp,\cx,\mup,\mux}\,,\\
\Sigma_{\bfy}
&=\begin{pmatrix}0&0&0&0\\
0&0&0&0\\
\int_0^{\uXx}\kappa(\|\ucV\|^2)\,\|\ucV\|^2
&-\int_0^{\uXx}\kappa(\|\ucV\|^2)\,\bJ\ucV\cdot\ucV_x
&0&0\\[0.25em]
\int_0^{\uXx}\kappa(\|\ucV\|^2)\,\ucV_x\cdot\bJ\ucV
&-\int_0^{\uXx}\kappa(\|\ucV\|^2)\,\|\ucV_x\|^2
&0&0
\end{pmatrix}\,.
\end{align*}
\et

Note that the structure of $\Sigma_{\bfy}$ is consistent with the fact that there is actually no singularity in the low-frequency expansion of the Evans function, every power of $\|\bfeta\|^2/\lambda$ is balanced by a corresponding power of $\lambda$.

\begin{proof}
By a density argument on the point where the Evans function is considered, we may reduce the analysis to the case when $\ucV(0)\cdot \ucV_x(0)\neq0$.

Guided by rotation and translation invariance, we introduce
\begin{align*}
\Psi_{1}(\cdot;\lambda,\bfeta)&\,=\,R(\cdot,0;\lambda,\bfeta)\begin{pmatrix}
\bJ\ucV(0)\\
\bJ\ucV_x(0)
\end{pmatrix}&
\Psi_{2}(\cdot;\lambda,\bfeta)&\,=\,R(\cdot,0;\lambda,\bfeta)\begin{pmatrix}
\ucV_x(0)\\
\ucV_{xx}(0)
\end{pmatrix}\,,\\
\Psi_{3}(\cdot;\lambda,\bfeta)&\,=\,R(\cdot,0;\lambda,\bfeta)\begin{pmatrix}
\d_{\mup}\ucV(0)\\
\d_{\mup}\ucV_x(0)
\end{pmatrix}&
\Psi_{4}(\cdot;\lambda,\bfeta)&\,=\,R(\cdot,0;\lambda,\bfeta)\begin{pmatrix}
\d_{\mux}\ucV(0)\\
\d_{\mux}\ucV_{x}(0)
\end{pmatrix}\,,
\end{align*}
so that in particular
\begin{align*}
\Psi_{1}(\cdot\,;0,0)&\,=\,\begin{pmatrix}
\bJ\ucV\\
\bJ\ucV_x
\end{pmatrix}\,,&
\Psi_{2}(\cdot\,;0,0)&\,=\,\begin{pmatrix}
\ucV_x\\
\ucV_{xx}
\end{pmatrix}\\
\Psi_{3}(\cdot\,;0,0)&\,=\,\begin{pmatrix}
\d_{\mup}\ucV\\
\d_{\mup}\ucV_x
\end{pmatrix}\,,&
\Psi_{4}(\cdot\,;0,0)&\,=\,\begin{pmatrix}
\d_{\mux}\ucV\\
\d_{\mux}\ucV_{x}
\end{pmatrix}\,.
\end{align*}
Then we set $\Psi=\begin{pmatrix}\Psi_1&\Psi_2&\Psi_3&\Psi_4\end{pmatrix}$ and observe that from the computations in Section~\ref{s:jumps} stems
\[
D_\xi(\lambda,\bfeta)\,=\,(\kappa(\|\ucV(0)\|^2))^2\,\det\left([\Psi(\cdot;\lambda,\bfeta)]_0
-(\eD^{\iD\xi}-1)\diag(\eD^{\uxip\bJ},\eD^{\uxip\bJ})\Psi(0;\lambda,\bfeta)\right)\,.
\]
Note that each $\Psi_\ell(\cdot;\lambda,\bfeta)$ splits as $(\bfpsi_\ell(\cdot;\lambda,\bfeta),(\bfpsi_\ell)_x(\cdot;\lambda,\bfeta))$ for some $\bfpsi_\ell(\cdot;\lambda,\bfeta)$ and that 
\[
\lambda \bfpsi_\ell\,=\,\bJ \Hess \Hamp[\ucV]\bfpsi_\ell+\|\bfeta\|^2\,\kappa(\|\ucV\|^2)\bJ\bfpsi_\ell\,.
\]

We may now use the identities \eqref{e:Lmass}
\eqref{e:Lmom}) and perform line combinations so as to obtain that 
$(\ucV(0)\cdot \ucV_x(0))\times D_\xi(\lambda,\bfeta)$ coincides 
with the determinant of a matrix of the form
\begin{align*}\hspace{-4em}
\begin{pmatrix}[\bfpsi_\ell]_0-(\eD^{\iD\xi}-1)\,\eD^{\uxip\bJ}\bfpsi_\ell(0)\\[0.25em]
\lambda\,\int_0^{\uXx}L\Mass[\ucV](\bfpsi_\ell)
+\|\bfeta\|^2\,\int_0^{\uXx}\kappa(\|\ucV\|^2)\,\bJ\ucV\cdot\bfpsi_\ell
-(\eD^{\iD\xi}-1)\,(L(\Sp+\ucx \Mass)[\ucV]\bfpsi_\ell)(0)\\[0.25em]
\lambda\,\int_0^{\uXx}L\Impulse_1[\ucV](\bfpsi_\ell)
+\|\bfeta\|^2\,\int_0^{\uXx}\kappa(\|\ucV\|^2)\,\ucV_x\cdot\bfpsi_\ell\hspace{14em}\\
\hspace{4em}-(\eD^{\iD\xi}-1)\,
\left(\left(L(\Sx+\uomp \Mass)[\ucV]
+\left(\frac{\lambda}{2}\bJ\ucV-\|\bfeta\|^2\,\kappa(\|\ucV\|^2)\ucV\right)\cdot\right)\bfpsi_\ell\right)(0)
\end{pmatrix}_{\ell}
\end{align*}
in the limit $(\lambda,\bfeta)\to(0,0)$ (where we have left implicit the dependence of $\bfpsi_\ell$ on $(\lambda,\bfeta)$ for concision's sake). Then we observe that it follows from invariances by rotational and spatial translations that the first two columns of the foregoing matrix are of the form
\begin{align*}
\begin{pmatrix}\\
\cO(|\lambda|+|\xi|+\|\bfeta\|^2)\\
\cO(|\lambda|+|\xi|+\|\bfeta\|^2)\\
\cO(|\lambda|\,(|\lambda|+|\xi|)+\|\bfeta\|^2)\\
\cO(|\lambda|\,(|\lambda|+|\xi|)+\|\bfeta\|^2)
\end{pmatrix}
\end{align*}
when $(\lambda,\bfeta)\to(0,0)$ and that, as follows by comparing respective equations, both $\d_\lambda\bfpsi_1(\cdot;0,0)$ and $\d_{\omp}\ucV$ on one hand and $\d_\lambda\bfpsi_2(\cdot;0,0)$ and $-\d_{\cx}\ucV$ on another hand differ only by a linear combination of $\bfpsi_1(\cdot;0,0)$, $\bfpsi_2(\cdot;0,0)$, $\bfpsi_3(\cdot;0,0)$ and $\bfpsi_4(\cdot;0,0)$. 

Therefore from a direct expansion and a column manipulation one derives that 
\[
(-\ucV(0)\cdot \ucV_x(0))\times D_\xi(\lambda,\bfeta)
=\det\begin{pmatrix}C_1&C_2&C_3&C_4\end{pmatrix}
\]
with
\begin{align*}
C_1&=
\begin{pmatrix}\lambda\,[\d_{\omp}\ucV]_0-(\eD^{\iD\xi}-1)\,\eD^{\uxip\bJ}\bJ\ucV(0)\\
\lambda^2\,\int_0^{\uXx}L\Mass[\ucV]\d_{\omp}\ucV
+\|\bfeta\|^2\,\int_0^{\uXx}\kappa(\|\ucV\|^2)\,\|\ucV\|^2\\
\lambda^2\,\int_0^{\uXx}L\Impulse_1[\ucV]\d_{\omp}\ucV
+\|\bfeta\|^2\,\int_0^{\uXx}\kappa(\|\ucV\|^2)\,\ucV_x\cdot\bJ\ucV
\end{pmatrix}
+\begin{pmatrix}\cO(|\lambda|(|\lambda|+|\xi|)+\|\bfeta\|^2)\\
\cO((|\lambda|^2+\|\bfeta\|^2)(|\lambda|+|\xi|)+\|\bfeta\|^4)\\
\cO((|\lambda|^2+\|\bfeta\|^2)(|\lambda|+|\xi|)+\|\bfeta\|^4)
\end{pmatrix}\,,
\end{align*}
\begin{align*}
C_2&=
\begin{pmatrix}\lambda\,[\d_{\cx}\ucV]_0+(\eD^{\iD\xi}-1)\,\eD^{\uxip\bJ}\ucV_x(0)\\
\lambda^2\,\int_0^{\uXx}L\Mass[\ucV]\d_{\cx}\ucV
+\lambda\,(\eD^{\iD\xi}-1)\,\Mass[\ucV](0)
-\|\bfeta\|^2\,\int_0^{\uXx}\kappa(\|\ucV\|^2)\,\bJ\ucV\cdot\ucV_x\\
\lambda^2\,\int_0^{\uXx}L\Impulse_1[\ucV]\d_{\cx}\ucV
+\lambda\,(\eD^{\iD\xi}-1)\,\Impulse_1[\ucV](0)
-\|\bfeta\|^2\,\int_0^{\uXx}\kappa(\|\ucV\|^2)\,\|\ucV_x\|^2
\end{pmatrix}\\
&\qquad
+\begin{pmatrix}\cO(|\lambda|(|\lambda|+|\xi|)+\|\bfeta\|^2)\\
\cO((|\lambda|^2+\|\bfeta\|^2)(|\lambda|+|\xi|)+\|\bfeta\|^4)\\
\cO((|\lambda|^2+\|\bfeta\|^2)(|\lambda|+|\xi|)+\|\bfeta\|^4)
\end{pmatrix}\,,
\end{align*}
\begin{align*}
C_3&=
\begin{pmatrix}[\d_{\mup}\ucV]_0-(\eD^{\iD\xi}-1)\,\eD^{\uxip\bJ}\d_{\mup}\ucV(0)\\
\lambda\,\int_0^{\uXx}L\Mass[\ucV]\d_{\mup}\ucV
+\|\bfeta\|^2\,\int_0^{\uXx}\kappa(\|\ucV\|^2)\,\bJ\ucV\cdot\d_{\mup}\ucV
-(\eD^{\iD\xi}-1)\\
\lambda\,\int_0^{\uXx}L\Impulse_1[\ucV]\d_{\mup}\ucV
+\|\bfeta\|^2\,\int_0^{\uXx}\kappa(\|\ucV\|^2)\,\ucV_x\cdot\d_{\mup}\ucV
\end{pmatrix}
+\begin{pmatrix}\cO(|\lambda|+\|\bfeta\|^2)\\
\cO((|\lambda|+\|\bfeta\|^2)(|\lambda|+|\xi|+\|\bfeta\|^2))\\
\cO((|\lambda|+\|\bfeta\|^2)(|\lambda|+|\xi|+\|\bfeta\|^2))
\end{pmatrix}\,,
\end{align*}
and
\begin{align*}
C_4&=
\begin{pmatrix}
[\d_{\mux}\ucV]_0-(\eD^{\iD\xi}-1)\,\eD^{\uxip\bJ}\d_{\mux}\ucV(0)\\
\lambda\,\int_0^{\uXx}L\Mass[\ucV]\d_{\mux}\ucV
+\|\bfeta\|^2\,\int_0^{\uXx}\kappa(\|\ucV\|^2)\,\bJ\ucV\cdot\d_{\mux}\ucV\\
\lambda\,\int_0^{\uXx}L\Impulse_1[\ucV]\d_{\mux}\ucV
+\|\bfeta\|^2\,\int_0^{\uXx}\kappa(\|\ucV\|^2)\,\ucV_x\cdot\d_{\mux}\ucV
-(\eD^{\iD\xi}-1)
\end{pmatrix}
+\begin{pmatrix}\cO(|\lambda|+\|\bfeta\|^2)\\
\cO((|\lambda|+\|\bfeta\|^2)(|\lambda|+|\xi|+\|\bfeta\|^2))\\
\cO((|\lambda|+\|\bfeta\|^2)(|\lambda|+|\xi|+\|\bfeta\|^2))
\end{pmatrix}\,.
\end{align*}

Then the result follows steadily from an expansion of the determinant and a few manipulations on the first two lines based on Formula~\eqref{e:dPer} for $[\d_a \ucV]_0$.
\end{proof}


\section{Longitudinal perturbations}\label{s:1-d}

We begin by completing and discussing consequences of the latter sections on the stability analysis for longitudinal perturbations. For results derived --- \emph{via} Madelung's transformation --- from corresponding known results for larger classes of Euler--Korteweg systems, we also provide some hints about direct proofs.

\subsection{Co-periodic perturbations}\label{s:co-periodic}

As in \cite{Benzoni-Noble-Rodrigues-note,Benzoni-Mietka-Rodrigues}, we connect stability with respect to co-periodic longitudinal perturbations with properties of the Hessian of the action integral $\Theta$. We remind that $\Theta$ is considered as a function of 
$(\mu_x,c_x,\omp,\mup)$, in that order.

At the spectral level, restricting to co-periodic longitudinal perturbations corresponds to focusing on $\cL_{0,0}$, the Bloch-Fourier symbol at $(\xi,\bfeta)=(0,0)$. It is thus worth pointing out that it follows from identities in \eqref{dtheta} that 
the matrix $\Sigma_t$ in Theorem~\ref{th:low-freq} is such that
\begin{equation}\label{e:sigmatoTheta}
\Sigma_t
\,=\,\begin{pmatrix}
0&0&0&1\\
1&0&0&0\\
0&0&1&0\\
0&1&0&0
\end{pmatrix}\,
\begin{pmatrix}
\I_2&0\\
0&-\I_2
\end{pmatrix}
\,\Hess
\,\Theta
\,\begin{pmatrix}
0&0&0&1\\
0&1&0&0\\
1&0&0&0\\
0&0&1&0
\end{pmatrix}
\end{equation}
so that 
\begin{equation}\label{D0bas}
D_0(\lambda,0)
=\,\lambda^4\,\det\left(\Hess\,\Theta\right)
+\cO\left(|\lambda|^5\right)
\end{equation}
as $|\lambda|\to0$. Combining it with Proposition~\ref{p:hig-freq} provides the first half of the following theorem. 

\bt\label{th:co-periodic}
Let $\ucU$ be a wave profile of parameter $(\umux,\ucx,\uomp,\umup)$ such that $\Hess\,\Theta(\umux,\ucx,\uomp,\umup)$ is non-singular.
\begin{enumerate}
\item The number of eigenvalues of $\cL_{0,0}$ in $(0,+\infty)$, counted with algebraic multiplicity, is 
\begin{itemize}
\item even if $\det\left(\Hess\,\Theta\right)(\umux,\ucx,\uomp,\umup)>0$ ;
\item odd if $\det\left(\Hess\,\Theta\right)(\umux,\ucx,\uomp,\umup)<0$.
\end{itemize}
In particular, in the latter case the wave is spectrally exponentially unstable to co-periodic longitudinal perturbations.
\item Assume that $\d_{\mux}^2\Theta(\umux,\ucx,\uomp,\umup)\neq0$ and that the negative signature of $\Hess\,\Theta(\umux,\ucx,\uomp,\umup)$ equals two. Then the wave is conditionally orbitally stable in $H^1_{\per}((0,\uXx))$.
\end{enumerate}
\et

By conditional orbital stability in $H^1_{\per}((0,\uXx))$, we mean 
that for any $\delta_0>0$ there exists $\eps_0>0$ such that for any 
$\bU_0$ satisfying
\[
\inf_{(\vphip,\vphix)\in\RR^2}\left\|\bU_0\,-\,\eD^{\vphip\bJ}\ucU(\,\cdot+\vphix\,)\right\|_{H^1_{\per}((0,1))}\,\leq\,\eps_0
\]
and any solution\footnote{Knowing in which precise sense does not matter since only conservations are used in the stability argument.} $\bU$ to \eqref{e:moving-nls} defined on an interval $I$ containing $0$, starting from $\bU(0,\cdot)=\bU_0$ and sufficiently smooth to guarantee
\begin{itemize}
\item $\bU\in\cC^0(I;H^1_{\per}((0,1)))$;
\item $t\mapsto\int_0^{\uXx} \Mass[\bU(t,\cdot)]$, $t\mapsto\int_0^{\uXx} \Impulse_1[\bU(t,\cdot)]$ and $t\mapsto\int_0^{\uXx} \Ham[\bU(t,\cdot)]$ are constant on $I$;
\end{itemize}
then for any $t\in I$,
\[
\inf_{(\vphip,\vphix)\in\RR^2}\left\|\bU(t,\cdot)\,-\,
\eD^{\vphip\bJ}\ucU(\,\cdot+\vphix\,)\right\|_{H^1_{\per}((0,1))}\,\leq\,\delta_0\,.
\]

To go from conditional orbital stability to orbital stability, one needs to know that for the notion of solution at hand controlling the $H^1$-norm is sufficient to prevent finite-time blow-up. This is in particular the case when $\kappa$ is constant; see e.g. \cite[Section~3.5]{Cazenave}).

\begin{proof}
The first point  is a direct consequence of identity 
\eqref{D0bas} and $D_0(\lambda,0)>0$ for $\lambda$ real and large 
(proposition \ref{p:hig-freq}).

The second part is deduced from a corresponding result for the 
Euler-Korteweg system \eqref{e:absEK}: 
$(\urho,\uu)$ is conditionnally orbitally stable in $H^1_{per}\times
L^2_{per}((0,1))$ if the negative signature of 
$\Hess\,\Theta(\umux,\ucx,\uomp,\umup)$ equals two. See Theorem~3 and its accompanying remarks in \cite[Section~4.2]{Benzoni-Mietka-Rodrigues} (conveniently summarized as \cite[Theorem~1]{BMR2-I}). The conversion to our setting stems from the following lemma and the fact that System~\eqref{e:absEK} preserves the integral of $v_1$.
\end{proof}

\begin{lemma} \label{diffeosobolev}
\begin{enumerate}
\item For any $c_0>0$, there exist $\eps>0$ and $C$ such that if 
$\ucU\in H^1_{\per}((0,1))$ satisfies $\|\ucU\|\geq c_0$ then 
for any $(\vphip,\vphix)\in\R^2$ and any $\bU\in H^1_{\per}((0,1))$
satisfying 
\[
\left\|\bU\,-\,\eD^{\vphip\bJ}\ucU(\,\cdot+\vphix\,)\right\|_{H^1_{\per}((0,1))}
\,\leq\,\eps\,,
\]
with 
\begin{align*}
(\rho,\tv)&=
\left(\Mass[\bU],
\frac{\Impulse_1[\bU]}{\Mass[\bU]}\right)\,,&
(\urho,\utv)&=
\left(\Mass[\ucU],
\frac{\Impulse_1[\ucU]}{\Mass[\bU]}\right)\,,
\end{align*}
there holds $(\rho,\tv)$, $(\urho,\utv)\in H^1_{\per}((0,1))\times L^2((0,1))$, $\int_0^1\tv=\int_0^1\utv=0$, and
\begin{align*}
\left\|(\rho,\tv)
-(\urho,\utv)(\,\cdot+\vphix\,)\right\|_{H^1_{\per}\times L^2_{\per}}
&\leq
C\,\left(1+\left\|\ucU\right\|_{H^1_{\per}}^3\right)\,
\left\|\bU\,-\,\eD^{\vphip\bJ}\ucU(\,\cdot+\vphix\,)\right\|_{H^1_{\per}}\,.
\end{align*}
\item There exists $C$ such that if 
\begin{align*}
\ucU&=\sqrt{2\urho}\,\eD^{\utheta\bJ}(\beD_1)\,,&
\bU&=\sqrt{2\rho}\,\eD^{\theta\bJ}(\beD_1)\,,
\end{align*}
with $(\rho,\d_x\theta)$, $(\urho,\d_x\utheta)\in H^1_{\per}((0,1))\times L^2((0,1))$, $\int_0^1\d_x\theta=\int_0^1\d_x\utheta=0$,\\
then $\bU\in H^1_{\per}((0,1))$, $\ucU\in H^1_{\per}((0,1))$ and, for any $\vphix\in\R$,
\begin{align*}
&\left\|\bU\,-\,\eD^{\vphip\bJ}\ucU(\,\cdot+\vphix\,)\right\|_{H^1_{\per}}\\
&\ \leq\,
C\,\left(1+\left\|(\rho,\d_x\theta)\right\|_{H^1_{\per}\times L^2_{\per}}^2
+\left\|(\urho,\d_x\utheta)\right\|_{H^1_{\per}\times L^2_{\per}}^2\right)
\left\|(\rho,\d_x\theta)
-(\urho,\d_x\utheta)(\,\cdot+\vphix\,)\right\|_{H^1_{\per}\times L^2_{\per}}
\end{align*}
where
\[
\vphip\,=\,\int_0^1(\theta(\zeta)-\utheta(\,\zeta+\vphix\,))d\zeta\,.
\]
\end{enumerate}
\end{lemma}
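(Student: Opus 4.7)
Let $\tilde\ucU := \eD^{\vphip\bJ}\ucU(\cdot+\vphix)$ and $\bW := \bU - \tilde\ucU$. Rotation and translation invariance of the quadratic functionals $\Mass$ and $\Impulse_1$ yield $\Mass[\tilde\ucU] = \urho(\cdot+\vphix)$ and $\Impulse_1[\tilde\ucU] = (\urho\,\utv)(\cdot+\vphix)$. Bilinear expansion gives
\[
\rho - \urho(\cdot+\vphix)\,=\,\tfrac12\|\bW\|^2\,+\,\bW\cdot\tilde\ucU
\]
and an analogous identity for $\Impulse_1[\bU] - \Impulse_1[\tilde\ucU]$. The 1D Sobolev embedding $H^1 \hookrightarrow L^\infty$ combined with the algebra property of $H^1((0,1))$ then bounds $\|\rho - \urho(\cdot+\vphix)\|_{H^1}$ and $\|\Impulse_1[\bU] - \Impulse_1[\tilde\ucU]\|_{L^2}$ linearly in $\|\bW\|_{H^1}$ with coefficient $\lesssim 1 + \|\ucU\|_{H^1}^2$. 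For the velocity I would use the identity
\[
\tv - \utv(\cdot+\vphix)\,=\,\frac{(\Impulse_1[\bU] - \Impulse_1[\tilde\ucU])\,\urho(\cdot+\vphix)\,-\,\Impulse_1[\tilde\ucU]\,(\rho - \urho(\cdot+\vphix))}{\rho\,\urho(\cdot+\vphix)},
\]
together with the pointwise lower bound $\|\bU\|\geq c_0/2$, ensured by shrinking $\eps$ via $H^1 \hookrightarrow L^\infty$, which controls the denominator. The factor $\|\Impulse_1[\tilde\ucU]\|_{L^\infty} \lesssim \|\ucU\|_{H^1}^2$ then produces the claimed cubic dependence $1 + \|\ucU\|_{H^1}^3$. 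Finally, $\int_0^1 \tv = 0$ follows because, for $\eps$ small, $\bU$ inherits from $\ucU$ zero winding number around the origin, so any continuous lift of its phase is periodic with derivative of mean zero.

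\textbf{Plan for Part 2.} Set $\theta_\sharp := \utheta(\cdot+\vphix) + \vphip$ with $\vphip$ as prescribed, so that $\theta - \theta_\sharp$ has zero mean. The Poincaré inequality for mean-zero periodic functions yields
\[
\|\theta - \theta_\sharp\|_{L^\infty}\,\lesssim\,\|\theta - \theta_\sharp\|_{H^1_{\per}}\,\lesssim\,\|\d_x\theta - \d_x\utheta(\cdot+\vphix)\|_{L^2}.
\]
Introducing $r := \sqrt{2\rho}$ and $\ur := \sqrt{2\urho(\cdot+\vphix)}$, one writes
\[
\bU - \tilde\ucU\,=\,(r - \ur)\,\eD^{\theta\bJ}\beD_1\,+\,\ur\,\eD^{\theta_\sharp\bJ}\bigl(\eD^{(\theta-\theta_\sharp)\bJ} - \I_2\bigr)\beD_1
\]
and uses $|\eD^{s\bJ} - \I_2| \leq |s|$ and $(r-\ur)(r+\ur) = 2(\rho - \urho(\cdot+\vphix))$ to reduce the $L^2$-estimate to bounds on $\|\rho - \urho(\cdot+\vphix)\|_{L^2}$ and $\|\theta - \theta_\sharp\|_{L^\infty}$. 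For the derivative, the product rule gives $\d_x\bU = \d_x r\,\eD^{\theta\bJ}\beD_1 + r\,\d_x\theta\,\bJ\eD^{\theta\bJ}\beD_1$, so $\|\d_x(\bU - \tilde\ucU)\|_{L^2}$ is controlled by expressions like $\|r\|_{L^\infty}\|\d_x\theta - \d_x\utheta(\cdot+\vphix)\|_{L^2}$ and $\|\d_x\theta\|_{L^2}\|r - \ur\|_{L^\infty}$, whose $L^\infty$-norms are controlled by $H^1$-norms. Collecting these products generates the quadratic factor $\|(\rho,\d_x\theta)\|_{H^1\times L^2}^2 + \|(\urho,\d_x\utheta)\|_{H^1\times L^2}^2$ in the constant.

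\textbf{Main obstacle.} The core difficulty lies in the square-root nonlinearity of Madelung's transform: both the change of variables $\bU\mapsto(\rho,\theta)$ and its inverse are only smooth where $\rho$ stays uniformly away from $0$. Every manipulation involving $\d_x r$, $r - \ur$, or a quotient by $\rho$ silently relies on a pointwise lower bound. In Part 1 this bound is the explicit $\|\ucU\|\geq c_0$ (transferred to $\bU$ via $\eps$-smallness in $L^\infty$); in Part 2 it is implicit in the hypothesis that the pairs $(\rho,\theta)$ and $(\urho,\utheta)$ define genuine $H^1$-profiles through Madelung's formula. Once these lower bounds are absorbed into the constant $C$, what remains is systematic bookkeeping of Sobolev product estimates, where the algebra property of $H^1((0,1))$ accounts for the polynomial growth of the constant in the relevant Sobolev norms.
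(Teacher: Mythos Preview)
Your proposal is correct and follows exactly the approach the paper indicates: the paper's own proof consists of a single sentence pointing to the embedding $H^1_{\per}((0,1))\hookrightarrow L^\infty((0,1))$ and the Poincar\'e inequality for mean-zero periodic functions, and you have supplied precisely the bookkeeping that fleshes this out. One small slip: in Part~1 you invoke $\|\Impulse_1[\tilde\ucU]\|_{L^\infty}$, but $\Impulse_1[\tilde\ucU]$ involves $\d_x\ucU$ and is only in $L^2$; the fix is to pair $\|\Impulse_1[\tilde\ucU]\|_{L^2}$ with $\|\rho-\urho(\cdot+\vphix)\|_{L^\infty}\lesssim\|\rho-\urho(\cdot+\vphix)\|_{H^1}$ instead, which still yields the cubic factor.
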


\begin{proof}
The proof of the lemma is quite straightforward, using the continuous embedding 
\[
H^1_{\per}((0,1))\hookrightarrow L^\infty((0,1))
\]
and the Poincar\'e inequality. We use the latter in the following form. There exists $C$ such that for any $\theta$ such that $\d_x\theta\in L^2((0,1))$, $\int_0^1\d_x\theta=0$, we have $\theta \in H^1_{\per}((0,1))$ and if $\int_0^1 \theta=0$,
\[
\|\theta\|_{L^2((0,1))}\leq C\,\|\d_x\theta\|_{L^2((0,1))}\,.
\]
\end{proof}

The first part of the foregoing theorem could also be deduced from 
\cite{Benzoni-Noble-Rodrigues-note,Benzoni-Mietka-Rodrigues} 
through Section~\ref{s:linearized-EK}. In the reverse direction, 
we expect that the second part could be deduced from abstract 
results directly concerning equations of the same type as the 
nonlinear Schr\"odinger equation --- 
see~\cite{GSS-II,DeBievre-RotaNodari} --- 
essentially as the conclusions in \cite{Benzoni-Mietka-Rodrigues} 
used here were deduced there by combining an abstract result --- 
\cite[Theorem~3]{Benzoni-Mietka-Rodrigues} --- with a result proving 
connections with the action integral --- 
\cite[Theorem~7]{Benzoni-Mietka-Rodrigues}.

\medskip

As in \cite{BMR2-I} for systems of Korteweg type, we now specialize Theorem~\ref{th:co-periodic} to two asymptotic regimes, small amplitude and large period. To state our result, in the small amplitude regime, we need one more non-degeneracy index
\begin{align}\label{eq:falpha}
\falpha_0(\cx,\rho,\kp)
&:=\frac{1}{8(\d_\rho^2\cW_\rho)^3}
\ \Big[\ \frac53(\d_\rho^3\cW_\rho)^2
-\d_\rho^2\cW_\rho\,\d_\rho^4\cW_\rho\\
&\qquad\nn
-4\,\d_\rho^2\cW_\rho\,\d_\rho^3\cW_\rho\,
\left(\frac{\kappa'(2\rho)}{\kappa(\rho)}-\frac{1}{2\,\rho}\right)\\
&\qquad\nn
+16\,(\d_\rho^2\cW_\rho)^2\,
\left(\frac{\kappa''(2\rho)}{\kappa(\rho)}
-\frac{1}{2\,\rho}\frac{\kappa'(2\rho)}{\kappa(\rho)}
+\frac{1}{2\,(\rho)^2}\right)\ \Big]
\end{align}
with derivatives of $\cW_\rho$ evaluated at $(\rho;\cx,\omp,\mup)$, $(\omp,\mup)$ being associated with $(\cx,\rho,\kp)$ through \eqref{eq:harmonic-omp-mup}. The following theorem is then merely a translation of Corollaries~1 and~2 in \cite{BMR2-I}.
\bt\label{th:coperiodic_asymptotics}
\begin{enumerate}
\item In the small amplitude regime near a $(\ucx^{(0)},\urho^{(0)},\ukp^{(0)})$ such that\footnote{With $\umup^{(0)}$ associated with $(\ucx^{(0)},\urho^{(0)},\ukp^{(0)})$ through \eqref{eq:harmonic-omp-mup}.}
\begin{align*}
\d_\rho\nu(\urho^{(0)};\ucx^{(0)},\umup^{(0)})&\neq0\,,&
\falpha_0(\ucx^{(0)},\urho^{(0)},\ukp^{(0)})&\neq0\,,
\end{align*}
we have that $\d_{\mux}^2\Theta\neq0$ and that the negative signature of $\Hess\,\Theta$ equals two so that waves are conditionally orbitally stable in $H^1_{\per}((0,\uXx))$.
\item In the large period regime, $\d_{\mux}^2\Theta\neq0$ and 
\begin{itemize}
\item if $\d_{\cx}^2\Theta_{(s)}(\ucx^{(0)},\urho^{(0)},\ukp^{(0)})>0$ then in the large period regime near $(\ucx^{(0)},\urho^{(0)},\ukp^{(0)})$, the negative signature of $\Hess\,\Theta$ equals two so that waves are conditionally orbitally stable in $H^1_{\per}((0,\uXx))$;
\item if $\d_{\cx}^2\Theta_{(s)}(\ucx^{(0)},\urho^{(0)},\ukp^{(0)})<0$ then in the large period regime near $(\ucx^{(0)},\urho^{(0)},\ukp^{(0)})$, the negative signature of $\Hess\,\Theta$ equals three so that waves are spectrally exponentially unstable to co-periodic longitudinal perturbations.
\end{itemize}
\end{enumerate}
\et

A few comments are in order.
\begin{enumerate}
\item The condition $\falpha_0\neq0$ is directly connected to the condition $\d_{\mux}^2\Theta\neq0$ since $\falpha_0\,\Xx^{(0)}$ is the limiting value of $\d_{\mux}^2\Theta\neq0$ in the small-amplitude regime; see \cite[Theorem~4]{BMR2-I}.
\item The small-amplitude regime considered here is disjoint from the one analyzed for the semilinear cubic Schr\"odinger equations in \cite{Gallay-Haragus-spectral-small} since here the constant asymptotic mass is nonzero, namely $\urho^{(0)}>0$. 
\item The condition on $\partial_{\cx}^2\Theta_{(s)}$ agrees with 
the usual criterion for stability of solitary waves, known as the Vakhitov-Kolokolov slope condition; see e.g. \cite{GSS-II}.
\end{enumerate}

\subsection{Side-band perturbations}\label{s:side-band}
Side-band perturbations are perturbations corresponding to Floquet 
exponents $\xi$ arbitrarily small but non zero.
As in \cite{Benzoni-Noble-Rodrigues-note,Benzoni-Noble-Rodrigues,BMR2-II}, we analyse the spectrum of $\cL_{\xi,0}$ near 
$0$ when $\xi$ is small. Some instability criteria associated with
this part of the spectrum could be deduced readily from
Theorem~\ref{th:low-freq}. Yet we postpone slightly these
conclusions since we are more interested in proving that such
rigorous conclusions agree with those guessed from formal
geometrical optics considerations. 

Thus let us consider the two-phases slow/fastly-oscillatory \emph{ansatz}  
\be\label{e:ansatz}
\bU^{(\eps)}(t,x)
\,=\,\eD^{\frac{1}{\eps}\vphip^{(\eps)}(\eps\,t,\eps\,x)\,\bJ}\cU^{(\eps)}\left(\eps\,t,\eps\,x;
\frac{\vphix^{(\eps)}(\eps\,t,\eps\,x)}{\eps}\right)
\ee 
with, for any $(T,X)$, $\zeta\mapsto\cU^{(\eps)}(T,X;\zeta)$ periodic of period $1$ and, as $\eps\to0$,
\begin{align*}
\cU^{(\eps)}(T,X;\zeta)&=
\cU_{0}(T,X;\zeta)+\eps\,\cU_{1}(T,X;\zeta)+o(\eps)\,,\\
\vphip^{(\eps)}(T,X)&=
(\vphip)_{0}(T,X)+\eps\,(\vphip)_{1}(T,X)+o(\eps)\,,\\
\vphix^{(\eps)}(T,X)&=
(\vphix)_{0}(T,X)+\eps\,(\vphix)_{1}(T,X)+o(\eps)\,.
\end{align*}
Requiring \eqref{e:ansatz} to solve \eqref{e:ab} up to a remainder of size $o(1)$ is equivalent to $\zeta\mapsto\cU_{0}(T,X;\zeta)$ being a scaled profile of a periodic traveling wave of \eqref{e:unscaled-profile}. Explicitly, 
\begin{equation}\label{e:scaled2}
\bJ\delta \cH_0(\cU_0,\eD_1\,(\kp\bJ+\kx\d_\zeta)\cU_0)
=\omp\bJ\cU_0
-\cx\,(\kp\bJ+\kx\d_\zeta)\cU_0.
\end{equation}
with local parameters (depending on slow variables $(T,X)$) related to phases by
\begin{align*}
\d_T(\vphip)_{0}&=\omp-\kp\,\cx\,,&
\d_X(\vphip)_{0}&=\kp\,,&
\d_T(\vphix)_{0}&=\omx\,,&
\d_X(\vphix)_{0}&=\kx\,.
\end{align*}
Symmetry of derivatives already constrains the slow evolution of wave parameters with
\begin{align*}
\d_T\kp&=\,\d_X\left(\omp-\kp\,\cx\right)\,,&
\d_T\kx&=\,\d_X\omx\,.
\end{align*}
Since periodic profiles form a $4$-dimensional manifold (after 
discarding translation and rotation parameters), in order to determine the leading-order dynamics of \eqref{e:ansatz}, we need two more equations. The fastest way to obtain such equations is to also require \eqref{e:ansatz} to solve \eqref{e:mass} and \eqref{e:momentum} up to remainders of size $o(\eps)$. Observing 
that all quantities in \eqref{e:mass} and \eqref{e:momentum} are 
independent of phases, 
\begin{align*}
\d_T(\Mass(\cU_{0}))
&\,=\,\d_X(\Sp(\cU_{0},\eD_1\,(\kx\d_\zeta+\kp\bJ)\cU_{0}))
+\d_\zeta\left(*\right)\,,\\
\d_T(\Impulse_1(\cU_{0},\eD_1\,(\kx\d_\zeta+\kp\bJ)\cU_{0}))
&\,=\,\d_X\big(\big(\n_{\bU_x}\Impulse_1 \cdot\bJ\delta \cH_0+\Sx\big)
(\cU_{0},\eD_1\,(\kx\d_\zeta+\kp\bJ)\cU_{0})\big)
+\d_\zeta\left(**\right)\,,
\end{align*}
with omitted terms $*$ and $**$ $1$-periodic in $\zeta$. Averaging in $\zeta$ (using \eqref{e:scaled2}) provides two more equations, completing the modulation system
\be\label{e:W-formal} 
\left\{
\begin{array}{rcl}
\d_T\kx&=&\d_X\omx\\\hspace{-0.75em}
\d_T(\langle\Impulse(_1\cU_{0},\eD_1\,(\kx\d_\zeta+\kp\bJ)\cU_{0})\rangle)
&=&\hspace{-0.75em}
\d_X(\langle(\omp\cM-\cx\Impulse_1
+\Sx)(\cU_{0},\eD_1\,(\kx\d_\zeta+\kp\bJ)\cU_{0})\rangle)\\
\d_T(\langle\Mass(\cU_{0})\rangle)
&=&\d_X(\langle\Sp(\cU_{0},\eD_1\,(\kx\d_\zeta+\kp\bJ)\cU_{0})\rangle)\\
\d_T\kp&=&\d_X\left(\omp-\kp\,\cx\right)
\end{array}
\right.
\ee
where $\di \langle\,\cdot\,\rangle=\int_0^1 \cdot\, d\zeta$ is 
the average over a periodic cell.

The reader may wonder why in the foregoing formal derivation we have asked for \eqref{e:ab} to be satisfied at order $1$ and for \eqref{e:mass} and \eqref{e:momentum} to be satisfied at order $\eps$. Alternatively, one may ask for \eqref{e:ab} to be satisfied at order $\eps$ and check that requirements on \eqref{e:mass} and \eqref{e:momentum} come as necessary conditions. One may also check that when \eqref{e:ab} is satisfied at order $1$ so are \eqref{e:mass} and \eqref{e:momentum}. 

System \eqref{e:W-formal} should be thought as a system for functions defined on the manifold of periodic traveling waves (identified when coinciding up to rotational and spatial translations). To make this more concrete, we now rewrite it in terms of parameters $(\mux,\cx,\omp,\mup)$. To do so, with notation from Section~\ref{s:profile}, we introduce 
\begin{equation}\label{e:averageMQ}
\left\{
\begin{array}{ll}
\di \mass(\mux,\cx,\omp,\mup)&:=\langle\Mass[\cV]\rangle
\,=\,\di \frac{1}{\Xx}\int_0^{\Xx}\Mass[\cV]dx\,,\\
\di \impulse(\mux,\cx,\omp,\mup)&:=\langle\Impulse[\cV]\rangle
\,=\,\di \frac{1}{\Xx}\int_0^{\Xx}\Impulse_1[\cV]dx\,,
\end{array}\right.
\end{equation}
where $\cV$ is the unscaled profile associated with $(\mux,\cx,\omp,\mup)$ and $\Xx$ is the corresponding period. By making use of \eqref{e:massu} and \eqref{e:momu}, one obtains
\be\label{e:W}
\left\{
\begin{array}{rcl}
\d_T\kx&=&\d_X\omx\\
\d_T\impulse
&=&\d_X\left(\mux-\cx\impulse\right)\\
\d_T\mass
&=&\d_X\left(\mup-\cx\mass\right)\\
\d_T\kp&=&\d_X\left(\omp-\cx\,\kp\right)
\end{array}
\right.
\ee
as an alternative form of \eqref{e:W-formal}. To connect with the analysis of other sections in terms of the action integral $\Theta$, we recall \eqref{dtheta}
\begin{align*}
\kx&=\frac{1}{\d_{\mux}\Theta}\,,&
\begin{pmatrix}
1\\
\impulse\\
\mass\\
\kp
\end{pmatrix}
&\,=\,\frac{\bA_0\,\nabla\Theta}{\d_{\mux}\Theta}\,,&
\textrm{with}\quad
\bA_0&:=\,
\begin{pmatrix}
1&0&0&0\\
0&1&0&0\\
0&0&-1&0\\
0&0&0&-1
\end{pmatrix}\,.
\end{align*}
Thus (for smooth solutions) System~\eqref{e:W} takes the alternative form
\be\label{e:W-action}
\kx\bA_0\Hess\Theta\,\left(\d_T+\cx\d_X\right)
\begin{pmatrix}
\mux\\\cx\\\omp\\\mup
\end{pmatrix}
\,=\,\bB_0\,\d_X
\begin{pmatrix}
\mux\\\cx\\\omp\\\mup
\end{pmatrix}
\ee
with
\begin{align*}
\bB_0&=\,
\begin{pmatrix}
0&1&0&0\\
1&0&0&0\\
0&0&0&1\\
0&0&1&0
\end{pmatrix}\,.
\end{align*}

\br
One may check that the modulated system \eqref{e:W}, also often called Whitham's system, agrees with the one derived for the associated Euler--Korteweg system \eqref{e:absEK} by injecting a one-phase slow/fastly-oscillatory \emph{ansatz}. See \cite{R,Benzoni-Noble-Rodrigues,BMR2-II} for a discussion of the latter. This may be achieved by direct comparisons of either formal \emph{ansatz}, averaged forms or more concrete parameterized forms.
\er

We now specialize the use of System~\eqref{e:W} to the discussion of the dynamics near a particular periodic traveling wave. Note that traveling-wave solutions fit the \emph{ansatz} \eqref{e:ansatz} and correspond to the case when phases $\vphip$ and $\vphix$ are affine functions of the slow variables and wave parameters are constant. Thus, when $\ubU$ is a wave profile of parameters $(\umux,\ucx,\uomp,\umup)$, one may expect that the stability\footnote{Incidentally we point out that from the homogeneity of first-order systems it follows that ill-posedness and stability are essentially the same for systems such as \eqref{e:W}.} of $(\umux,\ucx,\uomp,\umup)$ as a solution to \eqref{e:W} is necessary to the stability of $\ubU$ as a solution to \eqref{e:moving-nls}. The literature proving such a claim at the spectral level is now quite extensive and we refer the reader to \cite{Serre,Noble-Rodrigues}, \cite{Benzoni-Noble-Rodrigues,Bronski-Hur-Johnson}, \cite{KR}, \cite{JNRYZ} for results respectively on parabolic systems, Hamiltonian systems of Korteweg type, lattice dynamical systems and some hyperbolic systems with discontinuous waves. Yet this is the first time\footnote{Except for the almost simultaneous \cite{Clarke-Marangell}. See detailed comparison in Section~\ref{s:large-time}.} that a result for a class of systems with symmetry group of dimension higher than one is established. 

In the present case, the spectral validation of \eqref{e:W} is a simple corollary of Theorem~\ref{th:low-freq} based on a counting root argument for analytic functions, since
\[
\lambda\,\Sigma_t-(\eD^{\iD\xi}-1)\I_4
\,=\,\begin{pmatrix}
0&0&0&1\\
1&0&0&0\\
0&0&1&0\\
0&1&0&0
\end{pmatrix}\,
\left(
\lambda\,\bA_0
\,\Hess
\,\Theta
-(\eD^{\iD\xi}-1)\,
\bB_0
\right)
\,\begin{pmatrix}
0&0&0&1\\
0&1&0&0\\
1&0&0&0\\
0&0&1&0
\end{pmatrix}\,.
\]

\bc\label{c:W-cor}
Consider an unscaled wave profile $\ucV$ such that $\ucV\cdot \ucV_x\not\equiv0$, with associated parameters $(\umux,\ucx,\uomp,\umup)$.
\begin{enumerate}
\item The following three statements are equivalent.
\begin{itemize}
\item $0$ is an eigenvalue of algebraic multiplicity $4$ of $\cL_{0,0}$.
\item The map $(\mux,\cx,\omp,\mup)\mapsto (\kx,\impulse,\mass,\kp)$ is a local diffeomorphism near $(\umux,\ucx,\uomp,\umup)$.
\item $\Hess\Theta (\umux,\ucx,\uomp,\umup)$ is non-singular.
\end{itemize}
\item Assume that $\Hess\Theta (\umux,\ucx,\uomp,\umup)$ is non-singular. Then there exist $\lambda_0>0$, $\xi_0>0$ and $C_0$ such that 
\begin{itemize}
\item for any $\xi\in [-\xi_0,\xi_0]$, $\cL_{\xi,0}$ possesses $4$ eigenvalues (counted with algebraic multiplicity) in the disk $B(0,\lambda_0)$;
\item if $a-\ucx$ is a characteristic speed of \eqref{e:W} at $(\umux,\ucx,\uomp,\umup)$ of algebraic multiplicity~$r$, that is, if $a$ is an eigenvalue of $(\kx\bA_0\Hess\Theta(\umux,\ucx,\uomp,\umup))^{-1}\bB_0$ of algebraic multiplicity~$r$, then for any $\xi\in [-\xi_0,\xi_0]$, $\cL_{\xi,0}$ possesses $r$ eigenvalues (counted with algebraic multiplicity) in the disk $B(\iD\ukx\xi\,a\,,\,C_0|\xi|^{1+\frac1r})$.
\end{itemize}
In particular if System~\eqref{e:W} is not weakly hyperbolic at $(\umux,\ucx,\uomp,\umup)$, that is, if $(\kx\bA_0\Hess\Theta(\umux,\ucx,\uomp,\umup))^{-1}\bB_0$ possesses a non-real eigenvalue, then the wave is spectrally unstable to longitudinal side-band perturbations.
\end{enumerate}
\ec

A few comments are in order.
\begin{enumerate}
\item Note that the subtraction of $\ucx$ in the second part of the corollary accounts for the fact that System~\ref{e:W} is not expressed in a co-moving frame. 
\item The second part of the foregoing corollary could also be deduced from results in \cite{Benzoni-Noble-Rodrigues} through Madelung's transformation.
\end{enumerate}
 
We now turn to the small-amplitude and large-period regimes. 
To describe the small-amplitude regime, we need to introduce 
two instability indices
\begin{align}\label{def:sideindex}
\delta_{hyp}(\cx,\omp,\mup)
&:=\,W''(2\,\rho^{(0)})
+\left(\kappa''(2\,\rho^{(0)})\,\rho^{(0)}+\kappa'(2\,\rho^{(0)})\right)\,(\kp^{(0)})^2
\end{align}
and
\begin{align}\label{def:sideindex_II}
&\delta_{BF}(\cx,\omp,\mup)\\
&:=\,
\left(\frac12\frac{\kappa(2\,\rho^{(0)})}{2\,\rho^{(0)}}
\left(\frac{2\pi}{\Xx^{(0)}}\right)^2\right)^3
\,\left(
-3\,\left(\frac{\kappa'(2\,\rho^{(0)})}{\kappa(2\,\rho^{(0)})}\right)^2
-2\,\frac{\kappa'(2\,\rho^{(0)})}{\kappa(2\,\rho^{(0)})}\,\frac{1}{2\,\rho^{(0)}}
+\frac{\kappa''(2\,\rho^{(0)})}{\kappa(2\,\rho^{(0)})}
\right)\nn\\
&
+\left(\frac12\frac{\kappa(2\,\rho^{(0)})}{2\,\rho^{(0)}}
\left(\frac{2\pi}{\Xx^{(0)}}\right)^2\right)^2\,\nn\\
&\qquad\times
\,\Bigg(
W''(2\,\rho^{(0)})\,
\left(
-12\,\left(\frac{\kappa'(2\,\rho^{(0)})}{\kappa(2\,\rho^{(0)})}\right)^2
-6\,\frac{\kappa'(2\,\rho^{(0)})}{\kappa(2\,\rho^{(0)})}\,\frac{1}{2\,\rho^{(0)}}\,
+4\,\left(\frac{1}{2\,\rho^{(0)}}\right)^2
+3\,\frac{\kappa''(2\,\rho^{(0)})}{\kappa(2\,\rho^{(0)})}
\right)\nn\\
&\qquad\qquad
+4\,W'''(2\,\rho^{(0)})\,\left(\frac{\kappa'(2\,\rho^{(0)})}{\kappa(2\,\rho^{(0)})}
+2\,\frac{1}{2\,\rho^{(0)}}\right)
+2\,W''''(2\,\rho^{(0)})
\Bigg)\nn\\
&+\left(\frac12\frac{\kappa(2\,\rho^{(0)})}{2\,\rho^{(0)}}
\left(\frac{2\pi}{\Xx^{(0)}}\right)^2\right)\nn\\
&\qquad\times\,\Bigg(
12\,(W''(2\,\rho^{(0)}))^2\,
\left(
\left(\frac{\kappa'(2\,\rho^{(0)})}{\kappa(2\,\rho^{(0)})}\right)^2
+4\,\frac{\kappa'(2\,\rho^{(0)})}{\kappa(2\,\rho^{(0)})}\,\frac{1}{2\,\rho^{(0)}}\,
+3\,\left(\frac{1}{2\,\rho^{(0)}}\right)^2
\right)\nn\\
&\qquad\qquad
+8\,W''(2\,\rho^{(0)})\,W'''(2\,\rho^{(0)})
\,\left(4\,\frac{\kappa'(2\,\rho^{(0)})}{\kappa(2\,\rho^{(0)})}
+5\,\frac{1}{2\,\rho^{(0)}}\right)\nn\\
&\qquad\qquad
+\frac43\left(W'''(2\,\rho^{(0)})\right)^2
+6\,W''(2\,\rho^{(0)})\,W''''(2\,\rho^{(0)})
\Bigg)\nn\\
&+8\,W''(2\,\rho^{(0)})\,
\left(W'''(2\,\rho^{(0)})\,
+3\,W''(2\,\rho^{(0)})\,
\left(\frac{\kappa'(2\,\rho^{(0)})}{\kappa(2\,\rho^{(0)})}
+\frac{1}{2\,\rho^{(0)}}\right)\right)^2\,.\nn
\end{align}
where $(\rho^{(0)},\kp^{(0)})$ are the associated limiting mass and rotational shift and $\Xx^{(0)}$ is the associated period.

The following theorem is a consequence of 
Corollary~\ref{c:W-cor} and results in \cite{BMR2-II} for the 
Euler--Korteweg systems, namely Theorems~7 and~8 
respectively for the first and second points\footnote{In notation of \cite{BMR2-II}, $\delta_{BF}$ is $\Delta_{MI}$.}. 

\bt\label{th:side-band_asymptotics}
\begin{enumerate}
\item In the small amplitude regime near a $(\ucx^{(0)},\urho^{(0)},\ukp^{(0)})$ such that 
\[
\d_\rho\nu(\urho^{(0)};\ucx^{(0)},\umup^{(0)})\neq0\,,
\]
$\Hess\Theta$ is non singular and if 
\[
\delta_{hyp}(\ucx^{(0)},\uomp^{(0)},\umup^{(0)})<0
\qquad\textrm{or}\qquad
\delta_{BF}(\ucx^{(0)},\uomp^{(0)},\umup^{(0)})<0,
\]
then waves are spectrally exponentially unstable to longitudinal side-band perturbations.
\item If $\d_{\cx}^2\Theta_{(s)}(\ucx^{(0)},\urho^{(0)},\ukp^{(0)})\neq0$ then, in the large period regime near $(\ucx^{(0)},\urho^{(0)},\ukp^{(0)})$, $\Hess\Theta$ is non singular and if 
\[
\d_{\cx}^2\Theta_{(s)}(\ucx^{(0)},\urho^{(0)},\ukp^{(0)})<0,
\]
then in the large period regime near $(\ucx^{(0)},\urho^{(0)},\ukp^{(0)})$, waves are spectrally exponentially unstable to longitudinal side-band perturbations.
\end{enumerate}
\et

A few comments are worth stating. In particular, we borrow here some of the upshots of the much more comprehensive analysis in \cite{BMR2-II}.
\begin{enumerate}
\item Again we point out that the small-amplitude regime considered here is disjoint from the one analyzed for the semilinear cubic Schr\"odinger equations in \cite{Gallay-Haragus-spectral-small}. Let us however stress that for the semilinear cubic Schr\"odinger equations our instability criterion provides instability if and only if the potential is focusing, independently of the particular limit value under consideration. This is consistent with the conclusions for the case $\rho^{(0)}=0$ derived in \cite{Gallay-Haragus-spectral-small}.
\item In the small amplitude limit, the characteristic velocities split in two groups of two. One of these groups converges to the linear group velocity at the limiting constant value and the sign of $\delta_{BF}$ precisely determines how this double root splits. The corresponding instability is often referred to as the Benjamin--Feir instability. The other group converges to the characteristic velocities of a dispersionless hydrodynamic system at the limiting constant value; see \cite[Theorem~7]{BMR2-II}. The sign of $\delta_{hyp}$ decides the weak hyperbolicity of the latter system. When $\kappa$ is constant, it is directly related to the focusing/defocusing nature of the potential $W$ (namely $W''$ negative/positive).
\item A similar scenario takes place in the large period limit, with the phase velocity of the solitary wave replacing the linear group velocity. The sign of $\d_{\cx}^2\Theta_{(s)}$ determines how the double root splits. However, due to the nature of endstates of solitary waves, the dispersionless system is always hyperbolic, hence the reduction to a single instability index. See Appendix~\ref{s:constant} for some related details.
\end{enumerate}

\subsection{Large-time dynamics}\label{s:large-time}

Our interest in modulated systems also hinges on the belief that they play a deep role in the description of the large-time dynamics. In other words, one expects that near stable waves the large-time dynamics is well-approximated by simply varying wave parameters in a space-time dependent way and that the dynamics of these parameters is itself well-captured by some (higher-order version of a) modulated system.

The latter scenario has been proved to occur at the nonlinear level for a large class of parabolic systems \cite{JNRZ-RD2,JNRZ-conservation} and at the linearized level for the Korteweg--de Vries equation \cite{R_linKdV}. The reader is also referred to \cite{R,R_Roscoff} for some more intuitive arguments supporting the general claim.

We would like to extend here a small part of the analysis in \cite{R_linKdV} to the class of equations under consideration. We begin by revisiting the second part of Corollary~\ref{c:W-cor} from the point of view of Floquet symbols rather than Evans' functions. The goal is to provide a description of how eigenfunctions and spectral projectors behave near the quadruple eigenvalue at the origin. Once this is done, the arguments of \cite{R_linKdV} may be directly imported and provide different results (adapted to the presence of a two-dimensional group of symmetries) but with nearly identical --- thus omitted --- proofs.

In a certain way, we leave the point of view convenient for spatial dynamics to focus on time dynamics. To do so, it is expedient to use scaled variables so as to normalize period and to parameter waves not by phase-portrait parameters $(\mux,\cx,\omp,\mup)$ but by modulation parameters $(\kx,\kp,\impulse,\mass)$. The first part of Corollary~\ref{c:W-cor} proves that the latter is possible when the eigenvalue at the origin is indeed of multiplicity $4$. Therefore in the present subsection, we consider scaled profiles $\cU$ as in \eqref{e:moving-nls},and parameters $(\mux,\cx,\omp,\mup)$ as functions of $(\kx,\kp,\impulse,\mass)$. In scaled variables, the averaged mass and impulse from \eqref{e:averageMQ} take the form
\begin{align*}
\di\mass&=\langle \Mass(\cU)\rangle=\int_0^1\frac12\|\cU\|^2\,,\\
\di \impulse&=\langle\Impulse_1(\cU,\eD_1 (\kx\partial_x+\kp\bJ)\cU)\rangle
=\int_0^1\frac12\bJ\cU\cdot(\kx\partial_x+\kp\bJ)\cU\,.
\end{align*}

Our focus is on the operator $\cL_{\xi,0}=\cL_\xi^x$. Correspondingly we consider the Whitham matrix-valued map
\begin{equation}\label{WhiMat}
\bW(\mux,\cx,\omp,\mup)\,:=\,\Jac
\bp
\omx\\
\omp-\cx\,\kp\\
\mux-\cx\impulse\\
\mup-\cx\mass
\ep\,.
\end{equation}
To connect both objects, we shall use various algebraic relations obtained from profile equations and conservation laws that we first derive.

Differentiating profile equation $\delta\cH_u[\cU]=0$
with respect to rotational and spatial translation parameters (left implicit here) and to $(\impulse,\mass)$ yields
\begin{align}\label{dprofile1}
\cL_{0,0}\,\ucU_x&\,=\,0\,,&
\cL_{0,0}\,\d_{\impulse}\ucU&\,=\,
\left(\d_{\impulse}\uomp-\ukp\,\d_{\impulse}\ucx\right)\bJ\,\ucU
-\ukx\d_{\impulse}\ucx\,\ucU_x\,,\\
\cL_{0,0}\,\bJ\,\ucU&\,=\,0\,,&
\cL_{0,0}\,\d_{\mass}\ucU&\,=\,
\left(\d_{\mass}\uomp-\ukp\,\d_{\mass}\ucx\right)\bJ\,\ucU
-\ukx\,\d_{\mass}\ucx\,\ucU_x\,.
\end{align}
To highlight the role of $\d_{\kx}\ucU$ and $\d_{\kp}\ucU$, we expand
\[
\cL_{\xi,0}
\,=\,\cL_{0,0}
+\iD\ukx\xi\,\cL_{(1)}
+(\iD\ukx\xi)^2\,\cL_{(2)}\,.
\]
Differentiating profile equations with respect to $(\kx,\kp)$ leaves
\begin{equation}\label{dprofile2}
 \left\{
 \begin{array}{ll}
\cL_{0,0}\,\d_{\kx}\ucU&\,=\,
\left(\d_{\kx}\uomp-\ukp\,\d_{\kx}\ucx\right)\bJ\,\ucU
-\ukx\,\d_{\kx}\ucx\ \ucU_x
-\cL_{(1)}\,\ucU_x\,,\\
\cL_{0,0}\,\d_{\kp}\ucU&\,=\,
\left(\d_{\kp}\uomp-\ukp\,\d_{\kp}\ucx\right)\,\bJ\,\ucU
-\ukx\,\d_{\kp}\ucx\ \ucU_x
-\cL_{(1)}\,\bJ\,\ucU\,.
\end{array}\right.
\end{equation}
By differentiating the definitions of mass and impulse averages, 
we also obtain that 
\begin{align}\label{dimpulse}
\int_0^1\delta\Impulse_1(\ucU,\eD_1(\ukx\d_x+\ukp\bJ)\ucU)\,\ucU_x&\,=\,0\,,&
\int_0^1\delta\Impulse_1(\ucU,\eD_1(\ukx\d_x+\ukp\bJ)\ucU)\,\bJ\,\ucU&\,=\,0\,,\\
\int_0^1\delta\Impulse_1(\ucU,\eD_1(\ukx\d_x+\ukp\bJ)\ucU)\,\d_\impulse\ucU&\,=\,1\,,&
\int_0^1\delta\Impulse_1(\ucU,\eD_1(\ukx\d_x+\ukp\bJ)\ucU)\,\d_\mass\ucU&\,=\,0\,,
\end{align}
\begin{align}
\int_0^1\delta\Impulse_1(\ucU,\eD_1(\ukx\d_x+\ukp\bJ)\ucU)\,\d_{\kx}\ucU&\,=\,
-\int_0^1\dD_{\bU_x}\Impulse_1(\ucU,\eD_1(\ukx\d_x+\ukp\bJ)\ucU)\,\ucU_x\\
&\,=\,-\frac{1}{\ukx}(\uimpulse-\ukp\umass)\,,\nn\\
\int_0^1\delta\Impulse_1(\ucU,\eD_1(\ukx\d_x+\ukp\bJ)\ucU)\,\d_{\kp}\ucU
&\,=\,
-\int_0^1\dD_{\bU_x}\Impulse(\ucU,\eD_1(\ukx\d_x+\ukp\bJ)\ucU)\,\bJ\,\ucU\\
&\,=\,-\umass\,,\nn
\end{align}
and
\begin{align}\label{dmasse}
\int_0^1\delta\Mass[\ucU]\,\ucU_x&\,=\,0\,,&
\int_0^1\delta\Mass[\ucU]\,\d_\impulse\ucU&\,=\,0\,,&
\int_0^1\delta\Mass[\ucU]\,\d_{\kx}\,\ucU&\,=\,0\,,&\\
\int_0^1\delta\Mass[\ucU]\,\bJ\,\ucU&\,=\,0\,,&
\int_0^1\delta\Mass[\ucU]\,\d_\mass\ucU&\,=\,1\,,&
\int_0^1\delta\Mass[\ucU]\,\d_{\kp}\,\ucU&\,=\,0\,,\,.
\end{align}

At last, linearizing conservation laws for mass and impulse provides for any smooth $\bV$
\begin{align*}
\delta\Mass[\ucU]\cdot\cL_{\xi,0} \bV&\\
=\ukx(\d_x&+\iD\xi)
\Bigg(
\n_\bU (\Sp+\ucx\Mass)(\ucU,\eD_1(\ukx\d_x+\ukp\bJ)\ucU)\cdot\bV\\&\qquad\qquad
+\n_{\bU_x} (\Sp+\ucx\Mass)(\ucU,\eD_1(\ukx\d_x+\ukp\bJ)\ucU)\cdot
(\ukx(\d_x+\iD\xi)+\ukp\bJ)\bV
\Bigg)
\end{align*}
and
\begin{align*}
\n_\bU\Impulse_1(\ucU,&\eD_1(\ukx\d_x+\ukp\bJ)\ucU)\cdot\cL_{\xi,0} \bV
+\n_{\bU_x}\Impulse_1(\ucU,\eD_1(\ukx\d_x+\ukp\bJ)\ucU)\cdot
(\ukx(\d_x+\iD\xi)+\ukp\bJ)\,\cL_{\xi,0}\bV\\
=\ukx(\d_x&+\iD\xi)
\Bigg(\n_{\bU_x} \Impulse_1(\ucU,\eD_1(\ukx\d_x+\ukp\bJ)\ucU)\cdot\cL_{\xi,0}\bV\\&\qquad\qquad
+\n_\bU (\Sx+\uomp\Mass)(\ucU,\eD_1(\ukx\d_x+\ukp\bJ)\ucU)\cdot\bV\\&\qquad\qquad
+\n_{\bU_x} (\Sx+\uomp\Mass)(\ucU,\eD_1(\ukx\d_x+\ukp\bJ)\ucU)\cdot
(\ukx(\d_x+\iD\xi)+\ukp\bJ)\bV
\Bigg)\,.
\end{align*}
Evaluating at $\xi=0$ and integrating show  
\begin{align}\label{kerL*}
\cL_{0,0}^*\,\delta\Impulse_1(\ucU,(\ukx\d_x+\ukp\bJ)\ucU)&=0,& 
\cL_{0,0}^*\,\delta\Mass[\ucU]&=0,
\end{align}
where $\cL_{0,0}^*$ denotes the adjoint of $\cL_{0,0}$. Alternatively the latter may be checked by using explicit expressions of $\delta\Impulse_1(\ucU,\eD_1(\ukx\d_x+\ukp\bJ)\ucU)$ and $\delta\Mass[\ucU]$ in terms of $\bJ\ucU$ and $\ucU_x$ and Hamiltonian duality $\cL_{0,0}^*=-\bJ^{-1}\,\cL_{0,0}\,\bJ$. At next orders, for $\bV$ smooth and periodic we also deduce
\begin{align}\label{highermasse}
\langle\delta\Mass[\ucU];\cL_{(1)}\bV\rangle_{L^2}
&=\langle\delta(\Sp+\ucx\Mass)(\ucU,\eD_1(\ukx\d_x+\ukp\bJ)\ucU);\bV\rangle_{L^2}\,,\\
\langle\delta\Mass[\ucU];\cL_{(2)}\bV\rangle_{L^2}
&=\langle\nabla_{\bU_x}(\Sp+\ucx\Mass)(\ucU,\eD_1(\ukx\d_x+\ukp\bJ)\ucU);\bV\rangle_{L^2}\,,
\end{align}
and
\begin{align}\label{higherimpulse}
\langle\delta\Impulse_1(\ucU,\eD_1(\ukx\d_x+\ukp\bJ)\ucU);\cL_{(1)}\bV\rangle_{L^2}
&=\langle\delta(\Sx+\uomp\Mass)(\ucU,\eD_1(\ukx\d_x+\ukp\bJ)\ucU);\bV\rangle_{L^2}\,,\\
\langle\delta\Impulse_1(\ucU,\eD_1(\ukx\d_x+\ukp\bJ)\ucU);\cL_{(2)}\bV\rangle_{L^2}
&=\langle\nabla_{\bU_x}(\Sx+\uomp\Mass)(\ucU,\eD_1(\ukx\d_x+\ukp\bJ)\ucU);\bV\rangle_{L^2}\,,
\end{align}
In the foregoing relations,  $\langle\cdot;\cdot\rangle_{L^2}$ denotes the canonical Hermitian scalar product on $L^2((0,1);\C^2)$, $\C$-linear on the right\footnote{That is, $\langle f;g \rangle_{L^2}=\int_0^1 \bar f \cdot g$.}.

These are the key algebraic relations to prove the following proposition. 

\bt\label{th:W-Bloch}
Let $\ucU$ be a wave profile such that $\ucU\cdot \ucU_x\not\equiv0$ and that $0$ has algebraic multiplicity exactly $4$ as an eigenvalue of $\cL_{0,0}$. Assume that eigenvalues of $\ubW$ are distinct.\\
There exist $\lambda_0>0$, $\xi_0\in(0,\pi)$, analytic curves $\lambda_j:[-\xi_0,\xi_0]\to B(0,\lambda_0)$, $j=1,2,3,4$, such that for $\xi\in[-\xi_0,\xi_0]$
\[
\sigma(\cL_{\xi,0})\cap B(0,\lambda_0)\ =\ \left\{\ \lambda_j(\xi)\ \middle|\ j\in\{1,2,3,4\}\ \right\}
\]
and associated left and right eigenfunctions $\tpsi_j(\xi,\cdot)$ and $\psi_j(\xi,\cdot)$, $j=1,2,3,4$, satisfying pairing relations\footnote{With $\delta^j_\ell=1$ if $j=\ell$, and $\delta^j_\ell=0$ otherwise.}
\[
\langle\tpsi_j(\xi,\cdot),\psi_\ell(\xi,\cdot)\rangle_{L^2}=\iD\ukx\xi\ \delta^j_\ell,\qquad 1\le j,\ell\le 4,
\]
obtained as
\[
\begin{array}{rcccl}
\displaystyle
\psi_j(\xi,\cdot)&=&\displaystyle
\quad
\sum_{\ell=1}^2\beta_{\ell}^{(j)}(\xi)\ q_{\ell}(\xi,\cdot)
&+&\displaystyle
(\iD\ukx\xi)\sum_{\ell=3}^4\beta_\ell^{(j)}(\xi)\ q_\ell(\xi,\cdot)\\
\displaystyle
\tpsi_j(\xi,\cdot)&=&\displaystyle
-(\iD\ukx\xi)\,\sum_{\ell=1}^2\tbeta_{\ell}^{(j)}(\xi)\ \tq_{\ell}(\xi,\cdot)
&+&\displaystyle
\quad\sum_{\ell=3}^4\tbeta_\ell^{(j)}(\xi,\cdot)\ \tq_\ell(\xi,\cdot)\\
\end{array}
\]
where
\begin{itemize}
\item $(q_j(\xi,\cdot))_{1\leq j\leq 4}$ and $(\tq_j(\xi,\cdot))_{1\leq j\leq 4}$ are dual bases of spaces associated with the spectrum in $B(0,\lambda_0)$ of respectively $\cL_{\xi,0}$ and its adjoint $\cL_{\xi,0}^*$, that are analytic in $\xi$ and such that
\begin{align*}
(q_1(0,\cdot),q_2(0,\cdot),q_3(0,\cdot),q_4(0,\cdot))
&\,=\,(\ucU_x,\bJ\,\ucU,\d_\impulse\ucU,\d_\mass\ucU)\,,\\
(\tq_3(0,\cdot),\tq_4(0,\cdot))
&\,=\,(\delta\Impulse_1(\ucU,\eD_1(\ukx\d_x+\ukp\bJ)\ucU),\ \delta\Mass[\ucU])\\
&\,=\,(-\ukx\,\bJ\,\ucU_x+\ukp\,\ucU,\ \ucU)\,,\\
(\d_\xi q_1(0,\cdot),\d_\xi q_2(0,\cdot))
&\,=\,\iD\ukx\,(\d_{\kx}\ucU,\ \d_{\kp}\ucU)\,;
\end{align*}
\item $(\beta^{(j)}(\xi))_{1\leq j\leq 4}$ and $(\tbeta^{(j)}(\xi))_{1\leq j\leq 4}$ are dual bases of $\bC^4$ that are analytic in $\xi$ and such that $(\beta^{(j)}(0))_{1\leq j\leq 4}$ and $(\tbeta^{(j)}(0))_{1\leq j\leq 4}$ are dual right and left eigenbases of
$\ucx\,\I_4+\ubW$ associated with eigenvalues $(a_0^{(j)})_{1\leq j\leq 4}$ labeled so that
$$
\lambda_j(\xi)\,\stackrel{\xi\to0}{=}\,\iD\ukx\xi a_0^{(j)}+\cO(|\xi|^3)\,,\quad
1\leq j\leq 4\,.
$$
\end{itemize}
\et

The way in which the eigenvalue $0$ of multiplicity $4$ breaks is highly non-generic from the point of view of abstract spectral theory. Indeed we already know from Corollary~\ref{c:W-cor} that the four arising eigenvalues are differentiable at $\xi=0$ and we obtain that when eigenvalues of $\ubW$ are distinct, the four eigenvalues of $\cL_{\xi,0}$ are analytic in $\xi$. This should be contrasted with the fact that eigenvalues arising from generic Jordan blocks of height $2$ are no better than $\frac12$-H\"older (and in particular are not Lipschitz).

\begin{proof}
We make extensive use of standard spectral perturbation theory as 
expounded at length in \cite{Kato}. To begin with, we introduce $
\lambda_0>0$ and $\xi_0>0$ such that for $|\xi|\leq \xi_0$ the
spectrum of $\cL_{\xi,0}$ in $B(0,\lambda_0)$ has multiplicity $4$ 
and denote by $\Pi_\xi$ the corresponding Riesz spectral projector. 
From \eqref{dprofile1} the range of $\Pi_0$ is spanned by 
$(\ucU_x,\bJ\,\ucU,\d_\impulse\ucU,\d_\mass\ucU)$ and from 
\eqref{dimpulse},\eqref{dmasse}, we may choose a 
dual basis of the range of $\Pi_0^*$ in the form $(*,**,
\delta\Impulse_1(\ucU,\eD_1(\ukx\d_x+\ukp\bJ)\ucU),\ \delta\Mass[\ucU])$. 
By Kato's perturbation method, we may extend these dual bases as 
dual bases $(q_j(\xi,\cdot))_{1\leq j\leq 4}$ and $(\tq_j(\xi,
\cdot))_{1\leq j\leq 4}$ of respectively the ranges of $\Pi_\xi$ 
and $\Pi_\xi^*$.

One may use the corresponding coordinates to reduce the study of the spectrum of $\cL_{\xi,0}$ to the consideration of the matrix
\[
\Lambda_\xi:=\bp
\langle \tq_{j}(\xi,\cdot);\cL_{\xi,0}\,q_{\ell}(\xi,\cdot)\rangle_{L^2}
\ep_{(j,\ell)\in\{1,2,3,4\}^2}\,.
\] 
From relations expounded above stems
\[
\Lambda_0
\,=\,\bp
0&0&-\ukx\,\d_{\impulse}\ucx&-\ukx\,\d_{\mass}\ucx\\
0&0&\d_{\impulse}\uomp-\ukp\,\d_{\impulse}\ucx&\d_{\mass}\uomp-\ukp\,\d_{\mass}\ucx\\
0&0&0&0\\
0&0&0&0
\ep\,.
\]
Note in particular that $\Lambda_0^2$ is zero. From 
\eqref{dprofile2} we also derive
\begin{align*}
\langle \tq_{j}(0,\cdot);\cL_{(1)}\,q_{\ell}(0,\cdot)\rangle_{L^2}
&=0\,,
&3\leq j\leq 4\,,&\quad 1\leq \ell\leq 2\,.
\end{align*}
Thus 
\begin{align}\label{e:singlambda}
\tLambda_\xi&:=\frac{1}{\iD\ukx\,\xi}P_\xi^{-1}\,\Lambda_\xi\,P_\xi\,,&
P_\xi&:=\bp \I_2&0\\0&\iD\ukx\,\xi\,\I_2\ep\,,&&
\xi\neq 0,
\end{align}
defines a matrix $\tLambda_\xi$ extending analytically to $\xi=0$.

Our main intermediate goal is to compute $\tLambda_0$. We first show that we may enforce 
\begin{align}\label{higherdual}
\d_\xi q_1(0,\cdot)&=\iD\ukx\,\d_{\kx}\ucU,& 
\d_\xi q_2(0,\cdot)&=\d_{\kp}\ucU\,.
\end{align}
To do so, for $\ell=1,2$, by expanding $\Pi_\xi(\cL_{\xi,0}q_\ell(\xi,\cdot))=\cL_{\xi,0}q_\ell(\xi,\cdot)$, we derive that $\cL_{0,0}\d_\xi q_\ell(0,\cdot)+\iD\ukx\cL_{(1)}q_\ell(0,\cdot)$ belongs to the range of $\Pi_0$. Comparing with equations for $\d_{\kx}\ucU$ and $\d_{\kp}\ucU$, we deduce that $\cL_{0,0}(\d_\xi q_1(0,\cdot)-\iD\ukx\d_{\kx}\ucU)$ and $\cL_{0,0}(\d_\xi q_2(0,\cdot)-\iD\ukx\d_{\kp}\ucU)$ thus also $\d_\xi q_1(0,\cdot)-\iD\ukx\d_{\kx}\ucU$ and $\d_\xi q_2(0,\cdot)-\iD\ukx\d_{\kp}\ucU$ belong to the range of $\Pi_0$. Let $(\alpha_j^{(\ell)})_{1\leq j\leq4,\,1\leq\ell\leq 2}$ be such that
\begin{align*}
\d_\xi q_1(0,\cdot)-\iD\ukx\d_{\kx}\ucU
&=\sum_{j=1}^4\alpha_j^{(1)} q_j(0,\cdot)\,,&
\d_\xi q_2(0,\cdot)-\iD\ukx\d_{\kp}\ucU
&=\sum_{j=1}^4\alpha_j^{(2)} q_j(0,\cdot)\,.
\end{align*}
Lessening $\xi_0$ if necessary, one may then replace $(q_j(\xi,\cdot))_{1\leq j\leq 4}$ with
\begin{align*}
q_1(\xi,\cdot)&-\xi\,\sum_{j=1}^4\alpha_j^{(1)} q_j(\xi,\cdot)\,,&
q_2(\xi,\cdot)&-\xi\,\sum_{j=1}^4\alpha_j^{(2)} q_j(\xi,\cdot)\,,&
q_3(\xi,\cdot)&\,,&
q_4(\xi,\cdot)&\,,
\end{align*}
and $(\tq_j(\xi,\cdot))_{1\leq j\leq 4}$ with
\begin{align*}
\tq_j(\xi,\cdot)&+\xi\,\sum_{\ell=1}^2\talpha_j^{(\ell)}(\xi)\tq_\ell(\xi,\cdot)\,,&
j=1,2,3,4\,,
\end{align*}
with $(\talpha_j^{(\ell)}(\xi))_{1\leq j\leq4,\,1\leq\ell\leq 2}$ tuned to preserve duality relations and have \eqref{higherdual},
that we assume from now on. To make the most of associated relations, we observe that from duality  stems
\begin{align*}
\langle \d_\xi\tq_{j}(0,\cdot);\,q_{\ell}(0,\cdot)\rangle_{L^2}&=
-\langle \tq_{j}(0,\cdot);\,\d_\xi q_{\ell}(0,\cdot)\rangle_{L^2}\,,
&\quad 1\leq j\,,\ell\leq 4\,.
\end{align*}

Since
\begin{align*}
(\tLambda_0)_{j,\ell}
&=
\langle \tq_{j}(0,\cdot);
\frac{1}{\iD\ukx}\cL_{0,0}\,\d_\xi\,q_{\ell}(0,\cdot)
+\cL_{(1)}\,q_{\ell}(0,\cdot)\rangle_{L^2}\,,
&1\leq j\leq 2\,,&\quad 1\leq \ell\leq 2\,,\\
(\tLambda_0)_{j,\ell}
&=(\Lambda_0)_{j,\ell}\,,
&1\leq j\leq 2\,,&\quad 3\leq \ell\leq 4\,,\\
\end{align*}
this gives readily from \eqref{dprofile2}
\begin{align*}
\bp(\tLambda_0)_{j,\ell}\ep_{1\leq j\leq 2,\ 1\leq\ell\leq4}
=\bp
-\ukx\,\d_{\kx}\ucx&-\ukx\,\d_{\kp}\ucx&-\ukx\,\d_{\impulse}\ucx&-\ukx\,\d_{\mass}\ucx\\
\d_{\kx}\uomp-\ukp\,\d_{\kx}\ucx&\d_{\kp}\uomp-\ukp\,\d_{\kp}\ucx
&\d_{\impulse}\uomp-\ukp\,\d_{\impulse}\ucx&\d_{\mass}\uomp-\ukp\,\d_{\mass}\ucx\\
\ep\,.
\end{align*}
The extra relations also carry 
\begin{align*}
(\tLambda_0)_{j,\ell}
&=
\langle \tq_{j}(0,\cdot);
\frac{1}{\iD\ukx}\cL_{(1)}\,\d_\xi\,q_{\ell}(0,\cdot)
+\cL_{(2)}\,q_{\ell}(0,\cdot)\rangle_{L^2}\\
&\qquad+(\umass\,\d_{k^{(\ell)}}\uomp-\uimpulse\d_{k^{(\ell)}}\ucx)\,\delta_j^3\,,
&3\leq j\leq 4\,,&\quad 1\leq \ell\leq 2\,,\\
(\tLambda_0)_{j,\ell}
&=\langle \tq_{j}(0,\cdot);
\cL_{(1)}\,q_{\ell}(0,\cdot)\rangle_{L^2}\\
&\qquad+(\umass\,\d_{m^{(\ell)}}\uomp-\uimpulse\d_{m^{(\ell)}}\ucx)\,\delta_j^3\,,
&3\leq j\leq 4\,,&\quad 3\leq \ell\leq 4\,,
\end{align*}
with $k^{(1)}=\kx$, $k^{(2)}=\kp$, $m^{(3)}=\impulse$, $m^{(4)}=\mass$. Using \eqref{highermasse}, \eqref{higherimpulse} to
evaluate the foregoing expressions leads to the final identification
\[
\tLambda_0\,=\,\ucx\,\I_4+\ubW\,.
\]

The proof is then completed by diagonalizing matrices $\tLambda_\xi$, that have simple eigenvalues provided that $\xi_0$ is taken sufficiently small, and undoing the various transformations.
\end{proof}

We would like to make a few comments on the foregoing proof.
\begin{enumerate}
\item Though this is useless for our purposes, one may also compute explicitly $\tq_1(0,\cdot)$ and $\tq_2(0,\cdot)$ as combinations of $\bJ\ucU_x$, $\ucU$, $\bJ\d_\impulse\ucU$ and $\bJ\d_\mass\ucU$. Indeed it follows from Hamiltonian duality that the four vectors form a basis of the range of $\Pi_0^*$ and their scalar products with $\ucU_x$, $\bJ\ucU$, $\d_\impulse\ucU$ and $\d_\mass\ucU$ are explicitly known.
\item The assumption that the eigenvalues of $\ubW$ are distinct is only used at the very end of the proof. Removing it, the arguments still give an alternative proof of the second part of Corollary~\ref{c:W-cor}. For semilinear equations, to some extent this has already been carried out in the recent \cite{LBJM} with a few variations that we point now.
\begin{enumerate}
\item The authors of \cite{LBJM} further assume that $\cL_{0,0}$ exhibits two Jordan blocks of height $2$ at $0$, in other words they assume that the above matrix $\Lambda_0$ has rank $2$.
\item In \cite{LBJM} no formal interpretation is provided for the underlying instability criterion. In particular no connection with geometrical optics and modulated systems is offered for the matrix $\tLambda_0$. This connection is established in a 
recent preprint \cite{Clarke-Marangell}, building upon \cite{LBJM}. Hence the next remarks also apply to \cite{Clarke-Marangell}.
\item The structure of eigenfunctions is left out of the discussion in \cite{LBJM}, whereas this is our main motivation for reproving in a different way the second part of Corollary~\ref{c:W-cor}. In turn, the main focus of \cite{LBJM} is on spectral stability and the authors supplement their analysis with numerical experiments for cubic and quintic semilinear equations.
\item We have taken advantage of the fact that we have already proven the first part of  Corollary~\ref{c:W-cor} to use modulation coordinates $(\kx,\kp,\impulse,\mass)$ whereas the analysis in \cite{LBJM} is carried out with phase-portrait parameters $(\mux,\cx,\omp,\mup)$. 
\end{enumerate}
\end{enumerate}

\medskip

In the remaining part of this section, since we are only discussing longitudinal perturbations, we assume $d=1$ for the sake of readability. Then, denoting by $(S(t))_{t\in\R}$ the group associated with the operator $\cL$ on $L^2(\R)$ and, for $\xi\in[-\pi,\pi]$, by $(S_\xi(t))_{t\in\R}$ the group associated with the operator $\cL_\xi$ on $L^2_{\per}((0,1))$, we note that
from Bloch inversion \eqref{inverse-Bloch} stems
\[
(S(t)g)(\bfx)\ =\ \int_{-\pi}^\pi\eD^{\iD\xi x}\ 
(S_\xi(t)\check{g}(\xi,\cdot))(x)\ \dD\xi\,.
\]
Our main concern here is to analyze the large-time dynamics for the slow side-band part of the evolution
\[
(\SSp(t)g)(\bfx)\ :=\ \int_{-\pi}^\pi\eD^{\iD\xi x}\ \chi(\xi)\,
(S_\xi(t)\,\Pi_\xi\,\check{g}(\xi,\cdot))(x)\ \dD\xi\,.
\]
where $\chi$ is a smooth cut-off function equal to $1$ on 
$[-\xi_0/2,\xi_0/2]$ and to $0$ outside of $[-\xi_0,\xi_0]$ with 
$\xi_0>0$ as in the statement of Theorem~\ref{th:W-Bloch} 
and $\Pi_\xi$ the associated spectral projector, as in the 
proof of Theorem~\ref{th:W-Bloch}.

Let us explain in which sense this is expected to be the principal part of the linearized evolution for suitably spectrally stable waves. As a first remark we point out that when considering general perturbations on $\R$ (as opposed to co-periodic 
perturbations) we have to abandon not only stability in its 
strongest sense that would require a control of $\|\bU-\ucU\|_X$ 
(in some functional space $X$ of functions 
defined on $\R$) but also orbital stability that here requires a control of 
\[
\inf_{(\vphip,\vphix)\in\RR^2}\left\|\eD^{-\vphip\bJ}\bU(\,\cdot-\vphix\,)\,-\,\ucU\right\|_X\,,
\]
and instead to adopt the notion of \emph{space-modulated stability} that is encoded by bounds on 
\[
\inf_{(\vphip,\vphix)\textrm{ functions on }\R}\left(\left\|\eD^{-\vphip(\cdot)\bJ}\bU(\,\cdot-\vphix(\cdot)\,)\,-\,\ucU\right\|_X
+\|\d_x\vphip\|_X
+\|\d_x\vphix\|_X\right)\,.
\]
Rather than bounding $\|\bV\|_X$ or 
\[\di \inf_{\substack{(\vphip,\vphix)\in\RR^2\\\bV=\vphip\,\bJ\ucU+\vphix\,\ucU_x+\tbV}}\|\tbV\|_X\,,
\]
at the linearized level this consists in trying to bound 
\[
N_X(\bV):=\inf_{\substack{(\vphip,\vphix)\textrm{ functions on }\R\\\bV=\vphip\,\bJ\ucU+\vphix\,\ucU_x+\tbV}}\left(\|\tbV\|_X
+\|\d_x\vphip\|_X
+\|\d_x\vphix\|_X\right)\,,
\]
Note that $N_X$ precisely quotients ``locally'' the unstable 
directions highlighted in (the proof of) Theorem 
\ref{th:W-Bloch}, so that $\vphip,\,\vphix$ should be thought
of as \emph{local parameters}.
We adapt here to the case with a two-dimensional symmetry group the nonlinear notion formalized in \cite{JNRZ-conservation} and its linearized counterpart introduced in \cite{R_linKdV}.  Both notions have been proved to be sharp, respectively for a large class of parabolic systems in \cite{JNRZ-conservation} and for the linearized Korteweg--de Vries equation in \cite{R_linKdV}. The reader is also referred to \cite{R,R_Roscoff} for some more intuitive descriptions of the notions at hand. 

\br
An incorrect choice of stability type would lead to a claim of instability in situations where the global shape is preserved but positions need to be resynchronized either uniformly in space in the orbitally stable case or in a slowly varying way in the space-modulated stable case. In the latter case, the underlying spurious growth is due to the presence of Jordan blocks in the spectrum and it results in departures from the background profile that are algebraic in time (when no space-dependent synchronization is allowed). Thus concluding to a \emph{genuine} instability either at the linear or nonlinear requires extra care in the analysis. See for instance \cite{DR2} for an example of the latter. Unfortunately, though it seems clear that some extra analysis could be carried out to fill this gap, the only general nonlinear instability result available so far \cite{Jin-Liao-Lin} is expressed as an instability for the strongest sense of stability.
\er

Following the lines of \cite{R_linKdV}, one expects that 
for suitably spectrally stable waves the 
following bounds  hold 
\begin{align*}
\|(S(t)-\SSp(t))\bV_0\|_{H^s(\R)}&\leq\,C_s\,N_{H^s(\R)}(\bV_0)\,,&
\quad t\in\R\,,s\in\N\,,\\
\|(S(t)-\SSp(t))\bV_0\|_{L^\infty(\R)}&\leq\,\frac{C}{|t|^{\frac12}}\,N_{L^1(\R)}(\bV_0)\,,&
\quad t\in\R\,,
\end{align*} 
(with constants independent of $(t,\bV_0)$). We shall not try to prove or even formulate more precisely the latter but the reader should keep in mind the claimed $|t|^{-1/2}$ decay so as to compare it with bounds below. In particular the conclusions of the next theorem contains that 
\begin{align*}
N_{L^\infty(\R)}(\SSp(t)\bV_0)&\leq\,\frac{C}{(1+|t|)^{\frac13}}\,N_{L^1(\R)}(\bV_0)\,,&
\quad t\in\R\,.
\end{align*}

\begin{theorem}{\emph{Slow modulation behavior.}}\label{th:behavior}
Under the assumptions of Theorem~\ref{th:W-Bloch} and with its 
set of notation, assume moreover that 
\begin{enumerate}
\item for any $\xi\in[-\xi_0,\xi_0]$, for $j\in\{1,2,3,4\}$, $\lambda_j(\xi)\in\iD\R$;
\item for $j\in\{1,2,3,4\}$, $\d_\xi^3\lambda_j(0)\neq0$.
\end{enumerate}
There exists $C$ such that for any $\bV_0$ such that $N_{L^1(\R)}(\bV_0)<\infty$, there exists 
local parameter functions $\varphi_x,\,\varphi_\phi,\,\impulse$ and 
$\mass$ such that
for any time $t\in\R$ 
\begin{align*}
\ds
\big\|\SSp(t)\,(\bV_0)&-\ds
\vphix(t,\cdot)\,\ucU_x
\,-\,\vphip(t,\cdot)\,\bJ\ucU\\
&-\dd_{\kx,\kp,\impulse,\mass}\ucU\cdot(\ukx\d_x\vphix(t,\cdot),\ukx\d_x\vphip(t,\cdot),\impulse(t,\cdot),\mass(t,\cdot))
\big\|_{L^\infty(\R)}\\[0.5em]
&\leq
\frac{C}{(1+|t|)^{\frac12}}\,N_{L^1(\R)}(\bV_0)\ds
\end{align*}
where $\vphix(t,\cdot)$ and $\vphip(t,\cdot)$ are centered, $\vphix(t,\cdot)$, $\vphip(t,\cdot)$, $\impulse(t,\cdot)$, and 
$\mass(t,\cdot)$ are low-frequency\footnote{In the 
sense that their (distributional) Fourier transform has compact 
support that could be taken arbitrarily close to the origin.}, 
and 
\[
\|(\ukx\d_x\vphix(t,\cdot),\ukx\d_x\vphip(t,\cdot),\impulse(t,\cdot),\mass(t,\cdot))\|_{L^\infty(\R)}
\,\leq\,\frac{C}{(1+|t|)^{\frac13}}\,N_{L^1(\R)}(\bV_0)\,.\ds
\]
\end{theorem}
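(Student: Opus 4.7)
The plan is to import the arguments of \cite{R_linKdV}, adapted to the two-dimensional symmetry group that replaces the one-dimensional one of the KdV setting; accordingly I describe only the main steps.

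\textbf{Step 1: spectral representation on the slow/side-band window.} Starting from
\[
(\SSp(t)\bV_0)(x)\,=\,\int_{-\pi}^{\pi}\eD^{\iD\xi x}\,\chi(\xi)\,(S_\xi(t)\,\Pi_\xi\,\check{\bV_0}(\xi,\cdot))(x)\,\dD\xi\,,
\]
I would use Theorem~\ref{th:W-Bloch} together with the duality relations $\langle \tpsi_j(\xi,\cdot),\psi_\ell(\xi,\cdot)\rangle_{L^2}=\iD\ukx\xi\,\delta_\ell^j$ to replace, for $\xi\in\mathrm{supp}\,\chi$,
\[
S_\xi(t)\,\Pi_\xi\,=\,\sum_{j=1}^{4}\frac{\eD^{\lambda_j(\xi)\,t}}{\iD\ukx\xi}\,\psi_j(\xi,\cdot)\otimes\tpsi_j(\xi,\cdot)\,.
\]
Substituting the explicit factorizations of $\psi_j$ and $\tpsi_j$ in terms of the Kato bases $(q_\ell)$ and $(\tq_\ell)$ expands $\SSp(t)\bV_0$ as a finite sum of scalar oscillatory integrals of the form $\int \eD^{\iD\xi x+\lambda_j(\xi)t}\chi(\xi)\,a_j(\xi)\,\dD\xi$ multiplied by $x$-periodic factors stemming from $q_\ell(\xi,\cdot)$. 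The apparent $\xi^{-1}$ singularity of the projector is cancelled in each summand either by a factor $\iD\ukx\xi$ explicit in $\psi_j$ or $\tpsi_j$, or by a factor of $\xi$ extracted from a Taylor expansion in $\xi$ at $\xi=0$.

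\textbf{Step 2: identification of the modulation ansatz.} The only contributions that do not vanish at $\xi=0$ are those pairing, in the first slot, either $q_1(0,\cdot)=\ucU_x$ or $q_2(0,\cdot)=\bJ\ucU$ with, in the second slot, $\tq_3(0,\cdot)=\delta\Impulse_1[\ucU]$ or $\tq_4(0,\cdot)=\delta\Mass[\ucU]$; the contributions pairing $q_3(0,\cdot)=\d_\impulse\ucU$ or $q_4(0,\cdot)=\d_\mass\ucU$ in the first slot carry one factor of $\xi$ coming from $\tpsi_j$; and the first-order Taylor corrections of $q_1(\xi,\cdot),q_2(\xi,\cdot)$ produce, by virtue of the normalization $\d_\xi q_1(0,\cdot)=\iD\ukx\d_\kx\ucU$, $\d_\xi q_2(0,\cdot)=\iD\ukx\d_\kp\ucU$ identified in the proof of Theorem~\ref{th:W-Bloch}, precisely the $\ukx\d_x\vphix\,\d_\kx\ucU$ and $\ukx\d_x\vphip\,\d_\kp\ucU$ terms of the desired ansatz. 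I would then \emph{define} the four scalar local parameters $\vphix(t,x),\vphip(t,x),\impulse(t,x),\mass(t,x)$ as the oscillatory integrals (with $\vphix,\vphip$ centered to fix their arbitrary additive constants and restricted to the spectral cutoff, thus low-frequency) obtained by pairing the leading-order slot-factors just identified with $\check\bV_0$.

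\textbf{Step 3: dispersive estimates.} Under hypotheses~(1)--(2), each phase $\xi\mapsto \iD x\xi+\lambda_j(\xi)t$ is purely imaginary-valued along $\iD\RR$, with $\lambda_j''(0)=0$ forced by the structure of the low-frequency expansion of the Evans function (Theorem~\ref{th:low-freq} and Corollary~\ref{c:W-cor}) and $\lambda_j'''(0)\neq 0$ by assumption. Hence standard van der Corput / stationary-phase estimates (as used in \cite[Section~2 and Section~4]{R_linKdV}) yield
\[
\Bigl|\int_{-\xi_0}^{\xi_0}\eD^{\iD\xi x+\lambda_j(\xi)t}\,\chi(\xi)\,\xi^{k}\,a(\xi,y)\,\dD\xi\Bigr|\,\leq\,\frac{C}{(1+|t|)^{(1+k)/3}}\,\sup_{y}\|a(\cdot,y)\|_{W^{\sigma,\infty}},
\]
for some fixed $\sigma$ independent of $(t,x)$. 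Combining with Plancherel/Young-type bounds on $\check\bV_0$, controlled by $N_{L^1(\RR)}(\bV_0)$ (the quotient $N_{L^1}$ is designed precisely to remove the directions $\bJ\ucU$ and $\ucU_x$ that would prevent an $L^1$ control on the Bloch side), this gives the $(1+|t|)^{-1/3}$ bound on $(\ukx\d_x\vphix,\ukx\d_x\vphip,\impulse,\mass)$ in $L^\infty$. Applied to the residual in Step~2, where the kernel carries at least one extra factor of $\xi$, the same estimate at $k=1$ yields a $(1+|t|)^{-2/3}$ bound, in particular implying the announced $(1+|t|)^{-1/2}$ rate.

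\textbf{Main obstacle.} The conceptually delicate point is the algebraic bookkeeping in Step~2: organizing the $4\times 4$ block-matrix of kernels so that every $\xi^{-1}$ from the projector is matched with a $\xi$-factor either explicit in $\psi_j/\tpsi_j$ or extracted by Taylor expansion, and so that the non-decaying (``secular'') contributions are exactly the four prescribed modulation terms, no more and no less. This hinges crucially on the precise form of the dual bases and of $\d_\xi q_{1,2}(0,\cdot)$ provided by Theorem~\ref{th:W-Bloch}; once this identification is performed, the remaining work is the essentially standard oscillatory-integral analysis already carried out in \cite{R_linKdV}.
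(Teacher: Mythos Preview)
Your proposal is correct and matches the paper's approach: the authors explicitly omit the proof, stating that with Theorem~\ref{th:W-Bloch} in hand it is identical to the corresponding argument in \cite{R_linKdV} (see in particular \cite[Propositions~3.2 \&~4.2]{R_linKdV}), and your Steps~1--3 are precisely a sketch of that argument. One small correction: your justification for $\d_\xi^2\lambda_j(0)=0$ is not the Evans-function expansion but rather the Hamiltonian symmetry of the spectrum combined with assumption~(1), which forces each $\lambda_j$ to be odd in $\xi$ (this is exactly the observation the paper records in its comment~(2) following the theorem).
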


We omit the proof of Theorem~\ref{th:behavior} since with Theorem~\ref{th:W-Bloch} in hands, the proof is identical to the one of the corresponding result in \cite{R_linKdV}. Theorem~\ref{th:W-Bloch} is the counterpart of \cite[Proposition~2.1]{R_linKdV}, while 
Theorem~\ref{th:behavior} is a low frequency version of \cite[Theorem~1.3]{R_linKdV} (which is why the decay factor is bounded at $t=0$); see in particular \cite[Propositions~3.2 \&~4.2]{R_linKdV}. Yet we would like to add some comments.
\begin{enumerate}
\item A choice of local parameters can be given explicitly :
\begin{align*}
\bp\ukx\d_x\vphix(t,\cdot)\\\ukx\d_x\vphip(t,\cdot)\\\impulse(t,\cdot)\\\mass(t,\cdot)\ep(x)
&\,=\,\spp(t)(\bV_0)(x)\\
&:=\sum_{j=1}^4\int_{-\pi}^\pi\eD^{\iD\xi x+\lambda_j(\xi)\,t}\ \chi(\xi)\,
\beta^{(j)}(\xi)\,\langle \tpsi_j(\xi,\cdot),\widecheck{\bV_0}\,(\xi,\cdot)\rangle_{L^2}\ \dD\xi\,,
\end{align*}
This is motivated by the explicit diagonalization of $\cL_{\xi}\Pi_\xi$ from 
Theorem~\ref{th:W-Bloch} which implies
\begin{align*}
S_\xi(t)\,\Pi_\xi\,g
\,=\,&\frac{1}{\iD\ukx\xi}\left(\sum_{j=1}^4\eD^{\lambda_j(\xi)\,t}
\beta_{1}^{(j)}(\xi)\,\langle \tpsi_j(\xi,\cdot),g\rangle_{L^2}\right)\,q_1(\xi,\cdot)\\
&+\frac{1}{\iD\ukx\xi}\left(\sum_{j=1}^4\eD^{\lambda_j(\xi)\,t}
\beta_{2}^{(j)}(\xi)\,\langle \tpsi_j(\xi,\cdot),g\rangle_{L^2}\right)\,q_2(\xi,\cdot)\\
&+\left(\sum_{j=1}^4\eD^{\lambda_j(\xi)\,t}
\beta_{3}^{(j)}(\xi)\,\langle \tpsi_j(\xi,\cdot),g\rangle_{L^2}\right)\,q_3(\xi,\cdot)\\
&+\left(\sum_{j=1}^4\eD^{\lambda_j(\xi)\,t}
\beta_{4}^{(j)}(\xi)\,\langle \tpsi_j(\xi,\cdot),g\rangle_{L^2}\right)\,q_4(\xi,\cdot)
\end{align*}
the choice of local parameters is then dictated by analyzing the 
various expressions (including remainders) arising from expansions 
with respect to $\xi$ of $q_1(\xi,\cdot)$, $q_2(\xi,\cdot)$ at order $2$ and $q_3(\xi,\cdot)$, $q_4(\xi,\cdot)$, at order $1$.
One may also replace $\chi$ with a cut-off function with support closer to the origin if required.
\item Note that, since $(\d_\xi\lambda_j(0))_{j\in\{1,2,3,4\}}$ are two-by-two distinct, assuming that for any $\xi\in[-\xi_0,\xi_0]$ and any $j$, $\lambda_j(\xi)\in\iD\R$, from the Hamiltonian symmetry of the spectrum one derives that for $|\xi|\leq\xi_0$ ($\xi_0$ sufficiently small) and any $j\in\{1,2,3,4\}$  $\lambda_j(\xi)=\overline{\lambda_j(-\xi)}=-\lambda_j(-\xi)$. In particular, for any $j\in\{1,2,3,4\}$, $\lambda_j(\cdot)$ is an odd function and thus $\d_\xi^2\lambda_j(0)=0$. Therefore the assumption that for $j\in\{1,2,3,4\}$, $\d_\xi^3\lambda_j(0)\neq0$, expresses that the dispersive effects on local parameters are as strong as possible. In contrast the $|t|^{-1/2}$ decay claimed for the leftover part $S(t)-\SSp(t)$ is expected to be derivable from the assumption that outside the origin $(\lambda,\xi)=(0,0)$ second-order derivatives with respect to $\xi$ of spectral curves do not vanish.
\item For the semilinear defocusing cubic Schr\"odinger equation full spectral stability under longitudinal perturbations is known for all the waves and we expect that the remaining assumptions may be checked by reliable elementary numerics by using explicit formula for spectra obtained in \cite{Bottman-Deconinck-Nivala}.
\end{enumerate}

Theorem~\ref{th:behavior} essentially proves that $\SSp(t)(\bV_0)$ fits well with a large-time linearized version of the \emph{ansatz} \eqref{e:ansatz} with $\cU_{0}(T,X;\cdot)$ being a periodic wave profile of parameters such that $\kx=\d_X\vphix$ and $\kp=\d_X\vphip$. We now prove that some version of \eqref{e:W-formal} drives the evolution of local parameters $(\ukx\d_x\vphix,\ukx\d_x\vphip,\impulse,\mass)$ of Theorem~\ref{th:behavior}. We need to modify \eqref{e:W-formal} so as to account for dispersive effects. 

Let $P_0$ diagonalize $\ubW$ so that $P_0=\bp\beta^{(1)}(0)&\beta^{(2)}(0)&\beta^{(3)}(0)&\beta^{(4)}(0)\ep$
\begin{align*}
P_0^{-1}&=\bp\tbeta^{(1)}(0)\\\tbeta^{(2)}(0)\\\tbeta^{(3)}(0)\\\tbeta^{(4)}(0)\ep\,,&
P_0^{-1}\,\ubW\,P_0
&=\diag(a_0^{(1)}-\ucx,a_0^{(2)}-\ucx,a_0^{(3)}-\ucx,a_0^{(4)}-\ucx)\,,
\end{align*}
and define for $q$ an integer 
\[
\uD_q(\xi):=P_0\diag(\lambda_1^{[q]}(\xi)-a_0^{(1)}\iD\ukx\xi,\lambda_2^{[q]}(\xi)-a_0^{(2)}\iD\ukx\xi,\lambda_3^{[q]}(\xi)-a_0^{(3)}\iD\ukx\xi,\lambda_4^{[q]}(\xi)-a_0^{(4)}\iD\ukx\xi)P_0^{-1}
\]
where $\lambda_j^{[q]}(\xi)$ is the $q$th order Taylor expansion of $\lambda_j(\xi)$ near $0$. By convention we also include the pseudo-differential case where $q=\infty$ by choosing $\lambda_j^{(\infty)}$ as a smooth purely imaginary-valued function that coincides with $\lambda_j$ in a neighborhood of zero. Then consider the higher-order linearized modulation system
\be\label{e:Wlin-qth}
\d_t\bp\kx\\\kp\\\impulse\\\mass\ep
\,=\,\ukx\,\left(\ubW+\ucx\I_4\right)\d_x\bp\kx\\\kp\\\impulse\\\mass\ep
+\uD_q(\iD^{-1}\d_x)\bp\kx\\\kp\\\impulse\\\mass\ep\,.
\ee
Note that when $q=3$, $\uD_q(\iD^{-1}\d_x)$ takes the form $\ubD_3(\ukx\d_x)^3$ where $\ubD_3$ is a real-valued matrix.

\begin{theorem}{\emph{Averaged dynamics.}}\label{th:qth}
Let $q$ be an odd integer larger than $1$, or $q=\infty$. Under the assumptions of Theorem~\ref{th:behavior}, there exist $C$ and a cut-off function $\tchi$ such that for any $\bV_0$ such that $N_{L^1(\R)}(\bV_0)<\infty$ there exist $(\vphix^{(0)},\vphip^{(0)})$ centered and low-frequency such that with $\tbV_0:=\bV_0-\vphix^{(0)}\,\ucU_x\,-\,\vphip^{(0)}\,\bJ\ucU$ 
\[
\|\tbV_0\|_{L^1(\R)}\,+\,\|\d_x\vphix^{(0)}\|_{L^1(\R)}
\,+\,\|\d_x\vphip^{(0)}\|_{L^1(\R)}\,\leq\,2\,N_{L^1(\R)}(\bV_0)
\]
and for any such $(\vphix^{(0)},\vphip^{(0)})$ the local parameters $(\ukx\d_x\vphix,\ukx\d_x\vphip,\impulse,\mass)$  of Theorem~\ref{th:behavior} may be chosen in such a way that
with
\[
\bp\kx^{(0)}\\\kp^{(0)}\\\impulse^{(0)}\\\mass^{(0)}\ep
:=\tchi(\iD^{-1}\d_x)\bp\ukx\d_x\vphix^{(0)}\\\ukx\d_x\vphip^{(0)}\\
\delta\Impulse(\ucU,\ukx\d_x\ucU+\ukp\bJ\ucU)\,\tbV_0\\
\delta\Mass[\ucU]\,\tbV_0\ep
\]
for any time $t\in\R$
\begin{align*}
\big\|(\ukx\d_x\vphix(t,\cdot),\ukx\d_x\vphip(t,\cdot),\impulse(t,\cdot),\mass(t,\cdot))
&\ds-
\SigW_q(t)(\kx^{(0)},\kp^{(0)},\impulse^{(0)},\mass^{(0)})
\big\|_{L^\infty(\R)}\\
&\ds\,\leq\,
\frac{C}{(1+|t|)^{\frac{q+1}{2(q+2)}}}
\,N_{L^1(\R)}(\bV_0)
\end{align*}
and 
\begin{align*}
\|(\vphix(t,\cdot),\vphip(t,\cdot))&-\bp\beD_1\\\beD_2\ep\cdot\,(\ukx\d_x)^{-1}\SigW_q(t)(\kx^{(0)},\kp^{(0)},\impulse^{(0)},\mass^{(0)})\|_{L^\infty(\R)}\\
&\,\leq\,C\,N_{L^1(\R)}(\bV_0)
\,\begin{cases}\ds\frac{1}{(1+|t|)^{\frac13}}&\textrm{ if }q\geq 5\\
\ds\frac{1}{(1+|t|)^{\frac15}}&\textrm{ if }q=3
\end{cases}
\end{align*}
where $\SigW_q$ is the solution operator to System~\eqref{e:Wlin-qth}.
\end{theorem}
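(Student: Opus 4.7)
The plan is to follow the scheme introduced in \cite{R_linKdV} for the linearized Korteweg--de Vries equation, adapting it to the presence here of a two-dimensional symmetry group. First, using the spectral decomposition provided by Theorem~\ref{th:W-Bloch}, I would write the local parameter operator $\spp(t)$ of Theorem~\ref{th:behavior} explicitly as the Bloch--Fourier integral
\[
\spp(t)(\bV_0)(x)=\sum_{j=1}^4\int_{-\pi}^\pi e^{\iD\xi x+\lambda_j(\xi)t}\chi(\xi)\,\beta^{(j)}(\xi)\,\langle\tpsi_j(\xi,\cdot),\widecheck{\bV_0}(\xi,\cdot)\rangle_{L^2}\,\dD\xi\,.
\]
In parallel, diagonalizing the constant-coefficient Fourier multiplier defining the solution operator $\SigW_q(t)$ of \eqref{e:Wlin-qth}, one checks that its total symbol is $P_0\diag(\lambda_j^{[q]}(\xi))P_0^{-1}$, so that $\SigW_q(t)$ is expressed through an identical-looking integral but with $\lambda_j$ replaced by its truncated Taylor polynomial $\lambda_j^{[q]}$ and $\beta^{(j)}(\xi),\tpsi_j(\xi,\cdot)$ replaced by their frozen $\xi=0$ counterparts $\beta^{(j)}(0),\tbeta^{(j)}(0)\tq_j(0,\cdot)$.

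Second, I would construct $(\vphix^{(0)},\vphip^{(0)})$ by an essentially optimal low-frequency choice: select a cut-off function $\tchi$ supported in $[-\xi_0,\xi_0]$ and equal to one near $0$, and extract from $\widecheck{\bV_0}(\xi,\cdot)$ its components along $q_1(0,\cdot)=\ucU_x$ and $q_2(0,\cdot)=\bJ\ucU$ via the dual projections on $\tq_1(0,\cdot),\tq_2(0,\cdot)$ (obtained as in Remark~(1) following Theorem~\ref{th:W-Bloch}), weighting by $\tchi(\xi)/(\iD\ukx\xi)$. Then $\d_x\vphix^{(0)},\d_x\vphip^{(0)}$ are the low-frequency spatial cut-offs of these extracted components, and $\tbV_0$ is defined so as to eliminate this kernel contribution. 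A direct Parseval/Young argument, combined with the definition of $N_{L^1(\R)}$, yields the bound $\|\tbV_0\|_{L^1}+\|\d_x\vphix^{(0)}\|_{L^1}+\|\d_x\vphip^{(0)}\|_{L^1}\leq 2N_{L^1(\R)}(\bV_0)$. By design $\tbV_0$ is then paired naturally with $\delta\Impulse$ and $\delta\Mass$ to provide $\impulse^{(0)},\mass^{(0)}$, and the initial condition for $\SigW_q(t)$ matches the leading-order behavior of $\spp(t)(\bV_0)$ at $t=0$ modulo higher-order corrections.

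Third, the heart of the argument is the bound of the difference $\spp(t)(\bV_0)-\SigW_q(t)(\kx^{(0)},\kp^{(0)},\impulse^{(0)},\mass^{(0)})$, which splits into three types of oscillatory integrals according to the source of the discrepancy: replacement of $\beta^{(j)}(\xi)$ by $\beta^{(j)}(0)$, replacement of the projection functionals associated with $\tpsi_j(\xi,\cdot)$ by their $\xi=0$ limits composed with the extraction of $(\kx^{(0)},\kp^{(0)},\impulse^{(0)},\mass^{(0)})$, and replacement of the full curves $\lambda_j(\xi)$ by their Taylor polynomials $\lambda_j^{[q]}(\xi)$. Each piece is an oscillatory integral of the form $\int e^{\iD\xi x+\lambda_j(\xi)t}\,\xi^r\,a(\xi)\,h(\xi)\,\dD\xi$ with $r\geq 1$ (analyticity of $\xi\mapsto\beta^{(j)}(\xi)$, $\tpsi_j$ and Taylor remainder, respectively, the Taylor-remainder case carrying an additional factor $t$ but compensated by $r\geq q+1$). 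The oscillatory integral estimate from \cite{R_linKdV}, based on van der Corput's lemma and the hypothesis $\d_\xi^3\lambda_j(0)\neq 0$ (which, jointly with the purely-imaginary-valuedness hypothesis, makes $\lambda_j(\cdot)$ an odd imaginary function with cubic behavior), produces the decay factor $(1+|t|)^{-\frac{q+1}{2(q+2)}}$ through a dyadic decomposition in $\xi$ and optimization between low- and high-frequency regimes (a variant of the standard trade-off yielding $t^{-1/3}$ at the tip, boosted by $r$ extra powers of $\xi$). The contribution to $(\vphix,\vphip)$, obtained by integrating $(\ukx\d_x\vphix,\ukx\d_x\vphip)$ in $x$, is handled by reabsorbing one factor of $\iD\xi$ and repeating the same oscillatory argument, which accounts for the sharper rate $(1+|t|)^{-1/3}$ when $q\geq 5$ and $(1+|t|)^{-1/5}$ when $q=3$ stemming from balancing the non-vanishing third derivative of the phase against the $\iD\xi^{-1}$ singularity.

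The main technical obstacle will be the oscillatory-integral step: keeping track of how the order-$r$ vanishing of the symbol at $\xi=0$, the factor $t$ coming from Taylor expansion of the propagator $e^{(\lambda_j-\lambda_j^{[q]})t}$, and van der Corput decay interact to produce the stated rate $(1+|t|)^{-(q+1)/(2(q+2))}$ uniformly in $q$ (including the pseudo-differential limit $q=\infty$, which requires controlling the symbol of $\uD_\infty$ away from the origin). The rest of the argument --- spectral expansions, construction of initial data, identification of the symbol of $\SigW_q(t)$ --- is essentially algebraic and can be imported almost verbatim from \cite{R_linKdV}, using Theorems~\ref{th:W-Bloch} and~\ref{th:behavior} as substitutes for the corresponding results there.
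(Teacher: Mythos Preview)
Your proposal is correct and follows essentially the same approach as the paper, which explicitly omits the proof as ``nearly identical to the one of the corresponding result in \cite{R_linKdV}, namely Theorems~1.4 ($q=3$) and~1.5 ($q>3$)''. Your outline --- expressing $\spp(t)$ via the Bloch decomposition of Theorem~\ref{th:W-Bloch}, identifying the symbol of $\SigW_q(t)$, and controlling the difference by van der Corput-type oscillatory integral bounds exploiting $\d_\xi^3\lambda_j(0)\neq 0$ --- is exactly the scheme of \cite{R_linKdV} adapted to the two-dimensional symmetry group, and matches the paper's surrounding comments (in particular its explicit formula for the local parameters and its discussion of the pseudo-differential case $q=\infty$). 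One minor remark: the existence of a near-optimal centered low-frequency decomposition $(\vphix^{(0)},\vphip^{(0)})$ follows more directly from the infimum definition of $N_{L^1(\R)}$ together with a smooth frequency cut-off, rather than from a spectral projection weighted by $1/(\iD\ukx\xi)$ as you suggest; the latter phrasing risks an apparent singularity at $\xi=0$, though the intended object is the same.
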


Again we omit the proof as nearly identical to the one of the corresponding result in \cite{R_linKdV}, namely Theorems $1.4$ ($q=3$) and $1.5$ ($q>3$), but provide a few comments.
\begin{enumerate}
\item In the case $q=3$, one may drop the low-frequency cut-off $\tchi(\iD^{-1}\d_x)$ (provided one restricts to times $|t|\geq 1$) since it is here only to compensate for the fact that when $q\geq5$ one cannot infer good dispersive properties of the Taylor expansions globally in frequency. This is somehow analogous to the fact that slow expansions of well-behaved parabolic systems may produce ill-posed systems. Similar estimates hold for $q=1$ but are somewhat pointless
since the decay rate is then the same as in Theorem~\ref{th:behavior}.
\item If one is willing to use less explicit and pleasant formula for $(\kx^{(0)},\kp^{(0)},\impulse^{(0)},\mass^{(0)})$ then the description of $(\vphix(t,\cdot),\vphip(t,\cdot))$ may actually be achieved up to an error of size $(1+|t|)^{-\frac{q+1}{2(q+2)}}$;
see \cite[Theorem~1.6]{R_linKdV}.
\item If one removes the assumption that $(\vphix^{(0)},\vphip^{(0)})$ is low-frequency then the formula for $(\kx^{(0)},\kp^{(0)},\impulse^{(0)},\mass^{(0)})$ should be modified as
\[
\bp\kx^{(0)}\\\kp^{(0)}\\\impulse^{(0)}\\\mass^{(0)}\ep
:=\tchi(\iD^{-1}\d_x)\bp\ukx\d_x\vphix^{(0)}\\\ukx\d_x\vphip^{(0)}\\
\delta\Impulse(\ucU,\ukx\d_x\ucU+\ukp\bJ\ucU)\,\tbV_0
-(\Mass[\ucU]-\umass)\,\left(\ukp\d_x\vphix^{(0)}-\ukx\d_x\vphip^{(0)}\right)\\
\delta\Mass[\ucU]\,\tbV_0
-(\Mass[\ucU]-\umass)\,\d_x\vphix^{(0)}\ep\,.
\]
To illustrate how the high-frequency corrections arise, let us 
point out that
\[
\langle\delta\Mass[\ucU];\widecheck{\vphix^{(0)}\ucU_x}(\xi,\cdot)\rangle_{L^2}
\,=\,-[\widehat{(\Mass[\ucU]-\umass)\,\d_x\vphix^{(0)}}](\xi)
+\iD\xi\,\langle\Mass[\ucU];\widecheck{\vphix^{(0)}}(\xi,\cdot)-\widehat{\vphix^{(0)}}(\xi)\rangle_{L^2}
\]
and that extra $\xi$-factors bring extra decay.
\item We expect that in the case $q=3$ System~\eqref{e:Wlin-qth} could be derived from higher-order versions of geometrical optics as in \cite{Noble-Rodrigues}. In contrast, the formal derivation of either System~\eqref{e:Wlin-qth} in the general case or of effective data  $(\kx^{(0)},\kp^{(0)},\impulse^{(0)},\mass^{(0)})$  (in particular when $(\vphix^{(0)},\vphip^{(0)})$ is not low-frequency) seems out of reach.
\item The foregoing construction of $D_q$ follows closely the classical construction of artificial viscosity systems as large-time asymptotic equivalents to systems that are only parabolic in the hypocoercive sense of Kawashima. We refer the reader for instance to \cite[Section~6]{Hoff_Zumbrun-NS_compressible_pres_de_zero}, \cite{Rodrigues-compressible}, \cite[Appendix~B]{JNRZ-conservation} or \cite[Appendix~A]{R} for a description of the latter. A notable difference however is that in the diffusive context higher-order expansions of dispersion relations beyond the second-order necessary to capture some dissipation does not provide any sharper description since the second-order expansion already provides the maximal rate compatible with a first-order expansion of eigenvectors. Here one needs to use the full pseudo-differential dispersion relations so as to reach a description up to $\cO(|t|^{-1/2})$ error terms.
\item We infer from Theorems~\ref{th:behavior} and~\ref{th:qth} that at leading order the behavior of $\SSp(t)(\bV_0)$ is captured by a linear modulation of phases $\vphix(t,\cdot)\,\ucU_x+\vphip(t,\cdot)\,\bJ\,\ucU$ and the phase shifts $(\vphix,\vphip)$ are the antiderivative of the two first components of a four-dimensional vector $(\ukx\d_x\vphix,\ukx\d_x\vphip,\impulse,\mass)$ that itself is at leading-order a sum of four linear dispersive waves of Airy type, each one traveling with its own velocity. In particular, three scales coexist: the oscillation of the background wave at scale $1$ in $\ucU_x$ and $\bJ\,\ucU$, spatial separation of the four dispersive waves at linear hyperbolic scale $t$, width of Airy waves of size $t^{1/3}$. We refer the reader to \cite{BJNRZ-KS,R_linKdV} for enlightening illustration by direct simulations of similar multi-scale large-time dynamics.
\end{enumerate}

\section{General perturbations}\label{s:general}

We now come back to the general spectral stability problem. We begin with a corollary to Theorem~\ref{th:low-freq}, from which we recall the following key formula. The Evans function  $D_{\xi}(\lambda,\bfeta)$ expands as
\begin{align*}
D_\xi(\lambda,\bfeta)
\stackrel{(\lambda,\bfeta)\to(0,0)}{=}
&\det\begin{pmatrix}\lambda\,\Sigma_t-(\eD^{\iD\xi}-1)\I_4+\frac{\|\bfeta\|^2}{\lambda}\Sigma_{\bfy}\end{pmatrix}\\
&+\cO\left((|\lambda|+|\xi|+\|\bfeta\|^2)\,
(|\lambda|^2+|\xi|^2+\|\bfeta\|^2)\,(|\lambda|(|\lambda|+|\xi|)+\|\bfeta\|^2)\right)\,.
\end{align*} 
For later use, we introduce the homogeneous fourth-order polynomial with real coefficients\footnote{The formula being extended by continuity to incorporate the cases when $\lambda=0$.}
\be\label{def:Delta}
\Delta_0(\lambda,z,\zeta)
:=\det\begin{pmatrix}
\lambda\,\Sigma_t-z\,\I_4+\frac{\zeta^2}{\lambda}\Sigma_{\bfy}
\end{pmatrix}\,.
\ee
That the coefficients of $\Delta_0$ are real may be seen directly or related to the fact that $D_0(\lambda,\bfeta)$ is real when $\lambda$ and $\bfeta$ are real. Likewise, note that for any $(\eps,\lambda,z,\zeta)\in\C^4$
\begin{align*}
\Delta_0(\lambda,z,-\zeta)&=\Delta_0(\lambda,z,\zeta)\,,&
\Delta_0(-\lambda,-z,\zeta)&=\Delta_0(\lambda,z,\zeta)\,,&\\
\Delta_0(\eps\lambda,\eps z,\eps\zeta)&=\eps^4\Delta_0(\lambda,z,\zeta)\,,&
\Delta_0(\overline{\lambda},\overline{z},\overline{\zeta})&=
\overline{\Delta_0(\lambda,z,\zeta)}\,.
\end{align*}
The second and the fourth relations are inherited from original real and Hamiltonian symmetries.

Since the longitudinal perturbations have already been analyzed at length, the following corollary focuses on perturbations that do have a transverse component. 

\bc\label{c:transverse}
Consider an unscaled wave profile $\ucV$ such that $\ucV\cdot \ucV_x\not\equiv0$.
\begin{enumerate}
\item Assume that there exists $(\lambda_0,\zeta_0)\in \R^2$, such that $\zeta_0\neq0$ and $\Delta_0(\lambda_0,0,\zeta_0)<0$. Then the wave is spectrally exponentially unstable to perturbations that are longitudinally co-periodic and transversally arbitrarily slow, that is, $\cL_{0,\bfeta}$ has eigenvalues of positive real part for 
$\bfeta$ arbitrarily small but nonzero.
\item Assume that there exists $(\lambda_0,\xi_0,\zeta_0)\in \C\times \R^2$, such that $\zeta_0\neq0$ and $\lambda_0$ is a root of $\Delta_0(\cdot,\iD\xi_0,\zeta_0)$ of algebraic multiplicity~$r$. Then there exist $C_0$ and $\eta_0>0$ such that for any $\bfeta$ such that $0<\|\bfeta\|\leq\eta_0$, 
\[
\cL_{\frac{\|\bfeta\|}{|\zeta_0|}\xi_0,\bfeta}
\] 
possesses $r$ eigenvalues (counted with algebraic multiplicity) in the disk 
\[
B\left(\frac{\|\bfeta\|}{|\zeta_0|}\lambda_0\,,\,C_0\|\bfeta\|^{1+\frac1r}\right)\,.
\] 
In particular if $\Re(\lambda_0)\neq0$ then the wave is spectrally unstable.
\end{enumerate}
\ec

\begin{proof}
By using symmetries of $\Delta_0$ we may assume that $\lambda_0\geq0$ and $r_0=1$. Then since Theorem~\ref{th:low-freq} ensures
\[
D_0(\|\bfeta\|\,\lambda_0,\bfeta)\,\stackrel{\|\bfeta\|\to0}{=}\,
\|\bfeta\|^4\,\Delta_0(\lambda_0,1)+\cO(\|\bfeta\|^5)
\]
we deduce that there exists $\bfeta_0>0$ such that for any $0<\|\bfeta\|\leq \bfeta_0$, $D_0(\|\bfeta\|\,\lambda_0,\bfeta)<0$. Comparing with Proposition~\ref{p:hig-freq} this yields that when $0<\|\bfeta\|\leq \bfeta_0$, the spectrum of $\cL_{0,\bfeta}$ intersects $(0,+\infty)$. 

The second part stems from a counting root argument based directly on Theorem~\ref{th:low-freq} and the symmetries of $\Delta_0$ .
\end{proof}

At this stage, two more comments are worth stating.
\begin{enumerate}
\item Since both Theorem~\ref{th:low-freq} and Proposition~\ref{p:hig-freq} include the case $\xi=\pi$ besides the case $\xi=0$, one may obtain a $\xi=\pi$ counterpart to the first parts of Theorem~\ref{th:co-periodic} and Corollary~\ref{c:transverse}. Yet the corresponding  instability criteria are never met since $\Delta_0(0,-2,0)>0$. For a similar reason, though Theorem~\ref{th:low-freq} deals with arbitrary Floquet exponents $\xi$, the second parts of Corollaries~\ref{c:W-cor} and~\ref{c:transverse} involve Floquet exponents converging to $\xi=0$. This is due to the fact that $\Delta_0(0,(\eD^{\iD\xi}-1),0)=0$ if and only if $\xi\in 2\pi\Z$. 
\item We point out that to some extent the restriction to $\xi_0=0$ of the second part of the corollary has already been derived in the recent \cite{LBJM} with a few variations that we point now.
\begin{enumerate}
\item The authors restrict to semilinear equations, a fact that comes with quite a few algebraic simplifications in computations. 
\item They further assume that $\cL_{0,0}$ exhibits two Jordan blocks of height $2$ at $0$.
\item Their proof goes by direct spectral perturbation of $\cL_{0,0}$ rather than Evans functions computations.
\end{enumerate}
\end{enumerate}

\subsection{Geometrical optics}\label{s:WKB}

Prior to studying at length properties of $\Delta_0$, we show that the latter may be derived from a suitable version of geometrical optics \emph{\`a la} Whitham. To start bridging the gap, we first recall \eqref{e:sigmatoTheta} and observe that 
\be \label{evanstowhitham}
\lambda\,\Sigma_t-z\I_4
+\frac{\zeta^2}{\lambda}\Sigma_\bfy
\,=\,\begin{pmatrix}
0&0&0&1\\
1&0&0&0\\
0&0&1&0\\
0&1&0&0
\end{pmatrix}\,
\left(
\lambda\,\bA_0
\,\Hess
\,\Theta
-z\,\bB_0
+\frac{\zeta^2}{\lambda}\,\bC_0
\right)
\,\begin{pmatrix}
0&0&0&1\\
0&1&0&0\\
1&0&0&0\\
0&0&1&0
\end{pmatrix}\,,
\ee
with 
\begin{align*}
\bA_0&=\begin{pmatrix}
       \I_2 & 0\\
       0 & -\I_2 
      \end{pmatrix}\,,&
\bB_0&=
\begin{pmatrix}
 0 & 1 & 0 & 0\\
 1 & 0 & 0 & 0\\
 0 & 0 & 0 & 1\\
 0 & 0 & 1 & 0 
\end{pmatrix}\,,& 
\bC_0
&:=\begin{pmatrix}0&0&0&0\\
0&-\sigma_3
&\sigma_2&0\\
0&-\sigma_2
&\sigma_1&0\\
0&0&0&0
\end{pmatrix}\,,
\end{align*}
where
\begin{align}\label{e:defsigma}
\sigma_1&:=\int_0^{\uXx}\kappa(\|\ucV\|^2)\,\|\ucV\|^2\,,&
\sigma_2&:=\int_0^{\uXx}\kappa(\|\ucV\|^2)\,\bJ\ucV\cdot\ucV_x\,,&
\sigma_3&:=\int_0^{\uXx}\kappa(\|\ucV\|^2)\,\|\ucV_x\|^2\,.
\end{align}
In particular
\be\label{e:ABC}
\Delta_0(\lambda,z,\zeta)
\,=\,\det
\left(
\lambda\,\bA_0
\,\Hess
\,\Theta
-z\,\bB_0
+\frac{\zeta^2}{\lambda}\,\bC_0
\right)\,.
\ee

Now let us start formal asymptotics with a multi-dimensional \emph{ansatz} similar to \eqref{e:ansatz}
\be\label{e:ansatztransverse}
\bU^{(\eps)}(t,\bfx)
\,=\,\eD^{\frac{1}{\eps}\vphip^{(\eps)}(\eps\,t,\eps\,\bfx)\,\bJ}\cU^{(\eps)}\left(\eps\,t,\eps\,\bfx;
\frac{\vphix^{(\eps)}(\eps\,t,\eps\,\bfx)}{\eps}\right)
\ee 
with
\begin{align*}
\cU^{(\eps)}(T,\bX;\zeta)&=
\cU_{0}(T,\bX;\zeta)+\eps\,\cU_{1}(T,\bX;\zeta)+o(\eps)\,,\\
\vphip^{(\eps)}(T,\bX)&=
(\vphip)_{0}(T,\bX)+\eps\,(\vphip)_{1}(T,\bX)+o(\eps)\,,\\
\vphix^{(\eps)}(T,\bX)&=
(\vphix)_{0}(T,\bX)+\eps\,(\vphix)_{1}(T,\bX)+o(\eps)\,,
\end{align*}
with $\cU_{0}(T,\bX;\cdot)$ and $\cU_{1}(T,\bX;\cdot)$ $1$-periodic. Inserting \eqref{e:ansatztransverse} in \eqref{e:ab} yields at leading order
\begin{eqnarray*}
(\d_T(\vphip)_0\bJ+\d_T(\vphix)_0\d_\zeta )\cU_0
=\bJ\delta \cH_0\left(\cU_0,\left(\nabla_\bX(\vphip)_0\bJ+\nabla_\bX(\vphix)_0\d_\zeta
\right)\cU_0\right)\,,
\end{eqnarray*}
so that for each $(T,\bX)$, $\cU_0(T,\bX;\cdot)$ must be a wave profile as in \eqref{def:wave-general} such that
\begin{align*}
\d_T(\vphip)_{0}&=\omp-\cx\kp\,,&
\nabla_\bX(\vphip)_{0}&=\bkp\,,&
\d_T(\vphix)_{0}&=\omx\,,&
\nabla_\bX(\vphix)_{0}&=\bkx\,.
\end{align*}
As a consequence $\bkx$ and $\bkp$ are curl-free and 
\begin{align}\label{e:Whi_schwarz}
\d_T\bkp&=\,\nabla_\bX\left(\omp-\kp\,\cx\right)\,,&
\d_T\bkx&=\,\nabla_\bX\omx\,.
\end{align}

Moreover, inserting \eqref{e:ansatztransverse} in \eqref{e:mass} and \eqref{e:momentum} yields at leading order respectively
\begin{align*}
\d_T(\Mass(\cU_0))=
\Div_\bX\Big(\bJ\cU_0\cdot\nabla_{\bU_{\bfx}}\Ham_0\left(\cU_0,
\left(\bkp\bJ+\bkx\d_\zeta
\right)\cU_0\right)\Big)+\d_\zeta(*)
\end{align*}
and
\begin{align*}
\d_T&\Big(\bImpulse(\cU_0,\left(\bkp\bJ+\bkx\d_\zeta
\right)\cU_0)\Big)\,=\,
\nabla_\bX\left(\frac12\bJ\cU_0\cdot\bJ\delta\Ham_0(\cU_0,\left(\bkp\bJ+\bkx\d_\zeta
\right)\cU_0)-\Ham_0(\cU_0,\left(\bkp\bJ+\bkx\d_\zeta
\right)\cU_0)\right)\\
&\quad+\sum_{\ell}\d_{X_\ell}\Big(\bJ\delta\,\bImpulse(\cU_0,\left(\bkp\bJ+\bkx\d_\zeta
\right)\cU_0)\cdot\nabla_{\bU_{X_\ell}}\Ham_0(\cU_0,\left(\bkp\bJ+\bkx\d_\zeta
\right)\cU_0)\Big)+\d_\zeta(**)
\end{align*}
with $*$ and $**$ $1$-periodic in $\zeta$. Averaging the foregoing equations using formulas in Section~\ref{s:profile-general} provides equations that combined with \eqref{e:Whi_schwarz} yield
\be\label{e:W-more} 
\left\{
\begin{array}{rcl}
\d_T\bkx&=&\nabla_\bX\omx\\
\d_T\bimpulse
&=&\nabla_\bX\left(\mux-\cx\impulse+\frac12\,\tau_0\,\|\tkp\|^2\right)\\
&&+\Div_\bX\left(\tau_1\,\tkp\otimes\tkp
+\tau_2\,(\tkp\otimes\ex+\ex\otimes\tkp)
+\tau_3\,\left(\ex\otimes\ex-\I_d\right)\right)\\
\d_T\mass
&=&\Div_\bX\left((\mup-\cx\mass)\,\ex+\,\tau_1\,\tkp\right)\\
\d_T\bkp&=&\nabla_\bX\left(\omp-\cx\,\kp\right)
\end{array}
\right.
\ee
with curl-free $\bkx$ and $\bkp$, where 
\begin{align*}
\tau_0&:=\langle \kappa'(\|\cU_0\|^2)\,\|\cU_0\|^2\rangle\,,& 
\tau_1&:=\langle\kappa(\|\cU_0\|^2)\,\|\cU_0\|^2\rangle\,,\\
\tau_2&:=\langle\kappa(\|\cU_0\|^2)\,\bJ\cU_0\cdot(\kp\,\bJ+\kx\d_\zeta)\cU_0\rangle\,,&
\tau_3&:=\langle\kappa(\|\cU_0\|^2)\,\|(\kp\,\bJ+\kx\d_\zeta)\cU_0\|^2\rangle\,.
\end{align*}

Before linearizing, in order to prepare System~\ref{e:W-more} for comparison, we recall that $\bkx=\kx\,\ex$, $\bkp=\kp\,\ex+\tkp$ and $\bimpulse=\impulse\,\ex+\mass\,\tkp$, with $\ex$ unitary and $\tkp$ orthogonal to $\ex$ and write \eqref{e:W-more} in terms of $(\kx,\kp,\mass,\impulse,\ex,\tkp)$. By using that, for any derivative $\d_\sharp$, 
\begin{align}\label{e:W-cancel}
\ex\cdot\d_\sharp\ex&=0\,,& 
\ex\cdot\d_\sharp\tkp&=-\tkp\cdot\d_\sharp\ex\,,
\end{align} 
this yields 
\be\label{e:W-more-bis} 
\left\{
\begin{array}{rcl}
\d_T\ex&=&\dfrac{1}{\kx}(\nabla_\bX-\ex\,\ex\cdot\nabla_\bX)\omx\\
\d_T\tkp&=&\hspace{-0.75em}(\nabla_\bX-\ex\,\ex\cdot\nabla_\bX)\left(\omp-\cx\,\kp\right)
-\dfrac{\kp}{\kx}(\nabla_\bX-\ex\,\ex\cdot\nabla_\bX)\omx
-\ex \,\dfrac{\tkp}{\kx}\cdot\nabla_\bX\omx\\
\d_T\kx&=&\ex\cdot\nabla_\bX\omx\\
\d_T\impulse
&=&\ex\cdot\nabla_\bX\left(\mux-\cx\impulse+\frac12\,\tau_0\,\|\tkp\|^2\right)
+\,\mass\,\dfrac{\tkp}{\kx}\cdot\nabla_\bX\omx\\
&&+\ex\cdot\Div_\bX\left(\tau_1\,\tkp\otimes\tkp
+\tau_2\,(\tkp\otimes\ex+\ex\otimes\tkp)
+\tau_3\,\left(\ex\otimes\ex-\I_d\right)\right)\\
\d_T\mass
&=&\Div_\bX\left((\mup-\cx\mass)\,\ex+\,\tau_1\,\tkp\right)\\
\d_T\kp&=&\ex\cdot\nabla_\bX\left(\omp-\cx\,\kp\right)
+\dfrac{\tkp}{\kx}\cdot\nabla_\bX\omx
\end{array}
\right.
\ee
with $\ex$ unitary, $\tkp$ orthogonal to $\ex$ and $\kx\,\ex$ and $\kp\,\ex+\tkp$ curl-free. System~\eqref{e:W-more-bis} may be simplified further by noticing that from curl-free conditions (and \eqref{e:W-cancel}) stem
\begin{align*}
(\nabla_\bX-\ex\,\ex\cdot\nabla_\bX)\kx&=\kx\,(\ex\cdot\nabla_\bX)\ex\,,\\
(\nabla_\bX-\ex\,\ex\cdot\nabla_\bX)\kp&=(\ex\cdot\nabla_\bX)\tkp
+\nabla_\bX\left(\transp{\ex}\right)\tkp
+\kp\,(\ex\cdot\nabla_\bX)\ex\,.
\end{align*}
This leaves as equivalent system
\be\label{e:W-more-ter} 
\left\{
\begin{array}{rcl}
\hspace{-1em}(\d_T+\cx(\ex\cdot\nabla_\bX))\ex\hspace{-1em}&=&-(\nabla_\bX-\ex\,\ex\cdot\nabla_\bX)\cx\\
\hspace{-1em}(\d_T+\cx(\ex\cdot\nabla_\bX))\tkp\hspace{-1em}&=&(\nabla_\bX-\ex\,\ex\cdot\nabla_\bX)\omp
-\ex \,\dfrac{\tkp}{\kx}\cdot\nabla_\bX\omx
-\cx\nabla_\bX\left(\transp{\ex}\right)\tkp\\
\d_T\kx&=&\ex\cdot\nabla_\bX\omx\\
\d_T\impulse
&=&\ex\cdot\nabla_\bX\left(\mux-\cx\impulse+\frac12\,\tau_0\,\|\tkp\|^2\right)
+\,\mass\,\dfrac{\tkp}{\kx}\cdot\nabla_\bX\omx\\
&&\hspace{-3.75em}+\ex\cdot\Div_\bX\left(\tau_1\,\tkp\otimes\tkp
+\tau_2\,(\tkp\otimes\ex+\ex\otimes\tkp)
+\tau_3\,\left(\ex\otimes\ex-\I_d\right)\right)\\
\d_T\mass
&=&\Div_\bX\left((\mup-\cx\mass)\,\ex+\,\tau_1\,\tkp\right)\\
\d_T\kp&=&\ex\cdot\nabla_\bX\left(\omp-\cx\,\kp\right)
+\dfrac{\tkp}{\kx}\cdot\nabla_\bX\omx
\end{array}
\right.
\ee
with $\ex$ unitary, $\tkp$ orthogonal to $\ex$ and $\kx\,\ex$ and $\kp\,\ex+\tkp$ curl-free.

Linearizing System~\eqref{e:W-more-ter} about the constant $(\umux,\ucx,\uomp,\umup,\beD_1,0)$ yields
\be\label{e:W-lin}
\left\{
\begin{array}{rcl}
\left(\d_T+\ucx\d_X\right)\ex
&\,=\,&-\left(\nabla_\bX-\beD_1\,\d_X\right)\cx\\
\left(\d_T+\ucx\d_X\right)\tkp
&\,=\,&\left(\nabla_\bX-\beD_1\,\d_X\right)\omp\\
\ukx\bA_0\Hess\Theta\,\left(\d_T+\ucx\d_X\right)
\begin{pmatrix}
\mux\\\cx\\\omp\\\mup
\end{pmatrix}
&\,=\,&\bB_0\,\d_X
\begin{pmatrix}
\mux\\\cx\\\omp\\\mup
\end{pmatrix}
+\bp
0&0\\
\utau_3&\utau_2\\
\utau_2&\utau_1\\
0&0
\ep\bp \Div_\bX(\ex)\\\Div_\bX(\tkp) \ep
\end{array}\right.
\ee
with extra constraints that $\tkp$ and $\ex$ are orthogonal to $\uex=\beD_1$ and that $\kx\uex+\ukx\ex$ and $\kp\uex+\ukp\ex+\tkp$ are curl-free, where $(\kx,\kp)$ are given explicitly as 
\begin{align}\label{e:kxp-lin}
\kx&=-\ukx^2\dD\,(\d_{\mux}\Theta)(\mux,\cx,\omp,\mup)\,,&
\kp&=\frac{\kx}{\ukx}\,\ukp
-\ukx\dD\,(\d_{\mup}\Theta)(\mux,\cx,\omp,\mup)\,,&
\end{align}
where total derivatives are taken with respect to $(\mux,\cx,\omp,\mup)$ and evaluation is at $(\umux,\ucx,\uomp,\umup,\beD_1,0)$. In System~\ref{e:W-lin}, likewise $\Hess\Theta=\Hess_{(\mux,\cx,\omp,\mup)}\Theta$ is evaluated at $(\umux,\ucx,\uomp,\umup,\beD_1,0)$. For the convenience of the reader, we detail some of the observations and manipulations used to go from \eqref{e:W-more-ter} to \eqref{e:W-lin}:
\begin{enumerate}
\item As in going from \eqref{e:W} to \eqref{e:W-action}, derivatives of $\Theta$ arise in \eqref{e:W-lin} and \eqref{e:kxp-lin} from \eqref{dtheta}.
\item As pointed out in Section~\ref{s:profile-general}, $\dD_{(\ex,\tkp)}\,(\nabla_{(\mux,\cx,\omp,\mup)}\Theta)$ vanish at $(\umux,\ucx,\uomp,\umup,\beD_1,0)$.
\item For any scalar-valued function $a$,
\[
\uex\cdot\Div_\bX\left(a\,\left(\uex\otimes\uex-\I_d\right)\right)\,=\,0\,.
\]
\item From orthogonality constraints  stem that for any derivative $\d_\sharp$, 
\begin{align*}
\uex\cdot\d_\sharp\ex&=0\,,& 
\uex\cdot\d_\sharp\tkp&=0\,,
\end{align*} 
so that
\begin{align*}
\uex\cdot\Div_\bX\left(\uex\otimes\ex+\ex\otimes\uex\right)
&=\Div_\bX(\ex)\,,\\
\uex\cdot\Div_\bX\left(\uex\otimes\tkp+\tkp\otimes\uex\right)
&=\Div_\bX(\tkp)\,.
\end{align*} 
\end{enumerate}

At this stage, noting that for $j\in\{1,2,3\}$, $\sigma_j=\utau_j/\ukx$, a few line manipulations achieve to prove the claimed relation between $\Delta_0$ and modulated systems in the form
\begin{align*}
&\lambda^{2(d-1)}\times \Delta_0(\lambda,\iD\xi,\|\bfeta\|)\\
&=\det\left(\lambda\,\bp \I_{2(d-1)}&0\\
0&\bA_0\,\Hess\,\Theta\ep
-\iD\xi\,
\bp 0&0\\
0&\bB_0\ep
+\left(\begin{array}{cc|cccc}
0&0&0&-\iD\bfeta&0&0\\
0&0&0&0&\iD\bfeta&0\\
\hline\\[-1.25em]
0&0&0&0&0&0\\
\frac{\utau_3}{\ukx}\,\iD\transp{\bfeta}&\frac{\utau_2}{\ukx}\,\iD\transp{\bfeta}
&0&0&0&0\\[0.5em]
\frac{\utau_2}{\ukx}\,\iD\transp{\bfeta}&\frac{\utau_1}{\ukx}\,\iD\transp{\bfeta}
&0&0&0&0\\
0&0&0&0&0&0
\end{array}\right)\right)\,.
\end{align*}

\subsection{Instability criteria}\label{s:criteria}

The rest of the section is devoted to the study of instability criteria contained in Corollary~\ref{c:transverse} and its longitudinal counterparts. 

We begin by rephrasing the main consequence of Corollary~\ref{c:transverse}. A stable wave must satisfy
\begin{enumerate}
\item for any $(\lambda,\zeta)\in \R^2$, $\Delta_0(\lambda,0,\zeta)\geq0$;
\item for any $(\xi,\zeta)\in \R^2$, the roots of the (real) 
polynomial $\omega\mapsto \Delta_0(\iD\omega,\iD\xi,\zeta)$ 
are real. 
\end{enumerate}
Note that the latter condition may be expressed explicitly as
inequality constraints on some polynomial expressions in $(\xi,\zeta)
\in \R^2$ but the involved expressions are rather cumbersome. 

The restriction to $\xi=0$ is much simpler to analyze. To do so, let 
us introduce notation for coefficients of $\Delta_0$
\be\label{eq:poly}
\Delta_0(\lambda,z,\zeta)
\,=\,\sum_{\substack{0\leq m,n,p \leq 4\\m+n+p=4\\p\leq m}}
\delta_{(m,n,p)}\lambda^{m-p}\,z^n\,\zeta^{2p}\,,
\ee
and note that
\[
\Delta_0(\lambda,0,\zeta)
\,=\,\lambda^4\,\delta_{(4,0,0)}
+\zeta^2\lambda^2\,\delta_{(3,0,1)}
+\zeta^4\,\delta_{(2,0,2)}\,.
\]
A straightforward consequence is the following stability 
condition.
\bl\label{l:xi=0}
If the wave is stable, then 
for any $\zeta\in \R^2$, the roots of the polynomial 
$\omega\mapsto \Delta_0(\iD\omega,0,\zeta)$ are real and
\begin{align}\label{ineq:xi=0}
\delta_{4,0,0}&\geq 0\,,&
\delta_{3,0,1}&\geq 2\,\sqrt{|\delta_{4,0,0}\,\delta_{2,0,2}|}\,,&
\delta_{2,0,2}&\geq0\,.&
\end{align}
Moreover if all the signs of the three inequalities in \eqref{ineq:xi=0} are strict then $\Delta_0(\lambda,0,\zeta)>0$
when $(\lambda,\zeta)\in \R^2\setminus\{0\}$ and the roots 
of $\omega\mapsto \Delta_0(\iD\omega,0,\zeta)$ are distinct 
when $\zeta\neq0$.
\el

Note that the mere combination of the cases $\bfeta=0$ --- corresponding to longitudinal perturbations --- (studied in Corollary~\ref{c:W-cor} and in \cite{LBJM}) and 
$\xi=0$ --- corresponding to longitudinally co-periodic perturbations --- (studied in Lemma~\ref{l:xi=0} and in \cite{LBJM}) is \emph{a priori} insufficient to capture the full strength of Corollary~\ref{c:transverse}. Indeed note that the associated instability criteria do not involve coefficients $\delta_{(1,2,1)}$ and $\delta_{(2,1,1)}$. To illustrate why we expect that these coefficients do matter, let us consider an abstract real polynomial $\pi_0$ of the form~\eqref{eq:poly} then
\begin{itemize}
\item fixing all coefficients except $\delta_{(2,1,1)}$ with $\delta_{4,0,0}\neq0$ and choosing some $(\xi_0,\zeta_0)\in (\R^*)^2$, it follows that if $|\delta_{(2,1,1)}|$ is sufficiently large then either $\omega\mapsto \pi_0(\iD\omega,\iD\xi_0,\zeta_0)$ or $\omega\mapsto \pi_0(\iD\omega,-\iD\xi_0,\zeta_0)$ possesses a non-real root; 
\item fixing all coefficients except $\delta_{(1,2,1)}$ with $\delta_{4,0,0}\neq0$ and choosing some $(\xi_0,\zeta_0)\in (\R^*)^2$, it follows that if $|\delta_{(1,2,1)}|$ is sufficiently large with $\delta_{(1,2,1)}<0$ then $\omega\mapsto \pi_0(\iD\omega,\iD\xi_0,\zeta_0)$ possesses a non-real root. 
\end{itemize}
A less pessimistic guess could be that the inspection of the regimes $|\xi|\ll |\zeta|$ and $|\zeta|\ll |\xi|$ (that are perturbations of $\xi=0$ and $\zeta=0$) could involve the missing coefficients and be sufficient to decide the instability criteria encoded by Corollary~\ref{c:transverse}. For an example of a closely related situation where such a scenario does occur, the reader is referred to \cite{Noble-Rodrigues,JNRZ-KdV-KS}. Yet the following lemma suggests that even this weaker claim can be expected only in degenerate situations. Let us also anticipate a bit and stress that the coefficient $\delta_{(1,2,1)}$ plays a deep role in our analysis of the small-amplitude regime.

\begin{lemma}
\begin{enumerate}
\item Assume that $\Sigma_t$ is non singular and that the eigenvalues of $\Sigma_t$ are real and distinct, or equivalently that $\Hess\Theta$ is non singular and that the characteric values of the modulation system are real and distinct. Then there exists $\eps_0>0$ such that when $(\xi,\zeta)\in \R^2\setminus\{(0,0)\}$ is such that $|\zeta|\leq \eps_0\,|\xi|$, the fourth-order real polynomial $\omega\mapsto \Delta_0(\iD\omega,\iD\xi,\zeta)$ possesses four distinct real roots.
\item Assume that $\Sigma_t$ is non singular and that the eigenvalues of $(\Sigma_t)^{-1}\Sigma_\bfy$ are positive and distinct. Then there exists $\eps_0>0$ such that when $(\xi,\zeta)\in \R^2\setminus\{(0,0)\}$ is such that $|\xi|\leq \eps_0\,|\zeta|$, the fourth-order real polynomial $\omega\mapsto \Delta_0(\iD\omega,\iD\xi,\zeta)$ possesses four distinct real roots.
\end{enumerate}
\end{lemma}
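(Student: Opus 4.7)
My approach rests on two preliminary observations. From the symmetries $\Delta_0(\overline{\lambda},\overline{z},\overline{\zeta}) = \overline{\Delta_0(\lambda,z,\zeta)}$ and $\Delta_0(-\lambda,-z,\zeta) = \Delta_0(\lambda,z,\zeta)$, the map $\omega \mapsto \Delta_0(\iD\omega,\iD\xi,\zeta)$ is, for every $(\xi,\zeta)\in\R^2$, a polynomial in $\omega$ with real coefficients, and its coefficient in $\omega^4$ equals $\det(\Sigma_t) \neq 0$ independently of $(\xi,\zeta)$. In particular this polynomial has degree exactly four, and its four roots in $\C$ are either real or come in complex conjugate pairs. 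Next, the homogeneity $\Delta_0(\eps\lambda,\eps z,\eps\zeta) = \eps^4 \Delta_0(\lambda,z,\zeta)$ lets me rescale out one of $\xi$ or $\zeta$ and reduce each regime to a one-parameter perturbation problem around a nondegenerate unperturbed polynomial.

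For Regime~1 I set $\tilde\omega = \omega/\xi$ and $s=\zeta/\xi$, so that zeros in $\omega$ correspond to zeros of $\tilde\omega \mapsto \Delta_0(\iD\tilde\omega,\iD,s)$ and the hypothesis $|\zeta|\le\eps_0\,|\xi|$ becomes $|s|\le\eps_0$. At $s=0$ this polynomial reduces to $\det(\tilde\omega\,\Sigma_t-\I_4)$, whose roots are the reciprocals of the eigenvalues of $\Sigma_t$; by hypothesis these are four distinct real numbers. Simpleness of these roots combined with continuity of the coefficients in $s$ and reality of the coefficients for real $s$ (so that any nonreal perturbed root must appear with its conjugate) yields through the implicit function theorem four distinct real roots for all $|s|$ sufficiently small.

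Regime~2 is treated in a parallel way by setting $\tilde\omega = \omega/\zeta$ and $s=\xi/\zeta$, after reducing to $\zeta>0$ via $\Delta_0(\cdot,\cdot,-\zeta) = \Delta_0(\cdot,\cdot,\zeta)$. The key computation is
$$\Delta_0(\iD\tilde\omega,0,1)=\det\bigl(\iD\tilde\omega\,\Sigma_t + (\iD\tilde\omega)^{-1}\Sigma_\bfy\bigr)=\tilde\omega^{-4}\,\det(\Sigma_t)\,\det\bigl(\tilde\omega^2\,\I_4 - \Sigma_t^{-1}\Sigma_\bfy\bigr).$$
Because the last two columns of $\Sigma_\bfy$ vanish by construction, $\Sigma_t^{-1}\Sigma_\bfy$ has rank at most two and thus admits $0$ as an eigenvalue of algebraic multiplicity at least two; by hypothesis the two remaining eigenvalues $\nu_1,\nu_2$ are positive and distinct, so its characteristic polynomial is $x^2(x-\nu_1)(x-\nu_2)$. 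Substituting $x=\tilde\omega^2$, the factor $\tilde\omega^{-4}$ is exactly absorbed and one obtains
$$\Delta_0(\iD\tilde\omega,0,1) = \det(\Sigma_t)\,(\tilde\omega^2-\nu_1)(\tilde\omega^2-\nu_2),$$
a biquadratic with four distinct real roots $\pm\sqrt{\nu_1},\pm\sqrt{\nu_2}$. The same continuity-plus-reality perturbation argument as in Regime~1 then closes the proof.

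The main subtlety I anticipate is precisely this last cancellation: the apparent pole $\tilde\omega^{-4}$ in the unperturbed Regime~2 polynomial is removed exactly because of the double zero eigenvalue forced on $\Sigma_t^{-1}\Sigma_\bfy$ by the rank structure of $\Sigma_\bfy$ displayed in Theorem~\ref{th:low-freq}. Once this algebraic cancellation is identified, the problem collapses to a textbook perturbation of simple real roots of a polynomial with real coefficients, and the remaining work is routine.
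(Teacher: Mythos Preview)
Your proof is correct and follows essentially the same two-step logic as the paper's: continuity of polynomial roots gives distinctness near the limiting parameter value, and the real-coefficient/conjugate-pair structure then forces reality. The paper compresses this into two sentences, while you helpfully make explicit the homogeneity rescaling and, in Regime~2, the rank structure of $\Sigma_\bfy$ that produces the cancellation of the $\tilde\omega^{-4}$ factor and yields the biquadratic $\det(\Sigma_t)(\tilde\omega^2-\nu_1)(\tilde\omega^2-\nu_2)$; this is exactly the content underlying the paper's terse ``applied at $\xi=0$''.
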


Recall that if $\Sigma_t$ is non singular and either $\Sigma_t$ possesses a non real eigenvalue or $(\Sigma_t)^{-1}\Sigma_\bfy$ posesses an eigenvalue in $\C\setminus[0,+\infty)$ then the associated wave is unstable.

\begin{proof}
The distinct character follows from continuity of polynomial roots 
(applied respectively at $\zeta=0$ and $\xi=0$). Then their reality is 
deduced from the stability by complex conjugation of the root set of real polynomials.
\end{proof}

More generally the transition to non real roots of $\omega\mapsto \Delta_0(\iD\omega,\iD\xi,\zeta)$ may only occur near a $(\xi_0,\zeta_0)$ where the polynomial possesses a multiple root. With this in mind, we now elucidate how the breaking of a multiple root occurs near $\xi=0$ and near $\zeta=0$.

\bpr[Breaking of a multiple root near 
$\bfeta=0$]\label{pr:splitting-eta0} Assume that $\Sigma_t$ is non singular and that $\omega_0$ is a real eigenvalue of $(\Sigma_t)^{-1}$ of algebraic multiplicity $r_0$. If 
\begin{align*}
\textrm{either }&&  
(\ r_0&\geq3& \textrm{and}&& 
\delta_{(1,2,1)}+\delta_{(2,1,1)}\omega_0+\delta_{(3,0,1)}\omega_0^2&\neq 0\ ),\\
\textrm{or }&&  
\Big(\ r_0&=2& \textrm{and}&& 
\frac{\delta_{(1,2,1)}+\delta_{(2,1,1)}\omega_0+\delta_{(3,0,1)}\omega_0^2}{\frac{1}{r_0!}\d_\lambda^{r_0}\Delta_0(\omega_0,1,0)}&< 0\ \Big),
\end{align*}
then the corresponding wave is spectrally unstable.
\epr

\begin{proof}
This follows from the the Taylor expansion of $\Delta_0$ 
\begin{eqnarray*}
\Delta_0(\lambda,\iD\xi,\|\bfeta\|)&=&(\lambda-\iD\xi\omega_0)^{r_0}
\frac{\xi^{4-r_0}\partial_\lambda^{r_0}
\Delta_0(\iD\omega_0,\iD,0)}{r_0!}
+\|\bfeta\|^2\bigg(\delta_{1,2,1}(\iD\xi)^2+\delta_{2,1,1}\lambda\iD\xi+
\delta_{3,0,1}\lambda^2\bigg)\\
&&+\cO\left(|\lambda-\iD\xi\omega_0|^{r_0+1}+\|\bfeta\|^4\right).
\end{eqnarray*}
Hence from Theorem \ref{th:low-freq} and a continuity argument
there are $r_0$ roots of $D_\xi(\cdot,\|\bfeta\|)$ near $\iD\xi\omega_0$ that expand as
\[
\iD\xi\,\omega_0+\iD\xi\,\cZ\,\left(\frac{\|\bfeta\|}{|\xi|}\right)^{\frac{2}{r_0}}
\left(\frac{\delta_{(1,2,1)}+\delta_{(2,1,1)}\omega_0+\delta_{(3,0,1)}\omega_0^2}{\frac{1}{r_0!}\d_\lambda^{r_0}\Delta_0(\omega_0,1,0)}\right)^{\frac{1}{r_0}}
+\cO\left(|\xi|\left(\frac{\|\bfeta\|}{|\xi|}\right)^{\frac{2}{r_0}}
\left(\left(\frac{\|\bfeta\|}{|\xi|}\right)^{\frac{2}{r_0}}
+\frac{|\xi|^3}{\|\bfeta\|^2}\right)\right),
\]
in the limit 
\[
\left(|\xi|,\frac{\|\bfeta\|}{|\xi|},\frac{|\xi|^3}{\|\bfeta\|^2}\right)\to(0,0,0)\,,
\]
where $\cZ$ runs over the $r_0$th roots of unity.
\end{proof}

\bpr[Breaking of a multiple root near $\xi=0$] \label{pr:splitting-xi0}
Assume that $\delta_{(4,0,0)}\neq0$ and $\delta_{(3,0,1)}^2=4\,\delta_{(4,0,0)}\,\delta_{(2,0,2)}$. If
\begin{align*}
\delta_{(2,1,1)}&\,\delta_{(4,0,0)}-\frac12\delta_{(3,1,0)}\,\delta_{(3,0,1)}\neq0
\end{align*}
then the corresponding wave is spectrally unstable.
\epr

\begin{proof}
From Lemma \ref{l:xi=0} stability requires $\delta_{(4,0,0)}>0$, $\delta_{(2,0,2)}\geq0$ and $\delta_{(3,0,1)}=2\,\sqrt{\delta_{(4,0,0)}\,\delta_{(2,0,2)}}$ and we assume this from now on. The polynomial $\Delta_0$ is then
$$
\Delta_0(\lambda,\iD\xi,\zeta)=\delta_{(4,0,0)}\left(\lambda^2+
\sqrt{\frac{\delta_{(2,0,2)}}{\delta_{(4,0,0)}}}\zeta^2\right)^2
+\iD\xi(\delta_{(2,1,1)}\lambda\zeta^2+\delta_{(3,1,0)}\lambda^3)
+\cO\left(\xi^2(\lambda^2+\zeta^2)).
\right)
$$
We begin with the case $\delta_{(2,0,2)}\neq0$. To analyze it we introduce
\[
\omega_0:=\bigg(\frac{\delta_{(2,0,2)}}{\delta_{(4,0,0)}}\bigg)
^{1/4}
 =\sqrt{\frac{\delta_{(3,0,1)}}{2\delta_{(4,0,0)}}}\,.
\]
Then for each $\sigma\in\{-1,1\}$, when $(\|\bfeta\|,|\xi|/\|\bfeta\|,\|\bfeta\|^3/|\xi|^2)$ is sufficiently small, there are $2$ roots of $D_\xi(\cdot,\|\bfeta\|)$ near $\iD\sigma\|\bfeta\|\omega_0$ that expand as
$$
\iD\sigma \|\bfeta\|\omega_0\left(1\pm \frac{1}{2\delta_{(4,0,0)}}
\sqrt{\frac{\sigma \xi}{\|\bfeta\|}\left(\delta_{(2,1,1)}
\delta_{(4,0,0)}-\frac{1}{2}\delta_{(3,1,0)}\delta_{(3,0,1)}
\right)
}
+\cO\left(\|\bfeta\|\sqrt{\frac{|\xi|}{\|\bfeta\|}}\left(
\frac{|\xi|}{\|\bfeta\|}+\frac{\|\bfeta\|^2}{|\xi|}
\right)\right)
\right),
$$
in the limit $\di \left(\|\bfeta\|,\,\frac{|\xi|}{\|\bfeta\|},
\, \frac{\|\bfeta\|^2}{|\xi|}\right)\to 0$.
\\
When $\delta_{(2,0,2)}=0$ --- thus also $\delta_{(3,0,1)}=0$ --- and $(\|\bfeta\|,|\xi|/\|\bfeta\|,\|\bfeta\|^2/|\xi|)$ is sufficiently small, $3$ of the $4$ roots of $D_\xi(\cdot,\|\bfeta\|)$ near $0$ expand as
\[
\iD\|\bfeta\|^{\frac23}\xi^{\frac13}\,\cZ
\,\left(\frac{\delta_{(2,1,1)}}{\delta_{(4,0,0)}}\right)^{\frac13}
+\cO\left(\|\bfeta\|^{\frac23}|\xi|^{\frac13}\left(
\left(\frac{|\xi|}{\|\bfeta\|}\right)^{\frac23}
+\frac{\|\bfeta\|^2}{|\xi|}\right)\right)
\]
where $\cZ$ runs over the $3$rd roots of unity,
in the limit 
\[
\left(\|\bfeta\|,\frac{|\xi|}{\|\bfeta\|},\frac{\|\bfeta\|^2}{|\xi|}\right)\to(0,0,0)\,,
\]

\end{proof}

\subsection{Large-period regime}\label{s:homoclinic}

We now examine consequences of Corollary~\ref{c:transverse} in asymptotic regimes described in from Section~\ref{s:asymp}. Since the large-period regime turns out to be significantly simpler to analyze, we begin the solitary-wave limit.

We prove the following theorem.

\bt\label{th:homoclinic_asymptotics}
When $d\geq2$, if $\d_{\cx}^2\Theta_{(s)}(\ucx^{(0)},\urho^{(0)},\ukp^{(0)})\neq0$ then, in the large period regime near $(\ucx^{(0)},\urho^{(0)},\ukp^{(0)})$, System~\ref{e:W-more} fails to be weakly hyperbolic and waves are spectrally exponentially unstable to transversally-slow longitudinally-co-periodic perturbations.
\et

Before turning to the proof of Theorem~\ref{th:homoclinic_asymptotics}, we would like to add one comment. It is natural to wonder whether the proved instability correspond to an instability of the limiting solitary-wave and even to expect that one could argue the other way around by proving the instability of solitary waves and deduce periodic-wave instability in the large-period regime by a spectral perturbation argument. When $\d_{\cx}^2\Theta_{(s)}(\ucx^{(0)},\urho^{(0)},\ukp^{(0)})<0$, this is a well-known fact. We expect this to be true under the assumptions of Theorem~\ref{th:homoclinic_asymptotics}. Yet so far general results for solitary-wave instabilities \cite{Benzoni-transverse,Rousset-Tzvetkov-linear} have been proven only for semilinear cases. More precisely, it has been proven for very specific forms of Schr\"odinger equations and for larger classes of Euler--Korteweg systems, sufficiently general to include all our semilinear cases. In the semilinear case, a different proof of Theorem~\ref{th:homoclinic_asymptotics} could thus be obtained by applying results from \cite{Benzoni-transverse,Rousset-Tzvetkov-linear} to solitary waves of the associated Euler--Korteweg systems, transferring those to large-period periodic waves of the same Euler--Korteweg systems through a suitable spectral perturbation theorem in the spirit of \cite{Gardner-large-period,Sandstede-Scheel-large-period,Yang-Zumbrun} and passing the latter to the Schr\"odinger systems by the results of Section~\ref{s:linearized-EK}. 

Let us stress that instead our proof goes by examining the large-period limit of a periodic-wave criterion. Incidentally we point out that our periodic-wave criterion is orthogonal to the arguments in \cite{Rousset-Tzvetkov-linear} but share some similarities with those in \cite{Benzoni-transverse}. For the adaptation as a periodic-wave criterion of the arguments of the former the reader is referred to \cite{HSS}.

One of the advantages in the way we have chosen is that it offers a symmetric treatment of both limits of interest, whereas the spectral perturbation argument fails in the harmonic limit. Another one is that we prove that the instability is of modulation type, being associated with failure of weak hyperbolicity of System~\ref{e:W-more}.

The rest of the present section is devoted to the proof of Theorem~\ref{th:homoclinic_asymptotics}. This section and the next one about the harmonic regime use formulation \eqref{e:ABC} and builds upon intermediate\footnote{As opposed to main results.} results from \cite{BMR2-I} and \cite{BMR2-II} on systems of Korteweg type. Indeed, in the solitary-wave limit, once relevant asymptotic expansions have been recalled, the proof shall be quite straightforward.

To ease notational translations, it is useful to recall that in Section~\ref{s:EK} we have derived for \eqref{e:ab} the
hydrodynamic formulation \eqref{e:absEK}
\be\label{e:absEK-bis}
\d_t\begin{pmatrix}\rho\\\bfv\end{pmatrix}=\cJ\,\delta H_0[(\rho,\bfv)]\,,
\ee
with $\bfv$ curl-free, where
\[
\cJ=\begin{pmatrix}
 0 & \Div\\
\n & 0
\end{pmatrix}
\]
and
\begin{equation*}
H_0[(\rho,\bfv)]=
\kappa(2\,\rho)\,\rho\,\|\bfv\|^2
+\frac{\kappa(2\,\rho)}{4\,\rho}\|\nabla_\bfx\rho\|^2\,.
\end{equation*}
In turn, the Hamiltonian problems studied in \cite{BMR2-I,BMR2-II} include systems\footnote{Restricted to the one-dimensional case.} in the form \eqref{e:absEK-bis} but with a larger class of Hamiltonian densities, given in original notation from \cite{BMR2-I,BMR2-II} as
\[
\cH[(\rho,\bfu))]=\frac{1}{2}\tau(v)\|\bfu\|^2+\frac{1}{2}\kappa(v)\|\nabla_\bfx v\|^2+f(v).
\]
Thus, when importing results from \cite{BMR2-I,BMR2-II}, we shall keep in mind the notational correspondence
\begin{align*}
(v,\,u)&\to (\rho,\,v)\,,&
\kappa(v)&\to \frac{1}{2\rho}\kappa(2\rho)\,,&
\tau(v)&\to 2\rho\,\kappa(2\rho)\,,&
f(v)&\to W(2\rho)\,.
\end{align*}

To derive expansions for $\sigma_1$, $\sigma_2$ and $\sigma_3$, it is convenient to use the profile equation \eqref{e:profile-rho} so as to write them as
\begin{align*}
\sigma_1&=
\int_{\rhomin(\mux)}^{\rhomax(\mux)} 
\frac{f_1(\rho)}{\sqrt{\mux-\cW_\rho(\rho))}}
\sqrt{\frac{2\,\kappa(2\,\rho)}{2\,\rho}}\,\dd \rho\,,\\
\sigma_2&=
\int_{\rhomin(\mux)}^{\rhomax(\mux)} 
\frac{f_2(\rho)}{\sqrt{\mux-\cW_\rho(\rho))}}
\sqrt{\frac{2\,\kappa(2\,\rho)}{2\,\rho}}\,\dd \rho\,,\\
\sigma_3&=
\int_{\rhomin(\mux)}^{\rhomax(\mux)} 
\frac{f_3(\rho)}{\sqrt{\mux-\cW_\rho(\rho))}}
\sqrt{\frac{2\,\kappa(2\,\rho)}{2\,\rho}}\,\dd \rho
+\,\int_{\rhomin(\mux)}^{\rhomax(\mux)} 
\sqrt{\mux-\cW_\rho(\rho))}\sqrt{\frac{2\,\kappa(2\,\rho)
}{2\,\rho}}\,\dd \rho\,,
\end{align*}
with
\begin{align*}
f_1(\rho)&:=\kappa(2\,\rho)\,2\,\rho\,,\\
f_2(\rho;\cx,\mup)&:=\kappa(2\,\rho)\,2\,\rho\,\nu(\rho;\cx,\mup)
\,,\\
f_3(\rho;\cx,\mup)&:=\kappa(2\,\rho)\,2\,\rho\,(\nu(\rho;\cx,
\mup))^2\,.
\end{align*}
In this form, \cite[Proposition~C.3]{BMR2-I} is directly applicable and yields required expansions.

\phantomsection\label{e:fj}

In the above and from now on, we mostly keep the dependence on  $(\cx,\omp,\mup)$ implicit for the sake of readability. This is consistent with the fact that the limit is reached by holding $(\cx,\omp,\mup)$ fixed and taking $\mux$ sufficiently close to $\mux^{(0)}(\cx,\omp,\mup)$ (uniformly for  $(\cx,\omp,\mup)$ in a compact neighborhood of $(\ucx^{(0)},\uomp^{(0)},\umup^{(0)})$).

The solitary-wave expansions naturally involve a mass conjugated to 
the minimal\footnote{Recall that we have decided to focus only on 
the case when the endstate of the mass of the limiting solitary 
wave is also its infimum.} mass of periodic waves. Explicitly, in 
the large-period regime there exists 
$\rhodual=\rhodual(\mux;\cx,\omp,\mup)$ such that
\[
\mux=\cW_\rho(\rhodual)\,,
\]
with $\rhodual<\rho^{(0)}$ and $\mux-\cW_\rho(\cdot)$ does not vanish on $(\rhodual,\rhomin)$. In other words, $\rhodual$ is the first cancellation point of $\mu_x-\cW_\rho
(\cdot)$ at the left of $\rho^{(0)}$; see Figure~\ref{phasesol}. 

The following theorem gathers the relevant pieces of asymptotic expansions. Up to a slight extension to incorporate expansions of $\sigma_1$, $\sigma_2$, $\sigma_3$, it is the translation in our setting of results from \cite[Theorem~3.16 \& Lemma~4.1]{BMR2-I}. We borrow the statement and notation\footnote{Except for a few variations. We use $(\Xx^{(s)},\eps)$ instead of $(\Xi_s,\rho)$ and $(\rhodual,\rhomin,\rhomax)$ instead of $(v_1,v_2,v_3)$. The subscript $0$ was originally $s$.} from \cite[Proposition~4 \& Theorem~2]{BMR2-II} where relevant results from \cite{BMR2-I} are compactly summarized. 

\begin{theorem}[\cite{BMR2-I}]\label{thm:BMR2-I_soliton}
In the large-period regime there exist real numbers $\falpha_s$, $\fbeta_s$, a positive number $\feta_s$, a vector $\bX_s$ and a symmetric matrix $\OO_s$ --- depending smoothly on the parameters $(\cx,\omp,\mup)$ --- such that, with\footnote{The parameter $\eps$ goes to zero as $\sqrt{\mux^{(0)}(\cx,\omp,\mup)-\mux}$.} 
\begin{align*}
\eps(\mux)
&:=\frac{\rhomin(\mux)-\rhodual(\mux)}{\rhomax(\mux)-\rhomin(\mux)}\,,&
\fgamma_s&:=
-\frac{1}{2\d_\rho^2\cW_\rho(\rho^{(0)})}\,,
\end{align*}
\begin{equation}\label{eq:asgradsol}
\dfrac{\pi}{\Xx^{(s)}}\,\nabla\Theta\,=\,-\,\bV_0\,\ln\eps\,-\,\bX_s \,+\,\frac{\eps}{2}\,\bV_0\,-\,\dfrac{1}{2\feta_s}\,(\falpha_s\,\bV_0\,+\,\fbeta_s\,\bW_0\,+\,\fgamma_s\,\bZ_0)\,\eps^2\ln\eps\,+\,{\mathcal O}\big(\eps^2\big)
\end{equation}
\begin{equation}\label{eq:ashesssol}
\frac{\pi}{\Xx^{(s)}}\,
\Hess\Theta
\begin{array}[t]{l}\displaystyle\,=\,
\feta_s\,\frac{1+\eps}{\eps^2}\;\bV_0\otimes \,\bV_0
\,+\,\left( \falpha_s\,\bV_0\otimes \,\bV_0\,+\,\fbeta_s\,\left(\bV_0\otimes \bW_0\,+\,\bW_0\otimes \bV_0\right)\right)\,\ln\eps\\ [15pt]
\displaystyle\,+\left(\bT_0\otimes \bT_0\,+\,2\fgamma_s\,\bW_0\otimes \bW_0
\,+\,\fgamma_s\,(\bZ_0\otimes \bV_0\,+\,\bV_0\otimes \bZ_0)\right) \ln\eps\\ [15pt]
\displaystyle\,+\;\OO_s\,+\, {\mathcal O}\big(\eps\ln\eps\big)
\end{array}
\end{equation}
and, for $j=1,2,3$,
\begin{align*}
\dfrac{\pi}{\Xx^{(s)}}\,\sigma_j\,=\,-\,f_j(\rho^{(0)})\,\ln\eps
\,+\,{\mathcal O}(1)\,,
\end{align*}
where 
\[
\Xx^{(s)}:=\sqrt{\frac{-\kappa(\rho^{(0)})}{\d_\rho^2\cW_\rho(\rho^{(0)})}}\,,
\] 
\begin{align*}\label{eq:vectstwo}
\bV_0&:=\left(\begin{array}{c} 1\\ \fq(\rho^{(0)}) \\ \rho^{(0)} \\ \nu(\rho^{(0)}) \end{array}\right)\,,&
\bW_0&:=\left(\begin{array}{c} 0\\ \d_\rho\fq(\rho^{(0)}) \\ 1 \\ \d_\rho \nu(\rho^{(0)})\end{array}\right)\,,\\
\bZ_0&:=\left(\begin{array}{c} 0\\\d_\rho^2\fq(\rho^{(0)}) \\ 0 \\ \d_\rho^2\nu(\rho^{(0)})\end{array}\right)\,,&
\bT_0&:=\,
\frac{1}{\sqrt{\kappa(2\,\rho^{(0)})\,2\,\rho^{(0)}}}
\left(\begin{array}{c}
0 \\
\rho^{(0)}\\
0 \\
1
\end{array}\right)\,,
\end{align*}
with $\fq$ defined by
\[
\fq(\rho):=\rho\,\nu(\rho)\,.
\]
Moreover the vectors are such that
\[
\begin{array}{l}
\bV_0 \cdot \bB_0^{-1}\bA_0\,\bV_0\,=\,0\,,\ \ 
\bV_0\,\cdot \bB_0^{-1}\bA_0\,\bW_0\,=\,0\,,\ \ 
\bV_0\,\cdot \bB_0^{-1}\bA_0\,\bT_0\,=\,0\,,\\ [10pt]
\bV_0\,\cdot \bB_0^{-1}\bA_0\,\bZ_0\,=\,-\,\bW_0\cdot \bB_0^{-1}\bA_0\,\bW_0\,,\qquad
\bT_0 \cdot \bB_0^{-1}\bA_0\,\bT_0\,=\,0\,,\qquad
\bT_0 \cdot \bB_0^{-1}\bA_0\,\bZ_0\,=\,0\,,\\ [10pt]
\beD_1\cdot \bV_0\,=\,1\,,\qquad
\beD_1\cdot \bW_0\,=\,0\,,\qquad
\beD_1\cdot \bZ_0\,=\,0\,,\qquad
\beD_1\cdot \bT_0\,=\,0\,,
\end{array}
\]
and
\begin{align*}
\frac{\Xx^{(s)}}{\pi}\,(\bB_0^{-1}\bA_0\bV_0) \cdot \bX_s &\,=\,
-\d_{\cx}\Theta_{(s)}(\cx^{(0)},\rho^{(0)},\kp^{(0)})\,,\\ 
\frac{\Xx^{(s)}}{\pi}\,(\bB_0^{-1}\bA_0\bV_0) \cdot \OO_s\,\bB_0^{-1}\bA_0\bV_0&\,=\,
\d_{\cx}^2\Theta_{(s)}(\cx^{(0)},\rho^{(0)},\kp^{(0)})\,.
\end{align*}
\end{theorem}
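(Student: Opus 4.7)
The plan is to leverage the observation, already stressed by the authors, that the bulk of the statement is the translation of \cite[Theorem~3.16 \& Lemma~4.1]{BMR2-I} through the Madelung/Euler--Korteweg correspondence of Section~\ref{s:EK}, with only two genuinely new pieces of content: the expansions of $\sigma_1,\sigma_2,\sigma_3$, and the two concrete identifications of $\bX_s$ and $\OO_s$ in terms of derivatives of $\Theta_{(s)}$. I would therefore first set up carefully the dictionary
\begin{align*}
(v,u) &\longleftrightarrow (\rho,v)\,,&
\tau(v)&\longleftrightarrow 2\rho\,\kappa(2\rho)\,,&
\kappa(v)&\longleftrightarrow \tfrac{\kappa(2\rho)}{2\rho}\,,&
f(v)&\longleftrightarrow W(2\rho)\,,
\end{align*}
and check that under this dictionary the reduced profile equation~\eqref{e:profile-rho}, the potential $\cW_\rho$ defined in~\eqref{def:wrho}, the integral formula for $\Theta$ recalled in Section~\ref{s:action}, the alternate parametrization in terms of $(\cx,\rho_{(0)},\kp)$ and the solitary-wave action~\eqref{def:action-s} become literally the objects manipulated in \cite{BMR2-I}. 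Once this bookkeeping is in place, the expansions~\eqref{eq:asgradsol} and~\eqref{eq:ashesssol} can be transferred verbatim (modulo renaming of $(\Xi_s,\rho_{\rm param})$ as $(\Xx^{(s)},\eps)$ and of $(v_1,v_2,v_3)$ as $(\rhodual,\rhomin,\rhomax)$), together with all orthogonality relations satisfied by $\bV_0,\bW_0,\bZ_0,\bT_0$.

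The asymptotics of $\sigma_j$ are then a mild extension of the $\Xx$-asymptotics already contained in~\eqref{eq:asgradsol} via $\Xx=\d_{\mu_x}\Theta$. Starting from $\tfrac{\kappa(2\rho)}{4\rho}\rho_x^2=\mu_x-\cW_\rho$, I would change variables $\dD x=\tfrac12\sqrt{\kappa(2\rho)/[\rho(\mu_x-\cW_\rho)]}\,\dD\rho$ to reach the integral formulas for $\sigma_1,\sigma_2,\sigma_3$ recalled just before the theorem. Splitting $f_j(\rho)=f_j(\rho^{(0)})+(f_j(\rho)-f_j(\rho^{(0)}))$, the first piece contributes
\[
f_j(\rho^{(0)})\int_{\rhomin}^{\rhomax}\!\frac{1}{\sqrt{\mu_x-\cW_\rho}}\sqrt{\tfrac{2\kappa(2\rho)}{2\rho}}\,\dD \rho
\;=\; f_j(\rho^{(0)})\,\Xx \;=\; -\tfrac{\Xx^{(s)}}{\pi}\,f_j(\rho^{(0)})\,\ln\eps + \cO(1)\,,
\]
whereas the second piece has an integrand that vanishes to order~$1$ at $\rho=\rho^{(0)}$, which tames the square-root singularity of $1/\sqrt{\mu_x-\cW_\rho}$ there (since $\mu_x^{(0)}-\cW_\rho$ vanishes to order~$2$ at $\rho^{(0)}$), so this contribution is uniformly $\cO(1)$ and converges as $\eps\to 0$ to the corresponding integral computed on the limiting solitary-wave profile. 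The extra non-singular summand in the formula for $\sigma_3$ is $\cO(1)$ for the same reason, so the claimed expansion follows.

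The orthogonality identities involving $\bV_0,\bW_0,\bZ_0,\bT_0$ are purely algebraic: upon substituting the explicit column vectors and expanding
\[
\bB_0^{-1}\bA_0
\,=\,\begin{pmatrix} 0 & 1 & 0 & 0\\ 1 & 0 & 0 & 0\\ 0 & 0 & 0 & -1\\ 0 & 0 & -1 & 0\end{pmatrix}\,,
\]
each relation reduces to a one-line check using $\fq=\rho\,\nu$ and its derivatives. For the two identifications
\begin{align*}
\tfrac{\Xx^{(s)}}{\pi}(\bB_0^{-1}\bA_0\bV_0)\cdot \bX_s
&\,=\,-\d_\cx\Theta_{(s)}(\cx^{(0)},\rho^{(0)},\kp^{(0)})\,,\\
\tfrac{\Xx^{(s)}}{\pi}(\bB_0^{-1}\bA_0\bV_0)\cdot \OO_s\,\bB_0^{-1}\bA_0\bV_0
&\,=\,\d_\cx^2\Theta_{(s)}(\cx^{(0)},\rho^{(0)},\kp^{(0)})\,,
\end{align*}
I would contract \eqref{eq:asgradsol} and \eqref{eq:ashesssol} with $\bB_0^{-1}\bA_0\bV_0$. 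The already-established orthogonality relations ensure that every vector carrying a $\ln\eps$ factor is annihilated by this contraction, so that the surviving finite limit captures exactly a $\cx$-derivative of $\Theta$ taken along the curve that parametrizes waves by $(\cx,\rho_{(0)},\kp)$. By construction this coincides with the corresponding derivative of $\Theta_{(s)}$, thereby proving both identifications: this is the content of \cite[Lemma~4.1]{BMR2-I} once translated through the Madelung dictionary.

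The main obstacle is not the asymptotic analysis itself, which is largely black-boxed from \cite{BMR2-I}, but the careful verification that the Madelung correspondence preserves every object in sight: the two action integrals, the explicit forms of $\bV_0,\bW_0,\bZ_0,\bT_0$ in terms of the averaged conservation-law densities, the meaning of $\rhodual$ as a conjugate turning point, and the compatibility of derivatives with respect to $\mu_x$ at fixed $(\cx,\omp,\mup)$ with those used in \cite{BMR2-I}. Once this translation is settled, the $\sigma_j$ expansions follow from the one-line decomposition above and the algebraic identities are immediate.
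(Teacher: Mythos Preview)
Your proposal is correct and matches the paper's approach: the paper explicitly states that the theorem is a translation of \cite[Theorem~3.16 \& Lemma~4.1]{BMR2-I} via the Madelung correspondence of Section~\ref{s:EK}, with the only addition being the $\sigma_j$ expansions, for which the paper simply invokes \cite[Proposition~C.3]{BMR2-I} on the integral representations displayed just before the theorem. Your splitting $f_j(\rho)=f_j(\rho^{(0)})+(f_j(\rho)-f_j(\rho^{(0)}))$ is exactly the mechanism behind that proposition, so you are just unpacking what the paper cites; likewise your contraction argument for the $\bX_s$, $\OO_s$ identifications is the content of \cite[Lemma~4.1]{BMR2-I}, which the paper also cites rather than reproves.
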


As a consequence, after a few straightforward but tedious computations omitted here but detailed in the discussion preceding\footnote{In notation of \cite{BMR2-II}, $\bP_0=\transp{\PP_s}\SS$, $-\bB_0^{-1}\bA_0=\SS^{-1}$, $\bfy_s=\transp{(\bD_s^{-1})}\bfy_s$, $\Sigma_0^{-1}=\Sigma_s\bD_s^{-1}$ and
\[
\bp\sigma_s&0\\\frac{w_s}{2}&1\ep
=\bA_s\,\bB^{-1}\,.
\]
} \cite[Theorem~8]{BMR2-II} stems that
\begin{align*}
&\dfrac{\pi}{\Xx^{(s)}}\,
\bp1&0&0\\
0&\frac{\sqrt{1+\eps}}{\eps}&0\\
0&0&\I_2\ep
\bP_0
(-\bB_0^{-1}\bA_0)\Hess\Theta\,
\bP_0^{-1}
\bp1&0&0\\
0&\frac{\eps}{\sqrt{1+\eps}}&0\\
0&0&\I_2\ep\\
&\,=\,\bp
-\fgamma_s\,w_0\,\ln(\eps)+\cO(1)&\frac{\sqrt{1+\eps}}{\eps}\,\feta_s+\cO(\eps\,\ln(\eps))&\cO(\ln(\eps))\\
\frac{\sqrt{1+\eps}}{\eps}\,\dfrac{\Xx^{(s)}}{\pi}\d_{\cx}^2\Theta_{(s)}
+\cO(\ln(\eps))&-\fgamma_s\,w_0\,\ln(\eps)+\cO(1)
&\frac{\sqrt{1+\eps}}{\eps}\,\transp{\bfy_s}+\cO(\ln(\eps))\\
\cO(1)&\cO(\eps\,\ln(\eps))&
\Sigma_0^{-1}\,\ln(\eps)+\cO(1)
\ep
\end{align*}
with $\bfy_s$ some two-dimensional vector and
\begin{align}\nonumber
\bP_0&:=\transp{\bp
-\beD_2&\bV_0&\bT_0&\bW_0
\ep}\\
\label{e:defSigma0}
\Sigma_0&:=\bp\sigma_0&0\\\frac{w_0}{2}&1\ep\,
(\bB\Hess H^{(0)}(\rho^{(0)},\kp^{(0)})+\cx\,\I_2)
\,\bp\sigma_0&0\\\frac{w_0}{2}&1\ep^{-1}
\end{align}
where 
\begin{align*}
\sigma_0&:=-\bT_0\cdot \bB_0^{-1}\bA_0\bW_0=\frac{1}{\sqrt{\kappa(2\,\rho^{(0)})\,2\,\rho^{(0)}}}\\
w_0&:=-\bW_0\cdot \bB_0^{-1}\bA_0\bW_0=2\d_\rho\nu(\rho^{(0)})\,.
\end{align*}
Moreover
\[
\Sigma_0^{-1}
\,=\,\bp
0&2\fgamma_s \sigma_0\\
\sigma_0&2\fgamma_s w_0
\ep\,.
\]
We recall that $H^{(0)}$ is the zero dispersion limit of the Hamiltonian $H_0$ of the hydrodynamic formulation of the Schr\"odinger equation and $\bB$ is the self-adjoint matrix involved in this formulation; see \eqref{e:zerodisp}. 

Observe also that from the foregoing follows
\begin{align*}
\delta_{(4,0,0)}=
\det(\Hess\Theta)
&\,=\,
-\frac{(\ln(\eps))^2}{\eps^2}\,\dfrac{(\Xx^{(s)})^5}{\pi^5}\frac{\feta_s
\,\d_{\cx}^2\Theta_{(s)}}{\det(\bB\Hess H^{(0)}+\cx\,\I_2)}
+\cO\left(\frac{\ln(\eps)}{\eps^2}\right)\\
&\,=\,
\frac{(\ln(\eps))^2}{\eps^2}\,\dfrac{(\Xx^{(s)})^5}{\pi^5}2\fgamma_s\,\sigma_0^2\,\feta_s
\,\d_{\cx}^2\Theta_{(s)}
+\cO\left(\frac{\ln(\eps)}{\eps^2}\right)\\
&\,=\,
-\frac{(\ln(\eps))^2}{\eps^2}\,\dfrac{(\Xx^{(s)})^5}{\pi^5}\frac{\feta_s
\,\d_{\cx}^2\Theta_{(s)}}{\kappa(2\,\rho^{(0)})\,2\,\rho^{(0)}\,\d_\rho^2\cW_\rho(\rho^{(0)})}
+\cO\left(\frac{\ln(\eps)}{\eps^2}\right)\,.
\end{align*}

Likewise
\begin{align*}
\delta_{(0,4,0)}&=1\,,&
\delta_{(1,3,0)}&=\cO(1)\,,
\end{align*}
\begin{align*}
\delta_{(2,2,0)}&=-\frac{1}{\eps^2}\,\dfrac{(\Xx^{(s)})^3}{\pi^3}\,\feta_s\,\d_{\cx}^2\Theta_{(s)}+\cO\left(\frac{\ln(\eps)}{\eps}\right)\,,
\end{align*}
\begin{align*}
\delta_{(3,1,0)}&=-\frac{\ln(\eps)}{\eps^2}\,\dfrac{(\Xx^{(s)})^4}{\pi^4}\,2\fgamma_s\,w_0\,\feta_s\,\d_{\cx}^2\Theta_{(s)}\,
+\cO\left(\frac{1}{\eps^2}\right)\,.
\end{align*}

To go on we need to compute a similar expansion for
\[
\bp1&0&0\\
0&\frac{\sqrt{1+\eps}}{\eps}&0\\
0&0&\I_2\ep
\bP_0
(-\bB_0^{-1}\bC_0)\,
\bP_0^{-1}
\bp1&0&0\\
0&\frac{\eps}{\sqrt{1+\eps}}&0\\
0&0&\I_2\ep\,.
\]
A direct computation yields
\begin{align*}
-\bP_0\bB_0^{-1}\bC_0&=
\bp
0&0&0&0\\
0&\sigma_3+\nu(\rho^{(0)})\sigma_2&-(\sigma_2+\nu(\rho^{(0)})\sigma_1)&0\\
0&\sigma_0\sigma_2&-\sigma_0\sigma_1&0\\
0&\frac{w_0}{2}\sigma_2&-\frac{w_0}{2}\sigma_1&0
\ep
\end{align*}
and
\begin{align*}
\bP_0^{-1}
&=\bp
*&*&*&*\\
-1&0&0&0\\
\nu(\rho^{(0)})&0 &-\frac{w_0}{2\sigma_0} & 1\\
*&*&*&*
\ep
\end{align*}
so that
\begin{align*}
&-\bP_0\bB_0^{-1}\bC_0\bP_0^{-1}=
\bp
0&0&0&0\\
-(\sigma_3+2\nu(\rho^{(0)})\sigma_2+\nu(\rho^{(0)})^2\sigma_1)&0
&\frac{w_0}{2\sigma_0}(\sigma_2+\nu(\rho^{(0)})\sigma_1)
&-(\sigma_2+\nu(\rho^{(0)})\sigma_1)
\\
-\sigma_0(\sigma_2+\nu(\rho^{(0)})\sigma_1)&0
&\frac{w_0}{2}\sigma_1
&-\sigma_0\sigma_1
\\
-\frac{w_0}{2}(\sigma_2+\nu(\rho^{(0)})\sigma_1)&0
&
\frac{w_0^2}{4\sigma_0}\sigma_1
&-\frac{w_0}{2}\sigma_1
\ep\,.
\end{align*}
To ease computations and materialize both symmetry and size we introduce
\begin{align*}
\delta_1^{(s)}&:=\frac{\sigma_1}{-\ln(\eps)\Xx^{(s)}/\pi}\,,&
\delta_2^{(s)}&:=\frac{\sigma_2+\nu(\rho^{(0)})\sigma_1}{-\ln(\eps)\Xx^{(s)}/\pi}\,,&
\delta_3^{(s)}&:=\frac{\sigma_3+2\nu(\rho^{(0)})\sigma_2+\nu(\rho^{(0)})^2\sigma_1}{-\ln(\eps)\Xx^{(s)}/\pi}\,.
\end{align*}
Thus
\begin{align*}
-\bp1&0&0\\
0&\frac{\sqrt{1+\eps}}{\eps}&0\\
0&0&\I_2\ep
&\bP_0\bB_0^{-1}\bC_0\bP_0^{-1}
\bp1&0&0\\
0&\frac{\eps}{\sqrt{1+\eps}}&0\\
0&0&\I_2\ep\\
&=
\frac{\Xx^{(s)}}{\pi}
\bp
0&0&0&0\\
\delta_3^{(s)}\,\frac{\sqrt{1+\eps}}{\eps}\ln(\eps)&0
&-\frac{w_0}{2\sigma_0}\delta_2^{(s)}\,\frac{\sqrt{1+\eps}}{\eps}\ln(\eps)
&\delta_2^{(s)}\,\frac{\sqrt{1+\eps}}{\eps}\ln(\eps)
\\
\sigma_0\delta_2^{(s)}\,\ln(\eps)&0
&-\frac{w_0}{2}\delta_1^{(s)}\,\ln(\eps)
&\sigma_0\delta_1^{(s)}\,\ln(\eps)
\\
\frac{w_0}{2}\delta_2^{(s)}\,\ln(\eps)&0
&
-\frac{w_0^2}{4\sigma_0}\delta_1^{(s)}\,\ln(\eps)
&\frac{w_0}{2}\delta_1^{(s)}\,\ln(\eps)
\ep
\end{align*}
with
\begin{align*}
\delta_1^{(s)}&=\,f_1(\rho^{(0)})+\cO\left(\frac{1}{\ln(\eps)}\right)\,,\\
\delta_2^{(s)}&=\,2\,\nu(\rho^{(0)})\,f_1(\rho^{(0)})+\cO\left(\frac{1}{\ln(\eps)}\right)\,,\\
\delta_3^{(s)}&=\,4\,\nu(\rho^{(0)})^2\,f_1(\rho^{(0)})+\cO\left(\frac{1}{\ln(\eps)}\right)\,.
\end{align*}
At main order the matrix has rank one, with this observation
we find
\[
\delta_{(2,0,2)}
\,=\,\cO\left(\frac{(\ln(\eps))^2}{\eps^2}\right)\,,
\]
\[
\delta_{(3,0,1)}
\,=\,
\frac{(\ln(\eps))^3}{\eps^2}\,\dfrac{(\Xx^{(s)})^4}{\pi^4}
2\,\fgamma_s\,\sigma_0^2\,\feta_s\delta_3^{(s)}
+\cO\left(\frac{(\ln(\eps))^2}{\eps^2}\right)\,.
\]
Note that this already yields the instability condition
in the large period regime:
\begin{align*}
\d_{\cx}^2\Theta_{(s)}&>0\,,
\end{align*}
This may be derived, for instance, by examining the limit $\eps\to 0$ of the rescaled
\begin{eqnarray*}
\Delta_0\left(\frac{\sqrt{\eps}\,\lambda}{\sqrt{|\ln(\eps)|}},0,\frac{\sqrt{\eps}\,\zeta}{\ln(\eps)}\right)
&=&\left(
-\dfrac{(\Xx^{(s)})^5}{\pi^5}\frac{\feta_s
\,\d_{\cx}^2\Theta_{(s)}}{\kappa^{(0)}\,2\,\rho^{(0)}\,\d_\rho^2\cW_\rho^{(0)}}
+\cO\left(\frac{1}{\ln \eps}\right)\right)\lambda^4
+\cO\left(\frac{\zeta^4}{\ln^2\eps}\right)\\
&&+\left(\dfrac{(\Xx^{(s)})^4}{\pi^4}
2\,\fgamma_s\,\sigma_0^2\,\feta_s4(\nu^{(0)})^2
2\rho^{(0)}\kappa^{(0)}
+\cO\left(\frac{1}{\ln \eps}\right)\right)\lambda^2
\zeta^2
\,.
\end{eqnarray*}

According to both Theorem~\ref{th:coperiodic_asymptotics} and Theorem~\ref{th:side-band_asymptotics}, $\d_{\cx}^2\Theta_{(s)}<0$ also yields instability. Applying Theorem~\ref{th:coperiodic_asymptotics} shows that the instability may be obtained with $\xi=0$, whereas applying Theorem~\ref{th:side-band_asymptotics} shows that it also corresponds to a failure of weak hyperbolicity of the modulated system. 

This achieves the proof of Theorem~\ref{th:homoclinic_asymptotics}.

\subsection{Small-amplitude regime}\label{s:harmonic}

We now turn to the small-amplitude limit. Our goal is to prove the following theorem.

\bt\label{th:harmonic_asymptotics}
In the small amplitude regime near a $(\ucx^{(0)},\urho^{(0)},\ukp^{(0)})$ such that 
\[
\d_\rho\nu(\urho^{(0)};\ucx^{(0)},\umup^{(0)})\neq0\,,
\]
and\footnote{Where $\delta_{hyp}$, $\delta_{BF}$ are as in \eqref{def:sideindex}-\eqref{def:sideindex_II}.}
\[
\delta_{hyp}\times\delta_{BF}(\ucx^{(0)},\uomp^{(0)},\umup^{(0)})\neq0,
\]
waves are spectrally exponentially unstable to transversally-slow longitudinally-side-band perturbations.
\et

Let us stress that, as proved in Appendix~\ref{s:constant}, the limiting constant state is spectrally stable if and only if $\delta_{hyp}<0$ so that, when $\delta_{hyp}>0$, the result is nontrivial from the point of view of spectral perturbation. The overall proof strategy is the same than in the large-period regime but the final argument is significantly more cumbersome. In particular it involves essentially all coefficients of $\Delta_0$.

To begin with, we gather relevant expansions. The following is a straightforward extension of \cite[Theorem~3.14 \& Lemma~4.1]{BMR2-I} with notation taken from \cite[Theorem~2 \& Proposition~4]{BMR2-II}.

\begin{theorem}[\cite{BMR2-I}]\label{thm:BMR2-I_harmonic}
In the small-amplitude regime there exist a real number $\fbeta_0$ and a positive number $\fgamma_0$ --- depending smoothly on the parameters $(\cx,\omp,\mup)$ --- such that, with $\falpha_0$ given by \eqref{eq:falpha} and\footnote{The parameter $\eps$ goes to zero as $\sqrt{\mux-\mux^{(0)}(\cx,\omp,\mup)}$.} 
\begin{align*}
\eps(\mux)
&:=\frac{\rhomax(\mux)-\rhomin(\mux)}{2}
\frac{\rhomin(\mux)-\rhodual(\mux)}{\rhomax(\mux)-\rhomin(\mux)}\,,&
\fgamma_0&:=\frac{1}{2\d_\rho^2\cW_\rho(\rho^{(0)})}\,,
\end{align*}
we have
\begin{equation}\label{eq:asgradharm}
\frac{4\fgamma_0}{\Xx^{(0)}}\,\nabla\Theta\,=\,4\fgamma_0\,\bV_0\,+\,(\falpha_0\,\bV_0\,+\,\fbeta_0\,\bW_0\,+\,\fgamma_0\,\bZ_0)\,\eps^2
\,+\,{\mathcal O}(\eps^4)\,,
\end{equation}
\begin{equation}\label{eq:ashessharm}
\frac{1}{\Xx^{(0)}}\,\Hess\Theta\,=\,\begin{array}[t]{l}\falpha_0\,\bV_0\otimes \bV_0\,+\,\fbeta_0\,(\bV_0\otimes\bW_0\,+\,\bW_0\otimes\bV_0)\,-\,\bT_0\otimes\bT_0\\ [10pt]
\,+\,2\,\fgamma_0\,\bW_0\otimes\bW_0\,+\,
\fgamma_0\,(\bV_0\otimes\bZ_0\,+\,\bZ_0\otimes\bV_0)
\,+\,{\mathcal O}(\eps^2)\,,
\\ [10pt]
\end{array}
\end{equation}
and, for $j=1,2,3$,
\begin{align*}
\frac{4\fgamma_0}{\Xx^{(0)}}\,\sigma_j\,=\,
4\fgamma_0\,f_j(\rho^{(0)})
&\,+\,{\mathcal O}(\eps^2)\,,
\end{align*}
where $\Xx^{(0)}$ denotes the harmonic period \eqref{e:harm_period} and the other quantities are as in Theorem~\ref{thm:BMR2-I_soliton}.
\end{theorem}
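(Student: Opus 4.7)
The plan is to import the harmonic-limit expansions of $\nabla\Theta$ and $\Hess\Theta$ from \cite{BMR2-I} via the Madelung correspondence of Section~\ref{s:EK}, and to complement these with a direct asymptotic evaluation of the integrals defining $\sigma_1$, $\sigma_2$, $\sigma_3$. Under the dictionary of Section~\ref{s:homoclinic} (namely $(v,u)\leftrightarrow(\rho,v)$ together with the induced translations of $\kappa$, $\tau$, $f$, and of $(\cx,\omp,\mup,\mux)\leftrightarrow(c,-\lambda_\rho,-\lambda_v,\mu)$), the periodic profiles of \eqref{e:ab} with mass bounded away from zero correspond to periodic profiles of the associated Euler--Korteweg system \eqref{e:absEK}, and the action integral $\Theta$ defined in \eqref{def:action} is mapped onto the action integral considered in \cite{BMR2-I}. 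The small-amplitude regime, characterized in Section~\ref{s:asymp} by $\rhomin,\rhomax\to\rho^{(0)}$ with $\rho^{(0)}$ a non-degenerate minimum of $\cW_\rho$, matches verbatim the harmonic regime of \cite{BMR2-I}.

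Under this correspondence, the expansions \eqref{eq:asgradharm} and \eqref{eq:ashessharm} are direct translations of \cite[Theorem~3.14]{BMR2-I} (for $\nabla\Theta$) and \cite[Lemma~4.1]{BMR2-I} (for $\Hess\Theta$). I would proceed by identifying the scalars $\falpha_0,\fbeta_0,\fgamma_0$ with their counterparts in \cite{BMR2-I}, with $\falpha_0$ matching the explicit formula \eqref{eq:falpha} by direct substitution of the dictionary, and the vectors $\bV_0,\bW_0,\bZ_0,\bT_0$ as the images, under the Jacobian of the parameter change, of the corresponding vectors in \cite{BMR2-I}. The orthogonality relations of these vectors with respect to $\bA_0$ and $\bB_0$ displayed after Theorem~\ref{thm:BMR2-I_soliton} follow from the analogous relations in \cite{BMR2-I} and were already used without further comment in Section~\ref{s:homoclinic} for the solitary-wave limit.

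The genuinely new content is the $\sigma_j$ expansion. Starting from the integral representation
\[
\sigma_j = \int_{\rhomin}^{\rhomax}\frac{f_j(\rho)}{\sqrt{\mux-\cW_\rho(\rho)}}\sqrt{\frac{2\kappa(2\rho)}{2\rho}}\,d\rho
\quad(+\text{an extra term when }j=3)
\]
recorded on page~\pageref{e:fj}, I would rescale $\rho=\rho^{(0)}+\eta\,s$ with $\eta:=(\rhomax-\rhomin)/2$. Since $\d_\rho\cW_\rho(\rho^{(0)})=0$ and $\d_\rho^2\cW_\rho(\rho^{(0)})>0$, the Taylor expansion of $\mux-\cW_\rho$ at $\rho^{(0)}$ reduces each integral to one of the form $\int_{-1+o(1)}^{1+o(1)} g(s;\eta)/\sqrt{1-s^2+o(1)}\,ds$ with $g$ smooth in $(s,\eta)$ and even in $s$ at $\eta=0$. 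Term-by-term integration gives a smooth expansion in $\eta$ whose leading coefficient is $f_j(\rho^{(0)})\,\Xx^{(0)}$, with $\Xx^{(0)}$ the harmonic period defined in \eqref{e:harm_period}; the next correction is of order $\eta^2$, and the extra term present when $j=3$ is manifestly $\cO(\eta^2)$ and is absorbed into the remainder. Since in the harmonic regime $\eta$ and $\eps$ are comparable, this yields the claimed $\cO(\eps^2)$ remainder.

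The only real obstacle is bookkeeping: carefully tracking how the vectors $\bV_0,\bW_0,\bZ_0,\bT_0$ and the sign $-\bT_0\otimes\bT_0$ in \eqref{eq:ashessharm} arise from the corresponding objects of \cite{BMR2-I} after the Madelung change of parameters, and verifying the explicit form of $\bT_0$ displayed in Theorem~\ref{thm:BMR2-I_soliton}. This is handled by plugging the dictionary into the relevant steps of the proofs of \cite[Theorem~3.14 and Lemma~4.1]{BMR2-I}, following exactly the template used in Section~\ref{s:homoclinic} for the solitary-wave limit.
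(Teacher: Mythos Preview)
Your proposal is correct and matches the paper's own treatment: the theorem is not proved from scratch but imported from \cite[Theorem~3.14 \& Lemma~4.1]{BMR2-I} via the Madelung dictionary, with the $\sigma_j$ expansions obtained as a straightforward extension from the integral representations (the paper points to \cite[Proposition~C.3]{BMR2-I} for this in the analogous solitary-wave case, which is precisely the rescaling computation you sketch). Your explicit rescaling argument for $\sigma_j$ is a bit more hands-on than simply invoking the cited proposition, but it is the same computation underneath.
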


Our starting point is 
\begin{align*}
&\dfrac{1}{\Xx^{(0)}}\,
\bp1&0&0\\
0&\frac{1}{\eps}&0\\
0&0&\I_2\ep
\widetilde{\bP}_0^{-1}
\bP_0
(-\bB_0^{-1}\bA_0)\Hess\Theta\,
\bP_0^{-1}\widetilde{\bP}_0
\bp1&0&0\\
0&\eps&0\\
0&0&\I_2\ep\\
&\,=\,\bp
\frac{1}{\lambda_\eps}&\fdelta_0\,\eps+\cO(\eps^3)&\cO(\eps^2)\\
\fepsilon_0\,\eps
+\cO(\eps^2)&\frac{1}{\lambda_\eps}
&\cO(\eps)\\
\cO(\eps^2)&\cO(\eps^3)&
\Sigma_\eps^{-1}
\ep
\end{align*}
with 
\begin{align*}
\lambda_\eps&=-\frac{1}{\fgamma_0\,w_0}\,+\cO(\eps^2)\,,&
\Sigma_\eps&=\Sigma_0+\cO(\eps^2)\,,
\end{align*}
$\fdelta_0$ and $\fepsilon_0$ having the sign\footnote{In notation of \cite{BMR2-II}, with $(b,g)\to(1,\nu)$,
\begin{align*}
\delta_{BF}&=\frac{1}{16}\,\frac{4\tau(\d_v g)^5}{b^3k_0}
\frac{\d_v^2\cW+3\tau\,(\d_v g)^2}{-\d_v^2\cW+3\tau\,(\d_v g)^2}\times\Delta_{MI}\,,&
\delta_{hyp}&=\frac{1}{4}\d_v^2\Ham
\,=\,\frac{1}{4}(\tau\,(\d_v g)^2-\d_v^2\cW)\,,\\
2\rho^{(0)}\,\kappa(2\rho^{(0)})\,w_0^2+8\delta_{hyp}
&=2(-\d_v^2\cW+3\tau\,(\d_v g)^2)\,,&
w_0&=w_0=\frac{2\d_vg}{b}\,,\\
\fdelta_0&=
\frac{\tau(\d_v g)^5}{b^3\,k_0\,(\d_v^2\cW)^3}(-\d_v^2\cW+3\tau\,(\d_v g)^2)\,\Delta_{MI}\,,&
\fdelta_0\,\fepsilon_0&=
\frac{\fgamma_0^3\,w_0^5}{4\,k_0}\,\Delta_{MI}\,.
\end{align*}} respectively of $\delta_{BF}$ and of $2\rho^{(0)}\,\kappa(2\rho^{(0)})\,w_0^2+8\delta_{hyp}$,
\begin{align*}
\widetilde{\bP}_0&=
\begin{pmatrix}
1&0&\transp{\ell}\\
0&1&0\\
0&r&\I_2
\end{pmatrix}\,,&
\widetilde{\bP}_0^{-1}&=
\begin{pmatrix}
1&\transp{\ell}r&-\transp{\ell}\\
0&1&0\\
0&-r&\I_2
\end{pmatrix}\,,
\end{align*}
where
\begin{align*}
r&=-(\Sigma_0^{-1}+\fgamma_0\,w_0)^{-1}
\begin{pmatrix}\fbeta_0\,\sigma_0\\
\fbeta_0\,w_0+\fgamma_0\,\zeta_0\end{pmatrix}\,,&
\ell&=-\begin{pmatrix}0&\sigma_0\\
\sigma_0&w_0\end{pmatrix}^{-1}\,r\,.
\end{align*}
Note that 
\begin{align*}
\Sigma_0^{-1}
&\,=\,\bp
0&-2\fgamma_0 \sigma_0\\
\sigma_0&-2\fgamma_0 w_0
\ep\,,&
\delta_{hyp}&=\frac14\left(\frac{w_0^2}{4\sigma_0^2}-\frac{1}{2\fgamma_0}\right)\,,\\
\det(\Sigma_0^{-1}+\fgamma_0\,w_0)&=
-16\,\fgamma_0^2\,\sigma_0^2\,\delta_{hyp}\,,&
(\Tr(\Sigma_0^{-1}))^2-4\,\det(\Sigma_0^{-1})
&=\,64\,\fgamma_0^2\,\sigma_0^2\,\delta_{hyp}\,.
\end{align*}

This yields $\delta_{(0,4,0)}=1$,
\begin{align*}
\delta_{(4,0,0)}&=
\det(\Hess\Theta)
\,=\,(\Xx^{(0)})^4\,\left(\frac{1}{\lambda_\eps^2}-\eps^2\,\fdelta_0\,\fepsilon_0\right)
\,\det(\Sigma_\eps^{-1})
+\cO\left(\eps^4\right)\,,\\
\delta_{(3,1,0)}&=(\Xx^{(0)})^3\,\left(
\frac{2}{\lambda_\eps}\,\det(\Sigma_\eps^{-1})
+\left(\frac{1}{\lambda_\eps^2}-\eps^2\,\fdelta_0\,\fepsilon_0\right)
\,\Tr(\Sigma_\eps^{-1})\right)
+\cO\left(\eps^4\right)\,,\\
\delta_{(2,2,0)}&=(\Xx^{(0)})^2\,\left(
\det(\Sigma_\eps^{-1})
+\left(\frac{1}{\lambda_\eps^2}-\eps^2\,\fdelta_0\,\fepsilon_0\right)
+\frac{2}{\lambda_\eps}\,\,\Tr(\Sigma_\eps^{-1})\right)
+\cO\left(\eps^4\right)\,,\\
\delta_{(1,3,0)}&=\Xx^{(0)}\,\left(
\frac{2}{\lambda_\eps}\,+\,\Tr(\Sigma_\eps^{-1})\right)\,,
\end{align*}
which is equivalent to
\begin{align*}
\Delta_0\left(\frac{\lambda}{\Xx^{(0)}},z,0\right)
&=\,\left(\left(\frac{\lambda}{\lambda_\eps}+z\right)^2-\lambda^2\eps^2\fdelta_0\,\epsilon_0\right)\,
\det(\lambda\Sigma_\eps^{-1}+z\,\I_2)\,+\,
\cO(\epsilon^4\,\lambda^2\,(|\lambda|^2+|z|^2))\,,\\
\Delta_0\left(\frac{\lambda}{\Xx^{(0)}},z-\frac{\lambda}{\lambda_\eps},0\right)
&=\,\left(z^2-\lambda^2\eps^2\fdelta_0\,\epsilon_0\right)\,
\det\left(\lambda\,\left(\Sigma_\eps^{-1}-\frac{1}{\lambda_\eps}\I_2\right)+z\,\I_2\right)\,+\,
\cO(\epsilon^4\,\lambda^2\,(|\lambda|^2+|z|^2))\,,
\end{align*}
One recovers the instability criteria on $\delta_{BF}$ and $\delta_{hyp}$ in the form that both from $\fdelta_0\,\epsilon_0<0$ and from $\left(\Tr(\Sigma_0^{-1})\right)^2-4\det(\Sigma_0^{-1})<0$ stems instability.

This motivates a first shift to
\[
\tilde{\Delta}_0(\lambda,z,\zeta):=
\Delta_0\left(\frac{\lambda}{\Xx^{(0)}},z-\frac{\lambda}{\lambda_\eps},\frac{\zeta}{\Xx^{(0)}}\right)\,.
\]
Note that we still have an expansion in the form
\[
\tilde{\Delta}_0(\lambda,z,\zeta)
\,=\,\sum_{\substack{0\leq m,n,p \leq 4\\m+n+p=4\\p\leq m}}
\tilde{\delta}_{(m,n,p)}\lambda^{m-p}\,z^n\,\zeta^{2p}\,,
\]
but that expressing instability criteria in terms of $\tilde{\Delta}_0$ is not obvious and will require some care. We already  know that $\tilde{\delta}_{(0,4,0)}=1$, then
\begin{align*}
\tilde{\delta}_{(4,0,0)}&=
-\eps^2\,\fdelta_0\,\fepsilon_0
\,\det\left(\Sigma_\eps^{-1}-\frac{1}{\lambda_\eps}\I_2\right)
+\cO\left(\eps^4\right)\,,\\
\tilde{\delta}_{(3,1,0)}&=-\eps^2\,\fdelta_0\,\fepsilon_0
\,\Tr\left(\Sigma_\eps^{-1}-\frac{1}{\lambda_\eps}\I_2\right)
+\cO\left(\eps^4\right)\,,\\
\tilde{\delta}_{(2,2,0)}&=
\det\left(\Sigma_\eps^{-1}-\frac{1}{\lambda_\eps}\I_2\right)
-\eps^2\,\fdelta_0\,\fepsilon_0
+\cO\left(\eps^4\right)\,,\\
\tilde{\delta}_{(1,3,0)}&=\Tr\left(\Sigma_\eps^{-1}-\frac{1}{\lambda_\eps}\I_2\right)=\cO(\eps^2)\,.
\end{align*}

Now, as in the solitary wave limit, we introduce 
\begin{align*}
\delta_1^{(0)}&:=\frac{\sigma_1}{\Xx^{(0)}}\,,&
\delta_2^{(0)}&:=\frac{\sigma_2+\nu(\rho^{(0)})\sigma_1}{\Xx^{(0)}}\,,&
\delta_3^{(0)}&:=\frac{\sigma_3+2\nu(\rho^{(0)})\sigma_2+\nu(\rho^{(0)})^2\sigma_1}{\Xx^{(0)}}\,,
\end{align*}
so that
\begin{align*}
-\bP_0\bB_0^{-1}\bC_0\bP_0^{-1}
&=
-\Xx^{(0)}
\bp
0&0&0&0\\[0.25em]
\delta_3^{(0)}&0
&-\frac{w_0}{2\sigma_0}\delta_2^{(0)}
&\delta_2^{(0)}
\\[0.25em]
\sigma_0\delta_2^{(0)}&0
&-\frac{w_0}{2}\delta_1^{(0)}
&\sigma_0\delta_1^{(0)}
\\[0.25em]
\frac{w_0}{2}\delta_2^{(0)}&0
&-\frac{w_0^2}{4\sigma_0}\delta_1^{(0)}
&\frac{w_0}{2}\delta_1^{(0)}
\ep
\end{align*}
with
\begin{align*}
\delta_1^{(0)}&=\,f_1(\rho^{(0)})+\cO\left(\eps^2\right)\,,\\
\delta_2^{(0)}&=\,2\,\nu(\rho^{(0)})\,f_1(\rho^{(0)})+\cO\left(\eps^2\right)\,,\\
\delta_3^{(0)}&=\,4\,\nu(\rho^{(0)})^2\,f_1(\rho^{(0)})+\cO\left(\eps^2\right)\,.
\end{align*}
Then we compute that
\begin{align*}
&-\dfrac{1}{\Xx^{(0)}}\,
\bp1&0&0\\
0&\frac{1}{\eps}&0\\
0&0&\I_2\ep
\widetilde{\bP}_0^{-1}
\bP_0
\bB_0^{-1}\bC_0\,
\bP_0^{-1}\widetilde{\bP}_0
\bp1&0&0\\
0&\eps&0\\
0&0&\I_2\ep\\
&=
\bp\cO(1)&\cO(\eps)&\cO(1)&\cO(1)\\
\frac{\delta_3^{(0)}}{\eps}&\cO(1)&\cO\left(\eps^{-1}\right)&\cO\left(\eps^{-1}\right)\\
\cO(1)&\cO(\eps)&\cO(1)&\cO(1)\\
\cO(1)&\cO(\eps)&\cO(1)&\cO(1)
\ep
\end{align*}
and observe that the latter matrix takes the form 
\[
\textrm{matrix of rank }1\quad\times(\,\I_4+\cO(\eps^2)\,)\,.
\] 
As a result
\begin{align*}
\tilde{\delta}_{(2,0,2)}&=\cO(\eps^2)\,,&
\tilde{\delta}_{(3,0,1)}&=
-\delta_3^{(0)}\,\fdelta_0\,\det\left(\Sigma_\eps^{-1}-\frac{1}{\lambda_\eps}\I_2\right)
+\,\cO(\eps^2)\,,\\
\tilde{\delta}_{(2,1,1)}&=\cO(1)\,,&
\tilde{\delta}_{(1,2,1)}&=\cO(1)\,.
\end{align*}
Taking the limit $\eps\to0$ in
\[
\Delta_0\left(\frac{\lambda_\eps}{\Xx^{(0)}}\left(-\eps^{-\frac14} Z+\eps^{\frac14}\Lambda\right),\eps^{-\frac14} Z,\eps^{\frac14}\frac{\Gamma}{\Xx^{(0)}}\right)
\]
yields the limiting
\[
\Lambda^2\,Z^2\,\det\left(\Sigma_0^{-1}-\frac{1}{\lambda_0}\I_2\right)
-\delta_3^{(0)}\,\fdelta_0\,\det\left(\Sigma_0^{-1}-\frac{1}{\lambda_0}\I_2\right)\lambda_0^2\,Z^2\,\Gamma^2\,
\,=\,0\,,
\]
where we recall $\det\left(\Sigma_0^{-1}-\frac{1}{\lambda_0}\I_2\right)
=-16\fgamma_0^2\sigma_0^2\delta_{hyp}$. From this, one deduces that when $\delta_{hyp}\neq0$, $\delta_{BF}>0$ gives instability since $\fdelta_0$ has the sign of $\delta_{BF}$. 

Since Theorem~\ref{th:side-band_asymptotics} already concludes instability from $\delta_{BF}>0$, this concludes the proof of Theorem~\ref{th:harmonic_asymptotics}.


\appendix

\section{Symmetries and conservation laws}\label{s:Noether}

In the present paper, including the current section we only consider functional densities depending of derivatives up to order $1$. In particular
\begin{align*}
L \cA[U](\bJ\delta \cB [U])
\,=\,
-L \cB[U](\bJ\delta \cA [U])
+\sum_{\ell}
\d_{\ell}\left(
\nabla_{U_{x_\ell}}\cA[U]\cdot\bJ\delta \cB[U]
+\nabla_{U_{x_\ell}}\cB[U]\cdot\bJ\delta \cA[U]
\right)\,.
\end{align*}
As a consequence, if $U_t=\bJ\delta\cH[U]$ then
\begin{equation}\label{e:abstract-cl}
(\cG[U])_t\,=\,
-L \cH[U](\bJ\delta \cG [U])
+\sum_{\ell}
\d_{\ell}\left(
\nabla_{U_{x_\ell}}\cG[U]\cdot\bJ\delta \cH[U]
+\nabla_{U_{x_\ell}}\cH[U]\cdot\bJ\delta \cG[U]\right)
\,.
\end{equation}

The main point in concrete uses of the latter equality is that $U\mapsto L \cH[U](\bJ\delta \cG [U])$ encodes the variation of $\cH$ under the action of the group generated by $\cG$. Here we consider two kinds of invariance by the action of a group generated by a functional density:
\begin{itemize}
\item stationarity of the density functional $\cH$ under the action of the group generated by $\cG$ encoded by
\[
L \cH[U](\bJ\delta \cG [U])\equiv 0
\]
in which case \eqref{e:abstract-cl} reduces to
\[
(\cG[U])_t\,=\,
\sum_{\ell}
\d_{\ell}\left(
\nabla_{U_{x_\ell}}\cG[U]\cdot\bJ\delta \cH[U]
+\nabla_{U_{x_\ell}}\cH[U]\cdot\bJ\delta \cG[U]\right)
\,;
\]
\item commutation of the density functional $\cH$ with the action of the group generated by $\cG$ encoded by
\[
L \cH[U](\bJ\delta \cG [U])\,=\, \bJ\delta \cG [\cH[U]]
\]
in which case \eqref{e:abstract-cl} reduces to
\[
(\cG[U])_t\,=\,
-\bJ\delta \cG [\cH[U]]
+\sum_{\ell}
\d_{\ell}\left(
\nabla_{U_{x_\ell}}\cG[U]\cdot\bJ\delta \cH[U]
+\nabla_{U_{x_\ell}}\cH[U]\cdot\bJ\delta \cG[U]\right)
\,.
\]
\end{itemize}
Note that in the latter case if the group is a group of translations then the latter equation is still a conservation law.

Specializing the first case to $\cG=\Mass$ gives
\[
(\Mass[U])_t\,=\,
\sum_{\ell}
\d_{\ell}\left(
\nabla_{U_{x_\ell}}\cH[U]\cdot\bJ U\right)
\]
whereas a specialization of the second case respectively to $\cG=\Impulse_j$ and to $\cG=\cH$ gives respectively
\[
(\Impulse_j[U])_t\,=\,
\d_j(\nabla_{U_{x_j}}\Impulse_j[U]\cdot\bJ\delta \cH[U]-\cH[U])
+\sum_{\ell}
\d_{\ell}\left(
\nabla_{U_{x_\ell}}\cH[U]\cdot U_{x_j}\right)
\]
and 
\[
(\cH[U])_t\,=\,
\sum_{\ell}
\d_{\ell}\left(
\nabla_{U_{x_\ell}}\cH[U]\cdot\bJ\delta \cH[U]\right)
\,.
\]
To compute how these conservation laws are transformed when going to uniformly moving frames, we also record the following simple but useful relations 
\begin{align*}
L \Mass[U](\bJ\delta\Mass[U])&=0\,,&
L \Mass[U](\bJ\delta\Impulse_\ell[U])&=\d_\ell(\Mass[U])\,,\\
L \Impulse_j[U](\bJ\delta \Mass[U])&=-\d_j(\Mass[U])\,,&
L \Impulse_j[U](\bJ\delta \Impulse_\ell[U])&=\d_\ell(\Impulse_j[U])\,,\\
L \cH[U](\bJ\delta\Mass[U])&=0\,,&
L \cH[U](\bJ\delta\Impulse_\ell[U])&=\d_\ell(\cH[U])\,.
\end{align*}
Among the foregoing identities only the third one is not a simple expression of invariances of $\Mass$, $\Impulse_j$ and $\cH$ but it may be deduced from the second one.

For our purposes, it is also crucial to derive linearized versions of the algebraic relations expounded above. Let us define $\cF_{\cG,\cH}$ by $\cF_{\cG,\cH}[U]=L \cG[U](\bJ\delta\cH[U])$ so that $U_t=\bJ\delta\cH[U]$ implies $(\cG(U))_t=\cF_{\cG,\cH}[U]$. Now note that if $\uU$ is such that $\delta\cH[\uU]=0$ then $L\cF_{\cG,\cH}[\uU](V)=L \cG[\uU](\bJ L\delta\cH[\uU](V))$. In particular if $\uU_t=0$ and $\delta\cH[\uU]=0$ then $V_t=\bJ L\delta\cH[\uU](V)$ implies 
\[
(L\cG[\uU]V)_t\,=\,L\cF_{\cG,\cH}[\uU](V)\,.
\]
The latter computations also provide similar conclusions for the associated spectral problems.

\section{Spectral stability of constant states}\label{s:constant}

In the present section we study the spectral stability of constant solutions to \eqref{e:ab}. By constant solutions we mean solutions that are constant up to the symmetries, thus solutions in the form\footnote{The action of spatial translations is redundant with the action of rotations for this class of solutions.}
\be\label{def:constant}
\bU(t,\bfx)=\eD^{(\bkp\cdot\bfx+\omp\,t)\bJ}\bU^{(0)}\,,
\ee
with $\bU^{(0)}$ a constant vector of $\R^2$, $\bkp\in\R^d$ and $\omp\in\R$. Since it is almost costless and will be useful in Appendix~\ref{s:more-equations}, we consider a yet more general class of Hamiltonian equations
\begin{align}\label{e:ab-more}
\d_t\bU&=\bJ\,\delta \Ham_0[\bU]\,,&
\qquad\text{with}\qquad&&
\Ham_0\left[\bU\right]&=\tfrac12 \nabla_\bfx \bU\cdot\bD(\|\bU\|^2)\nabla_\bfx \bU
+W(\|\bU\|^2)\,,
\end{align}
where $\bD$ is valued in real symmetric $d\times d$-matrices. That $\bU$ from \eqref{def:constant} solves \eqref{e:ab-more} reduces to either $\bU^{(0)}$ is zero or
\be\label{e:constant}
\omp\,=\,2\,W'(\|\bU^{(0)}\|^2)
+\bkp\cdot\bD(\|\bU^{(0)}\|^2)\,\bkp
+\|\bU^{(0)}\|^2\,\bkp\cdot\bD'(\|\bU^{(0)}\|^2)\,\bkp\,.
\ee

For the sake of concision and comparison, it is expedient to introduce the dispersionless hydrodynamic Hamiltonian
\[
H^{(0)}(\rho,\bfv):=\rho\,\bfv\cdot\bD(2\,\rho)\bfv
+W(2\,\rho)\,.
\]
As a first instance, note that with $\rho^{(0)}:=\Mass(\bU^{(0)})$, \eqref{e:constant} takes the concise form $\omp=\d_\rho H^{(0)}(\rho^{(0)},\bkp)$.

Changing frame through $\bU(t,\bfx)=\eD^{(\bkp\cdot\bfx+\omp\,t)\bJ}\bV(t,\bfx)$, linearizing and using the Fourier transform bring the spectral stability question under consideration to the question of knowing whether for any $\bfxi\in\R^d$, the linear operator on $\C^2$
\begin{align*}
\bV\mapsto&
\left[\,\bU^{(0)}\cdot\bV\times\d_\rho^2H^{(0)}(\rho^{(0)},\bkp)
\,+\,2\,\bJ\bU^{(0)}\cdot\bV\times\bkp\cdot \bD'(2\,\rho^{(0)})\iD\bfxi\,\right]\,\bJ\bU^{(0)}\\
&+\left[2\,\bU^{(0)}\cdot\bV\times\bkp\cdot \bD'(2\,\rho^{(0)})\iD\bfxi\right]\,\bU^{(0)}
+\left[\bfxi\cdot \bD(2\,\rho^{(0)})\bfxi\right]\,\bJ\bV
+\left[2\,\bkp\cdot \bD(2\,\rho^{(0)})\iD\bfxi\right]\,\bV
\end{align*}
has purely imaginary spectrum. If $\bU^{(0)}=0$ then, by diagonalizing $\bJ$, one gets that the latter spectrum is 
\[
\iD\,\left(\pm\bfxi\cdot \bD(2\,\rho^{(0)})\bfxi
+2\,\bkp\cdot \bD(2\,\rho^{(0)})\bfxi\right)\in\iD\,\R
\]
hence spectral stability holds. When $\bU^{(0)}\neq0$, we may use $\bV\mapsto (\bU^{(0)}\cdot\bV,\bJ\bU^{(0)}\cdot\bV)$ as a coordinate map in which the above operator's matrix is
\begin{align*}
\bp 
2\,\d_\rho\nabla_\bfv H^{(0)}(\rho^{(0)},\bkp)\cdot\iD\bfxi
&-\bfxi\cdot \bD(2\,\rho^{(0)})\bfxi\\
2\,\rho^{(0)}\,\d_\rho^2H^{(0)}(\rho^{(0)},\bkp)+\bfxi\cdot \bD(2\,\rho^{(0)})\bfxi
&
2\,\d_\rho\nabla_\bfv H^{(0)}(\rho^{(0)},\bkp)\cdot\iD\bfxi
\ep\,.
\end{align*}
Thus, when $\bU^{(0)}\neq0$, spectral stability holds if and only if for any $\bfxi\in\R^d$, the solutions in $\lambda$ of
\[
\left(\lambda-2\iD\,\d_\rho\nabla_\bfv H^{(0)}(\rho^{(0)},\bkp)\cdot\bfxi\right)^2
+\bfxi\cdot \bD(2\,\rho^{(0)})\bfxi\,\left(2\,\rho^{(0)}\,\d_\rho^2H^{(0)}(\rho^{(0)},\bkp)+\bfxi\cdot \bD(2\,\rho^{(0)})\bfxi\right)\,=\,0
\]
are purely imaginary, that is, if and only if, for any $\bfxi\in\R^d$,
\[
\bfxi\cdot \bD(2\,\rho^{(0)})\bfxi\,\left(2\,\rho^{(0)}\,\d_\rho^2H^{(0)}(\rho^{(0)},\bkp)+\bfxi\cdot \bD(2\,\rho^{(0)})\bfxi\right)
\,\geq\,0\,.
\]

As a conclusion, spectral stability holds if and only if, for any unitary $\beD\in\R^d$,
\[
\beD\cdot \bD(2\,\rho^{(0)})\,\beD
\times\,\rho^{(0)}\,\d_\rho^2H^{(0)}(\rho^{(0)},\bkp)
\,\geq\,0\,.
\]
For comparison, let us point out that $\d_\rho^2H^{(0)}(\rho^{(0)},\bkp)\,=\,4\,\delta_{hyp}$. 

\bl\label{l:constant}
Let $\bU^{(0)}$ be a constant profile for a solution to \eqref{e:ab-more} in the sense of \eqref{def:constant}. Then, with $\rho^{(0)}:=\Mass(\bU^{(0)})$, the corresponding solution is spectrally exponentially unstable if and only if $\bD(2\,\rho^{(0)})$ is not the zero matrix, $\rho^{(0)}\,\delta_{hyp}\neq0$ and one of two following possibilities hold
\begin{enumerate}
\item there exists $\beD_+$ such that $\beD_+\cdot \bD(2\,\rho^{(0)})\,\beD_+>0$ and $\beD_-$ such that $\beD_-\cdot \bD(2\,\rho^{(0)})\,\beD_-<0$;
\item $\bD(2\,\rho^{(0)})$ is nonnegative (respectively nonpositive) and $\delta_{hyp}<0$ (resp. $\delta_{hyp}>0$).
\end{enumerate}
\el

\br\label{rk:delta-hyp} In the present contribution we are interested in constant solutions only as far as they are reachable either as the constant limit in the small-amplitude regime or as the limiting solitary-wave endstate in the large-period regime. Extending \cite[Appendix~A]{BMR2-I}, let us point out that when $\bD$ has a sign (either nonnegative or nonpositive), constant states associated with a large-period regime are always spectrally stable. We prove here this claim for equations of type \eqref{e:ab}. Let us recall that with $\nu$ and $\cW_\rho$ defined through
\begin{align*}
\cW_\rho(\rho)
&=-H^{(0)}(\rho,\nu(\rho)\ex+\tkp)
+\omp\,\rho+\mup\,\nu(\rho)-\cx \rho\,\nu(\rho)\,,\\
0&=-\ex\cdot\,\nabla_\bfv H^{(0)}(\rho,\nu(\rho)\ex+\tkp)
+\mup-\cx\,\rho\,,
\end{align*}
this means that we focus on the case when $\d_\rho^2\cW_\rho(\rho^{(0)})<0$. By differentiating the foregoing identities, one deduces that
\begin{align*}
\d_\rho\cW_\rho(\rho)
&=-\d_\rho H^{(0)}(\rho,\nu(\rho)\ex+\tkp)
+\omp-\cx\nu(\rho)\,,\\
2\,\rho\,\kappa(2\,\rho)\,\nu'(\rho)&=-\ex\cdot\,\d_\rho\nabla_\bfv H^{(0)}(\rho,\nu(\rho)\ex+\tkp)
-\cx\,,\\
\d_\rho^2\cW_\rho(\rho)
&=-\d_\rho^2 H^{(0)}(\rho,\nu(\rho)\ex+\tkp)
+\frac{(\nu'(\rho))^2}{2\,\rho\,\kappa(2\,\rho)}\,.
\end{align*}
From this stems that, for $(\rho^{(0)},\bkp)=(\rho^{(0)},\nu(\rho^{(0)})\ex+\tkp)$, the saddle condition $\d_\rho^2\cW_\rho(\rho^{(0)})<0$ implies $\d_\rho^2 H^{(0)}(\rho,\bkp)>0$ \emph{i.e.} $\delta_{hyp}>0$, as claimed.
\er

\section{Anisotropic equations}\label{s:more-equations}

In the present section, we show how to generalize most of our results from systems of the form~\eqref{e:ab} to systems of the form~\eqref{e:ab-more}, namely
\begin{align*}
\d_t\bU&=\bJ\,\delta \Ham_0[\bU]\,,&
\qquad\text{with}\qquad&&
\Ham_0\left[\bU\right]&=\tfrac12 \nabla_\bfx \bU\cdot\bD(\|\bU\|^2)\nabla_\bfx \bU
+W(\|\bU\|^2)\,,
\end{align*}
where $\bD$ is valued in real symmetric $d\times d$-matrices. As in Appendix~\ref{s:more-waves}, our goal is not to transfer our methodology (with possibly different outcomes), but to point out what is readily accessible by simple changes in notation.

Consistently with the rest of the present paper, we shall discuss explicitly only waves in the form \eqref{def:wave}. Yet let us anticipate from Appendix~\ref{s:more-waves} that even for System~\eqref{e:ab-more} as considered here all longitudinal results apply equally well to waves of the form~\eqref{def:wave-general} and that instability results about general perturbations also generalize when either $\bD$ is constant (semilinear case) or when $d\geq3$ and, for any $\alpha$, $\tkp$ is an eigenvector of $\bD(\alpha)$.

The restriction on generality we make here is that we consider waves of type \eqref{def:wave} propagating in a direction that is a principal direction for the dispersion of \eqref{e:ab-more}. We assume that, for any $\alpha$, $\uex$ is an eigenvector of $\bD(\alpha)$ for a non-zero eigenvalue. This includes the case, considered in \cite{LBJM}, when $d=2$, waves propagate in the direction $\beD_1$ and 
\[
\bD\equiv\bp 1&0\\0&\pm 1\ep\,.
\]
It follows readily from the principal-direction restriction that all longitudinal results still hold with
\be\label{e:new-kappa}
\kappa(\alpha)\,:=\,\uex\cdot\bD(\alpha)\,\uex\,.
\ee
and we recall that at this stage there is no loss in generality in assuming $\kappa$ positive-valued. Unfortunately, in genuinely anisotropic cases, the principal-direction restriction is essentially incompatible with modulation of the direction $\ex$ and thus rules out any hope for a modulational interpretation in the spirit of Section~\ref{s:WKB}.

Therefore, under this assumption, we focus on extending instability results of Section~\ref{s:general}. As far as this goal is concerned, it is sufficient to deal with the case when $d=2$, $\uex=\beD_1$ and
\be\label{e:nondefHam}
\Ham_0\left[\bU\right]=\tfrac12\kappa(\|\bU\|^2)\|\d_x \bU\|^2+W(\|\bU\|^2)
+\tfrac12\tkappa(\|\bU\|^2)\|\d_y \bU\|^2
\ee
with $\kappa$ as in \eqref{e:new-kappa} and $\tkappa$ ranging over all the functions $\tkappa$ given by 
\[
\tkappa(\alpha)\,:=\,\beD\cdot\bD(\alpha)\,\beD
\]
where $\beD$ is a unitary vector orthogonal to $\uex$. Note that this reduction hinges on the obvious facts that there is no loss in taking $\bfeta$ under the form $\|\bfeta\|\,\beD$ with $\beD$ as above, and that, for any $\alpha$, the space of vectors orthogonal to $\uex$ is stable under the action of $\bD(\alpha)$.

Up to minor changes that we detail below, Corollary~\ref{c:transverse} and results of Section~\ref{s:criteria} extend readily to the case \eqref{e:nondefHam} with $\uex=\beD_1$. Indeed, the changes required in Theorem~\ref{th:low-freq} and its proof are purely notational and in the statement the only place where $\kappa$ should be replaced with $\tkappa$ is in the definition of $\Sigma_{\bfy}$, or, in other words, in the definition of $\sigma_1$, $\sigma_2$ and $\sigma_3$. Explicitly, 
\begin{align*}
\sigma_1&:=\int_0^{\uXx}\tkappa(\|\ucV\|^2)\,\|\ucV\|^2\,,&
\sigma_2&:=\int_0^{\uXx}\tkappa(\|\ucV\|^2)\,\bJ\ucV\cdot\ucV_x\,,&
\sigma_3&:=\int_0^{\uXx}\tkappa(\|\ucV\|^2)\,\|\ucV_x\|^2\,.
\end{align*}
The proof of Proposition~\ref{p:hig-freq} requires more significant changes but all of them are elementary. The upshot is that in Proposition~\ref{p:hig-freq} the ellipticity condition $|\lambda|+\|\bfeta\|^2\geq R_0$ should be replaced with $|\lambda|\geq R_0\,(1+\|\bfeta\|^2)$. This weaker conclusion is still sufficient to derive Corollary~\ref{c:transverse}. Once the above-mentioned change in $\Sigma_{\bfy}$ has been performed, all the results of Section~\ref{s:criteria} hold unchanged. 

Note for instance that in Lemma~\ref{l:xi=0}, only the coefficients $\delta_{(m,n,p)}$ with $p\neq0$ depend on the choice of the transverse coefficient $\tkappa$. This stems from the fact that the wave profiles are independent of this coefficient. Note moreover that the dependence of $\delta_{(m,n,p)}$ on $\tkappa$ has the parity of $p$. Thus it follows from Lemma~\ref{l:xi=0} that waves cannot be spectrally stable to perturbations that are longitudinally co-periodic for both $\tkappa$ and $-\tkappa$ except possibly if $\delta_{4,0,0}=\delta_{3,0,1}=\delta_{2,0,2}=0$. Note that in the latter degenerate case, in particular, $0$ has algebraic multiplicity larger than $4$ as an eigenvalue of $\cL_{0,0}$. Moreover it follows from an inspection of the coefficients of $\Sigma_t$ and $\Sigma_{\bfy}$ and a Cauchy-Schwarz argument that this latter degenerate case cannot occur when $\tkappa$ has a definite sign (either positive or negative).

Now we turn to the generalization of asymptotic results in Sections~\ref{s:homoclinic} and~\ref{s:harmonic}. It is important to track there how the replacement of $\kappa$ with $\tkappa$ at some places impacts the proof. In the integral representations of $\sigma_1$, $\sigma_2$ and $\sigma_3$, $\kappa$ should be replaced with $\tkappa$ in definitions of $f_1$, $f_2$ and $f_3$ and the formula for $\sigma_3$ should be modified as 
\begin{align*}
\sigma_3&=
\int_{\rhomin(\mux)}^{\rhomax(\mux)} 
\frac{f_3(\rho)}{\sqrt{\mux-\cW_\rho(\rho))}}
\sqrt{\frac{2\,\kappa(2\,\rho)}{2\,\rho}}\,\dd \rho
+\,\int_{\rhomin(\mux)}^{\rhomax(\mux)} 
\frac{\tkappa(2\,\rho)}{\kappa(2\,\rho)}
\sqrt{\mux-\cW_\rho(\rho))}\sqrt{\frac{2\,\kappa(2\,\rho)
}{2\,\rho}}\,\dd \rho\,.
\end{align*}
Theses changes appear in proofs of Theorems~\ref{th:homoclinic_asymptotics} and~\ref{th:harmonic_asymptotics} only through the value $f_1(2\,\rho^{(0)})$. When $\tkappa(2\,\rho^{(0)})>0$, the arguments still apply so that instability still occurs.

Let us now focus on the case when $\tkappa(2\,\rho^{(0)})<0$. Recall that we have normalized signs to ensure $\kappa(2\,\rho^{(0)})>0$. Thus it follows from Lemma~\ref{l:constant} that if $\delta_{hyp}\neq0$ the limiting constant state is spectrally unstable. Note that, as pointed out in Remark~\ref{rk:delta-hyp}, the condition $\delta_{hyp}\neq0$ holds systematically at the large-period limit. So we only need to explain how to transfer spectral instability from limiting constant states to nearby periodic waves. 
 
In the small-amplitude regimes, the transfer follows from a direct standard perturbation argument for isolated eigenvalues of finite multiplicity, considering the constant-coefficient operators obtained by linearizing about the constant-state as a periodic operator. To perform this comparison, it is important, even at the constant limit, to choose a frame adapted to the harmonic limit. Indeed let us observe that the choice of a frame --- among those in which the reference solution is stationary --- does impact the spectrum of the linearized operator, yet without altering the instable character of this spectrum. We omit details of the standard argument and again refer the reader to \cite{Kato} for background on spectral perturbation theory.

The large-period regime is trickier to analyze but is covered by \cite{Yang-Zumbrun}.

To summarize, we have obtained that under the principal-direction assumption, when $\bD(2\,\rho^{(0)})$ is non trivial --- in the sense that there exists $\beD$ orthogonal to $\uex$ such that $\bD(2\,\rho^{(0)})\beD$ is not zero ---, spectral instability holds
\begin{enumerate}
\item in the large-period regime when at the limiting solitary wave $\d_{\cx}^2\Theta_{(s)}\neq0$;
\item in the small-amplitude regime when at the limiting constant $\d_\rho\nu\neq0$ and $\delta_{hyp}\,\delta_{BF}\neq0$.
\end{enumerate} 

\section{General plane waves}\label{s:more-waves}

In the present section, we show how our general analysis may be applied to more general plane waves in the form \eqref{def:wave-general}. Whereas we believe that our methodology could be applied to all these waves (with possibly different outcomes), our aim here is merely to point out what is readily accessible by a simple change in frame or notation.

As already highlighted in Section~\ref{s:profile-general}, all our longitudinal results apply as they are once one has replaced $W$ with $W_{\tkp}$ defined through
\[
W_{\tkp}(\alpha):=W(\alpha)\,+\,\frac12\,\alpha\,\kappa(\alpha)\,\|\tkp\|^2
=H^{(0)}\left(\frac{\alpha}{2},\tkp\right)\,.
\]
Thus we only need to discuss our results on general perturbations, focused on proving spectral exponential instability. 

\medskip

\noindent {\bf Dimension larger than $2$.} A simple but efficient observation is that when one restricts to perturbations that are constant in the direction of $\tkp$ all transverse contributions due to the fact that $\tkp$ is non-zero do disappear. As a direct consequence, when $\tkp\neq0$ but $d\geq3$, all the spectral instability results hold as they are (up to the change $W\to W_{\tkp}$) and a modulational interpretation is available for the spectral expansion of $D_\xi(\lambda,\bfeta)$ when $(\lambda,\xi,\bfeta)\to (0,0,0)$ under the condition $\bfeta\cdot\tkp=0$. In particular, when $d\geq3$, in non-degenerate cases, spectral instability occur in both small-amplitude and large-period regimes. Except for the generalization of the modulational interpretation, this argument also applies to the more general form of equations considered in Appendix~\ref{s:more-equations} under the assumption that, for any $\alpha$, $\tkp$ is an eigenvector of $\bD(\alpha)$.

\medskip

\noindent {\bf Semilinear case.} In the semilinear case, one may go further by using a form of Galilean invariance. Let us consider System~\eqref{e:ab-more} with $\bD\equiv\bD_0$. Then for any vector $\tkp$ if $\bU$ solves \eqref{e:ab-more} so does 
\[
(t,\bfx)\mapsto 
\eD^{\left(\tkp\cdot\bD_0\tkp\,t+\tkp\cdot\bfx\right)\bJ}\bU(t,\bfx+t\,2\,\bD_0\,\tkp)\,.
\]
The foregoing transformation preserves (in)stability properties and brings waves of type \eqref{def:wave-general} into waves of type \eqref{def:wave}. Thus in the semilinear case there is absolutely no loss in generality in assuming the form \eqref{def:wave}.

\section{Table of symbols}\label{s:index}

We gather here page numbers of main definitions for symbols that are used recurrently throughout the text. Pieces of notation specific to a subsection are not indexed here. For groups of symbols introduced simultaneously, the definitions may run over a few pages. We recall that underlining is used throughout to denote specialization at a specific background wave.

\begin{align*}
\delta,\,\Hess,\,L,\,\otimes,\,\Div\,,&\ \pageref{notation}&
\bJ,\,\kappa\,,W,\,\Ham_0,\,\Mass,\,\bImpulse,\,\Impulse_j\,,&\ \pageref{e:ab}&
\kx,\,\kp,\,\cx,\,\omx,\,\omp,\,\Ham,&\ \pageref{def:wave}\\
\cL,\,\cL^x,\,\cL^\bfy,\,\Ham^x,\,\Ham^\bfy,&\ \pageref{def:L}&
\cL_{\xi,\bfeta},\,\xi,\,\bfeta,\,\cL^x_\xi,\,\cB,\,\check{},\,\cF,\,\widehat{ },&\ \pageref{s:Bloch}&
\Theta,\,\rhomin,\,\rhomax,&\ \pageref{s:action}\\
\mux,\,\mup,\,\Hamp,&\ \pageref{e:unscaled-profile}&
D_\xi(\lambda,\bfeta),\,R(x,x_0;\lambda,\bfeta),&\ \pageref{s:Evans-def}&
\bA_0,\,\bB_0,\,\mass,\,\impulse,&\ \pageref{e:W-intro}\\
\bkx,\,\ex,\,\bkp,\,\tkp,\,\bimpulse,&\ \pageref{def:wave-general}&
\bC_0,\,\tau_0,\,\tau_1,\,\tau_2,\,\tau_3,\,\sigma_1,\,\sigma_2,\,\sigma_3,&\ \pageref{e:W-intro-more}&
\Xx,\,\xip,\,\vphip,\,\vphix,&\ \pageref{s:jumps}\\
\cU(\rho,\theta),\,\cJ,\,H_0,\,Q_j,\,\Hp,&\ \pageref{s:EK}&
\HEK,\,\nu,\,\cW_\rho,&\ \pageref{eq:EK}&
{}^{(0)},\,{}^{(s)},&\ \pageref{s:asymp}\\
\Theta_{(s)},\,\bB\,,H^{(0)},&\ \pageref{def:action-s}&
\Ham_{\tkp},\,\Ham_{0,\tkp},\,W_{\tkp},&\ \pageref{s:profile-general}&
\Sigma_t,\,\Sigma_\bfy,&\ \pageref{th:low-freq}\\
\delta_{hyp},\,\delta_{BF},&\ \pageref{def:sideindex}&
\Delta_0,\,&\ \pageref{def:Delta}&
\delta_{(m,n,p)},&\ \pageref{eq:poly}\\
f_1,\,f_2,\,f_3,&\ \pageref{e:fj}
\end{align*}



\begin{thebibliography}{BJN{\etalchar{+}}17b}

\bibitem[AP09]{Angulo-Pava}
J.~Angulo~Pava.
\newblock {\em Nonlinear dispersive equations}, volume 156 of {\em Mathematical
  Surveys and Monographs}.
\newblock American Mathematical Society, Providence, RI, 2009.
\newblock Existence and stability of solitary and periodic travelling wave
  solutions.

\bibitem[Bar14]{Barker}
B.~Barker.
\newblock Numerical proof of stability of roll waves in the small-amplitude
  limit for inclined thin film flow.
\newblock {\em J. Differential Equations}, 257(8):2950--2983, 2014.

\bibitem[BJN{\etalchar{+}}13]{BJNRZ-KS}
B.~Barker, M.~A. Johnson, P.~Noble, L.~M. Rodrigues, and K.~Zumbrun.
\newblock Nonlinear modulational stability of periodic traveling-wave solutions
  of the generalized {K}uramoto-{S}ivashinsky equation.
\newblock {\em Phys. D}, 258:11--46, 2013.

\bibitem[BJN{\etalchar{+}}17a]{BJNRZ-KdV-SV-note}
B.~Barker, M.~A. Johnson, P.~Noble, L.~M. Rodrigues, and K.~Zumbrun.
\newblock Note on the stability of viscous roll-waves.
\newblock {\em Comptes Rendus M{\'e}canique}, 345(2):125--129, 2017.

\bibitem[BJN{\etalchar{+}}17b]{BJNRZ-KdV-SV}
B.~Barker, M.~A. Johnson, P.~Noble, L.~M. Rodrigues, and K.~Zumbrun.
\newblock Stability of viscous {S}t. {V}enant roll waves: from onset to
  infinite {F}roude number limit.
\newblock {\em J. Nonlinear Sci.}, 27(1):285--342, 2017.

\bibitem[Ben72]{Benjamin72}
T.~B. Benjamin.
\newblock The stability of solitary waves.
\newblock {\em Proc. Roy. Soc. (London) Ser. A}, 328:153--183, 1972.

\bibitem[Ben84]{Benjamin}
T.~B. Benjamin.
\newblock Impulse, flow force and variational principles.
\newblock {\em IMA J. Appl. Math.}, 32(1-3):3--68, 1984.

\bibitem[BG10]{Benzoni-transverse}
S.~Benzoni-Gavage.
\newblock Spectral transverse instability of solitary waves in {K}orteweg
  fluids.
\newblock {\em J. Math. Anal. Appl.}, 361(2):338--357, 2010.

\bibitem[BG13]{Benzoni-survey}
S.~Benzoni-Gavage.
\newblock Planar traveling waves in capillary fluids.
\newblock {\em Differential Integral Equations}, 26(3-4):439--485, 2013.

\bibitem[BGMR16]{Benzoni-Mietka-Rodrigues}
S.~Benzoni-Gavage, C.~Mietka, and L.~M. Rodrigues.
\newblock Co-periodic stability of periodic waves in some {H}amiltonian {PDE}s.
\newblock {\em Nonlinearity}, 29(11):3241--3308, 2016.

\bibitem[BGMR20]{BMR2-I}
S.~Benzoni-Gavage, C.~Mietka, and L.~M. Rodrigues.
\newblock Stability of periodic waves in {H}amiltonian {PDE}s of either long
  wavelength or small amplitude.
\newblock {\em Indiana Univ. Math. J.}, 69(2):545--619, 2020.

\bibitem[BGMR21]{BMR2-II}
S.~Benzoni-Gavage, C.~Mietka, and L.~M. Rodrigues.
\newblock Modulated equations of {H}amiltonian {PDE}s and dispersive shocks.
\newblock {\em Nonlinearity}, 34(1):578--641, 2021.

\bibitem[BGNR13]{Benzoni-Noble-Rodrigues-note}
S.~Benzoni-Gavage, P.~Noble, and L.~M. Rodrigues.
\newblock Stability of periodic waves in {H}amiltonian {PDE}s.
\newblock {\em Journ\'ees \'Equations aux D\'eriv\'ees Partielles},
  2013(2):1--22, 2013.

\bibitem[BGNR14]{Benzoni-Noble-Rodrigues}
S.~Benzoni-Gavage, P.~Noble, and L.~M. Rodrigues.
\newblock Slow modulations of periodic waves in {H}amiltonian {PDE}s, with
  application to capillary fluids.
\newblock {\em J. Nonlinear Sci.}, 24(4):711--768, 2014.

\bibitem[BDN11]{Bottman-Deconinck-Nivala}
N.~Bottman, B.~Deconinck, and M.~Nivala.
\newblock Elliptic solutions of the defocusing {NLS} equation are stable.
\newblock {\em J. Phys. A}, 44(28):285201, 24, 2011.

\bibitem[Bou72]{Boussinesq}
J.~Boussinesq.
\newblock Th\'eorie des ondes et des remous qui se propagent le long d'un canal
  rectangulaire horizontal, en communiquant au liquide contenu dans ce canal
  des vitesses sensiblement pareilles de la surface au fond.
\newblock {\em J. Math. Pures Appl.}, 17(2):55--108, 1872.

\bibitem[BHJ16]{Bronski-Hur-Johnson}
J.~C. Bronski, V.~M. Hur, and M.~A. Johnson.
\newblock Modulational instability in equations of {K}d{V} type.
\newblock In {\em New approaches to nonlinear waves}, volume 908 of {\em
  Lecture Notes in Phys.}, pages 83--133. Springer, Cham, 2016.

\bibitem[CDS12]{Madelung-survey}
R.~Carles, R.~Danchin, and J.-C. Saut.
\newblock Madelung, {G}ross-{P}itaevskii and {K}orteweg.
\newblock {\em Nonlinearity}, 25(10):2843--2873, 2012.

\bibitem[Caz03]{Cazenave}
T.~Cazenave.
\newblock {\em Semilinear {S}chr\"odinger equations}, volume~10 of {\em Courant
  Lecture Notes in Mathematics}.
\newblock New York University Courant Institute of Mathematical Sciences, New
  York, 2003.

\bibitem[CM20]{Clarke-Marangell}
W.~Clarke and R.~Marangell.
\newblock Rigorous justification of the {W}hitham modulation theory for
  equations of {NLS} type.
\newblock {\em arXiv preprint}, arXiv:2011.09656, 2020.

\bibitem[Dav07]{Davies}
E.~B. Davies.
\newblock {\em Linear operators and their spectra}, volume 106 of {\em
  Cambridge Studies in Advanced Mathematics}.
\newblock Cambridge University Press, Cambridge, 2007.

\bibitem[DBRN19]{DeBievre-RotaNodari}
S.~De~Bi\`evre and S.~Rota~Nodari.
\newblock Orbital stability via the energy-momentum method: the case of higher
  dimensional symmetry groups.
\newblock {\em Arch. Ration. Mech. Anal.}, 231(1):233--284, 2019.

\bibitem[DR20]{DR2}
V.~Duch\^{e}ne and L.~M. Rodrigues.
\newblock Stability and instability in scalar balance laws: fronts and periodic
  waves.
\newblock {\em arXiv preprint}, arXiv:2006.14236, 2020.

\bibitem[ET16]{Erdogan-Tzirakis}
M.~B. Erdo\u{g}an and N.~Tzirakis.
\newblock {\em Dispersive partial differential equations}, volume~86 of {\em
  London Mathematical Society Student Texts}.
\newblock Cambridge University Press, Cambridge, 2016.
\newblock Wellposedness and applications.

\bibitem[GH07]{Gallay-Haragus-spectral-small}
T.~Gallay and M.~H{\u{a}}r{\u{a}}gu{\c{s}}.
\newblock Stability of small periodic waves for the nonlinear {S}chr\"odinger
  equation.
\newblock {\em J. Differential Equations}, 234(2):544--581, 2007.

\bibitem[Gar93]{Gardner-structure-periodic}
R.~A. Gardner.
\newblock On the structure of the spectra of periodic travelling waves.
\newblock {\em J. Math. Pures Appl. (9)}, 72(5):415--439, 1993.

\bibitem[Gar97]{Gardner-large-period}
R.~A. Gardner.
\newblock Spectral analysis of long wavelength periodic waves and applications.
\newblock {\em J. Reine Angew. Math.}, 491:149--181, 1997.

\bibitem[GSS90]{GSS-II}
M.~Grillakis, J.~Shatah, and W.~Strauss.
\newblock Stability theory of solitary waves in the presence of symmetry. {II}.
\newblock {\em J. Funct. Anal.}, 94(2):308--348, 1990.

\bibitem[HSS12]{HSS}
S.~Hakkaev, M.~Stanislavova, and A.~Stefanov.
\newblock Transverse instability for periodic waves of {KP}-{I} and
  {S}chr\"{o}dinger equations.
\newblock {\em Indiana Univ. Math. J.}, 61(2):461--492, 2012.

\bibitem[HK08]{Haragus-Kapitula-Hamiltonian}
M.~H{\v{a}}r{\v{a}}gu{\c{s}} and T.~Kapitula.
\newblock On the spectra of periodic waves for infinite-dimensional
  {H}amiltonian systems.
\newblock {\em Phys. D}, 237(20):2649--2671, 2008.

\bibitem[HZ95]{Hoff_Zumbrun-NS_compressible_pres_de_zero}
D.~Hoff and K.~Zumbrun.
\newblock Multi-dimensional diffusion waves for the {N}avier-{S}tokes equations
  of compressible flow.
\newblock {\em Indiana Univ. Math. J.}, 44(2):603--676, 1995.

\bibitem[JLL19]{Jin-Liao-Lin}
J.~Jin, S.~Liao, and Z.~Lin.
\newblock Nonlinear modulational instability of dispersive {PDE} models.
\newblock {\em Arch. Ration. Mech. Anal.}, 231(3):1487--1530, 2019.

\bibitem[JNR{\etalchar{+}}19]{JNRYZ}
M.~A. Johnson, P.~Noble, L.~M. Rodrigues, Z.~Yang, and K.~Zumbrun.
\newblock Spectral stability of inviscid roll waves.
\newblock {\em Comm. Math. Phys.}, 367(1):265--316, 2019.

\bibitem[JNRZ13]{JNRZ-RD2}
M.~A. Johnson, P.~Noble, L.~M. Rodrigues, and K.~Zumbrun.
\newblock Nonlocalized modulation of periodic reaction diffusion waves: the
  {W}hitham equation.
\newblock {\em Arch. Ration. Mech. Anal.}, 207(2):669--692, 2013.

\bibitem[JNRZ14]{JNRZ-conservation}
M.~A. Johnson, P.~Noble, L.~M. Rodrigues, and K.~Zumbrun.
\newblock Behavior of periodic solutions of viscous conservation laws under
  localized and nonlocalized perturbations.
\newblock {\em Invent. Math.}, 197(1):115--213, 2014.

\bibitem[JNRZ15]{JNRZ-KdV-KS}
M.~A. Johnson, P.~Noble, L.~M. Rodrigues, and K.~Zumbrun.
\newblock Spectral stability of periodic wave trains of the {K}orteweg-de
  {V}ries/{K}uramoto-{S}ivashinsky equation in the {K}orteweg-de {V}ries limit.
\newblock {\em Trans. Amer. Math. Soc.}, 367(3):2159--2212, 2015.

\bibitem[KR16]{KR}
B.~Kabil and L.~M. Rodrigues.
\newblock Spectral validation of the {W}hitham equations for periodic waves of
  lattice dynamical systems.
\newblock {\em J. Differential Equations}, 260(3):2994--3028, 2016.

\bibitem[KP13]{KapitulaPromislow_book}
T.~Kapitula and K.~Promislow.
\newblock {\em Spectral and dynamical stability of nonlinear waves}, volume 185
  of {\em Applied Mathematical Sciences}.
\newblock Springer, New York, 2013.
\newblock With a foreword by Christopher K. R. T. Jones.

\bibitem[Kat76]{Kato}
T.~Kato.
\newblock {\em Perturbation theory for linear operators}.
\newblock Springer-Verlag, Berlin, second edition, 1976.
\newblock Grundlehren der Mathematischen Wissenschaften, Band 132.

\bibitem[LBJM21]{LBJM}
K.~P. Leisman, J.~C. Bronski, M.~A. Johnson, and R.~Marangell.
\newblock Stability of {T}raveling {W}ave {S}olutions of {N}onlinear
  {D}ispersive {E}quations of {NLS} {T}ype.
\newblock {\em Arch. Ration. Mech. Anal.}, 240(2):927--969, 2021.

\bibitem[Lin02]{Lin}
Z.~Lin.
\newblock Stability and instability of traveling solitonic bubbles.
\newblock {\em Adv. Differential Equations}, 7(8):897--918, 2002.

\bibitem[LP15]{Linares-Ponce}
F.~Linares and G.~Ponce.
\newblock {\em Introduction to nonlinear dispersive equations}.
\newblock Universitext. Springer, New York, second edition, 2015.

\bibitem[{Mad}27]{Madelung}
E.~{Madelung}.
\newblock {Quantentheorie in hydrodynamischer Form}.
\newblock {\em Zeitschrift fur Physik}, 40(3-4):322--326, March 1927.

\bibitem[NR13]{Noble-Rodrigues}
P.~Noble and L.~M. Rodrigues.
\newblock Whitham's modulation equations and stability of periodic wave
  solutions of the {K}orteweg-de {V}ries-{K}uramoto-{S}ivashinsky equation.
\newblock {\em Indiana Univ. Math. J.}, 62(3):753--783, 2013.

\bibitem[Rod09]{Rodrigues-compressible}
L.~M. Rodrigues.
\newblock Vortex-like finite-energy asymptotic profiles for isentropic
  compressible flows.
\newblock {\em Indiana Univ. Math. J.}, 58(4):1747--1776, 2009.

\bibitem[Rod13]{R}
L.~M. Rodrigues.
\newblock {\em Asymptotic stability and modulation of periodic wavetrains,
  general theory \& applications to thin film flows}.
\newblock Habilitation {\`a} diriger des recherches, Universit\'e Lyon 1, 2013.

\bibitem[Rod15]{R_Roscoff}
L.~M. Rodrigues.
\newblock Space-modulated stability and averaged dynamics.
\newblock {\em Journ\'ees \'Equations aux D\'eriv\'ees Partielles},
  2015(8):1--15, 2015.

\bibitem[Rod18]{R_linKdV}
L.~M. Rodrigues.
\newblock Linear asymptotic stability and modulation behavior near periodic
  waves of the {K}orteweg--de {V}ries equation.
\newblock {\em J. Funct. Anal.}, 274(9):2553--2605, 2018.

\bibitem[RT10]{Rousset-Tzvetkov-linear}
F.~Rousset and N.~Tzvetkov.
\newblock A simple criterion of transverse linear instability for solitary
  waves.
\newblock {\em Math. Res. Lett.}, 17(1):157--169, 2010.

\bibitem[SS01]{Sandstede-Scheel-large-period}
B.~Sandstede and A.~Scheel.
\newblock On the stability of periodic travelling waves with large spatial
  period.
\newblock {\em J. Differential Equations}, 172(1):134--188, 2001.

\bibitem[Ser05]{Serre}
D.~Serre.
\newblock Spectral stability of periodic solutions of viscous conservation
  laws: large wavelength analysis.
\newblock {\em Comm. Partial Differential Equations}, 30(1-3):259--282, 2005.

\bibitem[SS99]{Sulem-Sulem}
C.~Sulem and P.-L. Sulem.
\newblock {\em The nonlinear {S}chr\"{o}dinger equation}, volume 139 of {\em
  Applied Mathematical Sciences}.
\newblock Springer-Verlag, New York, 1999.
\newblock Self-focusing and wave collapse.

\bibitem[VK73]{VK}
N.~G. {Vakhitov} and A.~A. {Kolokolov}.
\newblock {Stationary Solutions of the Wave Equation in a Medium with
  Nonlinearity Saturation}.
\newblock {\em Radiophysics and Quantum Electronics}, 16(7):783--789, July
  1973.

\bibitem[Whi74]{Whitham}
G.~B. Whitham.
\newblock {\em Linear and nonlinear waves}.
\newblock Wiley-Interscience [John Wiley \& Sons], New York, 1974.
\newblock Pure and Applied Mathematics.

\bibitem[YZ19]{Yang-Zumbrun}
Z.~Yang and K.~Zumbrun.
\newblock Convergence as period goes to infinity of spectra of periodic
  traveling waves toward essential spectra of a homoclinic limit.
\newblock {\em J. Math. Pures Appl. (9)}, 132:27--40, 2019.

\bibitem[Zhi01]{Zhidkov}
P.~E. Zhidkov.
\newblock {\em Korteweg-de {V}ries and nonlinear {S}chr\"{o}dinger equations:
  qualitative theory}, volume 1756 of {\em Lecture Notes in Mathematics}.
\newblock Springer-Verlag, Berlin, 2001.

\end{thebibliography}

\label{biblio}
\newcommand{\etalchar}[1]{$^{#1}$}
\newcommand{\SortNoop}[1]{}\def\cprime{$'$}

\end{document}